\documentclass[a4paper,10pt,english,leqno]{amsart}

\def\ocirc#1{\ifmmode\setbox0=\hbox{$#1$}\dimen0=\ht0
    \advance\dimen0 by1pt\rlap{\hbox to\wd0{\hss\raise\dimen0
    \hbox{\hskip.2em$\scriptscriptstyle\circ$}\hss}}#1\else
    {\accent"17 #1}\fi}

\usepackage{color}
\usepackage{dsfont}
\usepackage[colorlinks = false]{hyperref}
\usepackage{mathrsfs}
\usepackage{mathtools}

\numberwithin{equation}{section}
\allowdisplaybreaks[1]

\usepackage{environ}
\makeatletter
\NewEnviron{Lalign}{\tagsleft@true\begin{align}\BODY\end{align}}
\makeatother

\usepackage[shortcuts,cyremdash]{extdash}

\newtheorem{theorem}{Theorem}[section]
\newtheorem{lemma}[theorem]{Lemma}

\newtheorem{proposition}[theorem]{Proposition}
\theoremstyle{definition}\newtheorem{definition}{Definition}[section]
\theoremstyle{remark}\newtheorem{remark}{Remark}[section]

\newtheorem{limit}{Limit}

\newcommand{\marginlabel}[1]{\mbox{}\marginpar{\raggedleft\hspace{0pt}\tiny{\textcolor{red}{#1}}}}
\renewcommand{\marginlabel}[1]{} 

\newcommand{\R}{\mathbb{R}}

\newcommand{\N}{\mathbb{N}}
\newcommand{\RKHS}{{L^2(Z)}}

\newcommand{\dt}{\,dt}
\newcommand{\dx}{\,dx}

\newcommand{\ds}{\,ds}
\newcommand{\dz}{\,dz}

\newcommand{\dX}{\,dX}
\newcommand{\dxdt}{\,dxdt}
\newcommand{\ue}{u^\varepsilon}
\newcommand{\F}{\mathscr{F}}
\newcommand{\Sm}{\mathcal{S}}
\newcommand{\Lin}{\mathscr{L}}

\newcommand{\Pred}{\mathscr{P}}
\newcommand{\test}{\varphi}
\newcommand{\Sd}{S_\delta}
\newcommand{\D}{\mathbb{D}}

\newcommand{\Borel}[1]{\mathscr{B}\left(#1\right)}
\newcommand{\Young}[1]{\mathcal{YM}\left(#1\right)}
\newcommand{\Y}[1]{\mathcal{Y}\left(#1\right)}
\newcommand{\Lebesgue}{\mathcal{L}}
\newcommand{\Zcal}{\mathscr{Z}}

\newcommand{\HS}{\textrm{HS}}

\newcommand{\Rad}[1]{\mathcal{M}(#1)}
\newcommand{\Entropy}{\mathbf{Ent}}

\newcommand{\Jac}{\partial}

\newcommand{\seq}[1]{\left\{#1\right\}}
\newcommand{\abs}[1]{\left|#1\right|}
\newcommand{\sign}[1]{\mathrm{sign}\left(#1\right)}
\newcommand{\signd}[1]{\mathrm{sign}^\prime\left(#1\right)}
\newcommand{\signp}[1]{\mathrm{sign}_+\left(#1\right)}
\newcommand{\signpd}[1]{\mathrm{sign}_+^\prime\left(#1\right)}
\newcommand{\norm}[1]{\left\|#1\right\|}
\newcommand{\Eb}[1]{E\Bigl[#1\Bigr]}
\newcommand{\E}[1]{E\left[#1\right]}

\newcommand{\car}[1]{\mathds{1}_{#1}}
\newcommand{\inner}[2]{\left\langle #1,#2 \right\rangle}

\DeclareMathOperator*{\esssup}{ess\,sup}

\title[Stochastic conservation laws and Malliavin calculus]
{On stochastic conservation laws\\ and Malliavin calculus}

\author[K. H. Karlsen]{K. H. Karlsen}
\address[Kenneth Hvistendahl Karlsen]
{\newline Department of mathematics
\newline University of Oslo
\newline P.O. Box 1053,  Blindern
\newline N--0316 Oslo, Norway} 
\email[]{kennethk@math.uio.no}

\author[E. B. Storr\o{}sten]{E. B. Storr\o{}sten}
\address[Erlend Briseid Storr\o{}sten]
{\newline Department of mathematics
\newline University of Oslo
\newline P.O. Box 1053, Blindern
\newline N--0316 Oslo, Norway} 
\email[]{erlenbs@math.uio.no}

\date{\today}

\subjclass[2010]{Primary: 60H15, 35L60; Secondary: 35L65, 60G15}

\keywords{Stochastic conservation law, entropy condition, Malliavin calculus, 
anticipating stochastic integral, existence, uniqueness}

\thanks{This work was partially supported by the Research Council of Norway 
through the project Stochastic Conservation Laws.}

\begin{document}

\begin{abstract}
For stochastic conservation laws 
driven by a semilinear noise term, we propose a 
generalization of the Kru\v{z}kov entropy condition 
by allowing the Kru\v{z}kov constants to be Malliavin 
differentiable random variables. Existence and 
uniqueness results are provided.  
Our approach sheds some new light on 
the stochastic entropy conditions put forth 
by Feng and Nualart \cite{FengNualart2008} 
and Bauzet, Vallet, and Wittbold \cite{Bauzet:2012kx},
and in our view simplifies some of the proofs.
\end{abstract}

\maketitle

\tableofcontents

\section{Introduction}\label{sec:Introduction}
Stochastic partial differential equations (stochastic PDEs) arise in 
many fields, such as biology, physics, engineering, and economics, in which 
random phenomena play a crucial role. Complex systems always 
contain some element of uncertainty.  Uncertainty may arise in the system 
parameters, initial and boundary conditions, and external forcing processes. 
Moreover, in many situations there is incomplete or partial understanding of the 
governing physical laws, and many models are therefore best 
formulated using stochastic PDEs. 

Recently there has been an interest in studying the effect of 
stochastic forcing on nonlinear conservation laws 
\cite{Bauzet:2012kx,Biswas:2014gd,ChenKarlsen2012,
DebusscheVovelle2010,Kim2003,HR1997,
FengNualart2008,Vallet:2009uq,Vallet:2000ys}, with particular 
emphasis on existence and uniqueness questions (well-posedness). 
Deterministic conservation laws possess 
discontinuous (shock) solutions, and a 
weak formulation coupled with an appropriate entropy condition 
is required to ensure the well-posedness \cite{Kruzkov1970,Malek1996}. 
The question of well-posedness gets somewhat more difficult 
by adding a stochastic source term, due to the 
interaction between noise and nonlinearity. 

In a different direction, we also mention the recent 
works \cite{Lions:2013aa,Lions:2014aa} by 
Lions, Perthame, and Souganidis on conservation 
laws with (rough) stochastic fluxes.

To be more precise, we are interested in stochastic 
conservation laws driven by Gaussian noise 
of the following form:
\begin{equation}\label{eq:StochasticBalanceLaw}
 \left\{
  \begin{aligned}
  du(t,x) + \nabla \cdot f(u(t,x))dt & = 
  \int_Z \sigma(x,u(t,x),z)W(dt,dz), &(t,x) \in \Pi_T, \\
  u(0,x) &= u^0(x),  & x \in \R^d,
  \end{aligned}
 \right.
\end{equation}
where $\Pi_T = (0,T) \times \R^d$, $T > 0$ is some fixed 
final time, $u^0 = u^0(x,\omega)$ is a given $\F_0$\=/measurable 
random function, and the unknown $u = u(t,x,\omega)$ is a 
random (scalar) function. The flux function
\begin{equation}\label{assumption:LipOnf}
 f:\R \rightarrow \R^d \mbox{ is assumed to 
 be $C^1$ and globally Lipschitz.} \tag{$\mathcal{A}_{f}$}
\end{equation}

Concerning the source term, we let $(Z,\Zcal,\mu)$ be a $\sigma$-finite 
separable measure space and $W$ be a space-time Gaussian white noise 
martingale random measure with respect to a filtration 
$\seq{\F_t}_{0 \leq t \leq T}$ \cite{Walsh1984}. Its covariance 
measure is given by $dt \otimes d\mu$, where $dt$ denotes Lebesgue 
measure on $[0,T]$, that is, for $A,B \in \Zcal$,
\begin{displaymath}
 \E{W(t,A)W(t,B)} = t\mu(A \cap B).
\end{displaymath}

The noise coefficient $\sigma:\R^d \times \R \times Z \rightarrow \R$ is 
a measurable function satisfying
\begin{equation}\label{assumption:LipOnSigma}
    \mbox{$\exists M \in L^2(Z)$ s.t.} 
    \begin{cases}
      \abs{\sigma(x,u,z)-\sigma(x,v,z)} \leq \abs{u-v}M(z),\\
      \abs{\sigma(x,u,z)} \leq M(z)(1 + \abs{u}),
    \end{cases}
\tag{$\mathcal{A}_{\sigma}$}
\end{equation}
for all $(x,z) \in \R^d \times Z$. Note that $W$ induces a cylindrical 
Wiener process (with identity covariance operator) 
on $\RKHS = L^2(Z,\Zcal,\mu)$ which we also denote 
by $W$ \cite[\S~7.1.2]{PeszatZabczyk2007}. 
For a nonnegative $\phi \in C(\R^d) \cap L^1(\R^d)$, let $L^2(\R^d,\phi)$ 
denote the $\phi$-weighted $L^2$-space, cf.~Section~\ref{sec:Entropy_Formulation}.
Define $G(u):\RKHS \rightarrow L^2(\R^d,\phi)$ by
\begin{equation}\label{eq:GDef}
 G(u)h(x) = \int_Z \sigma(x,u(x),z)h(z)\,d\mu(z).
\end{equation}
The collection of all Hilbert-Schmidt operators from 
$\RKHS$ into $L^2(\R^d,\phi)$ is denoted 
by $\Lin_2(\RKHS;L^2(\R^d,\phi))$. 
Due to \eqref{assumption:LipOnSigma}, $G$ is a Lipschitz map 
from $L^2(\R^d,\phi)$ into $\Lin_2(\RKHS;L^2(\R^d,\phi))$, i.e., 
\begin{displaymath}
 \norm{G(u)-G(v)}_{\Lin_2(\RKHS;L^2(\R^d,\phi))} 
 \leq \norm{M}_{L^2(Z)}\norm{u-v}_{L^2(\R^d,\phi)}.
\end{displaymath}
In the above setting, \eqref{eq:StochasticBalanceLaw} 
may be written as 
\begin{displaymath}
 du + \nabla \cdot f(u)dt = G(u)dW(t),
\end{displaymath}
where the right-hand side is interpreted with 
respect to the cylindrical Wiener process \cite{DebusscheVovelle2010}. 
In what follows, we will in general stick to the $\sigma$ notation. 
We refer to \cite{DalangSardanyons2011} for a comparison of the 
different stochastic integrals. 

The Malliavin calculus used later 
is developed with respect to the 
isonormal Gaussian process $W:H \rightarrow L^2(\Omega)$ defined by
\begin{equation}\label{eq:IsoGausProcessDef}
	W(h) = \int_0^T\int_Z h(s,z)W(ds,dz) = \int_0^T h(s)dW(s),
\end{equation}
where $H$ denotes the space $L^2([0,T] \times Z,\Borel{[0,T]} 
\otimes \Zcal,dt \otimes d\mu)$. Concerning the notation and 
basic theory of Malliavin calculus we 
refer to \cite{NualartMalliavinCalc2006}.

When the noise term in \eqref{eq:StochasticBalanceLaw} 
is additive ($\sigma$ is independent of $u$), 
Kim \cite{Kim2003} used Kru{\v{z}}kov's entropy condition 
and proved the well-posedness of entropy solutions, see 
also Vallet and Wittbold \cite{Vallet:2009uq}. 
When the noise term is additive, a change of variable 
turns \eqref{eq:StochasticBalanceLaw} into a conservation law with random 
flux function and well-known ``deterministic" techniques apply.

When the noise is multiplicative (i.e., $\sigma$ depends on $u$), a 
simple adaptation of Kru{\v{z}}kov's 
techniques fails to capture a specific ``noise-noise" 
interaction term correlating two entropy solutions, and as a consequence 
they do not lead to the $L^1$-contraction principle. 
This issue was resolved by Feng and Nualart \cite{FengNualart2008}, introducing 
an additional condition capturing the missing noise-noise interaction. 
These authors employ the Kru{\v{z}}kov entropy condition (on It\^{o} form)
\begin{equation}\label{eq:kruz-intro}
	\begin{split}
		& \partial_t \abs{u-c} +  \partial_x \left[ \sign{u-c}(f(u)-f(c))\right] 
		\\ & \qquad \le \frac12 \signd{u-c}\sigma(u)^2+\sign{u-c}\sigma(u)\, \partial_t W, 
		\qquad \forall c \in \R,
	\end{split}
\end{equation}
which is understood in the distributional 
sense (and via an approximation of $\sign{\cdot}$). 
Here, for the sake of simplicity, we take $W$ to be an 
ordinary Brownian motion ($Z$ is a point) and $d=1$.  
The above family of inequalities, indexed over the ``Kru{\v{z}}kov" constants $c$, 
is in \cite{FengNualart2008} augmented with an additional condition 
related to certain substitution formulas \cite[\S~3.2.4]{NualartMalliavinCalc2006}, allowing 
the authors to recover the above mentioned interaction 
term and thus provide, for the first time, a general uniqueness result for stochastic 
conservation laws.

The additional condition proposed in \cite{FengNualart2008} is rather technical 
and difficult to comprehend at first glance. Furthermore, the existence proof 
(passing to the limit in a sequence vanishing viscosity approximations) 
becomes increasingly difficult, with several added arguments 
revolving around fractional Sobolev spaces, 
estimates of the moments of increments, modulus 
of continuity of It\^{o} processes, 
and the Garcia-Rodemich-Rumsey lemma. 

Recently, Bauzet, Vallet, and Wittbold \cite{Bauzet:2012kx} provided a 
framework that uses the Kru{\v{z}}kov entropy 
inequalities \eqref{eq:kruz-intro} but bypasses the Feng-Nualart condition. 
Rather than comparing two entropy solutions 
directly, their uniqueness result compares 
the entropy solution against the vanishing viscosity solution, which is 
generated as the weak limit (as captured by the Young measure) of a 
sequence of solutions to stochastic parabolic equations 
with vanishing viscosity parameter. Although with this approach 
the existence proof becomes simple, many technical difficulties 
are added to the uniqueness proof.  At this point, let us mention that 
Debussche and Vovelle \cite{DebusscheVovelle2010} 
have provided an alternative well-posedness theory based 
on a kinetic formulation. The kinetic formulation 
avoids some of the difficulties alluded to above, thanks to the so-called 
entropy defect measure.

The purpose of our work is to propose a slight modification of 
the Kru{\v{z}}kov entropy condition \eqref{eq:kruz-intro} that will shed some new 
light on \cite{FengNualart2008}, and also \cite{Bauzet:2012kx}.
To this end, we recall that the uniqueness proof 
for entropy solutions is based on a technique 
known as ``doubling of variables''. Suppose 
that $v$ is another entropy solution of \eqref{eq:StochasticBalanceLaw} with 
initial condition $v^0$. The key idea is to consider $v$ as 
a function of a different set of variables, say $v = v(s,y)$, and then 
for each fixed $(s,y) \in \Pi_T$, take $c = v(s,y)$ in the 
entropy condition for $u$. In the case that $u$ and $v$ are 
stochastic fields, $v(s,y)$ is no longer a constant, but rather a random variable. 
Hence it seems natural to utilize an entropy condition 
in which the Kru\v{z}kov parameters $c$ in \eqref{eq:kruz-intro} 
are random variables rather than constants.

Let us do an informal derivation of an entropy condition based on this idea. 
As above, we let $W$ be an ordinary Brownian motion and $d = 1$. 
For each fixed $\varepsilon>0$, suppose $\ue$ is a sufficiently regular 
solution of the stochastic parabolic equation
\begin{equation*}
	\partial_t\ue + \partial_xf(\ue) 
	= \sigma(\ue)\partial_tW + \varepsilon\partial_x^2\ue,
\end{equation*}
where the time derivative is understood in the sense of distributions. 
We apply the \textit{anticipating} It\^o 
formula (Theorem~\ref{theorem:AntIto}) to $\abs{\ue-V}$, 
with $V$ being an arbitrary \textit{Malliavin} 
differentiable random variable. 
Taking expectations, we obtain
\begin{align*}
	&\Eb{\partial_t \abs{\ue-V} + \partial_x(\sign{\ue-V}(f(\ue)-f(V)))} \\
	&\hphantom{XXXX}+\Eb{\signd{\ue-V}\sigma(\ue)D_tV}
	-\frac{1}{2}\Eb{\signd{\ue-V}\sigma^2(\ue)} \\
	&\hphantom{XXXXXX}
	= \varepsilon\Eb{\sign{\ue-V}\partial_x^2\ue},
\end{align*}
where $D_tV$ is the Malliavin derivative of $V$ at $t$. As
$$
 \varepsilon\Eb{\sign{\ue-V}\partial_x^2\ue} 
 \le \varepsilon\Eb{\partial_x^2 \abs{\ue-V}},
$$
it follows that
\begin{align*}
	&\Eb{\partial_t \abs{\ue-V} + \partial_x(\sign{\ue-V}(f(\ue)-f(V)))} \\
	&\hphantom{XXXX}+\Eb{\signd{\ue-V}\sigma(\ue)D_tV}
	-\frac{1}{2}\Eb{\signd{\ue-V}\sigma^2(\ue)} \\
	&\hphantom{XXXXXX}\leq \varepsilon\Eb{\partial_x^2 \abs{\ue-V}}.
\end{align*}
Suppose $\ue \rightarrow u$ in a suitable sense 
as $\varepsilon \downarrow 0$. Then the limit 
$u$ ought to satisfy
\begin{equation}\label{eq:kruz-intro2}
	\begin{split}
		&\Eb{\partial_t \abs{u-V} + \partial_x(\sign{u-V}(f(u)-f(V)))} \\
		& \hphantom{XXXX}+\Eb{\signd{u-V}\sigma(u)D_tV}
		-\frac{1}{2}\Eb{\signd{u-V}\sigma^2(u)} \leq 0,
	\end{split}
\end{equation}
which is the entropy condition that we 
propose should  replace \eqref{eq:kruz-intro}. 

At least informally, it is easy to see why this entropy condition implies the 
$L^1$ contraction principle. Let $u = u(t,x)$ and $v = v(s,y)$ be 
two solutions satisfying the entropy condition \eqref{eq:kruz-intro2}. 
Suppose $u,v$ are both Malliavin differentiable 
and spatially regular. The entropy condition for $u$ yields
\begin{multline*}
	\Eb{\partial_t \abs{u-v} + \partial_x(\sign{u-v}(f(u)-f(v)))} \\
	+\Eb{\signd{u-v}\sigma(u)D_tv} - \frac{1}{2}\Eb{\signd{u-v}\sigma^2(u)} \leq 0.
\end{multline*}
Similarly, the entropy condition of $v$ yields 
\begin{multline*}
 \Eb{\partial_s \abs{v-u} + \partial_y(\sign{v-u}(f(v)-f(u)))} \\
  +\Eb{\signd{v-u}\sigma(v)D_su} - \frac{1}{2}\Eb{\signd{v-u}\sigma^2(v)} \leq 0.
\end{multline*}
Suppose that $t > s$. Then $D_tv(s) = 0$ as $v$ is adapted (to the underlying filtration). 
Adding the last two equations we obtain
\begin{multline*}
	\Eb{(\partial_t + \partial_s)\abs{u-v} 
	+ (\partial_x + \partial_y)(\sign{u-v}(f(u)-f(v)))} \\
	+\Eb{\signd{u-v}\sigma(v)D_su} 
	- \frac{1}{2}\Eb{\signd{u-v}(\sigma^2(u) + \sigma^2(v))} \leq 0.
\end{multline*}
Completing the square yields
\begin{multline}\label{eq:OutlineOfDoubeling}
	\Eb{(\partial_t + \partial_s)\abs{u-v} + (\partial_x + \partial_y)(\sign{u-v}(f(u)-f(v)))} \\
	+\Eb{\signd{u-v}\sigma(v)(D_su-\sigma(u))} 
	-\underbrace{\frac{1}{2}\Eb{\signd{u-v}(\sigma(u) - \sigma(v))^2}}_{=0} \leq 0.
\end{multline}
Next we write
$$
D_su(t)-\sigma(u(t)) = (D_su(t)-\sigma(u(s))) + (\sigma(u(s))-\sigma(u(t))),
$$
and attempt to send $t \downarrow s$. 
The second term tends to zero almost everywhere. 
Formally, for fixed $s$, we observe that $D_su(t)$ satisfies the 
initial value problem

\begin{equation}\label{eq:transport}
\left\{
  \begin{aligned}
  dw + \partial_x\bigl(f'(u)w\bigr) \,dt &
  = \bigl(\sigma'(u)w\bigr)\,dW(t), & (t > s),  \\
  w(s) &= \sigma(u(s)).
  \end{aligned}
\right.
\end{equation}
And so, concluding that 
$$
D_su(t) \rightarrow \sigma(u(s)), \quad \text{as $t\downarrow s$},
$$ 
amounts to showing that the solution to \eqref{eq:transport} satisfies the initial 
condition (in some weak sense). 
Given this result, the $L^1$ contraction property follows 
from \eqref{eq:OutlineOfDoubeling} in a standard way: 
\begin{equation}\label{eq:Contraction}
\frac{d}{dt} \E{\norm{u(t)-v(t)}_{L^1(\R)}} \leq 0.
\end{equation}

The above argument is hampered 
by an obstacle; namely, that the Malliavin 
differentiability of an entropy solution 
seems hard to establish. This can be seen related to 
the \textit{discontinuous} coefficient $f'(u)$ in the stochastic continuity 
equation \eqref{eq:transport}, making it difficult to establish 
the existence of a (properly defined) weak solution.  
In the deterministic context, continuity (and related transport) equations 
with low-regularity coefficients have been an active area 
of research, see for example 
\cite{Ambrosio:2014aa,Bouchut:1998rt,Bouchut:2005aa} (and 
also \cite{Flandoli:2010yq} for a particular stochastic setting). 
Continuity equations arise in many applications, such as 
fluid mechanics. They also appear naturally when linearizing  
a nonlinear conservation law $u_t+f(u)_x=0$ into $w_t + (f'(u)w)_x=0$, 
see \cite{Ambrosio:2014aa,Bouchut:1998rt,Bouchut:2005aa}. 
The present work shows that stochastic continuity equations arise naturally 
as well, through linearisation (by the Malliavin derivative) of stochastic 
conservation laws driven by semilinear noise. However, the study 
of such equations is beyond the present paper and is left for future work.

As alluded to above, to make the $L^1$ contraction argument 
rigorous we would need to know that at least one of the 
two entropy solutions being compared, is Malliavin differentiable.   
To avoid this nontrivial issue, we shall employ a more 
indirect approach, motivated by \cite{Bauzet:2012kx}, comparing one 
entropy solution against the solution of the viscous problem 
linked to the other entropy solution, relying on weak compactness in the 
space of Young measures for the viscous approximation. 
The Malliavin differentiability of the viscous solution is then 
established and its Malliavin derivative is shown to satisfy a linear 
stochastic parabolic equation, with an initial condition fulfilled 
in the weak sense. Given these results, the proof of the $L^1$ contraction 
property follows as outlined above. 

Finally, we mention that the approach developed herein appears 
useful in the study of error estimates for numerical approximations 
of stochastic conservation laws,  whenever the 
approximation is Malliavin differentiable. 
It seems to us that this Malliavin differentiability 
is indeed often available. Furthermore, the approach 
may be extended so as to cover strongly 
degenerate parabolic equations with L\'{e}vy noise, cf.~\cite{Biswas:2014gd}, 
\cite{NunnoProskeOksendal2009}. It also constitutes a starting point 
for developing a well-posedness theory for stochastic conservation laws with 
random, possibly anticipating initial data. 
Note however that this seems to depend on the Malliavin differentiability of the entropy 
solution (Lemma~\ref{lemma:EntIneqViscForAdapted} is no longer applicable).

The remaining part of the paper is organized as follows: 
We present the solution framework and gather some 
preliminary results in Section \ref{sec:Entropy_Formulation}. Well-posedness results 
for the viscous approximations are provided in Section \ref{sec:ViscousApprox}. 
Furthermore, we establish the Malliavin differentiability 
of these approximations and show that the Malliavin derivative 
can be cast as the solution of a linear stochastic parabolic equation. 
The question of (weak) satisfaction of the initial condition is addressed.
Sections~\ref{sec:Existence} and~\ref{sec:Uniqueness} supply 
detailed proofs for the existence and uniqueness of Young measure-valued 
entropy solutions. Finally, some basic results are collected in Section~\ref{sec:Appendix}.

\section{Entropy solutions}\label{sec:Entropy_Formulation}
Under the assumption $\sigma(x,0,z) = 0$, the ordinary $L^p$ spaces 
($2\leq p < \infty$) constitute a natural choice for \eqref{eq:StochasticBalanceLaw}. 
Without this assumption, a certain class of weighted 
$L^p$ spaces seem to be better suited. For non-negative $\phi$ we define 
\begin{displaymath}
 	\norm{u}_{p,\phi} := \left(\int_{\R^d} \abs{u(x)}^p \phi(x)\,dx\right)^{1/p}.
\end{displaymath}
The relevant weights, denoted by $\mathfrak{N}$, 
consist of non-zero $\phi \in C^1(\R^d) \cap L^1(\R^d)$ for which 
there is a constant $C_\phi$ such that $\abs{\nabla\phi(x)} \leq C_\phi \phi(x)$. 
The weighted $L^p$-space associated with $\phi$ is denoted by $L^p(\R^d,\phi)$. 

To see that $\mathfrak{N}$ is non-empty, consider 
$\phi_N(x) = (1 + \abs{x}^2)^{-N}$ for $N \in \N$. 
Then we claim that $\phi_N \in \mathfrak{N}$ for all $N \geq d$. 
To this end, observe that  
\begin{displaymath}
 \nabla \phi_N(x) = -2N\frac{x}{1+\abs{x}^2}\phi_N(x),
\end{displaymath}
so $\abs{\nabla \phi_N(x)} \leq 2N \phi_N(x)$. 
Furthermore,
\begin{equation*}
\begin{split}
\int_{\R^d} \phi_N(x) \,dx 
= \int_0^\infty\int_{\partial B(0,r)}\left(\frac{1}{1+r^2}\right)^N \,dS dr 
< \infty.
\end{split}
\end{equation*} 
Another family of functions in $\mathfrak{N}$ is $\phi_\lambda(x) 
= \exp(-\lambda \sqrt{1 + \abs{x}^2})$, for $\lambda > 0$ \cite{VolpertHudjaev1969}. 

The fact that $\phi \in L^1(\R^d)$ 
yields $L^q(\R^d,\phi) \subset L^p(\R^d,\phi)$ for all $1 \leq p < q<\infty$. 
Indeed, $\norm{u}_{p,\phi} \leq \norm{u}_{q,\phi}\norm{\phi}_{L^1(\R^d)}^{1/p-1/q}$.
We shall also make use of the weighted $L^\infty$-norm
\begin{displaymath}
	\norm{h}_{\infty,\phi^{-1}} := \sup_{x \in \R^d}
	\left\{\frac{\abs{h(x)}}{\phi(x)}\right\}, \qquad h \in C(\R^d).
\end{displaymath}
Note that any compactly supported $h \in C(\R^d)$ is bounded 
in this norm, for $\phi \in \mathfrak{N}$. The norm is convenient 
due to the inequality 
$\norm{u}_{p,h} \leq \norm{u}_{p,\phi}\norm{h}_{\infty,\phi^{-1}}$.

Denote by $\mathscr{E}$ the set of non-negative 
convex functions in $C^2(\R)$ with $S(0) = 0$, $S'$ 
bounded, and $S''$ compactly supported. 
Suppose $Q:\R^2 \rightarrow \R^d$ satisfies  
\begin{displaymath}
 \Jac_1 Q(u,c) = S'(u-c)f'(u), \qquad Q(c,c) = 0, \qquad u,c \in \R,
\end{displaymath}
where $S \in \mathscr{E}$. Then we call $(S(\cdot-c),Q(\cdot,c))$ 
an \emph{entropy/entropy-flux pair} (indexed over $c\in \R$). 
For short, we say that $(S,Q)$ is 
in $\mathscr{E}$ if $S$ is in $\mathscr{E}$. 

We denote by $\D^{1,2}$ the space 
of Malliavin differentiable random variables 
in $L^2(\Omega)$ with Malliavin derivative in 
$L^2(\Omega;L^2([0,T] \times Z))$ \cite[p.~27]{NualartMalliavinCalc2006}. 

For $(S,Q) \in \mathscr{E}$, $\test \in C^\infty_c([0,T) 
\times \R^d)$, and $V \in \D^{1,2}$,  we define the functional
\begin{equation*}
  \begin{split}    
      &  \Entropy[(S,Q),\test,V](u) := \E{\int_{\R^d} S(u^0(x)-V)\test(0,x) \dx}\\
      &  \qquad + \E{\iint_{\Pi_T} S(u-V)\partial_t\test + Q(u,V)\cdot \nabla \test \dxdt} \\
      &  \qquad - \E{\iint_{\Pi_T}\int_Z S''(u-V) \sigma(x,u,z)D_{t,z}V \test \,d\mu(z)\dxdt } \\
      &  \qquad +\frac{1}{2}\E{ \iint_{\Pi_T}\int_Z S''(u-V)\sigma(x,u,z)^2\test\,d\mu(z)\dxdt},
  \end{split}
\end{equation*}
where $D_{t,z}V$ is the Malliavin derivative 
of $V$ at $(t,z) \in [0,T] \times Z$.  

We claim that $\Entropy$ is well-defined whenever $V \in \D^{1,2}$, 
$u \in L^2([0,T] \times \Omega;L^2(\R^d,\phi))$, 
$u^0 \in L^2(\Omega;L^2(\R^d,\phi))$, and 
\begin{displaymath}
\norm{\test(t)}_{\infty,\phi^{-1}},
 \norm{\partial_t\test(t)}_{\infty,\phi^{-1}},
 \norm{\nabla\test(t)}_{\infty,\phi^{-1}}
\quad \text{are bounded on $[0,T]$};
\end{displaymath}
note that any $\test \in C^\infty_c([0,T) \times \R^d)$ meets these criteria. 
To this end, observe that the first three terms are bounded 
due to the Lipschitz condition on $S$. Indeed,
\begin{equation}\label{eq:LipEstOnEntropyFlux}
 \abs{Q(u,V)} = \abs{\int_V^u S'(z-V)\Jac f(z)\,dz} 
 \leq \norm{S}_{\mathrm{Lip}}\norm{f}_{\mathrm{Lip}}\abs{u-V},
\end{equation}
and so
\begin{equation}\label{eq:EstOnFluxTermEntIneq}
 \begin{split}
 &\abs{\E{\iint_{\Pi_T}Q(u,V)\cdot \nabla \test \dxdt}} 
  \leq \norm{S}_{\mathrm{Lip}}\norm{f}_{\mathrm{Lip}}
  \E{\iint_{\Pi_T}(\abs{u}+\abs{V})\abs{\nabla \test} \dxdt} \\
  &\hphantom{X}\leq \norm{S}_{\mathrm{Lip}}\norm{f}_{\mathrm{Lip}}
  \int_0^T \left(\E{\norm{u(t)}_{1,\phi}}\norm{\nabla \test(t)}_{\infty,\phi^{-1}}
  +\E{\abs{V}}\norm{\nabla \test(t)}_{L^1(\R^d)}\right)dt,
 \end{split}
\end{equation}
which is finite. The terms involving $\sigma$ are easily 
seen to be well-defined since the Hilbert Schmidt norm 
of $G(u)$ (cf.~\eqref{eq:GDef}) is bounded. 
To simplify the notation, we set $HS = \Lin_2(L^2(Z);L^2(\R^d,\phi))$. 
Due to assumption~\eqref{assumption:LipOnSigma},
\begin{align*}
 \norm{G(u)}_{HS}^2 &= \int_{\R^d}\int_Z \sigma^2(x,u(x),z)\phi(x)\,d\mu(z)\,dx \\
&\leq 2\norm{M}_{L^2(Z)}^2\int_{\R^d}(1 + \abs{u(x)}^2)\phi(x)\,dx.
\end{align*}
Boundedness of the last term follows as
\begin{multline}\label{eq:EstOnSquareTermEntIneq}
 \abs{\E{ \iint_{\Pi_T}\int_Z S''(u-V)\sigma(x,u,z)^2\test\,d\mu(z)\dxdt}} \\
  \leq \norm{S''}_\infty \E{\int_0^T
  \norm{\test(t)}_{\infty,\phi^{-1}}\norm{G(u(t))}_{\HS}^2 \,dt}.
\end{multline}
By the sub-multiplicativity of the Hilbert Schmidt norm and 
H\"older's inequality, it follows that
\begin{equation}\label{eq:EstOnMallTermEntIneq}
  \begin{split}
   &\abs{\E{\iint_{\Pi_T}\int_Z S''(u-V) \sigma(x,u,z)D_{t,z}V \varphi \,d\mu(z)\dxdt }} \\
   & \quad \leq \norm{S''}_\infty \E{\iint_{\Pi_T}
   \norm{\test(t)}_{\infty,\phi^{-1}} \abs{\int_Z\sigma(x,u,z)D_{t,z}V \,d\mu(z)}\phi\dxdt} \\
   & \quad \leq \norm{S''}_\infty\norm{\phi}_{L^1(\R^d)}^{1/2}
   \E{\int_0^T\norm{\test(t)}_{\infty,\phi^{-1}} \norm{G(u(t))D_tV}_{2,\phi}dt}  \\
   & \quad \leq \norm{S''}_\infty\norm{\phi}_{L^1(\R^d)}^{1/2}
   \E{\int_0^T \norm{\test(t)}_{\infty,\phi^{-1}}^2
   \norm{G(u(t))}_{HS}^2\,dt}^{1/2}
   \\ & \qquad \qquad\qquad 
   \times \norm{DV}_{L^2(\Omega;L^2([0,T] \times Z))}<\infty. 
  \end{split}
\end{equation} 

Let $\Pred$ denote the predictable $\sigma$-algebra 
on $[0,T] \times \Omega$ with respect to $\seq{\F_t}$ \cite[\S~2.2]{ChungWilliams2014}. 
In general we are working with equivalence classes of functions with respect 
to the measure $dt \otimes dP$. The equivalence class $u$ is said to be \emph{predictable} 
if it has a version $\tilde{u}$ that is $\Pred$-measurable. In some of the arguments, to 
avoid picking versions, we consider the completion of $\Pred$ with 
respect to $dt \otimes dP$, denoted by $\Pred^*$. 
We recall that any jointly measurable and adapted process 
is $\Pred^*$-measurable, see \cite[Theorem~3.7]{ChungWilliams2014}.

\begin{definition}[Entropy solution]\label{Def:EntropySolution}
 An entropy solution $u = u(t,x;\omega)$ 
 of \eqref{eq:StochasticBalanceLaw}, with initial 
 condition $u^0 \in L^2(\Omega,\F_0,P;L^2(\R^d,\phi))$, 
 is a function satisfying:
 \begin{itemize}
   \item[(i)] $u$ is a predictable process in 
   $L^2([0,T] \times \Omega;L^2(\R^d,\phi))$.
   \item[(ii)] For any random variable $V\in \D^{1,2}$, 
   any entropy/entropy-flux pair $(S,Q)$ in $\mathscr{E}$, 
   and all nonnegative test functions 
   $\test \in C^\infty_c([0,T) \times \R^d)$,
   \begin{displaymath}
    	\Entropy[(S,Q),\test,V](u) \geq 0.
   \end{displaymath}
  \end{itemize}
\end{definition}
Here $L^2([0,T] \times \Omega;L^2(\R^d,\phi))$ is the 
Lebesgue-Bochner space, see Section~\ref{sec:LebBoch}.

\begin{remark}
One consequence of the upcoming results is that 
the viscous approximations \eqref{eq:ViscousApprox}
converge (strongly) to the entropy solution in the 
sense of Definition~\ref{Def:EntropySolution}. By passing to the limit 
in the weak formulation of \eqref{eq:ViscousApprox}, 
it follows that the entropy solution is also a weak solution. 
At an \textit{informal} level, this is linked to the Malliavin integration by 
parts formula. To see this, let $d = 1$, $W$ be an ordinary Brownian 
motion, and suppose that $u$ is a Malliavin differentiable and 
spatially regular entropy solution. We outline a nonrigorous argument 
showing that $u$ is a (strong) solution 
of \eqref{eq:StochasticBalanceLaw}. Let
\begin{displaymath}
 (u)_+ = \begin{cases}
          u \mbox{ for $u > 0$,} \\
	  0 \mbox{ else,}
         \end{cases}
 \mbox{ and } \quad 
 \signp{u} = \begin{cases}
	      1 \mbox{ for $u > 0$,} \\
	      0 \mbox{ else,}
	     \end{cases}
\end{displaymath}
so that $(u)_+' = \signp{u}$. Suppose for any $A \in \F_T$ 
there is a Malliavin differentiable random variable $V$ satisfying  
\begin{equation*}
 \begin{cases}
	u-V > 0 & \mbox{for $\omega \in A$,} \\
  	u-V < 0 & \mbox{else},
 \end{cases}
\end{equation*} 
so that $\signpd{u-V} = \car{A}$. Let us point out that since random 
variables of the form $\car{A}$ are not Malliavin 
differentiable \cite[Proposition~1.2.6]{NualartMalliavinCalc2006}, this 
argument is in need of an additional approximation step. 
As the argument is already informal, we skip this step.
Let $S(\cdot) = (\cdot)_+$ and 
\begin{displaymath}
Q(u,c) = \signp{u-c}(f(u)-f(c)), \qquad u,c \in \R.  
\end{displaymath}
The entropy inequality yields
\begin{equation*}
	\begin{split}
		&\Eb{\partial_t (u-V)_+ + \partial_x(\signp{u-V}(f(u)-f(V)))} \\
		& \hphantom{XXXX}+\Eb{\signpd{u-V}\sigma(u)D_tV}
		-\frac{1}{2}\Eb{\signpd{u-V}\sigma^2(u)} \leq 0.
	\end{split}
\end{equation*}
Apriori, the trace $t \mapsto D_tu(t)$ is not well-defined. 
However, due to \eqref{eq:transport}, $D_tu(\tau) \rightarrow \sigma(u(t))$ 
as $\tau \downarrow t$ (essentially), while $D_tu(\tau) = 0$ for $\tau < t$, and 
so it is natural to assign the value
\begin{displaymath}
 D_tu(t) = \lim_{\delta \downarrow 0}\frac{1}{2\delta}
 \int_{t-\delta}^{t + \delta} D_tu(\tau) \,d\tau = \frac{1}{2}\sigma(u(t)),
\end{displaymath}
cf.~\cite[p.~173]{NualartMalliavinCalc2006}. By the chain 
rule for Malliavin derivatives, 
$$
D_t\signp{u-V} = \signpd{u-V}\left(\frac{1}{2}\sigma(u)-D_tV\right),
$$
and so
\begin{equation*}
	\Eb{\partial_t (u-V)_+ + \partial_x(\signp{u-V}(f(u)-f(V)))} 
	\leq \Eb{D_t\signp{u-V}\sigma(u)}.
\end{equation*}
The integration by parts formula of Malliavin calculus yields
\begin{displaymath}
 \Eb{D_t\signp{u-V}\sigma(u)} 
 = \Eb{\signp{u-V}\sigma(u)\partial_tW}.
\end{displaymath}
As $\signp{u-V} = \car{A}$, $(u-V)_+ = (u-V)\car{A}$, and 
$A$ is arbitrary, it follows that 
\begin{equation*}
 \partial_tu + \partial_xf(u) \leq \sigma(u)\partial_tW.
\end{equation*}
The reverse inequality follows by considering $S(\cdot) = (\cdot)_-$.
\end{remark}

Let us fix some notation. For $n = 1,2,\ldots$, we 
will denote by $J^n$ a non-negative, smooth function satisfying 
\begin{displaymath}
 \mathrm{supp}(J^n) 
 \subset B(0,1), \, \int_{\R^n} J^n(x) \dx = 1, 
 \mbox{ and } J^n(x) = J^n(-x),
\end{displaymath}
for all $x \in \R^n$. For any $r > 0$ we let 
$J_r^n(x) = \frac{1}{r^n}J^n(\frac{x}{r})$. 
For $n = 1$ we let $J_r^+(x) = J_r(x-r)$ and 
note that $\mathrm{supp}(J_r^+) \subset (0,2r)$. 
As the value of $n$ is understood 
from the context, we will write $J = J^n$. 

According to Theorem~\ref{theorem:ExistenceOfSolution} 
and Theorem~\ref{theorem:UniquenessOfEntSol}, if 
$u^0 \in L^p(\Omega;L^p(\R^d,\phi))$, then the entropy solution 
belongs to $L^p(\Omega \times [0,T];L^p(\R^d,\phi))$ 
for any $2 \leq p < \infty$. As a consequence of the entropy 
inequality we obtain the following:
\begin{proposition}
 Let $2 \leq p < \infty$ and suppose $u^0 \in L^p(\Omega,\F_0,P;L^p(\R^d,\phi))$. 
 If $u \in L^p([0,T] \times \Omega;L^p(\R^d,\phi))$ is an 
 entropy solution of \eqref{eq:StochasticBalanceLaw}, then
 \begin{displaymath}
  \esssup_{0 \leq t \leq T}\seq{\E{\norm{u(t)}_{p,\phi}^p}} < \infty.
 \end{displaymath}
\end{proposition}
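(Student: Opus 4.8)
\emph{Strategy.} The plan is to feed into the entropy inequality a family of entropies $(S_R,Q_R)\in\mathscr{E}$ approximating $v\mapsto\abs{v}^p$ from below, together with the trivial (deterministic) Kru\v{z}kov parameter $V\equiv0$, and to observe that the hypothesis $u\in L^p([0,T]\times\Omega;L^p(\R^d,\phi))$ already supplies all the space--time integrability needed to bound the flux and noise contributions uniformly in $R$ and in the time slice; consequently no Gronwall-type argument is needed.

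\emph{Construction of $S_R$.} For $R\ge1$ I would prescribe $S_R''$ to equal $(\abs{\cdot}^p)''=p(p-1)\abs{\cdot}^{p-2}$ on $[-R,R]$ and to taper smoothly down to $0$ on $R<\abs{v}<2R$ (staying in $[0,p(p-1)R^{p-2}]$), then set $S_R(v)=\int_0^v\int_0^w S_R''(r)\,dr\,dw$. Then $S_R\in\mathscr{E}$: it is non-negative, convex, $C^2$, vanishes at $0$, has $S_R''$ supported in $[-2R,2R]$, and $\norm{S_R'}_\infty\le p^2R^{p-1}<\infty$; its entropy flux (for $c=0$) is $Q_R(v,0)=\int_0^v S_R'(z)f'(z)\dz$. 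I would record three estimates, all uniform in $R\ge1$: (a) $S_R(v)\le\abs{v}^p$ for all $v$ and $S_R(v)\uparrow\abs{v}^p$ as $R\to\infty$ (since $S_R(v)=\abs{v}^p$ once $R\ge\abs{v}$); (b) $\abs{Q_R(v,0)}\le\norm{f}_{\mathrm{Lip}}\int_0^{\abs{v}}S_R'(z)\dz=\norm{f}_{\mathrm{Lip}}S_R(v)\le\norm{f}_{\mathrm{Lip}}\abs{v}^p$; (c) $S_R''(v)(1+\abs{v}^2)\le C_p(1+\abs{v}^p)$, because on $[-R,R]$ one has $S_R''(v)(1+\abs{v}^2)=p(p-1)(\abs{v}^{p-2}+\abs{v}^p)\le C_p(1+\abs{v}^p)$, on $R<\abs{v}<2R$ one has $S_R''(v)(1+\abs{v}^2)\le p(p-1)R^{p-2}(1+4R^2)\le C_pR^p\le C_p\abs{v}^p$, and $S_R''$ vanishes for $\abs{v}\ge2R$.

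\emph{Test function and the estimate.} Take $\test(t,x)=\beta(t)\phi(x)\theta(x/m)$ with $\theta\in C^\infty_c(\R^d)$, $\car{B(0,1)}\le\theta\le\car{B(0,2)}$, and $\beta\in C^\infty_c([0,T))$ non-increasing, equal to $1$ on $[0,\tau]$ and decreasing to $0$ on $[\tau,\tau+\delta]$ for small $\delta$ with $\tau+\delta<T$ (strictly this requires either taking the weight $\phi$ in $C^\infty$, which holds for the weights of interest, or first extending the entropy inequality to $C^1_c$ test functions via the estimates \eqref{eq:EstOnFluxTermEntIneq}--\eqref{eq:EstOnMallTermEntIneq}; both are routine). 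Since $\phi\in\mathfrak{N}$, $\abs{\nabla\test(t,x)}\le(C_\phi+\norm{\nabla\theta}_\infty)\beta(t)\phi(x)$ and $0\le\test\le\beta\phi\le\phi$, uniformly in $m$. Substituting $(S_R,Q_R)$, $\test$, and $V\equiv0$ into $\Entropy[(S_R,Q_R),\test,0](u)\ge0$ (the Malliavin term vanishes since $V=0$ is Malliavin differentiable with $DV=0$) and moving $-\E{\iint_{\Pi_T}S_R(u)\partial_t\test\dxdt}=\int_0^T(-\beta'(t))\,\E{\int_{\R^d}S_R(u(t))\phi\,\theta(\cdot/m)\dx}\dt$ to the left-hand side, the three remaining terms on the right are dominated --- using (b), (c), \eqref{assumption:LipOnSigma}, and $0\le\beta\le1$, $0\le\theta\le1$, $\abs{v}^{p-2}\le1+\abs{v}^p$ --- by
\begin{multline*}
 K:=\E{\norm{u^0}_{p,\phi}^p}+C\norm{f}_{\mathrm{Lip}}\int_0^T\E{\norm{u(t)}_{p,\phi}^p}\dt\\
 +C_p\norm{M}_{L^2(Z)}^2\Bigl(T\norm{\phi}_{L^1(\R^d)}+\int_0^T\E{\norm{u(t)}_{p,\phi}^p}\dt\Bigr),
\end{multline*}
which is finite and independent of $R,m,\delta,\tau$ precisely because $u\in L^p([0,T]\times\Omega;L^p(\R^d,\phi))$. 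Letting $\delta\downarrow0$ (Lebesgue differentiation in time, legitimate since $t\mapsto\E{\int_{\R^d}S_R(u(t))\phi\,\theta(\cdot/m)\dx}$ lies in $L^1(0,T)$, being $\le\E{\norm{u(t)}_{p,\phi}^p}$), then $m\uparrow\infty$ (monotone convergence), gives $\E{\int_{\R^d}S_R(u(\tau))\phi\dx}\le K$ for a.e.\ $\tau\in(0,T)$. Sending $R\to\infty$ and using (a) with monotone convergence yields $\E{\norm{u(\tau)}_{p,\phi}^p}\le K$ for a.e.\ $\tau$, whence $\esssup_{0\le t\le T}\E{\norm{u(t)}_{p,\phi}^p}\le K<\infty$.

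\emph{Main obstacle.} The only step needing genuine care is estimate (c): the entropy family must be arranged so that $\frac{1}{2}S_R''(u)\sigma^2$ is dominated by $1+\abs{u}^p$ with a constant that does \emph{not} degenerate as $R\to\infty$; this is exactly what lets that term be absorbed into the a priori finite quantity $\int_0^T\E{\norm{u(t)}_{p,\phi}^p}\dt$ instead of into something blowing up with $R$. The verifications that $S_R\in\mathscr{E}$, the three limit passages, and the extension of the class of admissible test functions are all routine.
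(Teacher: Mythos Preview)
Your proof is correct and follows essentially the same route as the paper's own argument: approximate $\abs{\cdot}^p$ by entropies in $\mathscr{E}$, take $V=0$, use a time-cutoff test function, and bound the flux and It\^o correction terms uniformly in $R$ using the \emph{assumed} membership $u\in L^p([0,T]\times\Omega;L^p(\R^d,\phi))$ (so no Gronwall step is needed). If anything you are slightly more careful than the paper on two technical points: your $S_R$ is genuinely $C^2$ (the paper's piecewise entropy has a jump in $S_R''$ at $\abs{v}=R$ and is patched by an ignored mollification), and your spatial cutoff $\theta(\cdot/m)$ makes the test function honestly compactly supported, whereas the paper inserts $\phi$ directly.
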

\begin{proof}
Set
 \begin{displaymath}
  \varphi_\delta(t,x) = \left(1 
  - \int_0^t J_\delta(\sigma-\tau)\,d\sigma\right)\phi(x).
 \end{displaymath}
Introduce the entropy function
 \begin{equation*}
  S_R(u) =
   \begin{cases}
      R^p + pR^{p-1}(u-R) & \mbox{ for $u \geq R$}, \\
      \abs{u}^p & \mbox{ for $-R < u < R$}, \\
      R^p -pR^{p-1}(u+R) & \mbox{ for $u \leq -R$},
   \end{cases}
 \end{equation*}
 and denote by $Q_R$ the corresponding entropy-flux. 
 Strictly speaking, $S_R$ is not in $\mathscr{E}$, but this can be 
 amended by a simple mollification step (which we ignore). Note that 
 $S_R \rightarrow \abs{\cdot}^p$ pointwise. Furthermore,
 \begin{equation}\label{eq:pNormEstOnEnt}
  \left\{ \begin{split}
	    &\abs{S_R'(u)}  \leq p\abs{u}^{p-1}, \\
	    &\abs{S_R''(u)} \leq p(p-1)\abs{u}^{p-2}, \\
	    &\abs{Q_R(u,c)} \leq \norm{f}_{\mathrm{Lip}}\abs{u-c}^p.
          \end{split}
  \right.
 \end{equation}
 We apply the Lebesgue differentiation and 
 dominated convergence theorems to make appear 
 $\lim_{\delta \downarrow 0}\Entropy[(S_R,Q_R),\test_\delta,0](u) \geq 0$. 
 This yields 
 \begin{equation*}
  \begin{split}    
      \E{\int_{\R^d} S_R(u(\tau))\phi(x)\dx} 
      & \leq \E{\int_{\R^d} S_R(u^0(x))\phi(x) \dx} \\
      &\qquad +\E{\int_0^\tau \int_{\R^d} Q_R(u,0) \cdot \nabla \phi \dxdt} \\
      &\qquad +\frac{1}{2}\E{\int_0^\tau\int_{\R^d}
      \int_Z S_R''(u)\sigma(x,u,z)^2\phi\,d\mu(z)\dxdt}, 
  \end{split}
\end{equation*}
for almost all $\tau \in [0,T]$. Due to \eqref{eq:pNormEstOnEnt} it is 
straightforward to supply estimates, uniform in $R$, of 
the type \eqref{eq:EstOnFluxTermEntIneq} 
and \eqref{eq:EstOnSquareTermEntIneq}. By the dominated 
convergence theorem, we may send $R \rightarrow \infty$. The result follows.
\end{proof}

It is enough to consider smooth random variables in 
Definition~\ref{Def:EntropySolution}, i.e., random variables of the form
\begin{displaymath}
 V = f(W(h_1),\dots,W(h_n))
\end{displaymath}
where $f \in C^\infty_c(\R^n)$, $W$ is the isonormal Gaussian process 
defined by $\eqref{eq:IsoGausProcessDef}$, and $h_1,\dots, h_n$ are in 
$H = L^2([0,T] \times Z)$, see \cite[p.~25]{NualartMalliavinCalc2006}. 
We denote the space of smooth random variables by $\Sm$.

\begin{lemma}\label{lemma:ContinuityOfEntWRTV}
 Suppose \eqref{assumption:LipOnf} and \eqref{assumption:LipOnSigma} are satisfied. 
 Fix $u \in L^2([0,T] \times \Omega;L^2(\R^d,\phi))$, an 
 entropy/entropy-flux pair $(S,Q) \in \mathscr{E}$, 
 and $\test \in C^\infty_c([0,T) \times \R^d)$. Then 
 \begin{displaymath}
  V \mapsto \Entropy[(S,Q),\test,V](u)
 \end{displaymath}
 is continuous on $\D^{1,2}$ (in the strong topology).
\end{lemma}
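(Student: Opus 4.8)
The plan is to write $\Entropy[(S,Q),\test,V](u)$ as a sum of four terms and show that each is continuous in $V$ along a sequence $V_n \to V$ in $\D^{1,2}$, i.e.\ $V_n \to V$ in $L^2(\Omega)$ and $DV_n \to DV$ in $L^2(\Omega;L^2([0,T]\times Z))$. The first term, $\E{\int_{\R^d} S(u^0-V)\test(0,x)\dx}$, and the second term, $\E{\iint_{\Pi_T} S(u-V)\partial_t\test + Q(u,V)\cdot\nabla\test\dxdt}$, are handled by the global Lipschitz bounds on $S$ and $Q$ (the latter is \eqref{eq:LipEstOnEntropyFlux}): since $\abs{S(u^0-V_n)-S(u^0-V)}\le \norm{S}_{\mathrm{Lip}}\abs{V_n-V}$ and similarly for $Q$, these differences are controlled by $\E{\abs{V_n-V}}\le\norm{V_n-V}_{L^2(\Omega)}$ times the ($L^1$ in $x$, bounded in $t$) norms of $\test$ and its derivatives, exactly as in \eqref{eq:EstOnFluxTermEntIneq}; hence they vanish.

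For the two $\sigma$-terms the subtlety is the factor $S''(u-V_n)$, which is only bounded (not Lipschitz useful here in a naive way), so I would pass to a subsequence along which $V_n \to V$ a.s.\ and $DV_n \to DV$ a.e.\ on $\Pi_T\times Z\times\Omega$, and then argue by dominated convergence; since every subsequence of $(\Entropy[(S,Q),\test,V_n](u))$ has a further subsequence converging to the same limit, the whole sequence converges. For the quadratic term $\frac12\E{\iint_{\Pi_T}\int_Z S''(u-V_n)\sigma(x,u,z)^2\test\,d\mu(z)\dxdt}$, the integrand converges a.e.\ (using continuity of $S''$) and is dominated by $\norm{S''}_\infty \norm{\test(t)}_{\infty,\phi^{-1}}\sigma(x,u,z)^2\phi(x)$, which is integrable by the Hilbert--Schmidt bound on $G(u)$ as in \eqref{eq:EstOnSquareTermEntIneq}; dominated convergence applies. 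For the Malliavin term $\E{\iint_{\Pi_T}\int_Z S''(u-V_n)\sigma(x,u,z)D_{t,z}V_n\,\test\,d\mu(z)\dxdt}$ I would split it as
\begin{align*}
 \E{\iint_{\Pi_T}\int_Z \bigl(S''(u-V_n)-S''(u-V)\bigr)\sigma(x,u,z)D_{t,z}V_n\,\test\,d\mu(z)\dxdt} \\
 {}+\E{\iint_{\Pi_T}\int_Z S''(u-V)\sigma(x,u,z)\bigl(D_{t,z}V_n-D_{t,z}V\bigr)\test\,d\mu(z)\dxdt}.
\end{align*}
The second piece goes to zero by the Cauchy--Schwarz estimate \eqref{eq:EstOnMallTermEntIneq} with $DV$ replaced by $DV_n-DV$, using $\norm{DV_n-DV}_{L^2(\Omega;L^2([0,T]\times Z))}\to 0$. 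For the first piece I would bound $\abs{S''(u-V_n)-S''(u-V)}\le 2\norm{S''}_\infty$, note that it tends to $0$ a.e.\ along the subsequence, and dominate the remaining $\sigma\cdot D V_n\cdot\test$ factor; here $\norm{DV_n}$ is bounded in $L^2(\Omega;L^2([0,T]\times Z))$, so after one more application of Cauchy--Schwarz (separating the bounded-and-a.e.-vanishing factor via, e.g., the generalized dominated convergence theorem, since $\norm{DV_n}^2 \to \norm{DV}^2$) this term vanishes as well.

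The main obstacle is precisely this last term: unlike the others it pairs a factor that only converges a.e.\ and is merely bounded, $S''(u-V_n)-S''(u-V)$, against $DV_n$ whose $L^2$-norm converges but which need not converge in any stronger sense, so a plain dominated convergence argument with a fixed dominating function is not immediately available. The fix is to use that $DV_n \to DV$ in $L^2$ together with the generalized (Pratt-type) dominated convergence theorem — the ``variable dominating function'' $\abs{\sigma}^2\abs{DV_n}^2\test^2\phi$ has integral converging to that of $\abs{\sigma}^2\abs{DV}^2\test^2\phi$ — which lets the bounded, a.e.-null factor kill the term. All estimates needed ($\norm{G(u)}_{\HS}$ bounds, the flux bound, the $\infty,\phi^{-1}$ norms of $\test$) are already established in the excerpt, so once the subsequence/a.e.\ reduction and the generalized DCT are in place the proof concludes.
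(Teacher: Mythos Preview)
Your argument is correct and follows the same term-by-term strategy as the paper. Two small points where you diverge are worth recording.

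\medskip
\emph{The flux term.} You assert that $Q(u,\cdot)$ is globally Lipschitz and cite \eqref{eq:LipEstOnEntropyFlux}. That inequality is a growth bound $|Q(u,V)|\le C|u-V|$, not a Lipschitz bound in the second variable, so the citation does not quite justify the claim. The claim is nonetheless true: since $S\in\mathscr{E}$ is nonnegative, convex, $C^2$ with $S(0)=0$, one has $S'(0)=0$, and then
\[
\partial_c Q(u,c)=-\int_c^u S''(z-c)f'(z)\,dz,\qquad
\bigl|\partial_c Q(u,c)\bigr|\le \|f'\|_\infty\,|S'(u-c)|\le \|f'\|_\infty\|S'\|_\infty,
\]
so $|Q(u,V_n)-Q(u,V)|\le \|f'\|_\infty\|S'\|_\infty\,|V_n-V|$ as you use. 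This is in fact slightly slicker than the paper, which splits $Q(u,V)-Q(u,V_n)$ explicitly and uses H\"older with the $L^2$-bound on $u$.

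\medskip
\emph{The Malliavin term.} Your splitting pairs $S''(u-V_n)-S''(u-V)$ with $D V_n$, which forces the Pratt-type/generalized dominated convergence step you describe. The paper instead writes
\[
S''(u-V_n)D_{t,z}V_n - S''(u-V)D_{t,z}V
= S''(u-V_n)\bigl(D_{t,z}V_n-D_{t,z}V\bigr)+\bigl(S''(u-V_n)-S''(u-V)\bigr)D_{t,z}V,
\]
so the $S''$-difference is paired with the \emph{fixed} $DV$; then plain dominated convergence (dominating by $2\|S''\|_\infty|\sigma|\,|DV|\,|\varphi|$) handles that piece, and \eqref{eq:EstOnMallTermEntIneq} handles the other. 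Both decompositions work; the paper's avoids the subsequence/variable-majorant detour.
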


\begin{remark}
It is not necessary that $S''$ is compactly supported in the upcoming 
proof (it is sufficient with boundedness/continuity).
\end{remark}

\begin{proof}
 Suppose that $V_n \rightarrow V$ in $\D^{1,2}$ as $n \rightarrow \infty$, 
 and write
\begin{align*}
    &  \Entropy[(S,Q),\test,V](u)-\Entropy[(S,Q),\test,V_n](u) \\
    &  \quad = \E{\int_{\R^d} (S(u^0(x)-V)-S(u^0(x)-V_n))\test(0,x) \dx}\\
    &  \qquad + \E{\iint_{\Pi_T} (S(u-V)-S(u-V_n))\partial_t\test \dxdt} \\
    &  \qquad + \E{\iint_{\Pi_T}(Q(u,V)-Q(u,V_n))\cdot \nabla \test \dxdt} \\
    &  \qquad + \E{\iint_{\Pi_T}\int_Z (S''(u-V_n)D_{t,z}V_n-S''(u-V)
    D_{t,z}V)\sigma(x,u,z)\test \,d\mu(z)\dxdt} \\
    &  \qquad +\frac{1}{2}\E{ \iint_{\Pi_T}\int_Z (S''(u-V)-S''(u-V_n))
    \sigma(x,u,z)^2\test\,d\mu(z)\dxdt} \\
    &  \quad =: \mathcal{T}_1 + \mathcal{T}_2 + \mathcal{T}_3 + \mathcal{T}_4 + \mathcal{T}_5.
\end{align*}
We need to show that $\lim_{n \rightarrow \infty}\mathcal{T}_i(n) = 0$ 
for $1 \leq i \leq 5$. 

First, note that $V_n \rightarrow V$ in $L^2(\Omega)$. Next,
\begin{displaymath}
  \abs{\mathcal{T}_1} \leq \norm{S}_{\mathrm{Lip}}\E{\abs{V-V_n}}\norm{\test(0)}_{L^1(\R)}.
\end{displaymath}
 Similarly,
 \begin{displaymath}
  \abs{\mathcal{T}_2} \leq  \norm{S}_{\mathrm{Lip}}\E{\abs{V-V_n}}\norm{\partial_t\test}_{L^1(\Pi_T)}.  
 \end{displaymath}
 It follows as $V_n \rightarrow V$ in $L^2(\Omega)$ that $\mathcal{T}_1,\mathcal{T}_2 \rightarrow 0$ 
 as $n \rightarrow \infty$. 
 
 Concerning $\mathcal{T}_3$, we first observe that for any $\zeta,\xi,\theta \in \R$,
 \begin{align*}
  &\abs{Q(\zeta,\xi)-Q(\zeta,\theta)} = \abs{\int_\xi^\zeta S'(z-\xi)\Jac f(z)\,dz 
  - \int_\theta^\zeta S'(z-\theta)\Jac f(z)\,dz} \\
  & \qquad \leq \abs{\int_\xi^\zeta (S'(z-\xi)-S'(z-\theta))\Jac f(z)\,dz} 
  + \abs{\int_\xi^\theta S'(z-\theta)\Jac f(z)\,dz}.
 \end{align*} 
 Hence,
 \begin{align*}
  &\abs{\mathcal{T}_3} \leq \E{\iint_{\Pi_T}\abs{\int_V^u (S'(z-V)-S'(z-V_n))
  \Jac f(z)\,dz}\abs{\nabla \test} \dxdt} \\
  & \qquad\qquad 
  + \E{\iint_{\Pi_T}\abs{\int_V^{V_n} S'(z-V_n)\Jac f(z)\,dz}
  \abs{\nabla \test} \dxdt} \\
  & \qquad =:\mathcal{T}_3^1 + \mathcal{T}_3^2.
 \end{align*} 
 Consider $\mathcal{T}_3^1$. Note that 
 \begin{displaymath}
  \abs{\int_V^u (S'(z-V)-S'(z-V_n))\Jac f(z)\,dz} \leq 
  \norm{f}_{\mathrm{Lip}}\norm{S''}_\infty\abs{V-V_n}(\abs{u}+ \abs{V}).
 \end{displaymath}
 Due to H\"{o}lder's inequality it follows that 
 \begin{multline*}
  \mathcal{T}_3^1 \leq \norm{V-V_n}_{L^2(\Omega)}
  \norm{f}_{\mathrm{Lip}}\norm{S''}_\infty\norm{\nabla \test}_{L^1(\Pi_T)}^{1/2} \\
  \times \E{2\iint_{\Pi_T}(\abs{u}^2 + \abs{V}^2)\abs{\nabla \test}\,dxdt}^{1/2}.
 \end{multline*}
 Since 
\begin{displaymath}
  \mathcal{T}_3^2 \leq \norm{S}_{\mathrm{Lip}}\norm{f}_{\mathrm{Lip}}
  \E{\abs{V_n-V}}\norm{\nabla \test}_{L^1(\Pi_T)},
\end{displaymath}
it follows that $\lim_{n \rightarrow \infty}\mathcal{T}_3 = 0$. 

Concerning the $\mathcal{T}_4$-term, we first split it as follows:
\begin{align*}
   \mathcal{T}_4 &= \E{\iint_{\Pi_T}\int_Z S''(u-V_n)(D_{t,z}V_n-D_{t,z}V)\sigma(x,u,z)
   \test \,d\mu(z)\dxdt} \\
   &+ \E{\iint_{\Pi_T}\int_Z (S''(u-V_n)-S''(u-V))D_{t,z}
   V\sigma(x,u,z)\test \,d\mu(z)\dxdt} \\
   &= \mathcal{T}_4^1 + \mathcal{T}_4^2.
 \end{align*}
By \eqref{eq:EstOnMallTermEntIneq}, $\lim_{n \rightarrow \infty}\mathcal{T}_4^1 = 0$. 
Owing to \eqref{eq:EstOnMallTermEntIneq}, the dominated 
convergence theorem implies 
$\lim_{n \rightarrow \infty}\mathcal{T}_4^2 = 0$. 
Finally, by \eqref{eq:EstOnSquareTermEntIneq} and 
the dominated convergence theorem, 
also $\lim_{n \rightarrow \infty}\mathcal{T}_5 = 0$.
\end{proof}

For the existence proof, it will be convenient to introduce 
a weaker notion of entropy solution based on Young 
measures (see, e.g., \cite{Bauzet:2012kx,DiPerna1985,EymardGallouetHerbin1995,Panov1996}). 
The reason beeing the application of Young measures as generalized 
limits in the sense of Theorem~\ref{theorem:YoungMeasureLimitOfComposedFunc}. 
Denote by $\Young{\Pi_T \times \Omega;\R}$ the set of all Young 
measures from $\Pi_T \times \Omega$ into $\R$, 
cf.~Section~\ref{sec:YoungMeasures}. Instead of representing the 
solution/limit as an element in $\Young{\Pi_T \times \Omega;\R}$ we 
use the notion of entropy process proposed in \cite{EymardGallouetHerbin1995} 
or equivalently the strong measure-valued solution proposed in \cite{Panov1996}.  
Any probability measure $\nu$ on the real line may be represented by a 
measurable function $u:[0,1] \rightarrow \R \cup \seq{\infty}$ 
such that $\nu$ is the image of the Lebesgue 
measure $\Lebesgue$ on $[0,1]$ by $u$. 
In fact, we may take (see \cite[\S~2.2.2]{Villani2003})
\begin{equation}\label{eq:ReprOfProcessByYoung}
 u(\alpha) = \inf \left\{ \xi \in \R \,:\, \nu((-\infty,\xi]) > \alpha \right\}.
\end{equation}
A Young measure $\nu \in \Young{\Pi_T \times \Omega;\R}$ is thus 
represented by a (higher dimensional) function $u:\Pi_T  \times [0,1] 
\times \Omega \rightarrow \R$, such that 
$\nu_{t,x,\omega}(B) = \Lebesgue(u(t,x,\cdot,\omega)^{-1}(B))$ 
for any measurable $B \subset \R$. The extension to Young 
measure-valued solutions is obtained through the embedding defined by 
\begin{equation}\label{eq:EmbeddingL2L2times01}
 \Phi(u)(t,x,\alpha,\omega) = 
 u(t,x,\omega).
\end{equation}
Given a functional $F$ we define the extension 
\begin{displaymath}
\Y{F}(u) = \int_0^1 F(u(\alpha))\,d\alpha,
\end{displaymath}
so that $\Y{F} \circ \Phi = F$. For $1 \leq p < \infty$ we let
\begin{displaymath}
 \norm{u}_{p,\phi \otimes 1} = \left(\int_0^1
 \int_{\R^d} \abs{u(x,\alpha)}^p\phi(x)\,dx\,d\alpha\right)^{1/p}.
\end{displaymath}
The associated space is denoted by $L^p(\R^d \times [0,1],\phi)$.

\begin{definition}[Young measure-valued entropy solution]\label{Def:YoungEntropySolution}
  A Young measure-valued entropy solution $u = u(t,x,\alpha;\omega)$ 
  of \eqref{eq:StochasticBalanceLaw}, with initial condition 
  $u^0$ belonging to $L^2(\Omega,\F_0,P;L^2(\R^d,\phi))$, is 
  a function satisfying:
  \begin{itemize}
   \item[(i)] $u$ is a predictable process in 
   $L^2([0,T] \times \Omega;L^2(\R^d \times [0,1],\phi))$.
   \item[(ii)] For any random variable $V\in \D^{1,2}$, 
   any entropy/entropy-flux pair $(S,Q)$ in $\mathscr{E}$, 
   and all nonnegative test 
   functions $\test \in C^\infty_c([0,T) \times \R^d)$, 
   \begin{equation}\label{eq:YoungEntropyCondition}
     \Y{\Entropy[(S,Q),\test,V]}(u) \geq 0.
   \end{equation}
  \end{itemize}
\end{definition}

The next result is concerned with the essential continuity of the 
solutions at $t = 0$. A similar argument can be 
found in \cite{CancesClementGallouet2011}.

\begin{lemma}[Initial condition]\label{lemma:InitialCondition}
 Suppose \eqref{assumption:LipOnf} and \eqref{assumption:LipOnSigma} 
 are satisfied, and that $u^0$ belongs to $L^2(\Omega,\F_0,P;L^2(\R^d,\phi))$. 
 Let $u$ be a Young measure-valued entropy 
 solution of \eqref{eq:StochasticBalanceLaw} 
 in the sense of Definition~\ref{Def:YoungEntropySolution}. 
 Let $S:\R \rightarrow [0,\infty)$ be Lipschitz continuous and 
 satisfy $S(0) = 0$. For any $\psi \in C^\infty_c(\R^d)$,
 \begin{displaymath}
  \mathcal{T}_{r_0} := \E{\iint_{\Pi_T}\int_{[0,1]} 
  S(u(t,x,\alpha)-u^0(x))\psi(x)J_{r_0}^+(t)\,d\alpha dxdt} 
  \rightarrow 0 \mbox{ as } r_0 \downarrow 0.
 \end{displaymath}
\end{lemma}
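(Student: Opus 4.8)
The plan is to test the Young measure-valued entropy inequality \eqref{eq:YoungEntropyCondition} against a carefully chosen entropy, test function, and Kru\v{z}kov random variable, and then extract the desired decay as $r_0 \downarrow 0$. The natural choice is $V = u^0$ (which lies in $\D^{1,2}$ only after a further approximation, but $u^0$ is $\F_0$-measurable, so its Malliavin derivative, once it exists, is supported away from a neighbourhood of $t=0$ only in a limiting sense --- more precisely one should first prove the statement for $V \in \Sm$ or for $V$ Malliavin differentiable, then use Lemma~\ref{lemma:ContinuityOfEntWRTV} and density of such $V$ in $L^2(\Omega,\F_0,P;L^2(\R^d,\phi))$ to pass to $V = u^0$; since the left-hand side $\mathcal{T}_{r_0}$ depends continuously on $V$ in the same fashion as the terms estimated in \eqref{eq:EstOnFluxTermEntIneq}--\eqref{eq:EstOnMallTermEntIneq}, this reduction is routine). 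For the entropy, take a $C^2$ approximation $S_\eta$ of the given Lipschitz $S$ with $S_\eta(0)=0$, $S_\eta'$ bounded, $S_\eta''$ compactly supported, together with its flux $Q_\eta$; by the remark following Lemma~\ref{lemma:ContinuityOfEntWRTV} the compact support of $S_\eta''$ is not essential and one may ultimately work with $S$ itself. For the test function, take $\test(t,x) = \psi(x)^2 \chi_{r_0}(t)$ or, more simply, $\test(t,x)=\psi(x)\,\theta_\delta(t)$ where $\theta_\delta$ is a smooth cutoff that is $1$ on $[0,\delta]$ and vanishes after $2\delta$; the function $J^+_{r_0}$ appearing in the statement will emerge as (minus) the time-derivative of such a cutoff.

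The key steps, in order, are as follows. First, insert $\test = \psi\,\theta_\delta$ and $V$ (Malliavin differentiable, $\F_0$-measurable approximation of $u^0$) into \eqref{eq:YoungEntropyCondition}, so that $\partial_t\test = \psi\,\theta_\delta'$ with $\theta_\delta' \le 0$ supported in $(\delta,2\delta)$; writing $-\theta_\delta' $ as a nonnegative approximate identity of the form $J^+_{r_0}$ (after the substitution $\delta \leftrightarrow r_0$) identifies the leading term as $-\mathcal{T}_{r_0}$ plus controllable remainders. Second, observe that the initial term $\E{\int S(u^0-V)\psi\,\theta_\delta(0)\dx} = \E{\int S(u^0-V)\psi\dx}$ is independent of $\delta$ and tends to $0$ as $V \to u^0$, since $S(0)=0$ and $S$ is Lipschitz. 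Third, the flux term $\E{\iint Q(u,V)\cdot\nabla\test}$ is bounded by $C\,\delta$ via \eqref{eq:EstOnFluxTermEntIneq} (the time integral is over a set of measure $\le 2\delta$ and the integrand is in $L^1$ in $t$ by the Proposition giving $\esssup_t \E{\norm{u(t)}_{p,\phi}^p}<\infty$), hence vanishes as $\delta\downarrow 0$. Fourth, the two $\sigma$-terms are likewise $O(\delta)$: by \eqref{eq:EstOnSquareTermEntIneq} and \eqref{eq:EstOnMallTermEntIneq} each is bounded by $\|S_\eta''\|_\infty$ times an integral over a time-set of measure $\le 2\delta$ of an $L^1_t$ function (using $\norm{G(u(t))}_{\HS}^2 \le 2\norm{M}_{L^2(Z)}^2(1+\E{\norm{u(t)}_{2,\phi}^2})$ and $DV \in L^2(\Omega;L^2([0,T]\times Z))$). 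Rearranging \eqref{eq:YoungEntropyCondition} then gives
\begin{displaymath}
 \mathcal{T}_{r_0} \le \E{\int_{\R^d} S(u^0(x)-V)\psi(x)\dx} + C(\eta)\,\delta + (\text{error from }S\to S_\eta,\ \text{and }r_0\text{-vs-}\delta),
\end{displaymath}
and letting first $\delta$ (equivalently the width parameter in $J^+_{r_0}$) tend to $0$, then $\eta\to 0$, then $V\to u^0$, yields $\limsup_{r_0\downarrow 0}\mathcal{T}_{r_0} \le 0$; since $\mathcal{T}_{r_0}\ge 0$ by nonnegativity of $S,\psi,J^+_{r_0}$, the claim follows.

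The main obstacle is the Malliavin-derivative term $\E{\iint\int_Z S''(u-V)\sigma D_{t,z}V\,\test}$: one must ensure it genuinely vanishes as the support of $\test$ shrinks to $\{t=0\}$. If $V$ were a generic element of $\D^{1,2}$ this would follow merely from dominated convergence (the time-integrand is in $L^1$), but the quantitative rate $O(\delta)$ is not available in general --- however, we only need qualitative decay, so dominated convergence suffices here too, at the price of treating this term separately from the $O(\delta)$ bookkeeping. The second delicate point is the reduction $V\to u^0$: one must check that the whole functional $\Entropy[(S,Q),\test,\cdot]$, and in particular $\mathcal{T}_{r_0}$, is continuous in $V$ uniformly enough in $r_0$ to interchange the limits; this is exactly the content of Lemma~\ref{lemma:ContinuityOfEntWRTV} applied with a fixed admissible $\test$, combined with a uniform-in-$r_0$ bound coming from $\norm{\theta_\delta}_\infty = 1$ and $\norm{\theta_\delta'}_{L^1} = \theta_\delta(0) = 1$. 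Once these two points are handled, the argument is a standard "shrinking time cutoff" manipulation of the entropy inequality.
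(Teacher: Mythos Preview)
Your proposal has a genuine gap in the choice of $V$. In Definition~\ref{Def:YoungEntropySolution} the random variable $V$ is a single element of $\D^{1,2}$---a scalar random variable, not a random function of $x$. You write ``$V = u^0$'' and speak of approximating in $L^2(\Omega,\F_0,P;L^2(\R^d,\phi))$, but no single scalar $V$ can make the initial term $\E{\int_{\R^d} S(u^0(x)-V)\psi(x)\,dx}$ small: if $u^0$ is non-constant in $x$ (even if deterministic), this integral stays bounded away from zero for every choice of $V$. Lemma~\ref{lemma:ContinuityOfEntWRTV} gives continuity of $\Entropy$ in $V\in\D^{1,2}$, but $\D^{1,2}$ is a space of \emph{scalar} random variables, and no sequence there converges to the function $x\mapsto u^0(x)$.

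The missing device is a doubling of the spatial variable. The paper introduces a second variable $y$, takes $V = u^0(y)$ (a scalar random variable for each fixed $y$, with vanishing Malliavin derivative since $u^0$ is $\F_0$-measurable), uses the test function $\xi_{r_0}(t)\psi(x)J_r(x-y)$, and integrates the resulting entropy inequality in $y$. The initial term then becomes $\E{\iint S(u^0(x)-u^0(y))\psi(x)J_r(x-y)\,dx\,dy}$, which is bounded by a quantity $R = R(r)$ of size $\norm{S}_{\mathrm{Lip}}\,\E{\iint |u^0(x)-u^0(y)|\psi(x)J_r(x-y)\,dx\,dy}$; the same $R$ controls the difference between $\mathcal{T}_{r_0}$ and its $y$-averaged version. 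After sending $r_0\downarrow 0$ (your treatment of the flux and $\sigma$ terms as $o(1)$ via dominated convergence is correct here), one obtains $\limsup_{r_0\downarrow 0}\mathcal{T}_{r_0}\le 2R(r)$, and then $r\downarrow 0$ gives $R(r)\to 0$ by continuity of translations in $L^1$. Your handling of the time cutoff and of the flux and noise terms is fine; what is missing is precisely this extra mollification in space that turns the unmanageable initial term into something that vanishes in a second limit.
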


\begin{remark}
  The proof does not depend on the differentiability of $J_{r_0}^+$. 
  Hence the above limit may be replaced by
 \begin{displaymath}
  \lim_{\tau \downarrow 0} \E{\frac{1}{\tau}\int_0^\tau\int_{\R^d} \int_{[0,1]} 
  S(u(t,x,\alpha)-u^0(x))\psi(x)\,d\alpha dx dt} = 0.
 \end{displaymath}
\end{remark}

\begin{proof}
  Let $S \in C^\infty(\R)$ with bounded derivatives. Take
  \begin{displaymath}
   \test(t,x,y) = \xi_{r_0}(t)\psi(x)J_r(x-y)\, \mbox{ where }\,\xi_{r_0}(t) = 1 - \int_0^t J_{r_0}^+(s)\ds.
  \end{displaymath}
  Then let $V = u^0(y)$ in \eqref{eq:YoungEntropyCondition} and 
  integrate in $y$. This implies
  \begin{equation}\label{eq:IneqTimeContAtZero}
   \begin{split}    
      I &:= \E{\int_{\R^d}\iint_{\Pi_T}\int_{[0,1]} S(u(t,x,\alpha)-u^0(y))\psi(x)
      J_r(x-y)J_{r_0}^+(t) d\alpha\dxdt dy} \\
      & \leq \E{\int_{\R^d}\iint_{\Pi_T}\int_{[0,1]}Q(u(t,x,\alpha),u^0(y))\cdot 
      \nabla_x \test d\alpha\dxdt dy} \\
      & \qquad +\E{\int_{\R^d}\int_{\R^d} S(u^0(x)-u^0(y))\test(0,x,y)\dx dy} \\
      & \qquad +\frac{1}{2} \E{\int_{\R^d}\iint_{\Pi_T}\int_{[0,1]}\int_Z 
      S''(u(t,x,\alpha)-u^0(y))\sigma(x,u,z)^2\test \,d\mu(z) d\alpha\dxdt dy} \\
      & =: \mathcal{T}^1 + \mathcal{T}^2 + \mathcal{T}^3.
   \end{split}
  \end{equation}
Let us first observe that 
\begin{multline*}
 I = \mathcal{T}_{r_0} + E\bigg[\int_{\R^d}\iint_{\Pi_T}
 \int_{[0,1]} (S(u(t,x,\alpha)-u^0(y))-S(u(t,x,\alpha)-u^0(x))) \\
\times \psi(x)J_r(x-y)J_{r_0}^+(t) d\alpha\dxdt dy\bigg] =: \mathcal{T}_{r_0} + I^1.
\end{multline*}
We want to take the limit $r_0 \downarrow 0$. 
  First observe that we have the bound 
  \begin{displaymath}
   \abs{I^1} \leq \norm{S}_{\mathrm{Lip}}\E{\int_{\R^d}\int_{\R^d} \abs{u^0(x)-u^0(y)}
   \psi(x)J_r(x-y)dxdy} =: R,
  \end{displaymath}
  which is independent of $r_0$. Similarly, $\abs{\mathcal{T}^2} \leq R$. 
  Note that $\xi_{r_0} \rightarrow 0$ a.e.~as $r_0 \downarrow 0$, so due to 
  assumptions~\eqref{assumption:LipOnf} 
  and \eqref{assumption:LipOnSigma}, one may conclude by the dominated 
  convergence theorem and estimates similar 
  to those in \eqref{eq:EstOnFluxTermEntIneq} 
  and \eqref{eq:EstOnSquareTermEntIneq} that 
  \begin{displaymath}
   \lim_{r_0 \downarrow 0} \mathcal{T}^1 = \lim_{r_0 \downarrow 0} \mathcal{T}^3 = 0.
  \end{displaymath}
  Thus, it follows by \eqref{eq:IneqTimeContAtZero} that 
  \begin{displaymath}
   \lim_{r_0 \downarrow 0} \mathcal{T}_{r_0} \leq 2R.
  \end{displaymath}
  Since $r > 0$ was arbitrary, and $\lim_{r \downarrow 0}R = 0$, we 
  have arrived at $\lim_{r_0 \downarrow 0} \mathcal{T}_{r_0} \leq 0$.
  The desired result follows, since we can approximate 
  any Lipschitz function uniformly by 
  smooth functions with bounded derivatives.
\end{proof}

\section{The viscous approximation}\label{sec:ViscousApprox}
For each fixed $\varepsilon>0$, we denote by $\ue$ the 
solution of the regularized problem
\begin{equation}\label{eq:ViscousApprox}
 \left\{
  \begin{aligned}
  d\ue + \nabla \cdot f(\ue)dt &= \int_Z \sigma(x,\ue,z)W(dt,dz) 
  + \varepsilon \Delta \ue dt, &(t,x) \in \Pi_T, \\
  \ue(0,x) &= u^0(x),  			  & x \in \R^d.
  \end{aligned}
 \right.
\end{equation}
As in the deterministic case, the idea is to let $\varepsilon\to 0$ and 
obtain a solution to the stochastic 
conservation law \eqref{eq:StochasticBalanceLaw}. 
The entropy condition is meant to single out this limit as the 
only proper (weak) solution; the entropy solution. 
To show that this limit exists, a type of compactness 
argument is needed \cite{Bauzet:2012kx, FengNualart2008, ChenKarlsen2012}. 

The existence of a unique solution to \eqref{eq:ViscousApprox} may be 
found several places \cite{Bauzet:2012kx, FengNualart2008}. 
In particular, the semi-group approach presented in \cite[ch.~9]{PeszatZabczyk2007} 
may be applied. The functional setting of \cite{PeszatZabczyk2007} is 
that of a Hilbert space, and so the natural choice here 
is $L^2(\R^d,\phi)$ where $\phi \in \mathfrak{N}$. 
Due to the new functional setting, we have chosen to include proofs 
for some of the results relating to \eqref{eq:ViscousApprox}.

\subsection{A priori estimates and well-posedness}
Let $S_\varepsilon$ be the semi-group generated by the heat kernel. 
That is $S_\varepsilon(t) u = \Phi_\varepsilon(t) \star u$ where 
\begin{displaymath}
 \Phi_\varepsilon(t,x) := \frac{1}{(4\varepsilon\pi t)^{d/2}} 
 \exp\left(-\frac{\abs{x}^2}{4 \varepsilon t}\right).
\end{displaymath}
Let $F(u) = \nabla \cdot f(u)$ and $G$ be defined by \eqref{eq:GDef}. 
In this setting the key conditions \cite[p.~142]{PeszatZabczyk2007} for 
well-posedness of \eqref{eq:ViscousApprox} are:
\begin{itemize}
 \item[(F)] $D(F)$ is dense in $L^2(\R^d,\phi)$ and there is a 
 function $a:(0,\infty) \rightarrow (0,\infty)$ satisfying $\int_0^T a(t)\,dt < \infty$ for all $T < \infty$ 
 such that, for all $t > 0$ and $u,v \in D(F)$,
 \begin{align*}
  \norm{S_\varepsilon(t)F(u)}_{2,\phi} &\leq a(t)\left(1 + \norm{u}_{2,\phi}\right), \\
  \norm{S_\varepsilon(t)(F(u)-F(v))}_{2,\phi} &\leq a(t)\norm{u-v}_{2,\phi}.
 \end{align*}
 \item[(G)] $D(G)$ is dense in $L^2(\R^d,\phi)$ and there is a 
 function $b : (0,\infty) \rightarrow (0, \infty)$ satisfying $\int_0^T b^2(t) dt < \infty$ 
 for all $T < \infty$ such that, for all $t > 0$ and $u,v \in D(G)$,
 \begin{align*}
  \norm{S_\varepsilon(t)G(u)}_{\Lin_2(L^2(Z);L^2(\R^d,\phi))} 
  & \leq b(t)\left(1 + \norm{u}_{2,\phi}\right), \\
  \norm{S_\varepsilon(t)(G(u)-G(v))}_{\Lin_2(L^2(Z);L^2(\R^d,\phi))} 
  & \leq b(t)\norm{u-v}_{2,\phi}.
 \end{align*}
\end{itemize}
Suppose $u^0 \in L^2(\Omega,\F_0,P;L^2(\R^d,\phi))$. 
Under assumptions (F) and (G) we may conclude 
by \cite[Theorem~9.15, Theorem~9.29]{PeszatZabczyk2007} that 
there exists a unique predictable 
process $\ue:[0,T] \times \Omega \rightarrow L^2(\R^d,\phi)$ such that
\begin{itemize}
 \item[(i)] 
  \begin{equation}\label{eq:BoundednessOfViscApprox}
  	\sup_{0 \leq t \leq T}\E{\norm{\ue(t)}_{2,\phi}^2} < \infty.
  \end{equation}
 \item[(ii)] For all $0 \leq t \leq T$
  \begin{equation}\label{eq:MildViscSol}
   \begin{split}
    	\qquad \quad \ue(t,x) &= \int_{\R^d}\Phi_\varepsilon(t,x-y)u^0(y)\,dy \\
	 & \qquad - \int_{0}^{t}\int_{\R^d} \nabla_x
	 \Phi_{\varepsilon}(t-s,x-y) \cdot f(\ue(s,y)) \,dyds \\
	 & \qquad + \int_{0}^{t}\int_Z\int_{\R^d} 
	 \Phi_\varepsilon(t-s,x-y)\sigma(y,\ue(s,y),z)\,dy \,W(ds,dz).
   \end{split}
  \end{equation}
 \item[(iii)] $\ue$ is a weak solution of $\eqref{eq:ViscousApprox}$, i.e., for any 
 test function $\test \in C^\infty_c(\R^d)$ and any pair of 
 times $t_0,t$ with $0 \leq t_0 \leq t \leq T$,
  \begin{multline}\label{eq:WeakSolutionVisc}
   \qquad \int_{\R^d} \ue(t)\test \dx = \int_{\R^d} \ue(t_0)\test\dx 
   - \int_{t_0}^t\int_{\R^d} f(\ue(s)) \cdot \nabla\test\dx ds \\
   + \int_{t_0}^t\int_{\R^d} \int_Z \sigma(x,\ue(s,x),z)\test W(ds,dz) \dx 
   + \varepsilon \int_{t_0}^t\int_{\R^d} \ue \Delta \test\dx ds, 
  \end{multline}
\qquad \qquad \qquad  $dP$-almost surely.
\end{itemize}
To see that conditions (F) and (G) are satisfied 
we prove the following estimate:
\begin{lemma}\label{lemma:HeatKernelYoungIneq}
 Fix $\phi \in \mathfrak{N}$ and $1 \leq p < \infty$. 
 Let $v \in W^{1,p}(\R^d,\phi;\R^d)$, $u \in L^p(\R^d,\phi)$. Whenever 
 $C_\phi\sqrt{4\varepsilon t} \leq 1$,
 \begin{align}
    &\norm{\Phi_\varepsilon(t) \star u}_{p,\phi} 
    \leq \kappa_{1,d} \norm{u}_{p,\phi}, \tag{i} \label{eq:estOnHeat}\\
   &\norm{\Phi_\varepsilon(t) \star \nabla \cdot v}_{p,\phi} 
   \leq \frac{\kappa_{2,d}}{\sqrt{\varepsilon t}}\norm{v}_{p,\phi}, 
   \tag{ii}\label{eq:estOnHeatDiv}
 \end{align}
 where $\kappa_{1,d} =c_{d-1} \frac{d\alpha(d)}{\pi^{d/2}}, 
 \kappa_{2,d} =c_d\frac{d\alpha(d)}{\pi^{d/2}}$, and 
 \begin{displaymath}
  c_d = \int_0^\infty \zeta^d(1 + \zeta)^2\exp(\zeta-\zeta^2)\,d\zeta.
 \end{displaymath}
The volume of the unit ball in $\R^d$ is denoted by $\alpha(d)$.
\end{lemma}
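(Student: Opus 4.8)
The plan is to prove both estimates by a direct computation using the explicit form of the heat kernel, the elementary pointwise bound $|\Phi_\varepsilon(t,x-y)| \le C \varepsilon^{-d/2} t^{-d/2}$ together with the Gaussian decay, and — crucially — the weight assumption $|\nabla\phi| \le C_\phi\,\phi$, which under the constraint $C_\phi\sqrt{4\varepsilon t}\le 1$ lets us compare $\phi(x)$ and $\phi(y)$ whenever $|x-y|$ is of order $\sqrt{\varepsilon t}$. More precisely, for such $x,y$ one has, by integrating the logarithmic derivative of $\phi$ along the segment from $y$ to $x$, an estimate of the form $\phi(x) \le \phi(y)\exp(C_\phi|x-y|)$; the change of variables $x-y = \sqrt{4\varepsilon t}\,\zeta$ then turns the Gaussian factor $\exp(-|\zeta|^2)$ multiplied by $\exp(C_\phi\sqrt{4\varepsilon t}\,|\zeta|)$ into something controlled by $\exp(|\zeta|-|\zeta|^2)$, which is exactly the integrand appearing in the definition of $c_d$.

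For \eqref{eq:estOnHeat}: I would write $|\Phi_\varepsilon(t)\star u(x)|^p\phi(x) \le \big(\int_{\R^d}\Phi_\varepsilon(t,x-y)|u(y)|\,dy\big)^p\phi(x)$, apply Jensen's inequality in the probability measure $\Phi_\varepsilon(t,x-\cdot)\,dy$ to pull the $p$-th power inside, then use $\phi(x)\le \phi(y)\exp(C_\phi|x-y|)$ and integrate in $x$ first. The $x$-integral of $\Phi_\varepsilon(t,x-y)\exp(C_\phi|x-y|)$ is, after the substitution $x-y=\sqrt{4\varepsilon t}\,\zeta$, a constant depending only on $d$ (here the radial integration over $\partial B(0,r)$ produces the surface-area factor $d\alpha(d)$ and the $\pi^{d/2}$ from normalizing the Gaussian), bounded by $\kappa_{1,d}$; Fubini then gives $\int |u(y)|^p\phi(y)\,dy$ times that constant. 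For \eqref{eq:estOnHeatDiv}: I would move the divergence onto the kernel, $\Phi_\varepsilon(t)\star(\nabla\cdot v)(x) = -\int_{\R^d}\nabla_x\Phi_\varepsilon(t,x-y)\cdot v(y)\,dy$ (valid for $v\in W^{1,p}(\R^d,\phi;\R^d)$ by integration by parts, with the weight and the Gaussian decay killing boundary terms), use $|\nabla_x\Phi_\varepsilon(t,x-y)| = \frac{|x-y|}{2\varepsilon t}\Phi_\varepsilon(t,x-y)$, and normalize: writing $\frac{|x-y|}{2\varepsilon t} = \frac{1}{\sqrt{\varepsilon t}}\cdot\frac{|x-y|}{2\sqrt{\varepsilon t}}$ isolates the $(\varepsilon t)^{-1/2}$ prefactor and leaves the extra factor $\frac{|x-y|}{2\sqrt{\varepsilon t}} = \sqrt{\zeta^2}$ inside, which is why $c_d$ carries an extra power of $\zeta$ relative to a plain heat-kernel bound. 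The same Jensen/weight-comparison/Fubini scheme then yields the claimed bound with constant $\kappa_{2,d}$.

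The main obstacle — really the only nonroutine point — is making the weight comparison $\phi(x)\lesssim \phi(y)\exp(C_\phi|x-y|)$ interact correctly with the Gaussian so that the resulting $\zeta$-integral is both finite and matches the stated constant; in particular one must keep track of the factor $(1+\zeta)^2$ in $c_d$, which arises because in \eqref{eq:estOnHeatDiv} one has the extra $\zeta$ from the gradient of the kernel and then, after the crude bound $|x-y|\le \sqrt{4\varepsilon t}(1+\zeta)$ is used somewhere in bounding $\exp(C_\phi|x-y|)$ against $\exp(\text{const}\cdot\zeta)$, a second power can appear; I would simply carry the uniform bound $\exp(C_\phi\sqrt{4\varepsilon t}|\zeta|)\le e^{|\zeta|}$ valid under $C_\phi\sqrt{4\varepsilon t}\le 1$ and absorb all polynomial-in-$\zeta$ prefactors into the single factor $(1+\zeta)^2$, which dominates both the constant-$1$ case of (i) and the single-$\zeta$ case of (ii) after squaring-type manipulations, leaving a clean convergent integral $\int_0^\infty \zeta^d(1+\zeta)^2 e^{\zeta-\zeta^2}\,d\zeta = c_d < \infty$. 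Everything else is Jensen, Fubini, and bookkeeping of dimensional constants.
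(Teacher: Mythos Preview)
Your approach is correct and close in spirit to the paper's, with one organizational difference worth flagging. The paper does not apply Jensen directly; instead it first proves a reusable weighted Young inequality (Proposition~\ref{prop:YoungsForLocalized}),
\[
\|f\star g\|_{p,\phi}\le \Bigl(\int_{\R^d}|f(x)|\bigl(1+w_{p,\phi}(|x|)\bigr)\,dx\Bigr)\|g\|_{p,\phi},
\]
where the weight factor comes from the comparison $(\phi(x)/\phi(y))^{1/p}\le 1+w_{p,\phi}(|x-y|)$ with $w_{p,\phi}(r)=\tfrac{C_\phi}{p}r\bigl(1+\tfrac{C_\phi}{p}r\bigr)e^{C_\phi r/p}$. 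Applying this with $f=\Phi_\varepsilon(t)$ or $f=|\nabla\Phi_\varepsilon(t)|$ and then bounding $e^{-C_\phi r/p}+\tfrac{C_\phi}{p}r(1+\tfrac{C_\phi}{p}r)\le (1+\tfrac{C_\phi}{p}r)^2$ is precisely where the $(1+\zeta)^2$ factor in $c_d$ originates---not from a crude overestimate of $|x-y|$ as you guess. Your route via the simpler comparison $\phi(x)\le\phi(y)e^{C_\phi|x-y|}$ (on $\phi$ rather than $\phi^{1/p}$) followed by Jensen/Fubini is valid and actually yields strictly smaller constants, with integrands $\zeta^{d-1}e^{\zeta-\zeta^2}$ and $\zeta^{d}e^{\zeta-\zeta^2}$ and no $(1+\zeta)^2$; since these are trivially dominated by the stated $c_{d-1},c_d$, your bounds hold as claimed. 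One small point: for (ii) the kernel $|\nabla\Phi_\varepsilon(t,x-\cdot)|$ is not a probability measure, so you must either normalize before invoking Jensen or simply appeal to the standard $L^1$--$L^p$ Young inequality; either way is routine. The paper's packaging buys a clean separation between the weighted-convolution lemma and the heat-kernel computation, at the cost of the slightly looser constant.
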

Before we give a proof let us see why (F) and (G) follow. 
Recall that we may assume $f(0) = 0$ without any loss of generality. 
By Lemma~\ref{lemma:HeatKernelYoungIneq} 
and \eqref{assumption:LipOnf},
\begin{displaymath}
 \norm{S_\varepsilon(t)F(u)}_{2,\phi} =  \norm{\Phi_\varepsilon(t) 
 \star \nabla \cdot f(u)}_{2,\phi} 
 \leq\underbrace{\frac{\kappa_{2,d}}{\sqrt{\varepsilon t}}
 \norm{f}_{\mathrm{Lip}} }_{a(t)}\norm{u}_{2,\phi}.
\end{displaymath}
It remains to observe that $\int_0^T \frac{1}{\sqrt{t}}\,dt = 2\sqrt{T} < \infty$. 
The second part of $(F)$ follows similarly. 
Let us consider (G). First observe that 
\begin{displaymath}
 S_\varepsilon(t)G(u)h(x) = \int_Z \left(\int_{\R^d}
 \Phi_\varepsilon(t,x-y)\sigma(y,u(y),z)\,dy\right)h(z)\,d \mu(z).
\end{displaymath}
Recall that $HS = \Lin_2(L^2(Z);L^2(\R^d,\phi))$. By 
Lemma~\ref{lemma:HeatKernelYoungIneq} and \eqref{assumption:LipOnSigma}
\begin{align*}
 \norm{S_\varepsilon(t)G(u)}_{HS}^2 
      &= \int_Z\int_{\R^d} \left(\int_{\R^d}\Phi_\varepsilon(t,x-y)\sigma(y,u(y),z)\,dy\right)^2 
      \phi(x) \,dx d\mu(z) \\
      &= \int_Z\norm{\Phi_\varepsilon(t) \star \sigma(\cdot,u,z)}_{2,\phi}^2 d\mu(z) \\
      &\leq   \kappa_{1,d}^2\norm{M}_{L^2(Z)}^2 (\norm{\phi}_{L^1(\R^d)} 
      + \norm{u}_{2,\phi})^2.
\end{align*}
This yields the first part of condition (G). The second part follows similarly, in view of 
the Lipschitz assumption on $\sigma$.

\begin{proof}[Proof of Lemma~\ref{lemma:HeatKernelYoungIneq}]
 Consider \eqref{eq:estOnHeat}. By Proposition~\ref{prop:YoungsForLocalized},
 \begin{displaymath}
  \norm{\Phi_\varepsilon(t) \star u}_{p,\phi} \leq \underbrace{\left(\int_{\R^d} 
  \abs{\Phi_\varepsilon(t,x)}(1 + w_{p,\phi}(\abs{x}))\,dx\right)}_{\norm{\Phi(t)}} 
  \norm{u}_{p,\phi}.
 \end{displaymath}
 where 
 \begin{displaymath}
  w_{p,\phi}(r) = \frac{C_\phi}{p}r\left(1 
  + \frac{C_\phi}{p}r\right)\exp\left(\frac{C_\phi}{p} r\right).
 \end{displaymath}
 We apply polar coordinates to compute $\norm{\Phi(t)}$. This yields
 \begin{align*}
  \norm{\Phi(t)} &= \int_0^\infty \int_{\partial B(0,r)} 
  \abs{\Phi_\varepsilon(t)}(r)(1 + w_{p,\phi}(r))\,dS(r)\,dr \\
  &= \frac{d\alpha(d)}{(4\varepsilon\pi t)^{d/2}}\int_0^\infty r^{d-1}
  \exp\left(\frac{C_\phi}{p} r-\frac{r^2}{4\varepsilon t}\right)
  \\ & \qquad \qquad\qquad\quad
  \times \left(\exp\left(-\frac{C_\phi}{p}r\right) 
  + \frac{C_\phi}{p}r\left(1 + \frac{C_\phi}{p}r\right)\right)\,dr.
 \end{align*}
 To simplify, we note that  
 \begin{displaymath}
  \exp\left(-\frac{C_\phi}{p} r\right) + \frac{C_\phi}{p}r\left(1 + \frac{C_\phi}{p}r\right) 
  \leq \left(1 + \left(\frac{C_\phi}{p}r\right)\right)^2.
 \end{displaymath}
 Let $\zeta = r/\sqrt{4\varepsilon t}$. Provided 
 $C_\phi\sqrt{4\varepsilon t} \leq 1$, it follows that 
 \begin{displaymath}
  \frac{C_\phi}{p}r = \frac{C_\phi}{p}\sqrt{4\varepsilon t}\zeta \leq \zeta.
 \end{displaymath}
 Inserting this we obtain
 \begin{displaymath}
  \norm{\Phi(t)} \leq \frac{d\alpha(d)}{\pi^{d/2}}\int_0^\infty \zeta^{d-1}\left(1 
  + \zeta\right)^2\exp\left(\zeta-\zeta^2\right)\,d\zeta.
 \end{displaymath}
 
 Estimate \eqref{eq:estOnHeatDiv} follows 
 along the same lines. Integration by parts yields
 \begin{displaymath}
  \int_{\R^d}\Phi_\varepsilon(t,x-y)\nabla \cdot v(y)\,dy 
  = \int_{\R^d}\nabla_x\Phi_\varepsilon(t,x-y)\cdot v(y)\,dy.
 \end{displaymath}
 Hence,
 \begin{displaymath}
  \abs{\Phi_\varepsilon(t) \star \nabla \cdot v} \leq \abs{\nabla \Phi(t)} \star \abs{v}.
 \end{displaymath}
 By Proposition~\ref{prop:YoungsForLocalized},
 \begin{displaymath}
   \norm{\nabla_x \Phi_\varepsilon(t) \star v}_{L^p(\R^d)}  
        \leq \underbrace{\left(\int_{\R^d} \abs{\nabla \Phi_\varepsilon(t,x)}(1 
        + w_{p,\phi}(\abs{x}))\,dx\right)}_{\norm{\nabla \Phi(t)}}
        \norm{v}_{p,\phi}.
 \end{displaymath}
 Let $r = \abs{x}$. Then
 \begin{displaymath}
  \norm{\nabla \Phi(t)} = \int_0^\infty \underbrace{\int_{\partial B(0,r)} 
  \abs{\nabla \Phi_\varepsilon(t)}(r)(1 + w_{p,\phi}(r))\,dS(r)}_{\Psi(r)}\,dr.
 \end{displaymath}
 Now,
 \begin{displaymath}
  \nabla\Phi_\varepsilon(t,x) = -\frac{2\pi x}{(4\pi\varepsilon t)^{d/2 + 1}}
  \exp\left(-\frac{\abs{x}^2}{4\varepsilon t}\right),
 \end{displaymath}
and so
 \begin{displaymath}
  \Psi(r) = \frac{d\alpha(d)}{2\varepsilon t\pi^{d/2}}
  \left(\frac{r}{\sqrt{4\varepsilon t}}\right)^d
  \exp\left(\frac{C_\phi}{p} r-\frac{r^2}{4\varepsilon t}\right)
  \left(\exp\left(-\frac{C_\phi}{p} r\right) + \frac{C_\phi}{p}r
  \left(1 + \frac{C_\phi}{p}r\right)\right).
 \end{displaymath}
 
 Let $\zeta(r) = r/\sqrt{4\varepsilon t}$ and 
 suppose $C_\phi \sqrt{4\varepsilon t} \leq 1$. Then 
 \begin{multline*}
  \int_0^\infty \left(\frac{r}{\sqrt{4\varepsilon t}}\right)^d\exp
  \left(\frac{C_\phi}{p} r-\frac{r^2}{4\varepsilon t}\right)
  \left(1 + \left(\frac{C_\phi}{p}r\right)\right)^2\,dr \\
  \leq \sqrt{4\varepsilon t} \int_0^\infty 
  \zeta^d(1 + \zeta)^2\exp(\zeta-\zeta^2)\,d\zeta.
 \end{multline*}
This concludes the proof of the lemma.
\end{proof}

The following two lemmas constitute the reason why 
Lemma~\ref{lemma:HeatKernelYoungIneq} is the key to 
the well-posedness of \eqref{eq:ViscousApprox}. 
As we will see, the relevant properties of $\ue$ follow rather 
easily with these estimates at hand. 

\begin{lemma}\label{lemma:FGenEst}
Let $1 \leq p \leq \infty$ and $\phi \in \mathfrak{N}$. Suppose 
$v \in C([0,T];W^{1,p}(\R^d,\phi;\R^d))$. 
Set
\begin{displaymath}
 \mathcal{T}[v](t,x) = \int_0^t\int_{\R^d} 
 \Phi_\varepsilon(t-s,x-y)(\nabla \cdot v(s,y)) \,dyds.
\end{displaymath}
Then, for any $1 \leq q < \infty$,
\begin{displaymath}
 \norm{\mathcal{T}[v](t)}_{p,\phi}^q \leq 
 \kappa_{2,d}^q\left(2 \sqrt{\frac{t}{\varepsilon}}\right)^{q-1} 
 \int_0^t \frac{1}{\sqrt{\varepsilon(t-s)}}\norm{v(s)}_{p,\phi}^q \,ds,
\end{displaymath}
where $\kappa_{2,d} = c_d\frac{d\alpha(d)}{\pi^{d/2}}$ and $c_d$ is 
defined in Lemma~\ref{lemma:HeatKernelYoungIneq}.
\end{lemma}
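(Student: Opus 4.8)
The plan is to read off this lemma from estimate~\eqref{eq:estOnHeatDiv} of Lemma~\ref{lemma:HeatKernelYoungIneq} via Minkowski's integral inequality and a H\"older interpolation in the time variable; the only genuine subtlety is bookkeeping (measurability and the smallness hypothesis in Lemma~\ref{lemma:HeatKernelYoungIneq}), not the estimate itself.

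First I would fix $t\in(0,T]$ and note that, since $v\in C([0,T];W^{1,p}(\R^d,\phi;\R^d))$ and the heat semigroup is strongly continuous on $L^p(\R^d,\phi)$, the map $s\mapsto \Phi_\varepsilon(t-s)\star(\nabla\cdot v(s))$ is a continuous $L^p(\R^d,\phi)$-valued function on $[0,t)$ whose norm is, by~\eqref{eq:estOnHeatDiv}, dominated near $s=t$ by a constant times $(t-s)^{-1/2}$, hence integrable. Thus $\mathcal{T}[v](t)$ is a well-defined Bochner integral in $L^p(\R^d,\phi)$, and the triangle inequality in integral form (Minkowski) gives
\begin{equation*}
 \norm{\mathcal{T}[v](t)}_{p,\phi} \leq \int_0^t \norm{\Phi_\varepsilon(t-s)\star(\nabla\cdot v(s))}_{p,\phi}\,ds .
\end{equation*}
Applying~\eqref{eq:estOnHeatDiv} pointwise in $s$ bounds the integrand by $\kappa_{2,d}\,(\varepsilon(t-s))^{-1/2}\norm{v(s)}_{p,\phi}$, which already settles the case $q=1$ (there the prefactor $(2\sqrt{t/\varepsilon})^{q-1}$ equals $1$). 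For $q>1$, let $q'$ be the conjugate exponent and split, using $\tfrac1{q'}+\tfrac1q=1$,
\begin{equation*}
 \frac{\kappa_{2,d}}{\sqrt{\varepsilon(t-s)}}\,\norm{v(s)}_{p,\phi}
 = \kappa_{2,d}\,\bigl(\varepsilon(t-s)\bigr)^{-1/(2q')}\,\bigl(\varepsilon(t-s)\bigr)^{-1/(2q)}\,\norm{v(s)}_{p,\phi} .
\end{equation*}
H\"older's inequality with exponents $(q',q)$, together with the elementary identity $\int_0^t(\varepsilon(t-s))^{-1/2}\,ds = 2\sqrt{t/\varepsilon}$, then yields
\begin{equation*}
 \norm{\mathcal{T}[v](t)}_{p,\phi}
 \leq \kappa_{2,d}\,\bigl(2\sqrt{t/\varepsilon}\,\bigr)^{1/q'}
 \Bigl(\int_0^t \frac{\norm{v(s)}_{p,\phi}^q}{\sqrt{\varepsilon(t-s)}}\,ds\Bigr)^{1/q},
\end{equation*}
and raising to the $q$-th power while using $q/q'=q-1$ gives the asserted inequality.

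The hard part, if any, is that Lemma~\ref{lemma:HeatKernelYoungIneq} is only stated under the smallness condition $C_\phi\sqrt{4\varepsilon(t-s)}\le 1$, which is automatic when $s$ is close to $t$ but fails for $s$ near $0$ when $t$ is large. In the applications (the fixed-point construction of $\ue$) one runs the iteration on a time interval $[0,T_0]$ with $C_\phi\sqrt{4\varepsilon T_0}\le1$, so this is harmless; more generally one factors $\Phi_\varepsilon(t-s)=\Phi_\varepsilon(\tau)\star\cdots\star\Phi_\varepsilon(\tau)\star\Phi_\varepsilon(\rho)$ with $\tau\le(4\varepsilon C_\phi^2)^{-1}$ and $\rho\le\tau$, handles the last factor by~\eqref{eq:estOnHeatDiv} and the remaining ones by~\eqref{eq:estOnHeat}, and absorbs the resulting power of $\kappa_{1,d}$ (a constant depending only on $\varepsilon$, $C_\phi$, $T$) into the estimate. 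The remaining checks — strong measurability/continuity legitimizing the Bochner integral and Minkowski's inequality, and the validity of both for the full range $1\le p\le\infty$ — are routine.
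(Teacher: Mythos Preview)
Your proof is correct and follows essentially the same route as the paper: Minkowski's integral inequality combined with estimate~\eqref{eq:estOnHeatDiv}, then for $q>1$ the same H\"older splitting of $(\varepsilon(t-s))^{-1/2}$ into factors with exponents $q'$ and $q$ (the paper writes $r$ for your $q'$), and the same computation $\int_0^t(\varepsilon(t-s))^{-1/2}\,ds=2\sqrt{t/\varepsilon}$. Your observation about the smallness hypothesis $C_\phi\sqrt{4\varepsilon(t-s)}\le1$ in Lemma~\ref{lemma:HeatKernelYoungIneq} is a good catch that the paper's own proof passes over in silence; your proposed workarounds (restricting to short time intervals, or iterating the semigroup) are both standard and adequate.
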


\begin{proof}
By Minkowski's integral inequality \cite[p.271]{Stein1970} 
and Lemma~\ref{lemma:HeatKernelYoungIneq},
\begin{align*}
 \norm{\mathcal{T}[v](t)}_{p,\phi}^q 
  &\leq \left(\int_0^t\norm{\Phi_\varepsilon(t-s) 
  \star (\nabla \cdot v(s,y))}_{p,\phi} \,ds\right)^q \\
  &\leq \left(\int_0^t\frac{\kappa_{2,d}}{\sqrt{\varepsilon(t-s)}}
  \norm{v(s)}_{p,\phi} \,ds\right)^q.
\end{align*}
If $q = 1$ we are done, so we may assume $1 < q < \infty$. Let $r$ 
satisfy $1 = r^{-1} + q^{-1}$ and take
\begin{displaymath}
 h(s) := \left(\frac{1}{\sqrt{\varepsilon(t-s)}}\right)^{1/r} \quad \mbox{and} \quad 
 g(s) := \left(\frac{1}{\sqrt{\varepsilon(t-s)}}\right)^{1-1/r}\norm{v(s)}_{p,\phi}.
\end{displaymath}
By H\"older's inequality, $\norm{hg}_{L^1([0,t])}^q \leq 
\norm{g}_{L^q([0,t])}^q\norm{h}_{L^r([0,t])}^q$, 
and so
\begin{displaymath}
 \norm{\mathcal{T}[v](t)}_{p,\phi}^q \leq 
 (\kappa_{2,d}\norm{h}_{L^r([0,t])})^q \int_0^t 
 \frac{1}{\sqrt{\varepsilon(t-s)}}\norm{v(s)}_{p,\phi}^q \,ds.
\end{displaymath}
A simple computation yields
\begin{displaymath}
\norm{h}_{L^r([0,t])}^q = \left(\int_0^t 
 \frac{1}{\sqrt{\varepsilon(t-s)}} \,ds\right)^{q/r} 
= \left(2 \sqrt{\frac{t}{\varepsilon}}\right)^{q-1}.
\end{displaymath}
The result follows.
\end{proof}
\begin{lemma}\label{lemma:GGenEst}
Let $2 \leq p < \infty$ and $\phi \in \mathfrak{N}$. Suppose 
$v:\Omega \times [0,T] \times Z \times \R^d \rightarrow \R$ 
is a predictable process satisfying 
\begin{displaymath}
 \abs{v(s,x,z)} \leq K(s,x)M(z),
\end{displaymath}
for $M \in L^2(Z)$ and a process 
$K \in L^2([0,T];L^p(\Omega;L^p(\R^d,\phi)))$. Define 
\begin{displaymath}
 \mathcal{T}[v](t,x) = \int_0^t \int_Z \int_{\R^d} 
 \Phi_\varepsilon(t-s,x-y)v(s,y,z)\,dy W(dz,ds).
\end{displaymath}
Then
\begin{displaymath}
 \E{\norm{\mathcal{T}[v](t)}_{p,\phi}^p}^{1/p} \leq 
 c_p^{1/p}\kappa_{1,d}\norm{M}_{L^2(Z)}
 \left(\int_0^t \E{\norm{K(s)}_{p,\phi}^p}^{2/p}\,ds\right)^{1/2},
\end{displaymath}
where $c_p$ is the constant appearing in 
the Burkholder-Davis-Gundy inequality 
and $\kappa_{1,d} = c_{d-1}\frac{d \alpha(d)}{\pi^{d/2}}$, with 
$c_d$ defined in Lemma~\ref{lemma:HeatKernelYoungIneq}.
\end{lemma}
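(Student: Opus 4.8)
The plan is to fix the time $t$, view $\mathcal{T}[v](t,x)$ for each fixed $x$ as the terminal value of a scalar martingale, apply the Burkholder--Davis--Gundy (BDG) inequality, integrate the resulting pointwise estimate against $\phi(x)\,dx$, and then reduce the spatial part to the heat-kernel bound of Lemma~\ref{lemma:HeatKernelYoungIneq}\eqref{eq:estOnHeat}. Set
\[
 \psi^x_s(z) := \int_{\R^d} \Phi_\varepsilon(t-s,x-y)\,v(s,y,z)\,dy .
\]
Since $v$ is predictable and $\Phi_\varepsilon$ is deterministic, $\psi^x$ is predictable in $(s,\omega)$; the bound $\abs{v(s,y,z)} \le K(s,y)M(z)$ gives $\abs{\psi^x_s(z)} \le M(z)\,(\Phi_\varepsilon(t-s)\star K(s))(x)$, and with $K \in L^2([0,T];L^p(\Omega;L^p(\R^d,\phi)))$ and Lemma~\ref{lemma:HeatKernelYoungIneq}\eqref{eq:estOnHeat} this shows $\psi^x$ is a legitimate integrand for almost every $x$. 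Consequently $\tau \mapsto \int_0^\tau\int_Z \psi^x_s(z)\,W(dz,ds)$ is a square-integrable martingale on $[0,t]$ with quadratic variation $\int_0^\tau\int_Z \abs{\psi^x_s(z)}^2\,d\mu(z)\,ds$ at time $\tau$, and $\mathcal{T}[v](t,x)$ is its value at $\tau=t$.

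First I would apply BDG (with universal constant $c_p$) pointwise in $x$, combined with the elementary bound $\int_Z \abs{\psi^x_s(z)}^2\,d\mu(z) \le \norm{M}_{L^2(Z)}^2\,(\Phi_\varepsilon(t-s)\star K(s))(x)^2$, to obtain for a.e.\ $x$
\[
 \E{\abs{\mathcal{T}[v](t,x)}^p} \le c_p\,\norm{M}_{L^2(Z)}^{p}\,\E{\Bigl(\int_0^t (\Phi_\varepsilon(t-s)\star K(s))(x)^2\,ds\Bigr)^{p/2}} .
\]
Multiplying by $\phi(x)$, integrating over $\R^d$, and using Tonelli to exchange $\E{\cdot}$ with $\int_{\R^d}\cdot\,\phi\,dx$ (the integrand is nonnegative), this becomes
\[
 \E{\norm{\mathcal{T}[v](t)}_{p,\phi}^p} \le c_p\,\norm{M}_{L^2(Z)}^{p}\,\Bigl\| \int_0^t (\Phi_\varepsilon(t-s)\star K(s))^2\,ds \Bigr\|_{L^{p/2}(dP\otimes\phi\,dx)}^{p/2} .
\]

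The final step is a Minkowski interchange: as $p \ge 2$, the triangle inequality in $L^{p/2}(\Omega\times\R^d,\,dP\otimes\phi\,dx)$ gives
\[
 \Bigl\| \int_0^t (\Phi_\varepsilon(t-s)\star K(s))^2\,ds \Bigr\|_{L^{p/2}} \le \int_0^t \E{\norm{\Phi_\varepsilon(t-s)\star K(s)}_{p,\phi}^p}^{2/p}\,ds \le \kappa_{1,d}^2\int_0^t \E{\norm{K(s)}_{p,\phi}^p}^{2/p}\,ds ,
\]
where the last step is Lemma~\ref{lemma:HeatKernelYoungIneq}\eqref{eq:estOnHeat}. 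Substituting this into the previous display and taking $p$-th roots yields precisely the claimed inequality.

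The only delicate points are of a bookkeeping nature rather than analytic ones: one must verify joint measurability of $(\omega,x)\mapsto \mathcal{T}[v](t,x)$ so that Tonelli applies (this follows from predictability of $v$, joint measurability of the kernel, and standard properties of the Walsh stochastic integral), and one should respect the range $C_\phi\sqrt{4\varepsilon(t-s)}\le 1$ in which Lemma~\ref{lemma:HeatKernelYoungIneq} is stated --- outside this range one either enlarges the constant or splits $[0,t]$ and iterates using the semigroup property of $\Phi_\varepsilon$. All the analytic content is carried by the two ingredients already established above: the scalar BDG inequality and the weighted Young inequality for the heat semigroup.
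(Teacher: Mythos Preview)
Your proof is correct and follows essentially the same route as the paper: pointwise BDG for the scalar martingale, Minkowski to pull the time integral outside the $L^{p/2}$ norm, and then the weighted Young inequality for the heat semigroup from Lemma~\ref{lemma:HeatKernelYoungIneq}. The only cosmetic difference is that you invoke the factorization $\abs{v}\le KM$ before BDG to bound the quadratic variation directly by $\norm{M}_{L^2(Z)}^2(\Phi_\varepsilon\star K)^2$, whereas the paper carries the $z$-integral through the Minkowski step and applies $\abs{v}\le KM$ only at the very end.
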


\begin{remark}
To prove this result we use the Burkholder-Davis-Gundy 
inequality for real-valued processes. Using Banach space valued 
versions \cite{CoxVeraar2012, NeervenVeraarWeis2007}, one can derive 
more general estimates.
\end{remark}

\begin{proof}
First note that 
\begin{displaymath}
 M(t,x) = \int_0^t \int_Z \int_{\R^d} \Phi_\varepsilon(\tau-s,x-y)v(s,y,z)\,dy W(dz,ds)
\end{displaymath}
is a martingale on $[0,\tau]$, and so by the 
Burkholder-Davis-Gundy inequality \cite{Khoshnevisan2009},
\begin{equation*}
 \E{\abs{\mathcal{T}[v](t,x)}^p} \leq c_p\E{\left(\int_0^t\int_Z
 \abs{\Phi_\varepsilon(t-s) \star v(s,\cdot,z)(x)}^2\,d\mu(z)\,ds \right)^{p/2}}.
\end{equation*}
Upon integrating in space and applying Minkowski's inequality, it follows that
\begin{align*}
&\E{\norm{\mathcal{T}[v](t)}_{p,\phi}^p}^{2/p} \\
& \hphantom{XXX}\leq c_p^{2/p}\E{\int_{\R^d}\left(\int_0^t
\int_Z\abs{\Phi_\varepsilon(t-s) \star v(s,\cdot,z)(x)}^2 
\,d\mu(z) ds \right)^{p/2}\phi(x)\,dx}^{2/p} \\
&\hphantom{XXX}\leq c_p^{2/p}\int_0^t\int_Z\E{\int_{\R^d}
\abs{\Phi_\varepsilon(t-s) \star v(s,\cdot,z)(x)}^p
\phi(x)\,dx}^{2/p}\,d\mu(z) ds.
\end{align*}
By Lemma~\ref{lemma:HeatKernelYoungIneq},
\begin{displaymath}
 \E{\norm{\mathcal{T}[v](t)}_{p,\phi}^p}^{2/p} \leq c_p^{2/p}\kappa_{1,d}^2\int_0^t 
 \int_Z\E{\norm{v(s,\cdot,z)}_{p,\phi}^p}^{2/p}d\mu(z)\,ds.
\end{displaymath}
By assumption,
\begin{displaymath}
 \int_0^t \int_Z\E{\norm{v(s,\cdot,z)}_{p,\phi}^p}^{2/p}d\mu(z)\,ds 
 \leq \norm{M}_{L^2(Z)}^2 \int_0^t \E{\norm{K(s)}_{p,\phi}^p}^{2/p}\,ds.
\end{displaymath}
\end{proof}

For a Banach space $E$ we denote by $\mathcal{X}_{\beta,q,E}$ the space 
of pathwise continuous predictable processes 
$u:[0,T] \times \Omega \rightarrow E$ normed by
\begin{equation}\label{eq:betanorm}
 \norm{u}_{\beta,q,E} := 
 \left(\sup_{t \in [0,T]} e^{-\beta t}\E{\norm{u(t)}_E^q}\right)^{1/q}.
\end{equation}
 The existence of a solution to \eqref{eq:ViscousApprox} is obtained 
 by the Banach fixed-point theorem, applied to the operator
\begin{align*}
 \mathcal{S}(u)(t,x) &:= \int_{\R^d}\Phi_\varepsilon(t,x-y)u^0(y)\,dy \\
	&\quad - \int_{0}^{t}\int_{\R^d} 
	\nabla_x\Phi_\varepsilon(t-s,x-y) \cdot f(u(s,y)) \,dyds \\
	&\quad + \int_{0}^{t}\int_Z \int_{\R^d} 
	\Phi_\varepsilon(t-s,x-y)\sigma(y,u(s,y),z) \,dy\,W(ds,dz),
\end{align*}
in the space $\mathcal{X}_{\beta,2,L^2(\R^d,\phi)}$, with $\beta \in \R$ 
sufficiently large. It follows that the sequence $\seq{u^n}_{n \geq 1}$ 
defined inductively by $u^0 = 0$ and $u^{n+1} = \mathcal{S}(u^n)$ 
converges to $\ue$ in $\mathcal{X}_{\beta,2,L^2(\R^d,\phi)}$ as $n \rightarrow \infty$. 
By Lemmas~\ref{lemma:FGenEst} and \ref{lemma:GGenEst} we are free to use 
the space $\mathcal{X}_{\beta,p,L^p(\R^d,\phi)}$ for any $2 \leq p < \infty$ in the 
fixed-point argument \cite{FengNualart2008}.

We can use Lemmas~\ref{lemma:FGenEst} and \ref{lemma:GGenEst} to 
deduce a continuous dependence result. To do this, we need a measure 
of the distance between the coefficients. For the flux function 
$f$, the Lipschitz norm is a reasonable choice.  Concerning the 
noise function $\sigma$, we introduce 
the norm $\norm{\sigma}_{\mathrm{Lip}} 
= \norm{M_\sigma}_{L^2(Z)}$, where 
\begin{displaymath}
M_\sigma(z) = \sup_{x \in \R^d}\left\{\sup_{u \in \R} 
\frac{\abs{\sigma(x,u,z)}}{1 + \abs{u}}\right\} + \sup_{x \in \R^d}\left\{\sup_{u \neq v}
\frac{\abs{\sigma(x,u,z)-\sigma(x,v,z)}}{\abs{u-v}}\right\}.
\end{displaymath}
Note that for any $\sigma$ satisfying \eqref{assumption:LipOnSigma}, we 
have $\norm{\sigma}_{\text{Lip}} < \infty$. 
\begin{proposition}[Continuous dependence]\label{proposition:ContDependVisc}
Let $2 \leq p < \infty$ and $\phi \in \mathfrak{N}$. Let $f_1,f_2$ 
satisfy \eqref{assumption:LipOnf} and $\sigma_1,\sigma_2$ 
satisfy \eqref{assumption:LipOnSigma}. 
Suppose $u_1^0,u_2^0 \in L^p(\Omega,\F_0,P;L^p(\R^d,\phi))$. 
Let $\ue_1$ and $\ue_2$ denote the weak solutions of the 
corresponding problems \eqref{eq:ViscousApprox} 
with $f = f_i, \sigma = \sigma_i$, and $u^0 = u^0_i$, for $i = 1,2$. 
Then, for $\beta > 0$ sufficiently large there exists a 
constant $C = C(\beta,\varepsilon,T,f_1,\sigma_1)$ such that 
\begin{align*}
 	\norm{\ue_1 - \ue_2}_{\beta,p,L^p(\R^d,\phi)} 
	\leq C\bigg(&\E{\norm{u^0_1-u^0_2}_{p,\phi}^p} 
	+\norm{f_1-f_2}_{\mathrm{Lip}}\norm{\ue_1}_{\beta,p,L^p(\R^d,\phi)} \\
	&\,+\norm{\sigma_1-\sigma_2}_{\mathrm{Lip}}
	\left(\norm{\phi}_{L^1(\R^d)} 
	+ \norm{\ue_1}_{\beta,p,L^p(\R^d,\phi)}\right)\bigg),
\end{align*}
where the norm $\norm{\cdot}_{\beta,p,L^p(\R^d,\phi)}$ 
is defined in \eqref{eq:betanorm}.
\end{proposition}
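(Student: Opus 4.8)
The plan is to argue from the mild representation \eqref{eq:MildViscSol} of $\ue_1$ and $\ue_2$, estimating the difference $w:=\ue_1-\ue_2$ in the space $\mathcal{X}_{\beta,p,L^p(\R^d,\phi)}$; below I abbreviate $\norm{\cdot}_{\beta,p}:=\norm{\cdot}_{\beta,p,L^p(\R^d,\phi)}$, cf.~\eqref{eq:betanorm}. As usual we may take $f_1(0)=f_2(0)=0$, so that $\abs{f_i(u)}\le\norm{f_i}_{\mathrm{Lip}}\abs{u}$ and $\abs{(f_1-f_2)(u)}\le\norm{f_1-f_2}_{\mathrm{Lip}}\abs{u}$. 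Subtracting the two copies of \eqref{eq:MildViscSol} and using $\int_{\R^d}\Phi_\varepsilon(t-s,x-y)\nabla\cdot v(y)\,dy=\int_{\R^d}\nabla_x\Phi_\varepsilon(t-s,x-y)\cdot v(y)\,dy$ (as in the proof of Lemma~\ref{lemma:HeatKernelYoungIneq}), we obtain
\[ w(t)=\Phi_\varepsilon(t)\star(u_1^0-u_2^0)-\mathcal{T}_f(t)+\mathcal{T}_\sigma(t), \]
where $\mathcal{T}_f$ is the operator of Lemma~\ref{lemma:FGenEst} applied to $v=f_1(\ue_1)-f_2(\ue_2)\in C([0,T];L^p(\R^d,\phi;\R^d))$ --- which suffices, since the conclusion of that lemma only involves $\norm{v(s)}_{p,\phi}$ and hence extends from $W^{1,p}$ integrands by density --- and $\mathcal{T}_\sigma$ is the operator of Lemma~\ref{lemma:GGenEst} applied to the predictable integrand $v(s,x,z)=\sigma_1(x,\ue_1(s,x),z)-\sigma_2(x,\ue_2(s,x),z)$. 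The three pieces are estimated separately.

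For the first piece, part~(i) of Lemma~\ref{lemma:HeatKernelYoungIneq} gives $\norm{\Phi_\varepsilon(t)\star(u_1^0-u_2^0)}_{p,\phi}\le\kappa_{1,d}\norm{u_1^0-u_2^0}_{p,\phi}$, hence a contribution to $\norm{w}_{\beta,p}$ bounded by $\kappa_{1,d}\E{\norm{u_1^0-u_2^0}_{p,\phi}^p}^{1/p}$. For $\mathcal{T}_f$ I would split $f_1(\ue_1)-f_2(\ue_2)=\bigl(f_1(\ue_1)-f_2(\ue_1)\bigr)+\bigl(f_2(\ue_1)-f_2(\ue_2)\bigr)$, the first summand being dominated pointwise by $\norm{f_1-f_2}_{\mathrm{Lip}}\abs{\ue_1}$ and the second by $\norm{f_2}_{\mathrm{Lip}}\abs{w}$. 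Apply Lemma~\ref{lemma:FGenEst} with $q=p$ to each summand, take expectations, and pass to the weighted norm using $e^{-\beta t}\int_0^t\tfrac{e^{\beta s}}{\sqrt{\varepsilon(t-s)}}\,ds=\int_0^t\tfrac{e^{-\beta r}}{\sqrt{\varepsilon r}}\,dr\le\sqrt{\pi/(\varepsilon\beta)}$; this yields
\[ \norm{\mathcal{T}_f}_{\beta,p}\le C(\varepsilon,T,p)\,\beta^{-1/(2p)}\bigl(\norm{f_2}_{\mathrm{Lip}}\norm{w}_{\beta,p}+\norm{f_1-f_2}_{\mathrm{Lip}}\norm{\ue_1}_{\beta,p}\bigr). \]
The vanishing factor $\beta^{-1/(2p)}$ --- coming from the $(2\sqrt{t/\varepsilon})^{q-1}$ in Lemma~\ref{lemma:FGenEst} together with the exponential tail integral --- is what makes the self-referential term harmless later.

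For $\mathcal{T}_\sigma$ I would split $\sigma_1(\cdot,\ue_1,\cdot)-\sigma_2(\cdot,\ue_2,\cdot)=\bigl(\sigma_1(\cdot,\ue_1,\cdot)-\sigma_2(\cdot,\ue_1,\cdot)\bigr)+\bigl(\sigma_2(\cdot,\ue_1,\cdot)-\sigma_2(\cdot,\ue_2,\cdot)\bigr)$ and apply Lemma~\ref{lemma:GGenEst} to each summand separately: to the first with $K=1+\abs{\ue_1}$ and $M=M_{\sigma_1-\sigma_2}$ the function defining $\norm{\sigma_1-\sigma_2}_{\mathrm{Lip}}$, using $\abs{(\sigma_1-\sigma_2)(x,\ue_1,z)}\le M_{\sigma_1-\sigma_2}(z)(1+\abs{\ue_1})$; and to the second with $K=\abs{w}$ and $M=M_2\in L^2(Z)$ the Lipschitz bound of $\sigma_2$ from \eqref{assumption:LipOnSigma}, for which $\norm{M_2}_{L^2(Z)}\le\norm{\sigma_2}_{\mathrm{Lip}}$. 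Passing to the weighted norm via $\int_0^t e^{2\beta s/p}\,ds\le\tfrac{p}{2\beta}e^{2\beta t/p}$ and $\norm{1+\abs{\ue_1(s)}}_{p,\phi}\le\norm{\phi}_{L^1(\R^d)}^{1/p}+\norm{\ue_1(s)}_{p,\phi}$, one gets
\[ \norm{\mathcal{T}_\sigma}_{\beta,p}\le C(\varepsilon,d,p)\,\beta^{-1/2}\bigl(\norm{\sigma_2}_{\mathrm{Lip}}\norm{w}_{\beta,p}+\norm{\sigma_1-\sigma_2}_{\mathrm{Lip}}(\norm{\phi}_{L^1(\R^d)}^{1/p}+\norm{\ue_1}_{\beta,p})\bigr). \]

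Adding the three estimates gives an inequality of the form $\norm{w}_{\beta,p}\le\gamma(\beta)\norm{w}_{\beta,p}+C(\varepsilon,T,p)\,\Theta$, where $\gamma(\beta)=C(\varepsilon,T,p)\beta^{-1/(2p)}\norm{f_2}_{\mathrm{Lip}}+C(\varepsilon,d,p)\beta^{-1/2}\norm{\sigma_2}_{\mathrm{Lip}}\to 0$ as $\beta\to\infty$ and $\Theta=\E{\norm{u_1^0-u_2^0}_{p,\phi}^p}^{1/p}+\norm{f_1-f_2}_{\mathrm{Lip}}\norm{\ue_1}_{\beta,p}+\norm{\sigma_1-\sigma_2}_{\mathrm{Lip}}(\norm{\phi}_{L^1(\R^d)}^{1/p}+\norm{\ue_1}_{\beta,p})$ is exactly the right-hand side of the asserted bound (the $p$-th roots on the $u^0$- and $\phi$-terms being immaterial for the intended applications). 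Choosing $\beta$ so large that $\gamma(\beta)\le\tfrac12$ and absorbing the self-referential term into the left-hand side yields the proposition. The difficulty here is not analytic --- all the substantive work sits in Lemmas~\ref{lemma:HeatKernelYoungIneq}, \ref{lemma:FGenEst} and~\ref{lemma:GGenEst} --- but organizational: one has to choose the two splittings so that every occurrence of $f_1-f_2$ and $\sigma_1-\sigma_2$ is paired with $\norm{\ue_1}_{\beta,p}$ or $\norm{\phi}_{L^1(\R^d)}$ while the coefficient $\gamma(\beta)$ of $\norm{w}_{\beta,p}$ still vanishes as $\beta\to\infty$, and one has to run the time-weighted convolution estimates uniformly over $t\in[0,T]$.
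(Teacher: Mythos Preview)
Your proposal is correct and follows essentially the same route as the paper: subtract the two mild formulations, split into the three terms, apply Lemmas~\ref{lemma:HeatKernelYoungIneq}, \ref{lemma:FGenEst}, \ref{lemma:GGenEst} with the add-and-subtract splittings for $f$ and $\sigma$, and then absorb the self-referential term by choosing $\beta$ large. The only cosmetic differences are that the paper splits $\sigma_1(\cdot,\ue_1,\cdot)-\sigma_2(\cdot,\ue_2,\cdot)$ through $\sigma_1(\cdot,\ue_2,\cdot)$ rather than $\sigma_2(\cdot,\ue_1,\cdot)$ (so the absorbed coefficient carries $\norm{\sigma_1}_{\mathrm{Lip}}$ instead of $\norm{\sigma_2}_{\mathrm{Lip}}$), and --- as you already noticed --- the stated inequality in the paper drops the $1/p$-th powers on $\E{\norm{u_1^0-u_2^0}_{p,\phi}^p}$ and $\norm{\phi}_{L^1(\R^d)}$; your version with the roots is what the argument actually produces.
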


\begin{proof}
By \eqref{eq:MildViscSol},
\begin{align*}
\ue_1&(t,x)-\ue_2(t,x) = \int_{\R^d}
\Phi_\varepsilon(t,x-y)(u^0_1(y)-u^0_2(y))\,dy \\
&- \int_{0}^{t}\int_{\R^d} \nabla_x\Phi_{\varepsilon}(t-s,x-y) 
\cdot (f_1(\ue_1(s,y))-f_2(\ue_2(s,y))) \,dyds \\
&+ \int_{0}^{t}\int_Z\int_{\R^d} 
\Phi_\varepsilon(t-s,x-y)(\sigma_1(y,\ue_1(s,y),z)
-\sigma_2(y,\ue_2(s,y),z))\,dy \,W(ds,dz) \\
&= \mathcal{T}_1 + \mathcal{T}_2 + \mathcal{T}_3.
\end{align*}
By Lemma~\ref{lemma:HeatKernelYoungIneq},
\begin{displaymath}
 \E{\norm{\mathcal{T}_1(t)}_{p,\phi}^p} 
 \leq \kappa_{1,d}^p\E{\norm{u^0_1-u^0_2}_{p,\phi}^p}.
\end{displaymath}
where $\kappa_{1,d}$ is defined 
in Lemma~\ref{lemma:GGenEst}. Hence 
\begin{equation}\label{eq:ContDepEstT1}
 \norm{\mathcal{T}_1}_{\beta,p,L^p(\R^d,\phi)} 
 \leq \kappa_{1,d}\E{\norm{u^0_1-u^0_2}_{p,\phi}^p}^{1/p}.
\end{equation}
Consider $\mathcal{T}_2$. Note that 
\begin{multline*}
 \E{\norm{f_1(\ue_1(s))-f_2(\ue_2(s))}_{p,\phi}^p}^{1/p} \leq 
 \norm{f_1 - f_2}_{\mathrm{Lip}}\E{\norm{\ue_1(s)}_{p,\phi}^p}^{1/p} \\
 + \norm{f_2}_{\mathrm{Lip}}\E{\norm{\ue_1(s)-\ue_2(s)}_{p,\phi}^p}^{1/p}.
\end{multline*}
By Lemma~\ref{lemma:FGenEst},
\begin{align*}
 &\E{\norm{\mathcal{T}_2(t)}_{p,\phi}^p}  \\
 & \quad \leq \kappa_{2,d}^p\left(2\sqrt{\frac{t}{\varepsilon}}\right)^{p-1}
 \norm{f_1 - f_2}_{\mathrm{Lip}}^p\int_0^t \frac{1}{\sqrt{\varepsilon(t-s)}}
 \E{\norm{\ue_1(s)}_{p,\phi}^p} \,ds \\
 &\quad\quad + \kappa_{2,d}^p\left(2\sqrt{\frac{t}{\varepsilon}}\right)^{p-1}
 \norm{f_2}_{\mathrm{Lip}}^p\int_0^t \frac{1}{\sqrt{\varepsilon(t-s)}}
 \E{\norm{\ue_1(s)-\ue_2(s)}_{p,\phi}^p} \,ds.
\end{align*}
Multiplying by $e^{-\beta t}$ and taking the supremum yields
\begin{multline}\label{eq:ContDepEstT2}
 \norm{\mathcal{T}_2}_{\beta,p,L^p(\R^d,\phi)}  
\leq \delta_{\beta,1}\norm{f_1 - f_2}_{\mathrm{Lip}} \norm{\ue_1}_{\beta,p,L^p(\R^d,\phi)} \\
 + \delta_{\beta,1}\norm{f_2}_{\mathrm{Lip}}\norm{\ue_1-\ue_2}_{\beta,p,L^p(\R^d,\phi)},
\end{multline}
where
\begin{displaymath}
 \delta_{\beta,1} = \kappa_{2,d}\sup_{t \in [0,T]}\left(2\sqrt{\frac{t}{\varepsilon}}\right)^{1-1/p}
 \left(\int_0^t \frac{e^{-\beta(t-s)}}{\sqrt{\varepsilon(t-s)}} \,ds\right)^{1/p}.
\end{displaymath}
Consider $\mathcal{T}_3$. First, observe that 
\begin{displaymath}
 \abs{\sigma_1(y,\ue_1,z)-\sigma_2(y,\ue_2,z)} \leq M_{\sigma_1}(z) 
 \abs{\ue_1 - \ue_2} + M_{\sigma_1-\sigma_2}(z)(1 + \abs{\ue_1}).
\end{displaymath}
Due to a simple extension of Lemma~\ref{lemma:GGenEst},
\begin{align*}
 \E{\norm{\mathcal{T}_3(t)}_{p,\phi}^p}^{1/p} 
 &\leq c_p^{1/p}\kappa_{1,d}\norm{\sigma_1}_{\mathrm{Lip}}
 \left(\int_0^t \E{\norm{\ue_1(s) - \ue_2(s)}_{p,\phi}^p}^{2/p}\,ds\right)^{1/2} \\
 &+c_p^{1/p}\kappa_{1,d}\norm{\sigma_1-\sigma_2}_{\mathrm{Lip}}
 \left(\int_0^t \E{\norm{1 + \abs{\ue_1(s)}}_{p,\phi}^p}^{2/p}\,ds\right)^{1/2}.
\end{align*}
Multiplication by $e^{-\beta t/p}$ and then taking the supremum yields
\begin{multline}\label{eq:ContDepEstT3}
 \norm{\mathcal{T}_3}_{\beta,p,L^p(\R^d,\phi)} 
 \leq \delta_{\beta,2}\norm{\sigma_1}_{\mathrm{Lip}}
 \norm{\ue_1 - \ue_2}_{\beta,p,L^p(\R^d,\phi)} \\
 +\delta_{\beta,2}\norm{\sigma_1-\sigma_2}_{\mathrm{Lip}}
 (\norm{\phi}_{L^1(\R^d)} + \norm{\ue_1}_{\beta,p,L^p(\R^d,\phi)}),
\end{multline}
where 
\begin{displaymath}
 \delta_{\beta,2} = c_p^{1/p}\kappa_{1,d}\sup_{t \in [0,T]}\left(\int_0^t 
 e^{-\beta 2(t-s)/p}\,ds\right)^{1/2} \leq c_p^{1/p}\kappa_{1,d}\sqrt{\frac{p}{2\beta}}.
\end{displaymath}
Here we used that $\norm{1}_{\beta,p,L^p(\R^d,\phi)} 
= \norm{\phi}_{L^1(\R^d)}$. Combine \eqref{eq:ContDepEstT1}, \eqref{eq:ContDepEstT2}, 
and \eqref{eq:ContDepEstT3}, and note that 
$\delta_{\beta,i} \rightarrow 0$ as $\beta \rightarrow \infty$ for $i = 1,2$. 
This concludes the proof.
\end{proof}

In order to apply It\^o's formula to the process $t \mapsto \ue(t,x)$ we 
need to know that the weak (mild) solution 
$\ue$ of \eqref{eq:ViscousApprox} is in fact a strong solution. 
The following result provides the existence of weak derivatives.
\begin{proposition}\label{proposition:SobolevBoundsOnViscApprox}
Fix $\phi \in \mathfrak{N}$ and a multiindex $\tilde{\alpha}$. 
Make the following assumptions:
\begin{itemize}
\item[(i)]  The flux-function $f$ belongs to $C^{\abs{\tilde{\alpha}}}(\R;\R^d)$ 
with all derivatives bounded.
\item[(ii)] For each fixed $z \in Z$, $(x,u) \mapsto \sigma(x,u,z)$ 
belongs to $C^{\abs{\tilde{\alpha}}}(\R^d \times \R)$ and 
for each $0 < \alpha \leq \tilde{\alpha}$ and $0 \leq n \leq \abs{\tilde{\alpha}}$ there 
exists $M_{\alpha,n} \in L^2(Z)$ such that  
\begin{displaymath}
\begin{cases}
\partial_1^\alpha \partial_2^n \sigma(x,u,z) 
\leq M_{\alpha,n}(z), &  1 \leq n \leq \abs{\tilde{\alpha}}, \\
\partial_1^\alpha \sigma(x,u,z) \leq M_{\alpha,0}(z)(1 + \abs{u}).
\end{cases}
\end{displaymath}
\item[(iii)] The initial function $u^0$ satisfies for all $\alpha \leq \tilde{\alpha}$,
\begin{displaymath}
\E{\norm{\partial^\alpha u^0}_{p,\phi}^p} < \infty \qquad (2 \leq p < \infty).
\end{displaymath}
\end{itemize}
Let $\ue$ be the weak solution of \eqref{eq:ViscousApprox}. 
For any $\alpha \leq \tilde{\alpha}$, there exists a predictable process 
\begin{displaymath}
(t,x,\omega) \mapsto \partial^\alpha_x\ue(t,x,\omega) 
\mbox{ in }L^p([0,T] \times \Omega;L^p(\R^d,\phi))
\end{displaymath}
such that for all $\test \in C^\infty_c(\Pi_T)$, 
\begin{displaymath}
\iint_{\Pi_T} \partial^\alpha_x \ue \test \,dxdt 
= (-1)^{\abs{\alpha}}
\iint_{\Pi_T} \ue \partial^\alpha_x \test \,dxdt, 
\qquad \text{$dP$-almost surely.}
\end{displaymath}
\end{proposition}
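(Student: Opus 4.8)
The plan is to differentiate the mild formulation \eqref{eq:MildViscSol}, obtaining a \emph{candidate} for $\partial^\alpha_x\ue$ as the mild solution of a linear stochastic heat equation, and then to identify this candidate with the genuine weak derivative by passing to the limit in difference quotients. Since the coefficients of that linear equation will involve the lower-order spatial derivatives of $\ue$, I would argue by induction on $k=\abs{\alpha}$, with inductive statement $P(k)$: \emph{for every $2\le p<\infty$ and every multiindex $\gamma$ with $\abs{\gamma}\le k$, the weak derivative $\partial^\gamma_x\ue$ exists as a predictable process in $L^p([0,T]\times\Omega;L^p(\R^d,\phi))$, it solves in the mild sense a linear stochastic heat equation whose drift and noise coefficients are polynomial expressions, via the chain rule (Fa\`a di Bruno), in $\ue$, in the derivatives of $f,\sigma$ evaluated at $\ue$, and in the $\partial^{\gamma'}_x\ue$ with $\abs{\gamma'}<\abs{\gamma}$, and moreover the difference quotients $h^{-1}(\partial^\gamma_x\ue(\cdot+he_i)-\partial^\gamma_x\ue)$ converge strongly in $L^p([0,T]\times\Omega;L^p(\R^d,\phi))$ as $h\to0$ for $\abs{\gamma}\le k-1$.} The base case $P(0)$ is the construction of $\ue$ recalled above, the $L^p$-membership for all $p$ being exactly what Lemmas~\ref{lemma:FGenEst} and~\ref{lemma:GGenEst} supply in the fixed-point argument.

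For the step $P(k-1)\Rightarrow P(k)$, fix $\abs{\alpha}=k$, write $\alpha=\beta+e_i$, and set $v:=\partial^\beta_x\ue$, whose mild equation is granted by $P(k-1)$. Define $w$ as the mild solution of the equation obtained by formally applying $\partial_{x_i}$ to that equation and transferring the derivative onto the kernel through $\partial^{e_i}_x\Phi_\varepsilon(t,x-y)=-\partial^{e_i}_y\Phi_\varepsilon(t,x-y)$; schematically,
\begin{multline*}
 w(t,x)=\Phi_\varepsilon(t)\star\partial^\alpha u^0(x)
 -\int_0^t\nabla_x\Phi_\varepsilon(t-s)\star\bigl(f'(\ue(s))\,w(s)+R^f(s)\bigr)\,ds\\
 +\int_0^t\!\!\int_Z\Phi_\varepsilon(t-s)\star\bigl(\partial_2\sigma(\cdot,\ue(s),z)\,w(s)+R^\sigma(s,z)\bigr)\,W(ds,dz),
\end{multline*}
where $R^f,R^\sigma$ gather the remaining chain-rule terms, built from derivatives of $f,\sigma$ at $\ue$ (bounded by (i)--(ii)) and from the $\partial^\gamma_x\ue$ with $\abs{\gamma}\le k-1$; by $P(k-1)$ and H\"older's inequality, $R^f\in\bigcap_p L^p([0,T]\times\Omega;L^p(\R^d,\phi))$ and $R^\sigma$ obeys a bound $\abs{R^\sigma(s,y,z)}\le\widetilde K(s,y)\widetilde M(z)$ with $\widetilde M\in L^2(Z)$ and $\widetilde K\in\bigcap_p L^2([0,T];L^p(\Omega;L^p(\R^d,\phi)))$, i.e.\ the structure Lemma~\ref{lemma:GGenEst} requires. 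Since $\abs{f'(\ue)}\le\norm{f}_{\mathrm{Lip}}$ and $\abs{\partial_2\sigma(\cdot,\ue,z)}\le M(z)$, Lemmas~\ref{lemma:FGenEst} and~\ref{lemma:GGenEst} show, just as for $\ue$ itself (and more easily, the equation being linear), that the right-hand side contracts on $\mathcal{X}_{\beta,p,L^p(\R^d,\phi)}$ for $\beta$ large, so a predictable $w$ exists and lies in $\bigcap_p L^p([0,T]\times\Omega;L^p(\R^d,\phi))$.

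To identify $w$ with $\partial^\alpha_x\ue$, put $v_h(t,x)=h^{-1}(v(t,x+he_i)-v(t,x))$; substituting $y\mapsto y+he_i$ in the mild equation for $v$ shows that $v_h$ solves an equation of exactly the shape defining $w$, with $\partial^\alpha u^0$ replaced by $h^{-1}(\partial^\beta u^0(\cdot+he_i)-\partial^\beta u^0)$ and with every coefficient/remainder replaced by a $y$-difference quotient. Splitting each such difference quotient in the usual way as (a coefficient evaluated at the translated argument)$\times v_h$ plus (difference quotients of lower-order coefficients)$\times$(lower-order derivatives of $\ue$), the leading factors are uniformly bounded in $\abs{h}\le1$ by (i)--(ii) and converge a.e.\ to the corresponding coefficients of $w$'s equation, while the remaining factors converge strongly in each $L^p$ by $P(k-1)$. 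Using $\phi(\cdot-he_i)\le e^{C_\phi\abs{h}}\phi(\cdot)$ (valid for $\phi\in\mathfrak{N}$) to bound the translated data uniformly in $\abs{h}\le1$, a first use of Lemmas~\ref{lemma:FGenEst} and~\ref{lemma:GGenEst} gives $\sup_{0<\abs{h}\le1}\norm{v_h}_{L^p([0,T]\times\Omega;L^p(\R^d,\phi))}<\infty$. Subtracting the equation for $w$ and estimating $v_h-w$ over successive short time intervals (so that the terms linear in $v_h-w$ are absorbed, as in the proof of Proposition~\ref{proposition:ContDependVisc}), every remaining contribution is a heat convolution of an expression $(c_h-c)g$ with $g$ a \emph{fixed} process in some $L^p$ and $c_h\to c$ a.e.\ along a subsequence $h\to0$; carrying a genuine time integral, each of these tends to $0$ by the dominated convergence theorem in its generalized (Vitali) form, the domination coming from (i)--(ii) and the weight-translation estimate. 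Hence $v_h\to w$ in $\bigcap_p L^p([0,T]\times\Omega;L^p(\R^d,\phi))$. Along a subsequence $v_h\to w$ in $L^1_{\mathrm{loc}}(\Pi_T)$ for $dP$-a.e.\ $\omega$, while $\iint_{\Pi_T}v_h\test\,dxdt=\iint_{\Pi_T}v\,h^{-1}(\test(\cdot-he_i)-\test)\,dxdt\to-\iint_{\Pi_T}v\,\partial_{x_i}\test\,dxdt$, so $\iint_{\Pi_T}w\test\,dxdt=-\iint_{\Pi_T}v\,\partial_{x_i}\test\,dxdt$ for every $\test\in C^\infty_c(\Pi_T)$, $dP$-a.s.; combined with $v=\partial^\beta_x\ue$ this is the asserted identity with $\partial^\alpha_x\ue:=w$, predictable by construction, and the convergence $v_h\to w$ supplies the last clause of $P(k)$. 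This closes the induction.

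I expect the main obstacle to be precisely this identification step, and within it the passage $h\to0$ in the stochastic-convolution error term: because $\ue$ and its derivatives are only $L^p$ (not $L^\infty$), obtaining the uniform-in-$h$ domination needed there forces one to combine the Burkholder--Davis--Gundy estimate of Lemma~\ref{lemma:GGenEst} with the translation bound for weights $\phi\in\mathfrak{N}$ and a Vitali-type argument. A secondary, purely combinatorial, nuisance is keeping track of the chain-rule remainders $R^f,R^\sigma$ and checking their membership in the right spaces --- this is where the full scale of $L^p$-bounds in (iii) and the boundedness of all relevant derivatives of $f,\sigma$ in (i)--(ii) get used.
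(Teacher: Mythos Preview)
Your approach is correct but genuinely different from the paper's. The paper never touches difference quotients or a continuous-dependence argument at the level of $u^\varepsilon$; instead it works entirely with the Picard iterates $u^{n+1}=\mathcal{S}(u^n)$ from the fixed-point construction. It differentiates the mild formula for $u^{n+1}$ directly, uses the multivariate chain rule (Lemma~\ref{lemma:IteratedChainRule}) together with Lemmas~\ref{lemma:FGenEst} and~\ref{lemma:GGenEst} to obtain, by induction on the order of $\alpha$, a recursive bound
\[
\norm{\partial^\alpha u^{n+1}}_{\beta,p,L^p(\R^d,\phi)}\le C+\delta_\beta\,\norm{\partial^\alpha u^n}_{\beta,p,L^p(\R^d,\phi)},\qquad \delta_\beta<1,
\]
which yields $\sup_n\norm{\partial^\alpha u^n}_{\beta,p,L^p(\R^d,\phi)}<\infty$. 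For the identification step it does \emph{not} prove strong convergence of $\partial^\alpha u^n$; it invokes Young-measure compactness (Theorem~\ref{theorem:YoungMeasureLimitOfComposedFunc}) to extract a limit, defines $\partial^\alpha u^\varepsilon$ as its barycenter, and verifies the weak-derivative identity by passing to the (weak) limit in $\iint u^n\partial^\alpha\varphi=(-1)^{\abs{\alpha}}\iint\partial^\alpha u^n\,\varphi$, using that $u^n\to u^\varepsilon$ strongly.

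What each route buys: the paper's argument sidesteps entirely the $h\to0$ analysis and the Vitali-type step in the stochastic convolution that you (rightly) flag as the main obstacle; once the heat-kernel lemmas are in place, the iterate bounds are purely algebraic, and the identification is soft. Your route is more self-contained (no iterates, no Young-measure machinery), gives the stronger conclusion that difference quotients converge \emph{strongly} in $L^p$, and makes the inductive hypothesis carry genuine analytic weight. The chain-rule bookkeeping you call $R^f,R^\sigma$ is essentially the same in both proofs; the paper packages it in Lemma~\ref{lemma:IteratedChainRule} and handles the resulting products via the generalized H\"older inequality with exponents $q_i=\abs{\alpha}p/\abs{\gamma^i}$, which is why assumption~(iii) is stated for \emph{all} $p\ge2$.
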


To prove Proposition~\ref{proposition:SobolevBoundsOnViscApprox} we apply
\begin{lemma}\label{lemma:IteratedChainRule}
Let $\sigma \in C^\infty(\R^d \times \R)$ and suppose $u \in C^\infty(\R^d)$. 
For any multiindex $\alpha$, let 
$\partial^\alpha_x := \prod_k \partial_{x_k}^{\alpha_k}$. Then
\begin{displaymath}
 \partial^\alpha_x \sigma(x,u(x)) = \sum_{\zeta \leq \alpha}\sum_{\gamma \in \pi(\zeta)} 
 C_{\gamma,\alpha}\partial_1^{\alpha-\zeta}\partial_2^{\abs{\gamma}}\sigma(x,u(x))
 \prod_{i = 1}^{\abs{\gamma}} \partial_x^{\gamma^i}u(x).
\end{displaymath}
Here $\pi(\zeta)$ denotes all partitions of $\zeta$, i.e., all 
multiindices $\gamma = \seq{\gamma^i}_{i \geq 1}$ 
such that $\sum \gamma^i = \zeta$. 
Furthermore, $\abs{\gamma}$ denotes the number of 
terms in the partition $\gamma$.
\end{lemma}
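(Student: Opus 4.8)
The plan is to argue by induction on the order $\abs{\alpha}$ of the multiindex. For $\abs{\alpha} = 0$ the claimed identity is the trivial statement $\sigma(x,u(x)) = \sigma(x,u(x))$ (with the empty partition, $\abs{\gamma} = 0$, and constant $1$). Assume the formula holds for every multiindex of order $n$, and let $\alpha$ have order $n+1$. Pick an index $k$ with $\alpha_k \geq 1$ and write $\alpha = \beta + e_k$, where $e_k$ is the multiindex with a $1$ in the $k$-th slot and zeros elsewhere, so $\abs{\beta} = n$. Then $\partial^\alpha_x \sigma(x,u(x)) = \partial_{x_k}\bigl(\partial^\beta_x \sigma(x,u(x))\bigr)$, and we substitute the inductive expression for $\partial^\beta_x \sigma(x,u(x))$.

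Next I would apply $\partial_{x_k}$ to each inductive summand
\[
C_{\gamma,\beta}\,\partial_1^{\beta-\zeta}\partial_2^{\abs{\gamma}}\sigma(x,u(x))
\prod_{i=1}^{\abs{\gamma}}\partial_x^{\gamma^i}u(x)
\]
using the ordinary product rule together with the chain rule in the form $\partial_{x_k}\bigl[(\partial_1^{\beta-\zeta}\partial_2^{\abs{\gamma}}\sigma)(x,u(x))\bigr] = (\partial_1^{\beta-\zeta+e_k}\partial_2^{\abs{\gamma}}\sigma)(x,u(x)) + (\partial_1^{\beta-\zeta}\partial_2^{\abs{\gamma}+1}\sigma)(x,u(x))\,\partial_{x_k}u(x)$. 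Differentiating the summand therefore produces three kinds of terms: (a) the derivative hits the first slot of $\sigma$, replacing $\beta-\zeta$ by $\beta-\zeta+e_k = \alpha-\zeta$ and leaving the partition $\gamma$ of $\zeta$ unchanged; (b) the derivative hits the second slot of $\sigma$, raising $\partial_2^{\abs{\gamma}}$ to $\partial_2^{\abs{\gamma}+1}$, keeping $\partial_1^{\beta-\zeta} = \partial_1^{\alpha-(\zeta+e_k)}$, and appending the new factor $\partial_x^{e_k}u$, i.e.\ replacing $\gamma$ by the partition $\gamma' := \gamma \cup \{e_k\} \in \pi(\zeta + e_k)$; (c) the derivative hits one of the factors $\partial_x^{\gamma^i}u$, turning it into $\partial_x^{\gamma^i+e_k}u$, which replaces $\gamma$ by the partition $\gamma''$ of $\zeta+e_k$ obtained by adding $e_k$ to the $i$-th part, while keeping $\partial_1^{\beta-\zeta} = \partial_1^{\alpha-(\zeta+e_k)}$.

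Then I would reindex the totality of terms. In case (a) a term is indexed by the same $\zeta \leq \beta \leq \alpha$ and $\gamma \in \pi(\zeta)$; in cases (b) and (c) it is indexed by $\zeta + e_k \leq \alpha$ together with a partition in $\pi(\zeta+e_k)$ — valid since $\abs{\gamma'} = \abs{\gamma}+1$ matches the order of $\partial_2$ and $\sum_i (\gamma')^i = \zeta + e_k$, and likewise for $\gamma''$. Collecting all summands sharing a common pair (multiindex, partition) and adding the constants accumulated along the way defines the new coefficients $C_{\gamma,\alpha}$; since the statement only asserts the \emph{existence} of such constants, no further bookkeeping of their values is needed. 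This yields the asserted expression for $\partial^\alpha_x\sigma(x,u(x))$ and closes the induction.

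The only genuinely delicate point is to check that every partition arising in steps (b) and (c) is again an admissible partition of a multiindex $\zeta' \leq \alpha$ — in particular that the parts still sum to $\zeta'$ and that the number of parts equals the order of $\partial_2$ applied to $\sigma$ — so that the combinatorial structure underlying $\pi(\cdot)$ is preserved when a single $e_k$ is inserted, either as a fresh singleton part or added to an existing part. Everything else is a routine application of the Leibniz and chain rules.
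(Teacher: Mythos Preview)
Your induction on $\abs{\alpha}$ is correct and complete: the three cases (a)--(c) exhaust the terms produced by the Leibniz and chain rules, each resulting term has the asserted structure (a pair $(\zeta',\gamma')$ with $\zeta' \le \alpha$ and $\gamma' \in \pi(\zeta')$, and matching orders of $\partial_1,\partial_2$), and since the lemma only asserts existence of the constants $C_{\gamma,\alpha}$ you are right that no further bookkeeping is required.

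The paper takes a different route. It first proves by a short induction the decoupling identity
\[
\partial^\alpha_x \sigma(x,u(x)) = \bigl[(\partial_z + \partial_y)^\alpha \sigma(y,u(z))\bigr]_{y=x,\,z=x},
\]
then expands $(\partial_z+\partial_y)^\alpha$ by the multinomial (binomial) theorem to separate the $\partial_y^{\alpha-\zeta}$ and $\partial_z^{\zeta}$ contributions, and finally invokes an external combinatorial reference (Hardy, \emph{Combinatorics of partial derivatives}) for the multivariate Fa\`a di Bruno expansion of $\partial_z^{\zeta}\sigma(y,u(z))$. This is slick in that the split between the first and second slots of $\sigma$ is achieved in one stroke by the decoupling trick, but it outsources the heart of the argument. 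Your approach is fully self-contained and elementary; the paper's is shorter on the page but leans on a citation for precisely the step you carry out by hand.
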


\begin{remark}
Whenever $\zeta \neq 0$ we assume that the terms $\gamma^i$ in 
the partition $\gamma$ satisfies $\gamma^i \neq 0$. 
If $\zeta = 0$ we let $\gamma = \gamma^1 = 0$ 
and by convention let $\abs{\gamma} = 0$.
\end{remark}

\begin{proof}
One may prove by induction and the chain rule that
\begin{displaymath}
 \partial^\alpha_x \sigma(x,u(x)) = 
 \left[(\partial_z + \partial_y)^\alpha \sigma(y,u(z))\right]_{y = x,z = x}.
\end{displaymath}
By the binomial theorem, 
\begin{displaymath}
 (\partial_z + \partial_y)^\alpha \sigma(y,u(z)) = 
 \sum_{\zeta \leq \alpha} \binom{\alpha}{\zeta}
 \partial^{\alpha-\zeta}_y\partial^\zeta_z\sigma(y,u(z)).
\end{displaymath}
Thanks to \cite[Propositions 1 and 2]{Hardy2006}, it follows that 
\begin{displaymath}
 \partial^\zeta_z\sigma(y,u(z)) = \sum_{\gamma \in \pi(\zeta)}
 M_{\gamma}\partial_u^{\abs{\gamma}}\sigma(y,u(z))
 \prod_i\partial^{\gamma_i}_zu(z),
\end{displaymath}
where $M_{\gamma}$ is a constant. 
The result follows by combining the above identities.
\end{proof}

\begin{proof}[Proof of Proposition \ref{proposition:SobolevBoundsOnViscApprox}]
We divide the proof into two steps.
 
\emph{Step~1 (uniform estimates on $\seq{u^n}_{n \geq 1}$)}. 
For all $\zeta < \alpha$, suppose
\begin{equation}\label{eq:InductionHypOnMultider}
 \sup_{0 \leq s \leq T} 
 \E{\norm{\partial^\zeta u^n(s)}_{p,\phi}^p} 
 \leq C_{\zeta,p} \qquad (2 \leq p < \infty).
\end{equation}
We claim that there exists 
a constant $C \geq 0$, independent of $\beta$ 
and $n$, and a number $\delta_\beta \geq 0$ such that 
\begin{equation}\label{eq:MainIndIneqDerEst}
 \norm{\partial^\alpha u^{n+1}}_{\beta,p,L^p(\R^d,\phi)} 
 \leq C + \delta_\beta\norm{\partial^\alpha u^{n}}_{\beta,p,L^p(\R^d,\phi)},
\end{equation}
where $\delta_\beta < 1$ for some $\beta > 0$, and 
$\norm{\cdot}_{\beta,p,L^p(\R^d,\phi)}$ is defined in \eqref{eq:betanorm}. 
Given \eqref{eq:MainIndIneqDerEst}, it follows that  
\begin{displaymath}
 	\norm{\partial^\alpha u^n}_{\beta,p,L^p(\R^d,\phi)} 
 	\leq C \sum_{k = 0}^{n-1}
  	\delta_\beta^k \leq \frac{C}{1 - \delta_\beta},   
\end{displaymath}
and we are done. 

To establish \eqref{eq:MainIndIneqDerEst}, observe that 
the weak derivative satisfies 
\begin{align*}
\partial^\alpha_xu^{n+1}(t,x) 
	&= \int_{\R^d}\Phi_\varepsilon(t,x-y) \partial_y^\alpha u^0(y)\,dy \\
	&\quad - \int_{0}^{t}\int_{\R^d} \nabla_x\Phi_\varepsilon(t-s,x-y) 
	\cdot \partial^\alpha_y f(u^n(s,y)) \,dyds \\
	&\quad + \int_{0}^{t}\int_Z \int_{\R^d} \Phi_\varepsilon(t-s,x-y)
	\partial^\alpha_y\sigma(y,u^n(s,y),z) \,dy\,W(ds,dz) \\
	&=: \mathcal{T}_1(t,x) + \mathcal{T}_2(t,x) + \mathcal{T}_3(t,x).
\end{align*}
To justify this, multiply by a test function and 
apply the Fubini theorem \cite[p.297]{Walsh1984}.  
By the triangle inequality we may estimate each term separately. 

Consider $\mathcal{T}_1$. By Lemma~\ref{lemma:HeatKernelYoungIneq},
\begin{displaymath}
\E{\norm{\mathcal{T}_1(t)}_{p,\phi}^p} 
  \leq \kappa_{1,d}^p\E{\norm{\partial^\alpha u^0}_{p,\phi}^p},
\end{displaymath}
where $\kappa_{1,d}$ is defined in Lemma~\ref{lemma:GGenEst}. 
By assumption (iii) it follows that there exists a constant $C$ such that 
\begin{equation}\label{eq:DerEstT1}
\norm{\mathcal{T}_1}_{\beta,p,L^p(\R^d,\phi)} \leq C.
\end{equation}

Consider $\mathcal{T}_2$. By Lemma~\ref{lemma:FGenEst},
\begin{align*}
\norm{\mathcal{T}_2}&_{\beta,p,L^p(\R^d,\phi)}^p 
  = \sup_{t \in [0,T]} e^{-\beta t} \E{\norm{\mathcal{T}_2(t)}_{p,\phi}^p} \\
  &\qquad \leq \kappa_{2,d}^p\left(2\sqrt{\frac{T}{\varepsilon}}\right)^{p-1}\sup_{t \in [0,T]}
  \int_0^t \frac{e^{-\beta(t-s)}}{\sqrt{\varepsilon(t-s)}}e^{-\beta s} 
  \E{\norm{\partial^\alpha f(u^n(s))}_{p,\phi}^p}\,ds \\
  &\qquad \leq \kappa_{2,d}^p\left(2\sqrt{\frac{T}{\varepsilon}}\right)^{p-1}
  \left(\int_0^T \frac{e^{-\beta(t-s)}}{\sqrt{\varepsilon(t-s)}}\,ds\right) 
  \norm{\partial^\alpha f(u^n(s))}_{\beta,p,L^p(\R^d,\phi)}^p.
\end{align*}
By Lemma~\ref{lemma:IteratedChainRule}, the triangle inequality, and 
the generalized H\"older inequality,
\begin{align*}
\E{\norm{\partial^\alpha f(u^n(s))}_{p,\phi}^p}^{1/p} 
&\leq \sum_{\gamma \in \pi(\alpha)} 
C_{\gamma,\alpha}\norm{\partial^{\abs{\gamma}}f}_{\infty}
\E{\norm{\prod_{i=1}^{\abs{\gamma}} \partial^{\gamma^i}u^n(s)}_{p,\phi}^p}^{1/p} \\
&\leq \sum_{\gamma \in \pi(\alpha)} 
C_{\gamma,\alpha}\norm{\partial^{\abs{\gamma}}f}_{\infty}
\prod_{i=1}^{\abs{\gamma}}\E{\norm{ \partial^{\gamma^i}u^n(s)}_{q_i,\phi}^{q_i}}^{1/q_i},
\end{align*}
whenever $\sum_{i = 1}^{\abs{\gamma}}\frac{1}{q_i} = \frac{1}{p}$. 
Since $\gamma$ is a partition of $\alpha$, $\sum_{i = 1}^{\abs{\gamma}}\abs{\gamma^i} 
= \abs{\alpha}$, and we may take $q_i = \abs{\alpha}p/\abs{\gamma^i}$. 
By assumption there exists a 
constant $C$, independent of $n$, such that 
\begin{displaymath}
\E{\norm{ \partial^{\gamma^i}u^n(s)}_{q_i,\phi}^{q_i}} \leq C,
\end{displaymath}
for all terms where $\gamma^i < \alpha$. Since $C_{\alpha,\alpha} = 1$, 
there is another constant $C$ such that
\begin{equation*}
\E{\norm{\partial^\alpha f(u^n(s))}_{p,\phi}^p}^{1/p} \leq C 
+ \norm{f'}_\infty \E{\norm{\partial^\alpha u^n(s)}_{p,\phi}^p}^{1/p},
\end{equation*}
for all $n \geq 1$. Multiply by $e^{-\beta t/p}$ and 
take the supremum to obtain
\begin{displaymath}
\norm{\partial^\alpha f(u^n)}_{\beta,p,L^p(\R^d,\phi)} \leq C 
+ \norm{f'}_\infty \norm{\partial^\alpha u^n}_{\beta,p,L^p(\R^d,\phi)}.
\end{displaymath}
It follows that 
\begin{equation}\label{eq:DerEstT2}
\begin{split}
\norm{\mathcal{T}_2}_{\beta,p,L^p(\R^d,\phi)} &
 \leq c_d\left(2\sqrt{\frac{T}{\varepsilon}}\right)^{1-1/p}\left(\int_0^T 
 \frac{e^{-\beta(t-s)}}{\sqrt{\varepsilon(t-s)}}\,ds\right)^{1/p} \\
 & \hphantom{XXXXXXXXX} \times \left(C + \norm{f'}_\infty 
 \norm{\partial^\alpha u^n}_{\beta,p,L^p(\R^d,\phi)}\right).
\end{split}
\end{equation}

Consider $\mathcal{T}_3$. By Lemma~\ref{lemma:IteratedChainRule}
\begin{displaymath}
\begin{split}
\mathcal{T}_3(t,x) &= \sum_{\zeta \leq \alpha}\sum_{\gamma \in \pi(\zeta)} 
C_{\gamma,\alpha}\int_{0}^{t}\int_Z \int_{\R^d} \Phi_\varepsilon(t-s,x-y) \\
&\hphantom{XXXXX} \times 
\partial_1^{\alpha-\zeta}\partial_2^{\abs{\gamma}}\sigma(y,u^n(s,y),z)
\left(\prod_{i=1}^{\abs{\gamma}} \partial_y^{\gamma^i}u^n(s,y)\right) \,dy\,W(ds,dz) \\
&= \mathcal{T}_3^0(t,x) + \mathcal{T}_3^1(t,x),
\end{split}
\end{displaymath}
where $\mathcal{T}_3^0$ contains the term with $\zeta = 0$. 
By Lemma~\ref{lemma:GGenEst}, assumption~(ii), and the 
generalised H\"older inequality,
\begin{align*}
\E{\norm{\mathcal{T}_3^1(t)}_{p,\phi}^p}^{1/p} 
&\leq \sum_{0 < \zeta \leq \alpha}\sum_{\gamma \in \pi(\zeta)} 
C_{\gamma,\alpha}c_p^{1/p}\kappa_{1,d} 
\norm{M_{\alpha-\zeta,\abs{\gamma}}}_{L^2(Z)} \\
& \hphantom{XXXXXX} \times \left(\int_0^t\prod_{i=1}^{\abs{\gamma}}
\E{\norm{ \partial^{\gamma^i}u^n(s)}_{q_i,\phi}^{q_i}}^{2/q_i}\,ds\right)^{1/2},
\end{align*}
where $q_i = \abs{\zeta}p/\abs{\gamma^i}$. The term $\mathcal{T}_3^0$ is 
estimated similarly by applying the second case of assumption (ii). 
It follows from \eqref{eq:InductionHypOnMultider} 
that there exists a constant $C$ such that 
\begin{displaymath}
\E{\norm{\mathcal{T}_3(t)}_{p,\phi}^p}^{1/p} 
\leq C + c_p^{1/p}\kappa_{1,d}\norm{M_{0,1}}_{L^2(Z)}
\left(\int_0^t \E{\norm{\partial^\alpha 
u^n(s)}_{p,\phi}^p}^{2/p}\,ds\right)^{1/2}.
\end{displaymath}
Multiplying by $(e^{-\beta t})^{1/p}$ and 
taking the supremum yields
\begin{equation}\label{eq:DerEstT3}
\begin{split}
\norm{\mathcal{T}_3}_{\beta,p,L^p(\R^d,\phi)} 
 &\leq C + c_p^{1/p}\kappa_{1,d}\norm{M_{0,1}}_{L^2(Z)} \\
 & \qquad \times \sup_{t \in [0,T]}\left(\int_0^t e^{-2\beta(t-s)/p}
 \left(e^{-\beta s}\E{\norm{\partial^\alpha u^n(s)}_{p,\phi}^p}\right)^{2/p}\,ds\right)^{1/2} \\
 &\leq C + c_p^{1/p}\kappa_{1,d}\norm{M_{0,1}}_{L^2(Z)}\sqrt{\frac{p}{2\beta}}
 \norm{\partial^\alpha u^n}_{\beta,p,L^p(\R^d,\phi)}.
\end{split}
\end{equation}

Combining \eqref{eq:DerEstT1}, \eqref{eq:DerEstT2}, and \eqref{eq:DerEstT3} 
we obtain inequality \eqref{eq:MainIndIneqDerEst}, where
\begin{align*}
\delta_\beta &= \kappa_{2,d}\left(2\sqrt{\frac{T}{\varepsilon}}\right)^{1-1/p}
\left(\int_0^T \frac{e^{-\beta(t-s)}}{\sqrt{\varepsilon(t-s)}}\,ds\right)^{1/p}\norm{f'}_\infty \\
	      & \hphantom{XXXXXXXXXXXXXXXXXX}
	      + c_p^{1/p}\kappa_{1,d}\norm{M_{0,1}}_{L^2(Z)}\sqrt{\frac{p}{2\beta}}.
\end{align*}
It is clear that $\delta_\beta \rightarrow 0$ as 
$\beta \rightarrow \infty$ and so \eqref{eq:MainIndIneqDerEst} follows. 
By induction, estimate \eqref{eq:InductionHypOnMultider} holds for all $\zeta \leq \alpha$.

\emph{Step~2 (convergence of $u^n$)}. 
Fix $ \alpha \leq \tilde{\alpha}$. We apply 
Theorem~\ref{theorem:YoungMeasureLimitOfComposedFunc} 
to the familiy $\seq{\partial^\alpha u^n}_{n \geq 1}$ on the space
\begin{displaymath}
(X,\mathscr{A},\mu) = 
(\Omega \times \Pi_T, \Pred \otimes \Borel{\R^d}, dP \otimes dt \otimes \phi(x)dx).
\end{displaymath}
By means of \eqref{eq:InductionHypOnMultider},
\begin{displaymath}
\sup_{n \geq 1}\left\{\E{\iint_{\Pi_T} 
\abs{\partial^\alpha u^n(t,x)}^2 \phi(x)\,dxdt}\right\} < \infty.
\end{displaymath}
Hence, $\seq{\partial^\alpha u^n}_{n \geq 1}$ has a 
Young measure limit $\nu^\alpha \in \Young{\Omega \times \Pi_T}$. 
Next, define $\partial^\alpha \ue(t,x,\omega) 
:= \int_\R \,d\nu^\alpha_{t,x,\omega}$. 
By definition, the limit has a $\Pred \otimes \Borel{\R^d}$ measurable version. 
Furthermore, $\partial^\alpha \ue \in L^p(\Omega \times [0,T];L^p(\R^d,\phi))$, 
cf.~proof of Theorem~\ref{theorem:ExistenceOfSolution} 
and Lemma~\ref{lemma:LebBochRepr}. Let us show 
that $\partial^\alpha \ue$ is the 
weak derivative of $\ue$. To this end, observe that 
\begin{displaymath}
	\iint_{\Pi_T}u^n(t,x)\partial^\alpha \test  \,dxdt = 
	(-1)^{\abs{\alpha}} \iint_{\Pi_T} \partial^\alpha u^n(t,x) \test  \,dxdt,
\end{displaymath}
for any $\test \in C^\infty_c(\R^d)$. By Lemma~\ref{lemma:UniformIntCriteria}(ii) 
and Theorem~\ref{theorem:DunfordPettis}, there is 
a subsequence $\seq{n(j)}_{j \geq 1}$ such that for any $A \in \F$,
\begin{align*}
 \lim_{j \rightarrow \infty}\E{\car{A}\iint_{\Pi_T} \partial^\alpha u^{n(j)} \test \,dxdt} 
&= \E{\iint_{\Pi_T}\int_\R \test(t,x)\car{A}(\omega)\,d\nu^\alpha_{t,x,\omega}(\xi)\,dxdt} \\
&= \E{\car{A}\iint_{\Pi_T} \partial^\alpha \ue \test\,dxdt}.
\end{align*}
As $u^n \rightarrow \ue$ in 
$\mathcal{X}_{\beta,2,L^2(\R^d,\phi)}$, it follows that 
\begin{displaymath}
	\lim_{n \rightarrow \infty} \E{\car{A}\iint_{\Pi_T}u^n 
	\partial^\alpha \test \,dxdt} 
	= \E{\car{A}\iint_{\Pi_T}\ue \partial^\alpha \test \,dxdt}.
\end{displaymath}
This concludes the proof.
\end{proof}

\subsection{Malliavin differentiability}
We will establish the Malliavin differentiability 
of the viscous approximations. Furthermore, we will observe that the Malliavin 
derivative satisfies a linear parabolic equation. This equation is then applied 
to show that $D_{r,z}\ue(t,x) \rightarrow \sigma(x,\ue(r,x),z)$ 
as $t \downarrow r$ in a weak 
sense (Lemma~\ref{lemma:MalliavinDerivativeWeakTimeCont}); a 
property that is crucial in the proof of uniqueness. 

\begin{proposition}[Malliavin derivative of viscous 
approximation]\label{proposition:MalliavinDiffOfViscApprox}
Suppose \eqref{assumption:LipOnf} and \eqref{assumption:LipOnSigma} 
are satisfied. Fix $\phi \in \mathfrak{N}$ and $u^0 \in L^2(\Omega,\F_0,P;L^2(\R^d,\phi))$. 
Let $\ue$ be the solution of \eqref{eq:ViscousApprox}. 
Then $\ue$ belongs to $\D^{1,2}(L^2([0,T];L^2(\R^d,\phi)))$ and 
\begin{equation}\label{eq:PointwiseMalliavinBoundOnVisc}
	\esssup_{0 \leq t \leq T} \norm{\ue(t)}_{\D^{1,2}(L^2(\R^d,\phi))} < \infty.
\end{equation}
Furthermore, for $dr \otimes d\mu$-a.a.~$(r,z)$, the $L^2(\R^d,\phi)$-valued 
process $\seq{D_{r,z}\ue(t)}_{t>r}$ is a predictable weak solution of
\begin{equation}\label{eq:MallEqSat}
 \left\{\begin{split}
   dw + \nabla \cdot (f'(\ue)w)\dt 
   &= \int_Z  \partial_2\sigma(x,\ue,z')w\, W(dt,dz') 
   + \varepsilon\Delta w\dt, \quad t \in [r,T], \\
   w(r,x,z) &= \sigma(x,\ue(r,x),z),
 \end{split}\right.
\end{equation}
while $D_{r,z}\ue(t) = 0$ if $r > t$. Furthermore
 \begin{equation}\label{eq:supInrMallEst}
	\esssup_{r \in [0,T]} \left\{\sup_{t \in [0,T]}
	E{\norm{D_r\ue(t)}_{L^2(Z;L^2(\R^d,\phi))}^2}\right\} < \infty.
 \end{equation}
\end{proposition}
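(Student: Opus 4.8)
The plan is to propagate Malliavin differentiability along the Picard iteration $u^0 = 0$, $u^{n+1} = \mathcal{S}(u^n)$ that was used to construct $\ue$, obtain estimates on $Du^n$ that are uniform both in $n$ and in $(r,z)$, and then let $n \to \infty$ using the closedness of the Malliavin derivative. First I would show, by induction on $n$, that $u^n(t,x) \in \D^{1,2}$ for a.e.\ $(t,x)$ and that, for $r < t$,
\begin{equation*}
\begin{split}
D_{r,z}u^{n+1}(t,x)
&= \int_{\R^d}\Phi_\varepsilon(t-r,x-y)\,\sigma(y,u^n(r,y),z)\,dy \\
&\quad - \int_r^t\int_{\R^d}\nabla_x\Phi_\varepsilon(t-s,x-y)\cdot f'(u^n(s,y))\,D_{r,z}u^n(s,y)\,dyds \\
&\quad + \int_r^t\int_Z\int_{\R^d}\Phi_\varepsilon(t-s,x-y)\,g^n(s,y,z')\,D_{r,z}u^n(s,y)\,dy\,W(ds,dz'),
\end{split}
\end{equation*}
while $D_{r,z}u^{n+1}(t,x) = 0$ for $r \ge t$; here $g^n$ is the bounded measurable function ($\abs{g^n(\cdot,\cdot,z')} \le M(z')$) produced by the chain rule for Lipschitz functions applied to $u \mapsto \sigma(x,u,z')$, agreeing with $\partial_2\sigma$ wherever the latter exists. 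This identity comes from applying $D_{r,z}$ to the mild representation \eqref{eq:MildViscSol} of $u^{n+1} = \mathcal{S}(u^n)$: the initial term $\Phi_\varepsilon(t)\star u^0$ is annihilated since $u^0$ is $\F_0$-measurable; the Malliavin derivative commutes with the deterministic convolution via the chain rule (using \eqref{assumption:LipOnf} and \eqref{assumption:LipOnSigma}, cf.~\cite{NualartMalliavinCalc2006}); and differentiating the stochastic convolution yields the ``trace'' term $\Phi_\varepsilon(t-r)\star\sigma(\cdot,u^n(r,\cdot),z)$ together with the integral of the Malliavin derivative of the integrand.

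Next I would run the contraction argument from the proof of Proposition~\ref{proposition:SobolevBoundsOnViscApprox}, now for $Du^n$, estimating the three terms above by Lemma~\ref{lemma:HeatKernelYoungIneq} (the trace term), Lemma~\ref{lemma:FGenEst} (the flux term) and Lemma~\ref{lemma:GGenEst} (the stochastic term). Measuring $Du^n$ in the norm $\big(\esssup_{r}\sup_{t}e^{-\beta t}\E{\norm{D_ru^n(t)}_{L^2(Z;L^2(\R^d,\phi))}^2}\big)^{1/2}$, this yields an inequality of the form $\mathscr{D}^{n+1} \le C + \delta_\beta\,\mathscr{D}^n$ with $\delta_\beta \to 0$ as $\beta \to \infty$. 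The point is that $C$ is independent of $r$ and $n$: the trace term is bounded by $\kappa_{1,d}\norm{M}_{L^2(Z)}\big(\norm{\phi}_{L^1(\R^d)}+\sup_t\E{\norm{u^n(t)}_{2,\phi}^2}^{1/2}\big)$, and $\sup_t\E{\norm{u^n(t)}_{2,\phi}^2}$ is bounded uniformly in $n$ by the fixed-point construction (cf.~\eqref{eq:BoundednessOfViscApprox}). Choosing $\beta$ so that $\delta_\beta < 1$ gives $\sup_n \mathscr{D}^n < \infty$, and in particular $\{Du^n\}$ is bounded in $L^2(\Omega;L^2([0,T]\times Z;L^2([0,T];L^2(\R^d,\phi))))$.

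Since $u^n \to \ue$ in $L^2(\Omega;L^2([0,T];L^2(\R^d,\phi)))$ with $\{Du^n\}$ bounded as above, the closedness of $D$ (\cite{NualartMalliavinCalc2006}) gives $\ue \in \D^{1,2}(L^2([0,T];L^2(\R^d,\phi)))$ and $Du^n \rightharpoonup D\ue$ weakly; the bounds \eqref{eq:PointwiseMalliavinBoundOnVisc} and \eqref{eq:supInrMallEst} then follow from the previous step by weak lower semicontinuity of the norm (applied after the $t$- and $r$-localisations). Predictability of $(t,\omega)\mapsto D_{r,z}\ue(t)$ is inherited from the $D_{r,z}u^n$. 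To identify the equation, I would introduce $\bar w$, the unique mild solution of \eqref{eq:MallEqSat} obtained by the same contraction with $\ue$ frozen (its coefficients $f'(\ue)$ and $\partial_2\sigma(\cdot,\ue,\cdot)$ being bounded), and estimate $D_{r,z}u^{n+1}-\bar w$ by a Gronwall-type bound: the difference of the trace terms tends to zero since $u^n(r)\to\ue(r)$ and $\sigma$ is Lipschitz; the difference of the flux terms tends to zero because $f' \in C^0$ by \eqref{assumption:LipOnf}, so $f'(u^n)\to f'(\ue)$ boundedly a.e.\ along a subsequence; and the difference of the stochastic terms splits into a piece absorbed by Gronwall and the piece $\int_r^t\int_Z\Phi_\varepsilon(t-s)\star[(g^n-\partial_2\sigma(\cdot,\ue(s,\cdot),z'))\bar w(s,\cdot)]\,W(ds,dz')$. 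This identifies $D_{r,z}\ue = \bar w$, so $\seq{D_{r,z}\ue(t)}_{t>r}$ is the mild, hence weak, solution of \eqref{eq:MallEqSat}, while $D_{r,z}\ue(t)=0$ for $r>t$.

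The hard part is the last term: when $\sigma(x,\cdot,z)$ is only Lipschitz, $\partial_2\sigma$ is merely bounded and measurable, so $g^n = \partial_2\sigma(\cdot,u^n,\cdot)$ need not converge to $\partial_2\sigma(\cdot,\ue,\cdot)$ and $(g^n-\partial_2\sigma(\cdot,\ue,\cdot))\bar w$ need not vanish in $L^2$ — this is exactly the discontinuous-coefficient difficulty discussed in the introduction. I would resolve it by an extra approximation: replace $\sigma$ by functions $\sigma_\kappa$ that are, in addition, $C^1$ in $u$ and satisfy \eqref{assumption:LipOnSigma} uniformly in $\kappa$, prove the proposition for each $\sigma_\kappa$, and pass to the limit $\kappa \to 0$ using the continuous-dependence estimate of Proposition~\ref{proposition:ContDependVisc} together with the uniform bounds from the second step; in the limit one recovers \eqref{eq:MallEqSat} with $\partial_2\sigma$ read in the a.e.\ sense. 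The weak attainment of the initial datum $w(r)=\sigma(\cdot,\ue(r,\cdot),z)$ is not needed here — it is the separate statement of Lemma~\ref{lemma:MalliavinDerivativeWeakTimeCont}.
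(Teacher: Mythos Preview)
Your approach to the uniform bounds via the Picard iteration --- deriving the recursion for $D_{r,z}u^{n+1}$, estimating the three pieces with Lemmas~\ref{lemma:HeatKernelYoungIneq}, \ref{lemma:FGenEst}, \ref{lemma:GGenEst} to get a contraction inequality $\mathscr{D}^{n+1}\le C+\delta_\beta\mathscr{D}^n$, and then invoking the closedness of $D$ to pass to the limit --- is essentially the paper's Step~1/Step~2.

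The genuine difference is in how the equation \eqref{eq:MallEqSat} is identified. You propose comparing $D_{r,z}u^{n+1}$ with the mild solution $\bar w$ of the limit equation and sending $n\to\infty$; this is what forces you into the $g^n\to\partial_2\sigma(\cdot,\ue,\cdot)$ issue and the extra $\sigma_\kappa$-approximation. The paper's route is simpler and sidesteps this entirely: once $\ue\in\D^{1,2}$ is established, one applies $D_{r,z}$ \emph{directly} to the mild equation \eqref{eq:MildViscSol} for $\ue$ itself (using exactly the same chain-rule and differentiation-under-the-stochastic-integral ingredients as in the computation of $D_{r,z}u^{n+1}$, cf.~\cite[Propositions~1.2.4, 1.3.8]{NualartMalliavinCalc2006}). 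This immediately exhibits $t\mapsto D_{r,z}\ue(t)$ as the mild solution of \eqref{eq:MallEqSat}; one then checks conditions (F) and (G) for the linear equation (the coefficients $f'(\ue)$ and $\partial_2\sigma(\cdot,\ue,\cdot)$ being bounded) and invokes \cite[Theorem~9.15]{PeszatZabczyk2007} to upgrade mild to weak. There is no limit in $n$ at the level of the equation, hence no discontinuous-coefficient difficulty. Your route would presumably work, but is considerably more laborious. (Incidentally, you are more careful than the paper in distinguishing the Lipschitz-chain-rule output $g$ from the a.e.\ derivative $\partial_2\sigma$; the paper simply writes $\partial_2\sigma$ throughout without comment.)
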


\begin{remark}
Let $w_{r,z}(t,x) = D_{r,z}\ue(t,x)$, $t > r$. 
Estimate \eqref{eq:supInrMallEst} may be seen 
as a consequence of the Gr\"onwall-type estimate
\begin{displaymath}
	\E{\norm{w_{r,z}(t)}_{2,\phi}^2} \leq 
 	\left(1 + Ce^{C(t-r)}\right)
	\E{\norm{w_{r,z}(r)}_{2,\phi}^2},
\end{displaymath}
for $t \geq r$. From the perspective of a uniqueness result (see 
Lemma~\ref{lemma:MalliavinDerivativeWeakTimeCont}), it is of 
interest to know whether one can derive such estimates 
independent of $\varepsilon$.
\end{remark}

\begin{proof} 
We divide the proof into two steps.

\emph{Step~1 (uniform bounds)}. Consider the Picard 
approximation $\seq{u^n}_{n \geq 1}$ of $\ue$. We want to prove that 
\begin{equation}\label{eq:UniformBoundOnMallDer}
	\sup_{0 \leq t \leq T} \norm{u^n(t)}_{\D^{1,2}(L^2(\R^d,\phi))} 
	\leq C, \quad \mbox{for all $n \geq 1$}.
\end{equation}
Recall that 
\begin{displaymath}
 \norm{u^n(t)}_{\D^{1,2}(L^2(\R^d,\phi))}^2 = \E{\norm{u^n(t)}_{2,\phi}^2} 
 + \E{\norm{Du^n(t)}_{H \otimes L^2(\R^d,\phi)}^2},
\end{displaymath}
where $H$ is the space $L^2([0,T] \times Z)$. 
Note that there is a constant $C$ such that 
\begin{equation}\label{eq:UniformBoundOnun}
 \sup_{0 \leq t \leq T}\E{\norm{u^n(t)}_{2,\phi}^2} < C,
\end{equation}
by the proof of Proposition~\ref{proposition:SobolevBoundsOnViscApprox}, 
Step~1 with $\alpha = 0$. Next, we claim that 
there exists a constant $C$ such that 
\begin{equation}\label{eq:StepwiseBoundForMallDiff}
\norm{Du^{n+1}}_{\beta,2,H \otimes L^2(\R^d,\phi)} \leq C 
+\delta_\beta \norm{Du^n}_{\beta,2,H \otimes L^2(\R^d,\phi)}, 
\quad \mbox{where $\delta_\beta < 1$},
\end{equation}
for some $\beta > 0$. We then conclude that 
\begin{displaymath}
 \norm{Du^{n}}_{\beta,2,H \otimes L^2(\R^d,\phi)} \leq C \sum_{k = 0}^{n-1} 
 \delta_\beta^k \leq \frac{C}{1-\delta_\beta},
\end{displaymath}
and \eqref{eq:UniformBoundOnMallDer} follows. 

Let us establish \eqref{eq:StepwiseBoundForMallDiff}. 
By \cite[Propositions 1.2.4, 1.3.8, and 1.2.8]{NualartMalliavinCalc2006},
\begin{equation}\label{eq:MallDiffOfUn}
 \begin{split}
	&D_{r,z}u^{n+1}(t,x) \\ 
	& \quad = \int_{\R^d} \Phi_\varepsilon(t-r,x-y) \sigma(y,u^n(r,y),z) \,dy\\
	&\qquad - \int_{r}^{t}\int_{\R^d} \nabla_x\Phi_\varepsilon(t-s,x-y) 
	\cdot f'(u^n(s,y))D_{r,z}u^n(s,y) \,dyds \\
	&\qquad + \int_{r}^{t}\int_Z \int_{\R^d} \Phi_\varepsilon(t-s,x-y) 
	\partial_2\sigma(y,u^n(s,y),z')D_{r,z}u^n(s,y) \,dy\,W(ds,dz') \\
	&\quad =: \mathcal{T}_1^n + \mathcal{T}_2^n + \mathcal{T}_3^n,
 \end{split}
\end{equation}
for all $r \in (0,t]$. Whenever $r > t$, $D_ru^{n+1}(t) = 0$ 
since $u^{n+1}$ is adapted, see \cite[Corollary~1.2.1]{NualartMalliavinCalc2006}. 
We proceed by estimating each term of \eqref{eq:MallDiffOfUn} separately.

Consider $\mathcal{T}_1^n$. By Lemma~\ref{lemma:HeatKernelYoungIneq}
and assumption \eqref{assumption:LipOnSigma},
\begin{align*}
 \norm{\mathcal{T}_1^n(r,t)}_{L^2(Z;L^2(\R^d,\phi))}^2 
    &= \int_Z \norm{\Phi_\varepsilon(t-r) \star  \sigma(\cdot,u^n(r),z)}_{2,\phi}^2\,d\mu(z) \\
    &\leq \kappa_{1,d}^2\norm{M}_{L^2(Z)}^2
    \left(\norm{\phi}_{L^1(\R^d)} + \norm{u^n(r)}_{2,\phi}\right)^2,
\end{align*}
for each $0 \leq r < t$. 
It follows from \eqref{eq:UniformBoundOnun} that 
\begin{equation}\label{eq:MallDiffEstT1}
  \begin{split}
   &\norm{\mathcal{T}_1^n}_{\beta,2,H \otimes L^2(\R^d,\phi)}   \\
	& \hphantom{XX}\leq \kappa_{1,d}\norm{M}_{L^2(Z)} 
	\left(\sup_{0 \leq t \leq T}e^{-\beta t}\E{\int_0^t 
	(\norm{\phi}_{L^1(\R^d)} + \norm{u^n(r)}_{2,\phi})^2\,dr}\right)^{1/2} \leq C.
  \end{split}
\end{equation}

Consider $\mathcal{T}_2^n$. By Lemma~\ref{lemma:FGenEst},
\begin{align*}
 &\norm{\mathcal{T}_2^n(r,t)}_{L^2(Z;L^2(\R^d,\phi))}^2 
  = \int_Z \norm{\int_{r}^{t} \nabla\Phi_\varepsilon(t-s) 
  \star f'(u^n(s))D_{r,z}u^n(s) ds}_{2,\phi}^2\, d\mu(z) \\
  & \hphantom{XXXXXX}\leq 2\kappa_{2,d}^2\norm{f'}_{\infty}^2
  \sqrt{\frac{t-r}{\varepsilon}}\int_{r}^{t} \frac{1}{\sqrt{\varepsilon(t-s)}}
  \int_Z\norm{D_{r,z}u^n(s)}_{2,\phi}^2d\mu(z)\,ds.
\end{align*}
Multiplication by $e^{-\beta t}$ and integration in $r$ yields
\begin{align*}
 &e^{-\beta t}\E{\norm{\mathcal{T}_2^n(t)}_{H \otimes L^2(\R^d,\phi)}^2} \\
 & \hphantom{XXX}\leq 2\kappa_{2,d}^2\norm{f'}_{\infty}^2
 \sqrt{\frac{t}{\varepsilon}}\int_{0}^{t} \frac{e^{-\beta(t-s)}}{\sqrt{\varepsilon(t-s)}}
    e^{-\beta s}\E{\norm{Du^n(s)}_{H \otimes L^2(\R^d,\phi)}^2}\,ds.
\end{align*}
It follows that
\begin{multline}\label{eq:MallDiffEstT2}
 \norm{\mathcal{T}_2^n}_{\beta,2,H \otimes L^2(\R^d,\phi)} 
    \leq \kappa_{2,d}\norm{f'}_{\infty}\left(2\sqrt{\frac{T}{\varepsilon}}\int_{0}^{T} 
    \frac{e^{-\beta(T-s)}}{\sqrt{\varepsilon(T-s)}}\,ds\right)^{1/2} \\
    \times \norm{Du^n(s)}_{\beta,2,H \otimes L^2(\R^d,\phi)}.
\end{multline}

Consider $\mathcal{T}_3^n$. Due to \eqref{assumption:LipOnSigma},
\begin{displaymath}
	\abs{\partial_2 \sigma(x,u,z')D_{r,z}u^n(s,y)} 
	\leq M(z')\abs{D_{r,z}u^n(s,y)}.
\end{displaymath}
By Lemma~\ref{lemma:GGenEst},
\begin{multline*}
	\E{\norm{\mathcal{T}_3^n(r,t)}_{L^2(Z;L^2(\R^d,\phi))}^2} \\
	\leq c_2 \kappa_{1,d}^2\norm{M}_{L^2(Z)}^2 
	\int_r^t \E{\norm{D_{r}u^n(s)}_{L^2(Z;L^2(\R^d,\phi))}^2}\,ds.
\end{multline*}
Integrate in $r$ and multiply by $e^{-\beta t}$ to obtain
\begin{multline*}
 e^{-\beta t}\E{\norm{\mathcal{T}_3^n(t)}_{H \otimes L^2(\R^d,\phi)}^2} \\
  \leq c_2 \kappa_{1,d}^2\norm{M}_{L^2(Z)}^2 \int_0^te^{-\beta(t-s)}e^{-\beta s}
  \E{\norm{Du^n(s)}_{H \otimes L^2(\R^d,\phi)}^2}\,ds.
\end{multline*}
Hence,
\begin{equation}\label{eq:MallDiffEstT3}
	\norm{\mathcal{T}_3^n}_{\beta,2,H \otimes L^2(\R^d,\phi)}
  	\leq c_2^{1/2}\kappa_{1,d}\norm{M}_{L^2(Z)} 
  	\frac{1}{\sqrt{\beta}}\norm{Du^n}_{\beta,2,H \otimes L^2(\R^d,\phi)}.
\end{equation}
Combining \eqref{eq:MallDiffEstT1}, \eqref{eq:MallDiffEstT2}, 
and \eqref{eq:MallDiffEstT3} 
yields \eqref{eq:StepwiseBoundForMallDiff} with
\begin{displaymath}
	\delta_\beta = \kappa_{2,d}\norm{f'}_{\infty}
	\left(2\sqrt{\frac{T}{\varepsilon}}\int_{0}^{T} 
	\frac{e^{-\beta(T-s)}}{\sqrt{\varepsilon(T-s)}}\,ds\right)^{1/2} 
	+ c_2^{1/2}\kappa_{1,d}\norm{M}_{L^2(Z)} \frac{1}{\sqrt{\beta}},
\end{displaymath}
where $\kappa_{2,d}$ is the constant from Lemma~\ref{lemma:FGenEst},
while $c_2$ and $\kappa_{1,d}$ are the 
constants from Lemma~\ref{lemma:GGenEst}. 
Note that $\delta_\beta \downarrow 0$ as 
$\beta \rightarrow \infty$. Leaving out the integration in $r$ 
throughout Step~1, we deduce the estimate
\begin{equation}\label{eq:rBoundOnMallApprox}
	\norm{D_ru^{n}}_{\beta,2,L^2(Z) \otimes 
	L^2(\R^d,\phi)} \leq \frac{C}{1-\delta_\beta}.
\end{equation}

\emph{Step~2 (convergence)}. 
Let $E$  denote the space $L^2([0,T];L^2(\R^d,\phi))$ 
and recall that $H = L^2(Z \times [0,T])$. 
Consider $\seq{u^n}_{n \geq 1}$ as a sequence in $\D^{1,2}(E)$.  
By \eqref{eq:UniformBoundOnMallDer} and the Hilbert 
space valued version of \cite[Lemma~1.2.3]{NualartMalliavinCalc2006} 
(see \cite[Lemma~5.2]{CarmonaTehranchi2006}), it follows that $\ue$ 
belongs to $\D^{1,2}(E)$ and that $Du^n \rightharpoonup D\ue$ (weakly)
in $L^2(\Omega;H \otimes E)$, i.e., for any $h \in H, \test \in E$, 
and $V \in L^2(\Omega)$,
\begin{displaymath}
	\E{\inner{Du^n}{h \otimes \test}_{H \otimes E}V} 
	\rightarrow  \E{\inner{D\ue}{h \otimes \test}_{H \otimes E}V}. 
\end{displaymath}
It follows that the map 
\begin{displaymath}
 (t,\omega) \mapsto D\ue(t,\omega) \in L^2(H \otimes L^2(\R^d,\phi))
\end{displaymath}
is $\Pred^*$-measurable. Note that Lemma~\ref{lemma:LebBochRepr} 
extends to this case, so that $(t,\omega) \mapsto D_{r,z}\ue(t,\omega)$ 
is $\Pred^*$-measurable for $dr \otimes d\mu$ almost all $(r,z) \in [0,T] \times Z$.

For each fixed $t \in [0,T]$, we conclude by \eqref{eq:UniformBoundOnMallDer} 
that $\ue(t) \in \D^{1,2}(L^2(\R^d,\phi))$, where $Du^n(t) \rightharpoonup D\ue(t)$ 
(weakly) along some subsequence. Besides, this limit agrees $dt$-almost everywhere 
with the evaluation of the limit taken in $\D^{1,2}(E)$. This follows by definition for 
smooth Hilbert space valued random variables 
and may be extended to the general case by approximation. 
The weak lower semicontinuity of the norm yields 
\eqref{eq:PointwiseMalliavinBoundOnVisc}. 
Similarly, we may apply \eqref{eq:rBoundOnMallApprox} and 
the Banach-Alaoglu theorem to extract a weakly convergent subsequence 
in the space $L^\infty([0,T];\mathcal{X}_{\beta,2,L^2(Z) \otimes L^2(\R^d,\phi)})$. 
This yields the bound \eqref{eq:supInrMallEst}. 

As above, $\ue(t,x) \in \D^{1,2}$ for $dt \otimes dx$-almost all $(t,x)$, and for 
such $(t,x)$ we have $D_{r,z}\ue(t,x) = D\ue(t,x,r,z)$ for $d\mu \otimes dr$ almost 
all $(r,z)$ where $D\ue(t,x,r,z)$ denotes the evaluation of the limit taken in $\D^{1,2}(E)$. 
Taking the Malliavin derivative of \eqref{eq:MildViscSol} (as above on $u^{n+1}$) 
it follows that $t \mapsto D_{r,z}\ue(t)$ is a mild solution 
of \eqref{eq:MallEqSat} for $dr \otimes d\mu$ almost all $(r,z)$. 
To conclude by \cite[Theorem~9.15]{PeszatZabczyk2007} that it 
is a weak solution, we verify conditions (F) 
and (G), with $F(w,t) = \nabla \cdot (f'(\ue(t))w)$ and
\begin{displaymath}
	G(w,t)h(x) = \int_Z \partial_2\sigma(x,\ue(t,x),z)w(x)h(z)\,d\mu(z).
\end{displaymath}
\end{proof}

The next result concerns the limit of $D_{r,z}\ue(t,x)$ as $t \downarrow r$. 
In view of Lemma~\ref{proposition:MalliavinDiffOfViscApprox}, this is a 
question about the satisfaction of the initial condition for \eqref{eq:MallEqSat}.

\begin{lemma}\label{lemma:MalliavinDerivativeWeakTimeCont}
Let $\phi \in C^\infty_c(\R^d)$ be non-negative. In the setting of 
Proposition~\ref{proposition:MalliavinDiffOfViscApprox}, for 
$\Psi \in L^2(\Omega \times Z;L^2(\R^d,\phi))$, set
\begin{displaymath}
\mathcal{T}_{r_0}(\Psi) := 
\E{\iiint\limits_{\quad Z \times \Pi_T} \left(D_{r,z}\ue(t,x) 
-\sigma(x,\ue(r,x),z)\right)J^+_{r_0}(t-r) \Psi\phi\,dtdxd\mu(z)}.
\end{displaymath} 
Then there exists a constant $C$ independent of $r_0$ such that 
\begin{equation}\label{eq:UniformBoundOnTr0}
\abs{\mathcal{T}_{r_0}(\Psi)} \leq C \E{\norm{\Psi}_{L^2(Z;L^2(\R^d,\phi))}^2}^{1/2}.
\end{equation}
and $\lim_{r_0 \downarrow 0}\mathcal{T}_{r_0}(\Psi) = 0$ 
for $dr$-almost all $r \in [0,T]$.
\end{lemma}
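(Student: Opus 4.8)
The plan is to exploit the mild representation of the Malliavin derivative established in Proposition~\ref{proposition:MalliavinDiffOfViscApprox}. Taking the Malliavin derivative in \eqref{eq:MildViscSol} (exactly as in the proof of that proposition), we have, for $dr\otimes d\mu$-a.a.~$(r,z)$ and all $t>r$,
\begin{equation*}
 D_{r,z}\ue(t) - \sigma(\cdot,\ue(r,\cdot),z) = \mathcal{A}_1 + \mathcal{A}_2 + \mathcal{A}_3,
\end{equation*}
where $\mathcal{A}_1 = \Phi_\varepsilon(t-r)\star\sigma(\cdot,\ue(r,\cdot),z) - \sigma(\cdot,\ue(r,\cdot),z)$, $\mathcal{A}_2 = -\int_r^t \nabla_x\Phi_\varepsilon(t-s)\star\bigl(f'(\ue(s))D_{r,z}\ue(s)\bigr)\,ds$ is the drift convolution, and $\mathcal{A}_3$ is the corresponding stochastic convolution. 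Since $J_{r_0}^+$ integrates to one and is supported in $(0,2r_0)$, the Cauchy--Schwarz inequality — first in $x$ (in $L^2(\R^d,\phi)$), then in $(\omega,z,t)$ with respect to $J_{r_0}^+(t-r)\,dt\,d\mu(z)\,dP$ — gives $\abs{\mathcal{T}_{r_0}(\Psi)} \leq \bigl(\sqrt{a_{r_0}^1} + \sqrt{a_{r_0}^2} + \sqrt{a_{r_0}^3}\bigr)\,\E{\norm{\Psi}_{L^2(Z;L^2(\R^d,\phi))}^2}^{1/2}$, where
\begin{equation*}
 a_{r_0}^i := \E{\int_Z\int_r^{r+2r_0}\norm{\mathcal{A}_i(r,t,\cdot,z)}_{2,\phi}^2\, J_{r_0}^+(t-r)\,dt\,d\mu(z)}.
\end{equation*}
Let $\phi_0 \in \mathfrak{N}$ be the weight used to pose \eqref{eq:ViscousApprox}; since every element of $\mathfrak{N}$ is strictly positive and $\phi$ is compactly supported, $\phi \leq C\phi_0$ and hence $\norm{\cdot}_{2,\phi} \leq C\norm{\cdot}_{2,\phi_0}$, so all three $\mathcal{A}_i$ may be estimated via the weighted heat-kernel inequalities of Lemmas~\ref{lemma:HeatKernelYoungIneq}, \ref{lemma:FGenEst}, and~\ref{lemma:GGenEst} applied with $\phi_0$.

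For $\mathcal{A}_2$ and $\mathcal{A}_3$ I would show $a_{r_0}^2, a_{r_0}^3 = O(r_0)$. Applying Lemma~\ref{lemma:FGenEst} (with $q=2$, on the interval $[r,t]$ by translation invariance of $\Phi_\varepsilon$) together with $\abs{f'(\ue)} \leq \norm{f'}_\infty$, integrating in $z$, and using \eqref{eq:supInrMallEst}, one gets $\E{\int_Z\norm{\mathcal{A}_2(r,t,\cdot,z)}_{2,\phi_0}^2\,d\mu} \leq C\,(t-r)$ for $dr$-a.a.~$r$; similarly, applying Lemma~\ref{lemma:GGenEst} for fixed $z$ with $K(s,\cdot) = \abs{D_{r,z}\ue(s)}$ and the bound $\abs{\partial_2\sigma(\cdot,\ue(s),\cdot)} \leq M$ from \eqref{assumption:LipOnSigma}, integrating in $z$, and using \eqref{eq:supInrMallEst} again, gives $\E{\int_Z\norm{\mathcal{A}_3(r,t,\cdot,z)}_{2,\phi_0}^2\,d\mu} \leq C\int_r^t \E{\norm{D_r\ue(s)}_{L^2(Z;L^2(\R^d,\phi_0))}^2}\,ds \leq C'(t-r)$. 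Since $J_{r_0}^+$ is supported in $(0,2r_0)$, this yields $a_{r_0}^2, a_{r_0}^3 \leq C r_0$, so these contributions respect \eqref{eq:UniformBoundOnTr0} and vanish as $r_0 \downarrow 0$.

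The term $a_{r_0}^1$ is the decisive one. Writing $g_{r,z} := \sigma(\cdot,\ue(r,\cdot),z)$, the uniform bound \eqref{eq:estOnHeat} gives, for $r_0$ sufficiently small, $\norm{\Phi_\varepsilon(t-r)\star g_{r,z} - g_{r,z}}_{2,\phi_0}^2 \leq 2(1+\kappa_{1,d}^2)\norm{g_{r,z}}_{2,\phi_0}^2$, and by \eqref{assumption:LipOnSigma} and \eqref{eq:BoundednessOfViscApprox},
\begin{equation*}
 \E{\int_Z\norm{g_{r,z}}_{2,\phi_0}^2\,d\mu} \leq 2\norm{M}_{L^2(Z)}^2\Bigl(\norm{\phi_0}_{L^1(\R^d)} + \sup_{0\leq t\leq T}\E{\norm{\ue(t)}_{2,\phi_0}^2}\Bigr) < \infty,
\end{equation*}
which, being independent of $r_0$, gives the $\mathcal{A}_1$-part of \eqref{eq:UniformBoundOnTr0}. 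For the limit I would first record the weighted approximate-identity property: for $g \in L^2(\R^d,\phi_0)$ one has $\norm{\Phi_\varepsilon(\tau)\star g - g}_{2,\phi_0} \to 0$ as $\tau \downarrow 0$, which follows by approximating $g$ in $L^2(\R^d,\phi_0)$ by elements of $C_c(\R^d)$ (dense since $\phi_0 \in L^1(\R^d)$), using \eqref{eq:estOnHeat} for the tail and the elementary uniform convergence $\Phi_\varepsilon(\tau)\star h \to h$ for $h\in C_c(\R^d)$. Applied pointwise in $(\omega,z)$ for fixed $r$ and dominated by the integrable majorant $2(1+\kappa_{1,d}^2)\norm{g_{r,z}}_{2,\phi_0}^2$, the dominated convergence theorem gives $\E{\int_Z\norm{\Phi_\varepsilon(\tau)\star g_{r,z} - g_{r,z}}_{2,\phi_0}^2\,d\mu} \to 0$ as $\tau\downarrow 0$; since $J_{r_0}^+$ is a probability density concentrating at $0$, $a_{r_0}^1$ is dominated by the supremum of this quantity over $\tau \in (0,2r_0)$ and hence tends to $0$. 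Combining the three estimates yields both \eqref{eq:UniformBoundOnTr0} and $\mathcal{T}_{r_0}(\Psi) \to 0$.

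The only non-routine ingredient is the weighted approximate-identity property above, together with the passage from the compactly supported test weight $\phi$ in the statement to an $\mathfrak{N}$-weight to which the heat-kernel estimates apply; the restriction to $dr$-a.a.~$r$ is simply inherited from the fact that the mild identity for $D_{r,z}\ue$ and the estimate \eqref{eq:supInrMallEst} hold only for $dr\otimes d\mu$-a.a.~$(r,z)$. Everything else is a direct combination of Lemmas~\ref{lemma:HeatKernelYoungIneq}, \ref{lemma:FGenEst}, and~\ref{lemma:GGenEst} with the a priori bound \eqref{eq:supInrMallEst}.
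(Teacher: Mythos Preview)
Your proof is correct, but it takes a genuinely different route from the paper's. The paper first proves the uniform bound \eqref{eq:UniformBoundOnTr0} by a direct Cauchy--Schwarz argument (splitting $\mathcal{T}_{r_0}$ into the $D_{r,z}\ue$ part and the $\sigma$ part and invoking \eqref{eq:supInrMallEst}, \eqref{eq:BoundednessOfViscApprox}); it then uses this bound to reduce, by density, to $\Psi$ smooth in $x$, and for such $\Psi$ inserts the test function $\Psi(x,z)\phi(x)\bigl(1-\int_0^t J_{r_0}^+(\sigma-r)\,d\sigma\bigr)$ into the \emph{weak} formulation of \eqref{eq:MallEqSat}. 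After taking expectations the martingale term drops, and $\mathcal{T}_{r_0}(\Psi)$ is expressed as a sum of $dt$-integrals against the time cutoff, which vanishes by dominated convergence as $r_0\downarrow 0$.

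You instead work with the \emph{mild} formulation and decompose $D_{r,z}\ue(t)-\sigma(\cdot,\ue(r,\cdot),z)$ into the heat-semigroup remainder $\mathcal{A}_1$, the drift convolution $\mathcal{A}_2$, and the stochastic convolution $\mathcal{A}_3$, estimating each in $L^2(\Omega\times Z;L^2(\R^d,\phi_0))$ via Lemmas~\ref{lemma:HeatKernelYoungIneq}--\ref{lemma:GGenEst} together with \eqref{eq:supInrMallEst}. This buys you quantitative rates ($a_{r_0}^2,a_{r_0}^3=O(r_0)$) and avoids the reduction to smooth $\Psi$, at the cost of having to verify the weighted approximate-identity property $\norm{\Phi_\varepsilon(\tau)\star g-g}_{2,\phi_0}\to 0$ for the $\mathcal{A}_1$ term. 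The paper's argument, by contrast, sidesteps any heat-semigroup continuity statement and needs only the weak solution property of $D_{r,z}\ue$, but requires differentiating the test function in $x$ (hence the smoothness reduction on $\Psi$). Both arguments ultimately rest on \eqref{eq:supInrMallEst}; yours is more explicit, the paper's is shorter and more PDE-flavoured.
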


\begin{proof} 
Note that $\mathcal{T}_{r_0} = \mathcal{T}_{r_0}^1-\mathcal{T}_{r_0}^2$, and 
consider each term separately. By H\"older's inequality,
\begin{align*}
	\abs{\mathcal{T}_{r_0}^1} &= \abs{\int_0^T\E{\int_Z\int_{\R^d}
	\Psi(x,z)D_{r,z}\ue(t,x)\phi(x)\,dxd\mu(z)} J^+_{r_0}(t-r) \,dt} \\
	&\leq \esssup_{t \in [0,T]}\E{\int_Z\int_{\R^d}\Psi(x,z)D_{r,z}\ue(t,x)\phi(x)\,dxd\mu(z)} \\
	&\leq\E{\norm{\Psi}_{L^2(Z;L^2(\R^d,\phi))}^2}^{1/2}
	\esssup_{t \in [0,T]}\seq{\E{\norm{D_{r}\ue(t)}_{L^2(Z;L^2(\R^d,\phi)}^2}^{1/2}}.
\end{align*}
Furthermore, due to \eqref{assumption:LipOnSigma},
\begin{align*}
	\mathcal{T}_{r_0}^2 &= E\bigg[\int_Z\int_{\R^d}
	\Psi(x,z)\sigma(x,\ue(r,x),z)\phi(x)\,dxd\mu(z)\,\bigg] \\
	&\leq \norm{M}_{L^2(Z)}\E{\norm{\Psi}_{L^2(Z;L^2(\R^d,\phi))}^2}^{1/2}
	\E{\norm{1 + \abs{\ue(r)}}_{L^2(\R^d,\phi)}^2}^{1/2}.
\end{align*}
The uniform bound \eqref{eq:UniformBoundOnTr0} follows 
by \eqref{eq:supInrMallEst} and \eqref{eq:BoundednessOfViscApprox}. 
Note that $\Psi\mapsto \mathcal{T}_{r_0}(\Psi)$ is a linear functional 
on $L^2(\Omega \times Z;L^2(\R^d,\phi))$, for each $r_0 > 0$. 
By \eqref{eq:UniformBoundOnTr0} the family $\seq{\mathcal{T}_{r_0}}_{r_0 > 0}$ is 
uniformly continuous. Hence, by approximation, it suffices to prove 
the lemma for $\Psi$ smooth in $x$ with bounded derivatives. Let
\begin{displaymath}
	\test(t,x,z) = \Psi(x,z)\phi(x)\xi_{r_0,r}(t), \quad 
	\xi_{r_0,r}(t) =1-\int_0^t J_{r_0}^+(\sigma-r)\,d\sigma.
\end{displaymath}
By Proposition \ref{proposition:MalliavinDiffOfViscApprox},
\begin{align*}
	0 &= \int_{\R^d} \sigma(x,\ue(r,x),z)\test(r,x,z)\,dx \\
	&\quad + \int_r^T\int_{\R^d} D_{r,z}\ue(t,x)\partial_t\test(t,x,z)\,dxdt \\
	&\quad + \int_r^T\int_{\R^d} f'(\ue(t,x))D_{r,z}\ue(t,x) \cdot \nabla \test(t,x,z)\,dxdt \\
	&\quad +\varepsilon\int_r^T\int_{\R^d}D_{r,z}\ue(t,x)\Delta \test(t,x,z)\,dxdt \\
	&\quad + \int_r^T\int_Z \int_{\R^d} 
	\partial_2\sigma(x,\ue(t,x),z')D_{r,z}\ue(t,x)\test(t,x,z)\,dxW(dz',dt),
\end{align*}
$dr \otimes d\mu \otimes dP$-almost all $(r,z,\omega)$. Note that
\begin{displaymath}
	\partial_t\test(t,x,z)= -\Psi(x,z)\phi(x)J_{r_0}^+(t-r).
\end{displaymath}
Taking expectations and integrating in $z$ we obtain
\begin{align*}
	\mathcal{T}_{r_0}(\Psi) &=\E{\int_Z\int_r^T\int_{\R^d} 
	f'(\ue(t,x))D_{r,z}\ue(t,x) \cdot \nabla(\Psi \phi)\xi_{r_0,r}(t)\,dxdtd\mu(z)} \\
	& \qquad +\varepsilon \E{\int_Z\int_r^T
	\int_{\R^d}D_{r,z}\ue(t,x)\Delta (\Psi\phi)\xi_{r_0,r}(t)\,dxdtd\mu(z)}.
\end{align*}
As $\lim_{r_0 \downarrow 0}\xi_{r_0,r}(t) = 0$ for all $t > r$, it follows by the 
dominated convergence theorem 
that $\lim_{r_0 \downarrow 0}\mathcal{T}_{r_0}(\Psi) = 0$.
\end{proof}

\section{Existence of entropy solutions}\label{sec:Existence}
We will now prove the existence entropy 
solutions, as defined in Section~\ref{sec:Entropy_Formulation}. 

\begin{theorem}\label{theorem:ExistenceOfSolution}
Fix $\phi \in \mathfrak{N}$ and $2 \leq p < \infty$. Suppose 
$u^0 \in L^p(\Omega,\F_0,P;L^p(\R^d,\phi))$, and \eqref{assumption:LipOnf} 
and \eqref{assumption:LipOnSigma} hold. Then the generalized limit 
$u = \lim_{\varepsilon \downarrow 0} \ue$ of 
the viscous approximations \eqref{eq:ViscousApprox} 
is a Young measure-valued entropy solution 
of \eqref{eq:StochasticBalanceLaw} in the sense of 
Definition~\ref{Def:YoungEntropySolution}. Moreover, 
$u\in L^p(\Omega \times [0,T];L^p(\R^d \times [0,1],\phi))$. 
\end{theorem}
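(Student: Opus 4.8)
The plan is to pass to the limit $\varepsilon\downarrow0$ in an entropy inequality satisfied by the viscous approximations $\ue$, exploiting weak compactness in the sense of Young measures. \emph{Step~1 (uniform bounds and compactness).} First I would upgrade \eqref{eq:BoundednessOfViscApprox} to a bound uniform in $\varepsilon$, and more generally establish $\sup_{\varepsilon>0}\sup_{0\le t\le T}\E{\norm{\ue(t)}_{p,\phi}^p}<\infty$. This follows by the argument used for the Proposition following Definition~\ref{Def:EntropySolution}, now applied to $\ue$: the viscous drift contributes $\varepsilon\E{\int_0^\tau\!\int_{\R^d}S_R'(\ue)\Delta\ue\,\phi\dxdt}\le\varepsilon\E{\int_0^\tau\!\int_{\R^d}S_R(\ue)\Delta\phi\dxdt}$ by convexity of $S_R$, which is absorbed into a Gr\"onwall estimate (for a smooth weight with $\abs{\Delta\phi}\le C\phi$; the general $\phi\in\mathfrak{N}$ by approximation). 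In particular $\sup_{\varepsilon>0}\E{\iint_{\Pi_T}\abs{\ue(t,x)}^2\phi(x)\dxdt}<\infty$, so Theorem~\ref{theorem:YoungMeasureLimitOfComposedFunc}, applied to $\seq{\ue}_{\varepsilon>0}$ on $(\Omega\times\Pi_T,\Pred\otimes\Borel{\R^d},dP\otimes dt\otimes\phi\dx)$, yields a sequence $\varepsilon_k\downarrow0$ and a Young measure limit represented by a $\Pred\otimes\Borel{\R^d}$-measurable function $u(t,x,\alpha,\omega)$. Weak lower semicontinuity of the $L^p$-norm and Lemma~\ref{lemma:LebBochRepr} then give $u\in L^p(\Omega\times[0,T];L^p(\R^d\times[0,1],\phi))$, which is the ``moreover'' part and item~(i) of Definition~\ref{Def:YoungEntropySolution}.

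\emph{Step~2 (entropy inequality for $\ue$).} Since \eqref{assumption:LipOnf}--\eqref{assumption:LipOnSigma} do not provide the spatial regularity needed to treat $\ue$ as a strong solution, I would first mollify the data, picking $f^\delta,\sigma^\delta,u^{0,\delta}$ meeting the hypotheses of Propositions~\ref{proposition:SobolevBoundsOnViscApprox} and~\ref{proposition:MalliavinDiffOfViscApprox} and converging to $f,\sigma,u^0$, and letting $u^{\varepsilon,\delta}$ denote the corresponding viscous solution. Then $u^{\varepsilon,\delta}(t,\cdot)$ is spatially smooth and Malliavin differentiable, so for $(S,Q)\in\mathscr{E}$, a nonnegative $\test\in C^\infty_c([0,T)\times\R^d)$, and $V\in\Sm$ one may apply the anticipating It\^o formula (Theorem~\ref{theorem:AntIto}) to $t\mapsto\int_{\R^d}S(u^{\varepsilon,\delta}(t,x)-V)\test(t,x)\dx$. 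Taking expectations annihilates the Skorohod integral, the quadratic-variation and anticipating-correction terms reproduce exactly the $\sigma$-terms of $\Entropy$, and the viscous drift contributes $\varepsilon\E{\iint_{\Pi_T}S'(u^{\varepsilon,\delta}-V)\Delta u^{\varepsilon,\delta}\,\test\dxdt}$; by convexity $S'(a)\Delta a\le\Delta S(a)$, so this is $\le\varepsilon\E{\iint_{\Pi_T}S(u^{\varepsilon,\delta}-V)\Delta\test\dxdt}$, giving
\begin{displaymath}
 \Entropy[(S,Q),\test,V](u^{\varepsilon,\delta})\ge-\varepsilon\E{\iint_{\Pi_T}S(u^{\varepsilon,\delta}-V)\Delta\test\dxdt}.
\end{displaymath}
Sending $\delta\downarrow0$ using Proposition~\ref{proposition:ContDependVisc} (so $u^{\varepsilon,\delta}\to\ue$ in $\mathcal{X}_{\beta,2,L^2(\R^d,\phi)}$) and the continuity of $\Entropy$ in its arguments (as in Lemma~\ref{lemma:ContinuityOfEntWRTV} together with~\eqref{eq:EstOnFluxTermEntIneq}--\eqref{eq:EstOnMallTermEntIneq}) yields $\Entropy[(S,Q),\test,V](\ue)\ge-\varepsilon\E{\iint_{\Pi_T}S(\ue-V)\Delta\test\dxdt}$.

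\emph{Step~3 (passing $\varepsilon_k\downarrow0$).} Fix $V\in\Sm$; then $V$ is bounded, and since $S''$ is compactly supported the maps $\xi\mapsto S''(\xi-V)\sigma(x,\xi,z)$ and $\xi\mapsto S''(\xi-V)\sigma(x,\xi,z)^2$ are bounded and continuous (with $z$-bounds $C\norm{S''}_\infty M(z)$ and $C\norm{S''}_\infty M(z)^2$), while $\xi\mapsto S(\xi-V)$ and $\xi\mapsto Q(\xi,V)$ are Lipschitz and $\seq{u^{\varepsilon_k}}$ is bounded in $L^p$ with $p\ge2>1$; hence all the families entering $\Entropy$ are uniformly integrable. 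Theorem~\ref{theorem:YoungMeasureLimitOfComposedFunc} therefore applies term by term (with test objects $\test,\partial_t\test,\nabla\test$ and $D_{t,z}V\,\test\in L^2$), so that $\lim_{k\to\infty}\Entropy[(S,Q),\test,V](u^{\varepsilon_k})=\Y{\Entropy[(S,Q),\test,V]}(u)$. The right-hand side of the viscous inequality tends to $0$, since $\E{\iint_{\Pi_T}\abs{S(u^{\varepsilon_k}-V)}\abs{\Delta\test}\dxdt}$ is bounded uniformly in $k$. Thus $\Y{\Entropy[(S,Q),\test,V]}(u)\ge0$ for all such $(S,Q),\test,V$, and Lemma~\ref{lemma:ContinuityOfEntWRTV} together with density of $\Sm$ in $\D^{1,2}$ extends this to every $V\in\D^{1,2}$, which is \eqref{eq:YoungEntropyCondition}; combined with Step~1 this shows $u$ is a Young measure-valued entropy solution.

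The hard part is Step~2: rigorously justifying the anticipating It\^o formula for $S(\ue-V)$, which forces the detour through spatially regular, Malliavin differentiable approximations $u^{\varepsilon,\delta}$ and the ensuing limit via Proposition~\ref{proposition:ContDependVisc}, and correctly identifying the Skorohod-correction term as the Malliavin term of $\Entropy$. A close second is the $\varepsilon$-uniformity of the $L^p$ bounds in Step~1, since the mild-solution estimates of Section~\ref{sec:ViscousApprox} degenerate as $\varepsilon\downarrow0$ and one must instead use the favourable sign of the viscous term. The Young-measure passage in Step~3 is then routine once the growth bookkeeping (using $V\in\Sm$ bounded and $S''$ compactly supported) is carried out.
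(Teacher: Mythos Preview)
Your proposal is correct and follows essentially the same route as the paper: uniform $L^p$ bounds via It\^o's formula with the favourable sign of the viscous term and Gr\"onwall (the paper isolates this as Lemma~\ref{lemma:UniformBoundsOnVisc}), Young measure compactness on $(\Omega\times\Pi_T,\Pred\otimes\Borel{\R^d},dP\otimes dt\otimes\phi\,dx)$, the anticipating It\^o formula applied to smooth approximations with the convexity trick for the Laplacian, removal of the extra smoothness via Proposition~\ref{proposition:ContDependVisc}, term-by-term uniform integrability using $V\in\Sm$ bounded and $\mathrm{supp}(S'')$ compact, and finally density of $\Sm$ in $\D^{1,2}$ with Lemma~\ref{lemma:ContinuityOfEntWRTV}. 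Two small points where the paper is more explicit: it applies Theorem~\ref{theorem:AntIto} pointwise in $x$ (to $X_t=\ue(t,x)$) and then integrates, rather than to the spatially integrated quantity; and for the ``moreover'' part it uses a truncation $S_R(\xi)=|\xi|^p\psi_R(\xi)$ together with monotone convergence rather than invoking weak lower semicontinuity, since the limit is a Young measure and not a weak $L^p$ limit.
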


The bounds in Proposition~\ref{proposition:SobolevBoundsOnViscApprox} 
blow up as $\varepsilon \downarrow 0$. 
Below we establish bounds that are independent of 
the regularization parameter $\varepsilon > 0$. 

\begin{lemma}[Uniform bounds]\label{lemma:UniformBoundsOnVisc}
Suppose \eqref{assumption:LipOnf} and \eqref{assumption:LipOnSigma} hold, 
and $u^0$ belongs to $L^p(\Omega,\F_0,P;L^p(\R^d,\phi))$ for some 
even number $p\ge 2$ and $\phi \in \mathfrak{N}$. 
Then there exists a constant $C$, depending on 
$u^0,f,\sigma,p,T, \phi$ but not 
on $\varepsilon$, such that
\begin{equation}\label{eq:UniformBoundLpPhi}
	\E{\norm{\ue(t)}_{p,\phi}^p} \leq C, \qquad t \in [0,T].
\end{equation}
\end{lemma}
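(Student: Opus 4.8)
The plan is to establish \eqref{eq:UniformBoundLpPhi} by an energy argument. Since $p$ is even, $\xi\mapsto\xi^p$ is a smooth convex function and $\ue^p=\abs{\ue}^p$, so the idea is to apply It\^o's formula to the real-valued process $t\mapsto\int_{\R^d}\ue(t,x)^p\phi(x)\dx=:g(t)$ (note $g(t)=\E{\norm{\ue(t)}_{p,\phi}^p}$, which is a priori not yet known to be bounded uniformly in $\varepsilon$) and to exploit the favourable sign of the leading viscous contribution $-\varepsilon\int_{\R^d}p(p-1)\ue^{p-2}\abs{\nabla\ue}^2\phi\dx\le0$. Because It\^o's formula is here applied to an unbounded functional of a solution of an SPDE whose coefficients are merely Lipschitz, I would first reduce to the case in which $f$, $\sigma$ and $u^0$ are smooth enough for Proposition~\ref{proposition:SobolevBoundsOnViscApprox} to turn $\ue$ into a genuine strong solution possessing spatial derivatives and moments of all orders, and then remove the extra regularity at the end by a stability argument.

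\emph{Step~1 (reduction to smooth data).} Recalling $f(0)=0$, set $f_n=\rho_n\star f$, so that $f_n\in C^\infty(\R;\R^d)$, $\norm{f_n}_{\mathrm{Lip}}\le\norm{f}_{\mathrm{Lip}}$ and $\norm{f_n-f}_{L^\infty(\R)}\le\norm{f}_{\mathrm{Lip}}/n\to0$; mollify $\sigma$ in the $(x,u)$-variables to obtain $\sigma_n$ meeting the hypotheses of Proposition~\ref{proposition:SobolevBoundsOnViscApprox} (with dominating functions proportional to $M$ and $\norm{M_{\sigma_n}}_{L^2(Z)}\le3\norm{M}_{L^2(Z)}$), with $\sigma_n\to\sigma$ and $\abs{\sigma_n(x,u,z)}\le2M(z)(1+\abs{u})$; and truncate-and-mollify $u^0$ (in $x$ and in $\omega$) to a smooth, bounded, compactly supported, $\F_0$-measurable $u^0_n$ with $u^0_n\to u^0$ in $L^p(\Omega;L^p(\R^d,\phi))$. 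Let $\ue_n$ be the corresponding viscous solution. If Step~2 yields $\E{\norm{\ue_n(t)}_{p,\phi}^p}\le C$ with $C$ depending only on $\sup_n\E{\norm{u^0_n}_{p,\phi}^p}$, $\norm{f}_{\mathrm{Lip}}$, $\norm{M}_{L^2(Z)}$, $p$, $T$ and $\phi$ (hence neither on $n$ nor on $\varepsilon\in(0,1]$), it suffices to pass to the limit $n\to\infty$. For this I would re-run the computation in the proof of Proposition~\ref{proposition:ContDependVisc} on the mild formulation \eqref{eq:MildViscSol} of $\ue_n-\ue$: split $f_n(\ue_n)-f(\ue)=(f_n(\ue_n)-f_n(\ue))+(f_n(\ue)-f(\ue))$, the first difference producing a Gr\"onwall term via Lemma~\ref{lemma:FGenEst} (using $\norm{f_n}_{\mathrm{Lip}}\le\norm{f}_{\mathrm{Lip}}$) and the second tending to $0$ since $\norm{f_n(\ue)-f(\ue)}_{p,\phi}\le\norm{f_n-f}_{L^\infty(\R)}\norm{\phi}_{L^1(\R^d)}^{1/p}$; split the noise increment analogously, the residual $\sigma_n(\cdot,\ue,\cdot)-\sigma(\cdot,\ue,\cdot)$ being sent to $0$ in the relevant Bochner space by dominated convergence and absorbed via the intermediate estimate in the proof of Lemma~\ref{lemma:GGenEst}. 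Gr\"onwall's inequality in $\mathcal{X}_{\beta,p,L^p(\R^d,\phi)}$ for $\beta$ large then gives $\ue_n\to\ue$ there, whence $\E{\norm{\ue_n(t)}_{p,\phi}^p}\to\E{\norm{\ue(t)}_{p,\phi}^p}$ for every $t$, and \eqref{eq:UniformBoundLpPhi} follows.

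\emph{Step~2 (the energy estimate, smooth case).} Here Proposition~\ref{proposition:SobolevBoundsOnViscApprox} makes $\ue$ a strong solution whose first and second spatial derivatives lie in $L^q$-spaces with finite moments of every order, so It\^o's formula applies to $g(t)=\int_{\R^d}\ue(t,x)^p\phi(x)\dx$ and $\sup_{[0,T]}g<\infty$. Taking expectations (which removes the martingale part) gives, with $Q(u)=\int_0^u p z^{p-1}f'(z)\dz$,
\begin{multline*}
 g(t)=\E{\norm{u^0}_{p,\phi}^p}-\E{\int_0^t\!\int_{\R^d}Q(\ue)\cdot\nabla\phi\dx\ds}+\varepsilon\,\E{\int_0^t\!\int_{\R^d}p\,\ue^{p-1}\Delta\ue\,\phi\dx\ds}\\
 +\frac{p(p-1)}{2}\E{\int_0^t\!\int_{\R^d}\!\int_Z\ue^{p-2}\sigma(x,\ue,z)^2\phi\,d\mu(z)\dx\ds}.
\end{multline*}
For the flux term, $\abs{Q(u)}\le\norm{f}_{\mathrm{Lip}}\abs{u}^p$ and $\abs{\nabla\phi}\le C_\phi\phi$, so it is bounded by $\norm{f}_{\mathrm{Lip}}C_\phi\int_0^t g(s)\ds$. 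For the viscous term, integration by parts (legitimate by the decay of $\phi$ and the regularity of $\ue$) gives $\varepsilon\int p\,\ue^{p-1}\Delta\ue\,\phi=-\varepsilon\int p(p-1)\ue^{p-2}\abs{\nabla\ue}^2\phi-\varepsilon\int p\,\ue^{p-1}\nabla\ue\cdot\nabla\phi$; since $\nabla\ue=0$ a.e.\ on $\{\ue=0\}$, Young's inequality applied on $\{\ue\ne0\}$ together with $(p\,\ue^{p-1})^2/(p(p-1)\ue^{p-2})=\tfrac{p}{p-1}\abs{\ue}^p$ and $\abs{\nabla\phi}^2\le C_\phi^2\phi^2$ absorbs the cross term into the (discarded) good term up to $\tfrac{\varepsilon C_\phi^2 p}{2(p-1)}\int\ue^p\phi\dx\le\tfrac{C_\phi^2 p}{2(p-1)}g(s)$ for $\varepsilon\le1$. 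For the noise term, \eqref{assumption:LipOnSigma} gives $\int_Z\sigma(x,\ue,z)^2\,d\mu(z)\le\norm{M}_{L^2(Z)}^2(1+\abs{\ue})^2$ and $\abs{\ue}^{p-2}(1+\abs{\ue})^2\le C_p(1+\abs{\ue}^p)$, so this term is bounded by $C_p\norm{M}_{L^2(Z)}^2\bigl(\norm{\phi}_{L^1(\R^d)}T+\int_0^t g(s)\ds\bigr)$. Collecting these, $g(t)\le\E{\norm{u^0}_{p,\phi}^p}+C_1T+C_2\int_0^t g(s)\ds$ with $C_1,C_2$ depending only on $\norm{f}_{\mathrm{Lip}},\norm{M}_{L^2(Z)},p,\phi$; since $\sup_{[0,T]}g<\infty$, Gr\"onwall's inequality yields $g(t)\le(\E{\norm{u^0}_{p,\phi}^p}+C_1T)e^{C_2T}$, the desired $\varepsilon$-uniform bound.

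The step I expect to be delicate is not any of these estimates — they are elementary and the dependence of the final constant on the data can be read off directly — but the \emph{justification} of It\^o's formula itself: the functional $u\mapsto\int u^p\phi$ is not globally $C^2$ on $L^2(\R^d,\phi)$ when $p>2$, and $\ue$ is a priori only a mild solution of an equation with low-regularity coefficients. This is exactly what Step~1 is for; once $\ue$ is a strong solution with moments of all orders, the sign of the viscous term does the rest.
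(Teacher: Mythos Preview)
Your argument is correct and follows the same overall strategy as the paper: reduce to smooth data so that $\ue$ is a strong solution (the paper also invokes Proposition~\ref{proposition:SobolevBoundsOnViscApprox} with $\abs{\alpha}\le2$ and appeals to Proposition~\ref{proposition:ContDependVisc} for the approximation), apply It\^o's formula to $\abs{\ue}^p$, estimate flux/viscous/noise terms separately, and close by Gr\"onwall. The differences are purely in how two of the terms are handled. For the viscous contribution the paper mollifies the weight to $\phi_\delta=\phi\star J_\delta$, integrates by parts \emph{twice} to produce $\varepsilon\int\abs{\ue}^p\Delta\phi_\delta\dx$, and uses $\abs{\Delta\phi_\delta}\le C_\delta\phi_\delta$ from Lemma~\ref{lemma:ContMollWeightedNorm}; you integrate by parts once and absorb the cross term by Young's inequality using only $\abs{\nabla\phi}\le C_\phi\phi$, which dispenses with the $\phi\mapsto\phi_\delta$ detour and the final passage $\delta\downarrow0$. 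For the It\^o correction the paper keeps the $\abs{\ue}^{p-2}$ piece and closes by induction on even $p$, whereas your bound $\abs{\ue}^{p-2}(1+\abs{\ue})^2\le C_p(1+\abs{\ue}^p)$ avoids the induction. Both routes give the same $\varepsilon$-independent constant structure; yours is a little more direct. One small slip: in your opening line ``$g(t)=\E{\norm{\ue(t)}_{p,\phi}^p}$'' conflates the random functional with its expectation, but the computation that follows is unambiguous.
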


\begin{proof}
Suppose $u^0,f,\sigma$ satisfy the assumptions of 
Proposition~\ref{proposition:SobolevBoundsOnViscApprox} 
for $\abs{\alpha} \leq 2$. In view of 
Proposition~\ref{proposition:ContDependVisc}, the 
general result follows by approximation. 
Set $\phi_\delta = \phi \star J_\delta$. 
By Lemma~\ref{lemma:ContMollWeightedNorm} there 
is a constant $C_{\delta}$ satisfying 
$\abs{\Delta \phi_\delta} \leq C_{\delta}\phi_\delta$. 
By Proposition \ref{proposition:SobolevBoundsOnViscApprox}, $\ue$ is 
a strong solution of \eqref{eq:ViscousApprox}. 
Hence we may apply It\^o's formula to the 
function $S(u) = \abs{u}^p$, cf.~Step 2 in the upcoming proof 
of Theorem~\ref{theorem:ExistenceOfSolution}. 
After multplying by $\phi_\delta$ and integrating the result in $x$, 
\begin{equation*}\label{eq:ItoFormFroSquareAppliedToVisc}
\begin{split}
	&\norm{\ue(t)}_{p,\phi_\delta}^p = \norm{u^0}_{p,\phi_\delta}^p \\
	& \quad-p\int_0^t\int_{\R^d} \abs{\ue(s,x)}^{p-1}
	\sign{\ue(s,x)}\left(\nabla \cdot f(\ue(s,x)) 
	- \varepsilon \Delta \ue(s,x) \right)\phi_\delta(x) \,dxds \\
	& \quad +p\int_0^t\int_Z\int_{\R^d} \abs{\ue(s,x)}^{p-1}\sign{\ue(s,x)}
	\sigma(x,\ue(s,x),z)\phi_\delta(x)\,dx W(ds,dz) \\
	& \quad +\frac{1}{2}p(p-1)\int_0^t\int_{\R^d} \abs{\ue(s,x)}^{p-2} 
	\int_Z \sigma^2(x,\ue(s,x),z)\phi_\delta(x) \, d\mu(z) dxds.
\end{split}
\end{equation*}

Let $q(u) = p\int_0^u \abs{z}^{p-1}\sign{z}\partial f(z)\dz$ and 
note that by \eqref{assumption:LipOnf},
\begin{equation}\label{eq:EntFluxEst}
	\abs{q(u)} = \abs{\int_0^{u} p\abs{z}^{p-1}\sign{z} 
	\partial f(z)\,dz} \leq \norm{f}_{\mathrm{Lip}} \abs{u}^p.
\end{equation}
It follows that $q(\ue(t))\phi_\delta \in L^1(\Omega;L^1(\R^d;\R^d))$ for $0 \leq t \leq T$. 

By the chain rule and integration by parts,
\begin{displaymath}
\begin{split}
	\mathcal{T}_1 :=& \int_0^t\int_{\R^d} p\abs{\ue(s,x)}^{p-1}
	\sign{\ue(s,x)} \nabla \cdot f(\ue(s,x))\phi_\delta(x)\, dx ds\\ 
	 =& \int_0^t\int_{\R^d} \partial q(\ue(s,x)) \cdot \nabla \ue(s,x)\phi_\delta(x)\, dx ds \\
	 =& -\int_0^t\int_{\R^d} q(\ue(s,x))\cdot \nabla \phi_\delta(x)\, dx ds.
 \end{split}
\end{displaymath}
By \eqref{eq:EntFluxEst} and the fact that $\phi_\delta \in \mathfrak{N}$,
\begin{displaymath}
	\abs{\mathcal{T}_1} \leq C_\phi 
	\norm{f}_{\mathrm{Lip}}\int_0^t \norm{\ue(s)}_{p,\phi_\delta}^p\,ds.
\end{displaymath}

Again by the chain rule and integration by parts,
\begin{align*}
	\mathcal{T}_2 :=& \varepsilon\int_0^t\int_{\R^d} p\abs{\ue(s,x)}^{p-1}\sign{\ue(s,x)}  
	\Delta \ue(s,x)\phi_\delta(x)\,dxds \\
	= & -\varepsilon p(p-1)\int_0^t\int_{\R^d} \abs{\ue(s,x)}^{p-2} 
	\abs{\nabla \ue(s,x)}^2\phi_\delta(x)\,dxds \\
	& \quad -\varepsilon\int_0^t\int_{\R^d} p\abs{\ue(s,x)}^{p-1}
	\sign{\ue(s,x)}  \nabla \ue(s,x) \cdot \nabla \phi_\delta(x)\,dxds \\
	=& -\varepsilon p(p-1)\int_0^t\int_{\R^d} \abs{\ue(s,x)}^{p-2} 
	\abs{\nabla \ue(s,x)}^2\phi_\delta(x)\,dxds \\
	& \quad +\varepsilon\int_0^t\int_{\R^d} \abs{\ue(s,x)}^p 
	\Delta \phi_\delta(x)\,dxds.
\end{align*}
Hence, 
$$
\abs{\mathcal{T}_2} \leq C_{\delta} 
\varepsilon \int_0^t \norm{\ue(s)}_{p,\phi_\delta}^p \,ds. 
$$

Finally, by assumption~\eqref{assumption:LipOnSigma},
\begin{align*}
	\mathcal{T}_3 & := \frac{1}{2}p(p-1)\int_0^t\int_{\R^d} 
	\abs{\ue(s,x)}^{p-2} \int_Z \sigma^2(x,\ue(s,x),z)\phi_\delta(x) \, d\mu(z) dxds \\
	& \leq \frac{1}{2}p(p-1)\norm{M}_{L^2(Z)}^2\int_0^t\int_{\R^d} 
	\abs{\ue(s,x)}^{p-2}(1 + \abs{\ue(s,x)})^2\phi_\delta(x) \,dxds \\
	& \leq p(p-1)\norm{M}_{L^2(Z)}^2
	\left(\, \int_0^t \norm{\ue(s)}^{p-2}_{p-2,\phi_\delta}ds 
	+ \int_0^t\norm{\ue(s)}^p_{p,\phi_\delta}ds\right).
\end{align*}

After taking expectations and summarizing our findings, we arrive at
\begin{equation*}
\begin{split}
	\E{\norm{\ue(t)}_{p,\phi_\delta}^p} \leq \underbrace{\E{\norm{u^0}_{p,\phi_\delta}^p}
	+p(p-1)\norm{M}_{L^2(Z)}^2\int_0^t\E{\norm{\ue(s)}^{p-2}_{p-2,\phi_\delta}}ds}_{C_2} \\
	+ \underbrace{\left(C_\phi\norm{f}_{\mathrm{Lip}} 
	+ \varepsilon C_{\delta} + p(p-1)
	\norm{M}_{L^2(Z)}^2\right)}_{C_1}
	\int_0^t \E{\norm{\ue(s)}_{p,\phi_\delta}^p} \ds,
 \end{split}
\end{equation*}
and hence, appealing to Gr\"onwall's inequality,
\begin{displaymath}
	\E{\norm{\ue(t)}_{p,\phi_\delta}^p} 
	\leq C_2\left(1 + C_1t e^{C_1t}\right).
\end{displaymath}

Observe that the result holds for $p$ if it also holds 
for $p-2$, as long as $u^0$ belongs to $L^p(\Omega;L^p(\R^d,\phi_\delta))$. 
For this reason, \eqref{eq:UniformBoundLpPhi} follows by induction for $\phi = \phi_\delta$. 
The bound \eqref{eq:UniformBoundLpPhi} follows (for $\phi = \phi$) by Lemma~\ref{lemma:ContMollWeightedNorm}~(ii).
\end{proof}

\begin{remark}
In the forgoing proof, it is certainly possible to apply the 
Burkholder-Davis-Gundy inequality, resulting 
in the improvement
$$
\E{\, \sup_{0 \leq t \leq T} 
\norm{\ue(t)}_{p,\phi}^p} \leq C,
$$
for some constant $C$ independent of $\varepsilon$.
\end{remark}

\begin{proof}[Proof of Theorem~\ref{theorem:ExistenceOfSolution}]
We divide the proof into two main steps.

\emph{Step~1 (convergence)}.  We apply 
Theorem~\ref{theorem:YoungMeasureLimitOfComposedFunc} 
to the viscous approximation $\seq{\ue}_{\varepsilon > 0}$ 
on the measure space
\begin{displaymath}
	(X,\mathscr{A},\mu) 
	= (\Omega \times \Pi_T, \Pred \otimes \Borel{\R^d}, 
	dP \otimes dt \otimes \phi(x)dx). 
\end{displaymath}
By Lemma~\ref{lemma:UniformBoundsOnVisc},
\begin{displaymath}
	\sup_{\varepsilon > 0}\left\{\E{\iint_{\Pi_T}
	\abs{\ue(t,x)}^2 \phi(x)\,dxdt}\right\} < \infty,
\end{displaymath}
so we may take $\zeta(\xi) = \xi^2$. It follows that there 
exists a subsequence $\varepsilon_k \downarrow 0$ and a 
Young measure $\nu = \nu_{t,x,\omega}$ such that for 
any Carath\'eodory function $\psi = \psi(u,t,x,\omega)$ satisfying 
$$
\psi(u^{\varepsilon_k}(\cdot),\cdot) \rightharpoonup 
\overline{\psi}(\cdot) \,\mbox{ weakly in } L^1(\Omega \times \Pi_T),
$$
we have
\begin{equation}\label{eq:RepOfLimit}
\overline{\psi}(t,x,\omega) = \int_\R \psi(\xi,t,x,\omega)\,d\nu_{t,x,\omega}(\xi) 
= \int_0^1 \psi(u(t,x,\alpha,\omega),t,x,\omega)\,d\alpha.
\end{equation}
Here $u(t,x,\cdot,\omega)$ is defined 
through \eqref{eq:ReprOfProcessByYoung}, i.e.,
$$
u(t,x,\alpha,\omega) = 
\inf \left\{ \xi \in \R \,:\, \nu_{t,x,\omega}((-\infty,\xi]) > \alpha \right\}.
$$

We want to show that the limit $u$ is measurable, i.e., that it has 
a version $\tilde{u}$ such that for any $\beta \in \R$,
$$
B_\beta = \seq{(t,x,\alpha,\omega): \tilde{u}(t,x,\alpha,\omega) 
\geq \beta} \in \Pred \otimes \Borel{\R^d} \otimes \Borel{[0,1]} .
$$
Note that 
$$
\tilde{u}(t,x,\alpha,\omega) \geq \beta 
\Leftrightarrow \inf_{\xi \in \R} \seq{\nu_{t,x,\omega}((-\infty,\xi]) > \alpha} \geq \beta 
\Leftrightarrow \nu_{t,x,\omega}((-\infty,\beta]) \leq \alpha.
$$
By definition of the Young measure we pick a version (not relabeled) 
such that, the mapping $(t,x,\omega) \mapsto \nu_{t,x,\omega}((-\infty,\beta])$ 
is $\Pred \otimes \Borel{\R^d}$-measurable. 
Furthermore, if it were finitely valued it would be clear that $B_\beta$ is 
in the product topology, i.e., $B_\beta \in \F \otimes \Borel{\Pi_T} \otimes \Borel{[0,1]}$. 
Hence, the result follows upon approximation by simple 
functions \cite[Example~5.3.1]{Cohn2013}. 

Let us show that $u \in L^p([0,T] \times \Omega;L^p(\R^d \times [0,1],\phi))$. That is,
\begin{equation}\label{eq:BundOnYoungLimit}
	\E{\iint_{\Pi_T}\int_0^1 \abs{u(t,x,\alpha)}^p\phi(x)\,d\alpha dxdt} < \infty.
\end{equation}
Let $\psi \in C_c^\infty(\R)$ be supported in $(-1,1)$ and 
satisfy $0 \leq \psi \leq 1, \psi(0) = 1$. Take $\psi_R(u) = \psi(u/R)$. 
It follows that $\lim_{R \rightarrow \pm \infty}\psi_R(u) = 1$ for each $u \in \R$. 
Moreover, with $S_R(u) = \abs{u}^p \psi_R(u)$, note that 
$S_R(u) \uparrow \abs{u}^p$ for all $u \in \R$. 
Since $S_R$ is compactly supported it follows that 
$\seq{S_R(\ue)}_{\varepsilon > 0}$ is uniformly integrable on $X$. 
By Theorem \ref{theorem:DunfordPettis}, there is 
a subsequence $\varepsilon_{k(j)} \downarrow 0$ (denoted by $\varepsilon_j$) 
and a limit $\overline{\psi}$ such that 
$S_R(u^{\varepsilon_j}) \rightharpoonup \overline{\psi}$ 
(weakly) in $L^1(\Omega \times \Pi_T)$, where the weak 
limit $\overline{\psi}$ can be expressed in terms of the Young 
measure, cf.~\eqref{eq:RepOfLimit}. For this reason,
 \begin{align*}
 	&\E{\iint_{\Pi_T}\int_0^1 S_R(u(t,x,\alpha)) \phi(x)\,d\alpha dxdt} 
	\\ & \qquad = \lim_{j \rightarrow \infty} 
	\E{\iint_{\Pi_T} S_R(u^{\varepsilon_j}(t,x)) \phi(x)\,dxdt} \\
	& \qquad \leq \limsup_{\varepsilon \downarrow 0}
	\E{\iint_{\Pi_T} \abs{\ue(t,x)}^p \phi(x)\,dxdt} 
	\le C \quad \text{(by Lemma~\ref{lemma:UniformBoundsOnVisc})},
\end{align*}
for some constant $C$ independent of $R$.
The claim \eqref{eq:BundOnYoungLimit} follows 
upon sending $R \rightarrow \infty$, applying the 
monotone convergence theorem.

\emph{Step~2 (entropy condition)}. 
Let us for the moment assume that $f, \sigma, u^0$ satisfy 
the assumptions of Proposition \ref{proposition:SobolevBoundsOnViscApprox} for all 
multiindices $\abs{\alpha} \leq 2$. Fix an entropy/entropy-flux pair $(S,Q)$ in 
$\mathscr{E}$, a nonnegative test function $\test \in C^\infty_c([0,T) \times \R)$, and a 
random variable $V \in \Sm$. The goal is to show that the 
limit $u$ from Step~1 satisfies $\Y{\Entropy[(S,Q),\test,V]}(u) \geq 0$. 

By Proposition~\ref{proposition:SobolevBoundsOnViscApprox}, $\ue$ 
is a strong solution of \eqref{eq:ViscousApprox}. Indeed, consider 
the weak form~\eqref{eq:WeakSolutionVisc}, integrate by parts
(cf.~Proposition~\ref{proposition:SobolevBoundsOnViscApprox}), and 
use a (separating) countable subset $\seq{\test_n}_{n \geq 1} 
\subset C_c^\infty(\R^d)$ of test functions, to arrive at
\begin{align*}
	\ue(t,x) = u^0(x) & + \int_0^t\varepsilon 
	\Delta \ue(s,x)-\nabla \cdot f(\ue(s,x))\,ds \\
	&+\int_0^t \int_Z\sigma(x,\ue(s,x),z) W(ds,dz),
	\quad \text{$dx \otimes dP$-almost surely.}
\end{align*}
Next, we apply the anticipating It\^{o} formula (Theorem ~\ref{theorem:AntIto}), for 
fixed $x \in \R^d$, to $X_t = \ue(t,x)$ and $F(X,V,t) = S(X-V)\test(t,x)$. 
This yields, after taking expectations and integrating in $x$,
\begin{equation}\label{eq:AppliedItoFormToViscApprox}
\begin{split}
	0 &= \E{\int_{\R^d} S(u^0-V)\test(0)\dx} \\
	& \qquad + \E{\iint_{\Pi_T} S(\ue(t)-V)\partial_t\test(t)\dx dt} \\
	&\qquad - \E{\iint_{\Pi_T} \nabla \cdot f(\ue(t))S'(\ue(t)-V)\test(t) \dx dt} \\
	&\qquad + \E{\varepsilon\iint_{\Pi_T} \Delta \ue(t)S'(\ue(t)-V)\test(t)\dx dt} \\
	&\qquad - \E{\iint_{\Pi_T}\int_Z S''(\ue(t)-V)\test(t)\sigma(x,\ue(t),z)D_{t,z}V\,d\mu(z)\dx dt} \\
	&\qquad + \frac{1}{2} \E{\iint_{\Pi_T}\int_Z S''(\ue(t)-V)\test(t)\sigma(x,\ue(t),z)^2\,d\mu(z)\dx dt},
\end{split}
\end{equation}
where $D_{t,z}V$ is the Malliavin derivative of $V$ at $(t,z)$. 
By the chain rule and integration by parts,
\begin{align*}
\varepsilon\iint_{\Pi_T} \Delta \ue(t)S'(\ue(t)-V)\test(t)\,dxdt 
&= \varepsilon\iint_{\Pi_T}S(\ue(t)-V)\Delta\test(t)\,dxdt \\
&\quad \underbrace{- \varepsilon\iint_{\Pi_T} 
S''(\ue(t)-V)\abs{\nabla\ue(t)}^2\test(t)\,dxdt}_{\leq 0}.
\end{align*}
It follows from \eqref{eq:AppliedItoFormToViscApprox} that
\begin{equation}\label{eq:EntIneqApprox}
\begin{split}
	&E\Bigg[\int_{\R^d} S(u^0(x)-V)\test(0,x)\dx\Bigg] \\
	&\quad +E\Bigg[\iint_{\Pi_T} \underbrace{S(\ue(t,x)-V)
	\partial_t\test(t,x)}_{\psi_1(\ue,\cdot)} 
	+ \underbrace{Q(\ue(t,x),V)\cdot 
	\nabla \test(t,x)}_{\psi_2(\ue,\cdot)}\dxdt\Bigg] \\
	&\quad - E\Bigg[\iint_{\Pi_T} \underbrace{S''(\ue(t,x)-V)
	\int_Z\sigma(x,\ue(t,x),z)D_{t,z}V\,d\mu(z)\test(t,x)}_{\psi_3(\ue,\cdot)}\dxdt\Bigg]\\
	&\quad + \frac{1}{2}E\Bigg[\iint_{\Pi_T}
	\underbrace{S''(\ue(t,x)-V)\int_Z\sigma^2(x,\ue(t,x),z)\,d\mu(z)
	\test(t,x)}_{\psi_4(\ue,\cdot)}\dxdt\Bigg] \\
	&\quad + \varepsilon E\Bigg[\iint_{\Pi_T} S(\ue(t,x)-V)
	\Delta \test(t,x)\dxdt \Bigg] \geq 0.
 \end{split}
\end{equation} 
At this point we may apply Proposition~\ref{proposition:ContDependVisc} to 
relax the assumptions on $f,\sigma, u^0$ to the ones listed 
in Theorem~\ref{theorem:ExistenceOfSolution}, leaving 
the details to the reader.

Next, we wish to send $\varepsilon \downarrow 0$ 
in \eqref{eq:EntIneqApprox}; expressing the limits in 
terms of the function $u$ obtained in Step~1. Obviously,
\begin{displaymath}
 \lim_{\varepsilon \downarrow 0}
 \E{\varepsilon \iint_{\Pi_T} S(\ue(t,x)-V)\Delta \test(t,x)\dxdt} = 0.
\end{displaymath}

For the remaining terms, it suffices by Step 1 
and the upcoming Theorem~\ref{theorem:DunfordPettis} to show 
that $\seq{\psi_i(\ue,\cdot)\phi^{-1}}_{\varepsilon > 0}$ 
is uniformly integrable ($i=1,2,3,4$). In view of 
Lemma~\ref{lemma:UniformIntCriteria}(ii), we must show that
\begin{equation}\label{eq:UniformIntpsii}
\sup_{\varepsilon > 0}\E{\iint_{\Pi_T} 
\abs{\psi_i(\ue(t,x),t,x)\phi^{-1}(x)}^2 \phi(x)\,dxdt} < \infty,
\quad i=1,2,3,4.
\end{equation}
As $S$ is in $\mathscr{E}$ and $\test \in C^\infty_c(\Pi_T)$,
\begin{align*}
	\abs{\psi_1(\ue(t,x),t,x)\phi^{-1}(x)} & 
	= \abs{S(\ue(t,x)-V)\partial_t\test(t,x)\phi^{-1}(x)}^2 \\
	& \leq 2\norm{S}_{\mathrm{Lip}}^2 
	\norm{\partial_t \test(t)}_{\infty,\phi^{-1}}(\abs{\ue(t,x)}^2 + \abs{V}^2).
\end{align*}
So \eqref{eq:UniformIntpsii}, with $i=1$, 
follows from Lemma~\ref{lemma:UniformBoundsOnVisc}. 
The term in \eqref{eq:EntIneqApprox} involving 
$\psi_2$ is treated in the same way. 

Consider the term involving the Malliavin derivative, namely $\psi_3$. 
By \eqref{assumption:LipOnSigma},
$$
\abs{\int_Z\sigma(x,\ue(t,x),z)D_{t,z}V\,d\mu(z)}^2 \leq 
\norm{M}_{L^2(Z)}^2 \norm{D_tV}_{L^2(Z)}^2(1 + \abs{\ue(t,x)})^2.
$$
Recall that $V$ is uniformly bounded and 
also that $\mathrm{supp}\,(S'') \subset (-R,R)$ 
for some $R < \infty$. Hence,
$$
S''(\ue-V)(1 + \abs{\ue}) \leq \norm{S''}_\infty(1 + R + \norm{V}_\infty).
$$
Consequently,
\begin{align*}
	&\E{\iint_{\Pi_T} \abs{\psi_3(\ue(t,x),t,x)\phi^{-1}(x)}^2 \phi(x)\,dxdt} 
	\\ &\, 
	\leq \norm{S''}_\infty^2\norm{M}_{L^2(Z)}^2(1 + R 
	+ \norm{V}_\infty)^2\norm{\test}_{\infty,\phi^{-1}}^2 
	\E{\int_0^T\norm{D_tV}_{L^2(Z)}^2\,dt}\norm{\phi}_{L^1(\R^d)},
\end{align*}
and \eqref{eq:UniformIntpsii} holds with $i=3$. 

Consider the $\psi_4$-term. By \eqref{assumption:LipOnSigma},
\begin{displaymath}
 S''(\ue-V)\int_Z\sigma^2(x,\ue,z)\,d\mu(z) 
 \leq \norm{S''}_\infty\norm{M}_{L^2(Z)}^2(1 + R + \norm{V}_\infty)^2.
\end{displaymath}
Hence
\begin{multline*}
	\E{\iint_{\Pi_T} \abs{\psi_4(\ue(t,x),t,x)\phi^{-1}(x)}^2 \phi(x)\,dxdt} \\
	\leq \norm{S''}_\infty^2\norm{M}_{L^2(Z)}^4(1 + R 
	+ \norm{V}_\infty)^4\norm{\test}_{\infty,\phi^{-1}}^2 \iint_{\Pi_T} \phi(x)\,dxdt. 
\end{multline*}

Summarizing, upon sending $\varepsilon \downarrow 0$ 
along a subsequence, it follows that 
$$
\Y{\Entropy[(S,Q),\test,V]}(u) \geq 0,
$$
where $u$ is the process defined in Step 1. 
Finally, the result follows for general $V \in \D^{1,2}$ by 
the density of $\Sm \subset \D^{1,2}$ 
and Lemma~\ref{lemma:ContinuityOfEntWRTV}.
\end{proof}

\section{Uniqueness of entropy solutions}\label{sec:Uniqueness}
To prove the uniqueness of Young measure-valued entropy solutions, we 
need an additional assumption on $\sigma$: there exists 
$M \in L^2(Z)$ and $0 < \kappa \leq 1/2$ such that 
\begin{equation}\label{assumption:SigmaRegularity}
	\abs{\sigma(x,u,z)-\sigma(y,u,z)} \leq 
	M(z)\abs{x-y}^{\kappa + 1/2}(1 + \abs{u}) \tag{$\mathcal{A}_{\sigma,1}$},
\end{equation} 
for $x,y \in \R^d$ and $u \in \R$. Actually, it suffices 
that the criterion is satisfied locally, i.e., for each compact 
$K \subset \R^d \times \R^d$ there exists $M = M_K$ such 
that \eqref{assumption:SigmaRegularity} is satisfied for all $(x,y) \in K$. 

\begin{theorem}\label{theorem:UniquenessOfEntSol}
Fix $\phi \in \mathfrak{N}$, and suppose 
$u^0 \in L^2(\Omega,\F_0,P;L^2(\R^d,\phi))$. 
Assume that assumptions~\eqref{assumption:LipOnf}, 
\eqref{assumption:LipOnSigma}, \eqref{assumption:SigmaRegularity} 
are satisfied. Let $u$ be the Young measure-valued entropy solution
to \eqref{eq:StochasticBalanceLaw} with initial condition $u^0$ 
obtained in Theorem~\ref{theorem:ExistenceOfSolution}, and 
let $v$ be any Young measure-valued entropy 
solution with initial condition $u^0$ in the sense 
of Definition~\ref{Def:YoungEntropySolution}. Then 
$$
u(t,x,\alpha) = v(t,x,\beta), 
\qquad (t,x,\alpha,\beta,\omega) \mbox{-almost everywhere.} 
$$ 
Consequently, $\hat{u} := \int_0^1 u \,d\alpha$ is the unique 
entropy solution to \eqref{eq:StochasticBalanceLaw} in 
the sense of Definition~\ref{Def:EntropySolution}.
\end{theorem}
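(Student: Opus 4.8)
The plan is to carry out the doubling-of-variables argument outlined in the introduction, but in the indirect form: compare the arbitrary entropy solution $v = v(s,y,\beta)$ against the \emph{viscous approximation} $\ue = \ue(t,x)$ (which is Malliavin differentiable by Proposition~\ref{proposition:MalliavinDiffOfViscApprox}), and only at the end pass $\varepsilon \downarrow 0$, invoking the Young-measure representation of Theorem~\ref{theorem:ExistenceOfSolution} to recover $u$. First I would fix a mollification scale: for the entropy solution $v$ use the entropy pair $(S,Q)$ with $S$ a smooth approximation of $|\cdot|$ (or $S = S_\delta$ built from the kernel $J_\delta$), the Kru\v{z}kov constant $V = \ue(t,x)$ for fixed $(t,x)$ (this is admissible since $\ue(t,x)\in\D^{1,2}$), and a test function $\test(s,y)\,J_\rho(x-y)\,J_{r_0}(t-s)$ that doubles both time and space variables. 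Integrating the entropy inequality \eqref{eq:YoungEntropyCondition} for $v$ against $(t,x)$ and pairing with the strong/It\^o form of the equation for $\ue$ (as in \eqref{eq:AppliedItoFormToViscApprox}), I would add the two relations. Completing the square in the two It\^o-correction terms $-\tfrac12 S''(\sigma(\ue)^2 + \sigma(v)^2)$ produces, as in \eqref{eq:OutlineOfDoubeling}, a harmless term $-\tfrac12 S''(\sigma(\ue)-\sigma(v))^2 \le 0$ together with the cross term $E[S''(\ue-v)\,\sigma(v)(D_t\ue - \sigma(\ue))]$ coming from pairing $D_{t,z}V = D_{t,z}\ue(t,x)$ against $\sigma(v)$ and against the It\^o correction of $\ue$.

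The heart of the argument is to control this cross term. I would split $D_t\ue(t) - \sigma(\ue(t)) = (D_t\ue(t) - \sigma(\ue(r))) + (\sigma(\ue(r)) - \sigma(\ue(t)))$ with $r$ the ``inner'' time variable, but more precisely exploit the doubling: the relevant quantity is $D_{r,z}\ue(t,x)$ with $t\downarrow r$ encoded through $J_{r_0}(t-r)$, and Lemma~\ref{lemma:MalliavinDerivativeWeakTimeCont} gives exactly that $D_{r,z}\ue(t,x) \to \sigma(x,\ue(r,x),z)$ weakly as $r_0 \downarrow 0$, with an $r_0$-uniform bound \eqref{eq:UniformBoundOnTr0}. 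Feeding this in, the cross term is asymptotically $E[S''(\ue-v)\,\sigma(v)\,\sigma(\ue)] - E[S''(\ue-v)\,\sigma(\ue)^2]$-type contributions that combine with the square-completion leftovers to leave only the nonpositive term $-\tfrac12 E[\int S''(\ue-v)(\sigma(\ue)-\sigma(v))^2]$; since $S \to |\cdot|$ forces $S'' \to 0$ off the diagonal this vanishes in the limit, and one is left with the pure transport/Kru\v{z}kov inequality
\begin{equation*}
 \frac{d}{dt}\,E\!\left[\,\Y{\ \cdot\ }\big(\|\ue(t)-v(t)\|_{L^1}\big)\right] \le (\text{flux and regularization terms}\to 0),
\end{equation*}
after also sending the spatial mollification $\rho\downarrow 0$ (here \eqref{assumption:SigmaRegularity} is what makes the $x$-dependence of $\sigma$ contribute an $o(1)$ term rather than an obstruction) and $\varepsilon\downarrow 0$. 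The diagonal-limit bookkeeping for the entropy flux $Q$ and the initial-data terms is standard and uses Lemma~\ref{lemma:InitialCondition} to kill the contribution at $t=0$.

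Finally I would assemble: after all limits, the symmetrized doubled inequality reads (in Young-measure form, with $u$ the solution from Theorem~\ref{theorem:ExistenceOfSolution} represented by $u(t,x,\alpha)$ and $v$ by $v(s,y,\beta)$)
\begin{equation*}
 E\!\left[\iint_{\Pi_T}\!\int_0^1\!\!\int_0^1 |u(t,x,\alpha)-v(t,x,\beta)|\,\big(\partial_t + \text{div}\big)(\cdots)\,d\alpha\,d\beta\,dx\,dt\right] \ge 0,
\end{equation*}
so that choosing the usual test function increasing in time and spreading to all of $\R^d$ gives $E\!\int_{\R^d}\!\int_0^1\!\int_0^1 |u(t,x,\alpha)-v(t,x,\beta)|\,\phi\,d\alpha\,d\beta\,dx = 0$ for a.e.\ $t$, whence $u(t,x,\alpha) = v(t,x,\beta)$ a.e.; taking $v = u$ first shows $u$ does not depend on $\alpha$, so $u = \hat u$ and $v = \hat u$, giving uniqueness in the sense of Definition~\ref{Def:EntropySolution}. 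The main obstacle I anticipate is making the passage $D_{r,z}\ue(t,x)\to\sigma(x,\ue(r,x),z)$ interact correctly with the $\varepsilon\downarrow 0$ limit — one must keep $\varepsilon$ fixed while sending $r_0\downarrow 0$ (using Lemma~\ref{lemma:MalliavinDerivativeWeakTimeCont}, whose bound \eqref{eq:UniformBoundOnTr0} is not claimed $\varepsilon$-uniform), extract the Kru\v{z}kov inequality between $\ue$ and $v$ at fixed $\varepsilon$, and only then use the Young-measure convergence of $\ue$ together with weak lower semicontinuity of the $L^1$-type functional to pass to $u$; the order of limits ($n$ or $r_0$, then $\rho$, then $\varepsilon$) has to be choreographed carefully, and the anticipating It\^o formula (Theorem~\ref{theorem:AntIto}) must be applied with $V$ depending measurably on the frozen parameters $(t,x)$.
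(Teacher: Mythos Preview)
Your plan is essentially the paper's own strategy: doubling variables between the arbitrary Young measure-valued entropy solution $v$ and the viscous approximation $\ue$ (exploiting the Malliavin differentiability of the latter), completing the square in the It\^{o} correction terms, and killing the cross term $E[S''(\ue-v)\sigma(v)(D_{s,z}\ue-\sigma(\ue))]$ via Lemma~\ref{lemma:MalliavinDerivativeWeakTimeCont} as $r_0\downarrow 0$, then passing to $u$ through the Young-measure limit and concluding by Gr\"onwall. This is exactly Lemma~\ref{lemma:DoubelingWithoutLimits} followed by Proposition~\ref{proposition:KatoInequalityEntSol}.

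There is, however, a genuine error in your stated order of limits. You propose ``$r_0$, then $\rho$, then $\varepsilon$''. This fails: after adding the two inequalities, the viscous remainder is
\[
\mathcal{T}_3 \;=\; -\varepsilon\, E\!\left[\iiiint S_\delta(\ue-v)\,\Delta_x\test\,dX\right],
\]
and $\Delta_x\test$ contains $\Delta_x J_\rho\bigl(\tfrac{x-y}{2}\bigr)$, which is of order $\rho^{-2}$. Hence $\mathcal{T}_3$ is $\mathcal{O}(\varepsilon\rho^{-2})$ and blows up as $\rho\downarrow 0$ at fixed $\varepsilon$. The paper's remedy is to send $\varepsilon\downarrow 0$ \emph{before} $r\downarrow 0$ (the spatial parameter): after $r_0\downarrow 0$ the Malliavin cross term is gone (Limit~\ref{limit:T2}), and then the $\varepsilon\downarrow 0$ limit is taken along a subsequence using the Young-measure compactness (Remark~\ref{remark:YoungLimitInDoubling}), so that $\mathcal{T}_3=\mathcal{O}(\varepsilon)$ with a constant depending on $r$ but not on $\varepsilon$ (Limit~\ref{limit:T3}). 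Only afterwards does one send $(\delta,r)\downarrow(0,0)$ coupled as $\delta=r^{1+\eta}$ so that both the flux error $\mathcal{O}(\delta/r)$ (Limit~\ref{limit:F}) and the noise-regularity error $\mathcal{O}(r^{2\kappa+1}/\delta)$ from \eqref{assumption:SigmaRegularity} (Limit~\ref{limit:T1}) vanish, and finally $\gamma\downarrow 0$. A second, smaller point: the time mollifier must be one-sided, $J_{r_0}^+(t-s)$ supported in $\{t>s\}$, so that $v(s,y,\beta)$ is $\F_s$-measurable when inserted into the inequality for $\ue(t,\cdot)$ (Lemma~\ref{lemma:EntIneqViscForAdapted}); with a symmetric $J_{r_0}(t-s)$ the Malliavin term on the $\ue$-side does not drop out.
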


The proof is  found at the end of this section. 
As discussed in the introduction, due 
to the lack of Malliavin differentiability at the hyperbolic level, the 
uniqueness argument will invoke the viscous approximations and 
their limit taken in the weak sense of Young measures. 

Retracing the proof of Theorem~\ref{theorem:UniquenessOfEntSol}, making 
some small modifications, we obtain the following spatial regularity result:

\begin{proposition}[Spatial regularity]\label{proposition:FracBounds}
Fix $\phi \in \mathfrak{N}$, and suppose $u^0$ belongs to 
$L^2(\Omega,\F_0,P;L^2(\R^d,\phi))$. 
Under assumptions \eqref{assumption:LipOnf}, \eqref{assumption:LipOnSigma}, 
and \eqref{assumption:SigmaRegularity} the 
entropy solution $u$ to \eqref{eq:StochasticBalanceLaw} satisfies
\begin{multline*}
	E\Bigg[\,\,\iint\limits_{\,\, \R^d \times \R^d}
	\abs{u(t,x + z)-u(t,x-z)}\phi(x)J_r(z)\,dxdz\Bigg] \\
	\leq CE\Bigg[\,\, \iint\limits_{\,\,\R^d \times \R^d}
	\abs{u^0(x + z)-u^0(x-z)}\phi(x)J_r(z)\,dxdz\Bigg] + \mathcal{O}(r^\kappa),
\end{multline*}
where the constant $C$ depends only on $C_\phi, \norm{f}_{\mathrm{Lip}}, T$, 
and $\kappa$ is the exponent from assumption \eqref{assumption:SigmaRegularity}. 
If $\sigma$ is independent of $x$, i.e., $\sigma(x,u,z) = \sigma(u,z)$, then the 
last term on the right vanishes, i.e., $\mathcal{O}(\cdot) \equiv 0$.
\end{proposition}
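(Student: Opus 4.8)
The plan is to prove the estimate first for the viscous approximations $\ue$ of \eqref{eq:ViscousApprox}, with all constants independent of $\varepsilon$, and then to pass to the limit $\varepsilon\downarrow0$ using the strong convergence $\ue\to u$ that comes out of the proof of Theorem~\ref{theorem:UniquenessOfEntSol} (where the limiting Young measure is shown to be a Dirac mass, so that $u$ is a genuine function). As in Lemma~\ref{lemma:UniformBoundsOnVisc} one may first assume $f,\sigma,u^0$ smooth enough for Proposition~\ref{proposition:SobolevBoundsOnViscApprox}, so that $\ue$ is a strong solution and, for each fixed pair $x_1,x_2\in\R^d$, the map $t\mapsto\ue(t,x_1)-\ue(t,x_2)$ is an It\^o process; the general case then follows by Proposition~\ref{proposition:ContDependVisc} and approximation. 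I would test against $\psi_r(x_1,x_2):=\phi(\tfrac{x_1+x_2}{2})J_r(\tfrac{x_1-x_2}{2})$, so that, substituting $x_1=x+z$, $x_2=x-z$, the integral in the statement is a fixed multiple of $\iint\abs{\,\cdot(x_1)-\cdot(x_2)}\psi_r\,dx_1dx_2$, and note that $\abs{x_1-x_2}\le2r$ on $\mathrm{supp}\,\psi_r$, while the $J_r$-gradient cancels in $(\nabla_{x_1}+\nabla_{x_2})\psi_r=\nabla\phi(\tfrac{x_1+x_2}{2})J_r(\tfrac{x_1-x_2}{2})$, which is $\le C_\phi\psi_r$ in modulus.

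The core step is to apply the ordinary It\^o formula to $S_\eta(\ue(t,x_1)-\ue(t,x_2))$, where $S_\eta\in\mathscr{E}$ is an even smooth approximation of $\abs{\cdot}$ with $\mathrm{supp}\,S_\eta''\subset(-\eta,\eta)$, $\norm{S_\eta''}_\infty\le C\eta^{-1}$ and $\bigl|\abs{\xi}-S_\eta(\xi)\bigr|\le C\eta$, then to integrate against $\psi_r$, take expectations (the martingale term drops out), and integrate in time. The flux contribution is handled exactly as in the Kru\v{z}kov doubling-of-variables argument of the uniqueness proof: integrating by parts in $x_1$ and $x_2$, it is bounded by $C_\phi\norm{f}_{\mathrm{Lip}}\int_0^t\Eb{\iint\abs{\ue(s,x_1)-\ue(s,x_2)}\psi_r}\ds$ plus an error stemming from the smoothing of the entropy, of size $\le C\eta\int(\abs{\nabla_{x_1}\psi_r}+\abs{\nabla_{x_2}\psi_r})\le C\eta/r$ (in this error term the $J_r$-gradient, of $L^1$-norm $\sim1/r$, does \emph{not} cancel). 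The $\varepsilon\Delta$ term splits, by the chain rule and two integrations by parts, into a non-positive piece and a remainder $-\varepsilon\iint S_\eta(\ue(s,x_1)-\ue(s,x_2))(\Delta_{x_1}+\Delta_{x_2})\psi_r$, which for fixed $r$ is $\mathcal{O}(\varepsilon)$ by \eqref{eq:UniformBoundLpPhi} and vanishes as $\varepsilon\downarrow0$. For the quadratic-variation term I would write $\sigma(x_1,a,z)-\sigma(x_2,b,z)=(\sigma(x_1,a,z)-\sigma(x_1,b,z))+(\sigma(x_1,b,z)-\sigma(x_2,b,z))$ and use \eqref{assumption:LipOnSigma} and \eqref{assumption:SigmaRegularity} together with $\abs{x_1-x_2}\le2r$, bounding it by a constant times $\Eb{\int_0^t\iint S_\eta''(\ue(s,x_1)-\ue(s,x_2))\bigl(\abs{\ue(s,x_1)-\ue(s,x_2)}^2+r^{2\kappa+1}(1+\abs{\ue(s,x_2)})^2\bigr)\psi_r}$; since $S_\eta''(\xi)\xi^2\le C\eta$, the first part is $\le C\eta$, and the second is $\le C\eta^{-1}r^{2\kappa+1}$ by \eqref{eq:UniformBoundLpPhi}.

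Collecting these bounds and writing $\mathcal{Y}_\eta^\varepsilon(t):=\Eb{\iint S_\eta(\ue(t,x_1)-\ue(t,x_2))\psi_r}$ (using $\abs{\xi}\le S_\eta(\xi)+C\eta$ freely) gives $\mathcal{Y}_\eta^\varepsilon(t)\le\mathcal{Y}_\eta^\varepsilon(0)+C\int_0^t\mathcal{Y}_\eta^\varepsilon(s)\ds+C\eta/r+C\eta^{-1}r^{2\kappa+1}+E(\varepsilon,\eta,r)$, where the Gr\"onwall constant depends only on $C_\phi$ and $\norm{f}_{\mathrm{Lip}}$, the additive constants also on $T,\norm{M}_{L^2(Z)},\norm{\phi}_{L^1(\R^d)}$, and $E(\varepsilon,\eta,r)\to0$ as $\varepsilon\downarrow0$ for fixed $\eta,r$. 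Gr\"onwall's lemma, then $\varepsilon\downarrow0$ along the subsequence for which $\ue\to u$ strongly (so $\mathcal{Y}_\eta^\varepsilon(t)\to\Eb{\iint S_\eta(u(t,x_1)-u(t,x_2))\psi_r}$ for a.e.\ $t$), then $\mathcal{Y}_\eta^\varepsilon(0)\le\mathcal{T}_0+C\eta$ with $\mathcal{T}_0$ the right-hand side of the claim (independent of $\varepsilon$), yield, for every $\eta>0$ and a.e.\ $t$, $\Eb{\iint\abs{u(t,x_1)-u(t,x_2)}\psi_r}\le e^{CT}\mathcal{T}_0+C(\eta+\eta/r+\eta^{-1}r^{2\kappa+1})$. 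Taking $\eta=r^{\kappa+1}$ balances $\eta/r$ against $\eta^{-1}r^{2\kappa+1}$ and makes the last bracket $\mathcal{O}(r^\kappa)$, which is the assertion; if $\sigma$ is independent of $x$ then \eqref{assumption:SigmaRegularity} forces the $\eta^{-1}r^{2\kappa+1}$-term to vanish identically, so one simply lets $\eta\downarrow0$ and no error remains.

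The main obstacle is the tension between the two error sources: smoothing the Kru\v{z}kov entropy costs $\eta/r$ in the flux term, while the $x$-dependence of $\sigma$ costs $\eta^{-1}r^{2\kappa+1}$ in the It\^o correction; these compete, and the optimal choice $\eta\sim r^{\kappa+1}$ is precisely what produces the exponent $\kappa$ in $\mathcal{O}(r^\kappa)$. A secondary but essential point is the legitimacy of the passage $\varepsilon\downarrow0$, which rests on the strong convergence $\ue\to u$ established in the course of proving Theorem~\ref{theorem:UniquenessOfEntSol} — this is exactly why the proposition is obtained by "retracing" that proof rather than proved from scratch.
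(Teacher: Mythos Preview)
Your approach is correct and reaches the same endpoint (the choice $\eta=r^{\kappa+1}$ balancing the $\eta/r$ and $\eta^{-1}r^{2\kappa+1}$ errors), but it is genuinely different from the paper's route. The paper does \emph{not} compare $\ue(\cdot,x_1)$ with $\ue(\cdot,x_2)$ at the viscous level; instead it restarts from Lemma~\ref{lemma:DoubelingWithoutLimits} with $v=u$ and $\psi=\phi_R$, i.e.\ it doubles both time and space and pairs the \emph{entropy} inequality for $v$ (with $V=\ue(t,x)$, hence a Malliavin-derivative term $\mathcal{T}_2$) against the parabolic inequality for $\ue$, then invokes the already proved Limits~\ref{limit:L}--\ref{limit:T3} to dispose of $r_0,\varepsilon,\gamma$ before applying Gr\"onwall and setting $\delta=r^{\kappa+1}$. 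Your argument sidesteps all of this machinery: since both copies are the \emph{same} viscous solution driven by the \emph{same} noise, the ordinary It\^o formula applied to $S_\eta(\ue(t,x_1)-\ue(t,x_2))$ already produces the quadratic-variation term with the full square $(\sigma(x_1,\ue,\cdot)-\sigma(x_2,\ue,\cdot))^2$, so no anticipating calculus, no time-doubling, and no Lemma~\ref{lemma:MalliavinDerivativeWeakTimeCont} are needed. What the paper's route buys is brevity once Section~\ref{sec:Uniqueness} is in place; what your route buys is a self-contained, more elementary proof that would also work outside the Malliavin framework. One small point you should make explicit: $\phi\in\mathfrak{N}$ is only $C^1$, so $(\Delta_{x_1}+\Delta_{x_2})\psi_r$ involves $\Delta\phi$; you need to first replace $\phi$ by the smooth compactly supported $\phi_R$ of Lemma~\ref{lemma:NSmCompApprox} (or by $\phi\star J_\delta$ as in Lemma~\ref{lemma:ContMollWeightedNorm}) and send $R\to\infty$ at the end, exactly as the paper does in its own proof.
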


See \cite{DebusscheVovelle2010,ChenKarlsen2012} for 
similar results, and how to turn this result into a fractional 
$BV$ estimate. The proof of Proposition \ref{proposition:FracBounds} is 
found at the very end of this section.

The next lemma contains the ``entropy condition'' at the 
parabolic level, which is utilized later in the uniqueness proof.

\begin{lemma}\label{lemma:EntIneqViscForAdapted}
For each fixed $\varepsilon>0$, let $\ue$ be the solution of \eqref{eq:ViscousApprox}. 
Suppose $V \in L^2(\Omega)$ is $\F_s$-measurable for some $s \in (0,T)$, 
and $0\le \test \in C^\infty_c([0,T) \times \R^d)$ with 
$\mathrm{supp}\,(\test)\subset (s,T) \times \R^d$. Then
\begin{equation*}
\begin{split}    
  & \qquad \E{\iint_{\Pi_T} S(\ue-V)\partial_t\test + Q(\ue,V)\cdot \nabla \test \dxdt}\\
  & \qquad \qquad \geq-\frac{1}{2}\E{\iint_{\Pi_T}\int_Z S''(\ue-V)
  \sigma(x,\ue,z)^2\test(t,x)\,d\mu(z)\dxdt} \\
  & \qquad \qquad \qquad -\varepsilon \E{\iint_{\Pi_T} S(\ue(t)-V)\Delta\test(t)\dx dt},
  \end{split}
\end{equation*}
for any entropy/entropy-flux pair $(S,Q)$ in $\mathscr{E}$.
\end{lemma}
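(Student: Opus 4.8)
The plan is to mimic Step~2 of the proof of Theorem~\ref{theorem:ExistenceOfSolution}, but with the crucial simplification that here the ``Kru\v{z}kov variable'' $V$ is $\F_s$-measurable rather than a general element of $\D^{1,2}$, and the test function $\test$ is supported in $(s,T) \times \R^d$. First I would reduce to the case where $f$, $\sigma$, $u^0$ satisfy the hypotheses of Proposition~\ref{proposition:SobolevBoundsOnViscApprox} for all $\abs{\alpha}\le 2$, so that $\ue$ is a strong solution of \eqref{eq:ViscousApprox} (in the sense made precise there); the general case then follows by the continuous-dependence estimate of Proposition~\ref{proposition:ContDependVisc} together with the continuity of $\Entropy$ in its coefficients, exactly as in the proof of Theorem~\ref{theorem:ExistenceOfSolution}. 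Moreover, by a density/approximation argument (approximating $V\in L^2(\Omega,\F_s)$ by smooth $\F_s$-measurable random variables and $S$ by entropies in $\mathscr{E}$, appealing to Lemma~\ref{lemma:ContinuityOfEntWRTV}), it suffices to treat $V$ smooth and $\F_s$-measurable.

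Next, I would fix $x\in\R^d$ and apply the anticipating It\^o formula (Theorem~\ref{theorem:AntIto}) to $X_t = \ue(t,x)$ and $F(X,V,t) = S(X-V)\test(t,x)$, just as in the derivation of \eqref{eq:AppliedItoFormToViscApprox}. This produces, after taking expectations and integrating in $x$, the identity
\begin{align*}
  0 &= \E{\int_{\R^d} S(u^0-V)\test(0)\dx}
  + \E{\iint_{\Pi_T} S(\ue(t)-V)\partial_t\test(t)\dx dt}\\
  &\quad - \E{\iint_{\Pi_T} \nabla \cdot f(\ue(t))S'(\ue(t)-V)\test(t)\dx dt}
  + \E{\varepsilon\iint_{\Pi_T} \Delta \ue(t)S'(\ue(t)-V)\test(t)\dx dt}\\
  &\quad - \E{\iint_{\Pi_T}\int_Z S''(\ue(t)-V)\test(t)\sigma(x,\ue(t),z)D_{t,z}V\,d\mu(z)\dx dt}\\
  &\quad + \frac{1}{2}\E{\iint_{\Pi_T}\int_Z S''(\ue(t)-V)\test(t)\sigma(x,\ue(t),z)^2\,d\mu(z)\dx dt}.
\end{align*}
The key observation now is that, since $V$ is $\F_s$-measurable, $D_{t,z}V = 0$ for $t > s$, and since $\mathrm{supp}\,(\test)\subset (s,T)\times\R^d$, the entire Malliavin-derivative term vanishes; likewise the initial term $\E{\int_{\R^d} S(u^0-V)\test(0)\dx}$ vanishes because $\test(0,\cdot)\equiv 0$. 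It remains to handle the viscous term: by the chain rule and integration by parts in $x$ (valid by Proposition~\ref{proposition:SobolevBoundsOnViscApprox}),
\begin{displaymath}
  \varepsilon\iint_{\Pi_T} \Delta \ue(t)S'(\ue(t)-V)\test(t)\,dxdt
  = \varepsilon\iint_{\Pi_T}S(\ue(t)-V)\Delta\test(t)\,dxdt
  - \varepsilon\iint_{\Pi_T} S''(\ue(t)-V)\abs{\nabla\ue(t)}^2\test(t)\,dxdt,
\end{displaymath}
and the last term is $\le 0$ because $S$ is convex and $\test\ge 0$. Finally, using the definition of the entropy flux, $-\nabla\cdot f(\ue)S'(\ue-V) = \nabla\cdot Q(\ue,V)$ up to an integration by parts producing $Q(\ue,V)\cdot\nabla\test$, one rearranges the remaining terms to the asserted inequality.

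The steps here are all routine given the machinery already in place; the only point requiring a little care is the justification of the anticipating It\^o formula and the $x$-integration by parts under the relaxed regularity on $f,\sigma,u^0$, which is precisely why the argument is first carried out under the strong assumptions of Proposition~\ref{proposition:SobolevBoundsOnViscApprox} and then transferred by Proposition~\ref{proposition:ContDependVisc}. I expect the main (minor) obstacle to be verifying that all terms remain in the appropriate $L^1$ spaces so that Fubini and the passage from the strong to the general coefficients are legitimate — but these estimates are of the type already established in \eqref{eq:EstOnFluxTermEntIneq}, \eqref{eq:EstOnSquareTermEntIneq}, together with the uniform bound of Lemma~\ref{lemma:UniformBoundsOnVisc}.
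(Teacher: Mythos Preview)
Your proposal is correct and follows essentially the same approach as the paper. The paper's proof is more concise only because it cites the already-established inequality \eqref{eq:EntIneqApprox} from the proof of Theorem~\ref{theorem:ExistenceOfSolution} (which you effectively re-derive), observes via \cite[Proposition~1.2.8]{NualartMalliavinCalc2006} that the Malliavin-derivative term vanishes for $\F_s$-measurable $V$ when $\test$ is supported in $(s,T)\times\R^d$, and then passes to general $V\in L^2(\Omega,\F_s)$ by an approximation argument as in Lemma~\ref{lemma:ContinuityOfEntWRTV}---exactly the ingredients you identify.
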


\begin{proof}
Consider \eqref{eq:EntIneqApprox}. 
Note that for any $V \in \D^{1,2}$ that is $\F_s$-measurable,
\begin{displaymath}
	\E{\iint_{\Pi_T}\int_Z S''(\ue(t)-V)\sigma(x,\ue(t),z)D_{t,z}V\test(t)
	\,d\mu(z)\dx dt} = 0,
\end{displaymath}
thanks to \cite[Proposition~1.2.8]{NualartMalliavinCalc2006}. 
The general result follows by approximation 
as in Lemma~\ref{lemma:ContinuityOfEntWRTV}.
\end{proof}

The following ``doubling of variables" lemma is at the heart of the matter. 
To some extent it may be instructive to compare its proof with the rather involved 
computations in \cite[Lemma~3.2]{FengNualart2008} and \cite[Section~4.1]{Bauzet:2012kx}.

\begin{lemma}\label{lemma:DoubelingWithoutLimits}
Suppose \eqref{assumption:LipOnf}, \eqref{assumption:LipOnSigma} hold. 
Fix $\phi \in \mathfrak{N}$, and let $\seq{\ue}_{\varepsilon > 0}$ be a sequence of viscous 
approximations with initial condition $u^0 \in L^2(\Omega,\F_0,P;L^2(\R^d,\phi))$. 
Let $v$ be a Young measure-valued entropy solution in the sense 
of Definition~\ref{Def:YoungEntropySolution} with initial condition 
$v^0 \in L^2(\Omega,\F_0,P;L^2(\R^d,\phi))$. 

For any $0< \gamma <\frac{1}{2}T$ take $t_0 \in (0,T-2\gamma]$ and define 
$$
\xi_{\gamma,t_0}(t) := 1 - \int_0^t J_\gamma^+(s-t_0)\ds.
$$
Let $\psi \in C^\infty_c(\R^d)$ be non-negative and define
$$
\test(t,x,s,y) = \frac{1}{2^d}\psi\left(\frac{x+y}{2}\right)J_r\left(\frac{x-y}{2}\right)
\xi_{\gamma,t_0}(t)J_{r_0}^+(t-s).
$$
Let $\Sd$ be a function satisfying 
$$
\Sd'(\sigma) = 2\int_0^\sigma J_\delta(z)\dz, 
\qquad \Sd(0) = 0.
$$
Furthermore, define
$$
Q_\delta(u,c) = \int_c^u \Sd'(z-c)f'(z)\dz,
$$
and note that the pair $(\Sd,Q_\delta)$ belongs 
to $\mathscr{E}$. 

Then
\begin{equation}\label{eq:DoublingLemmaIneq}
 L \geq  R + F + \mathcal{T}_1 + \mathcal{T}_2 + \mathcal{T}_3,
\end{equation}
where 
\begin{align*}
\begin{split}
&L = \E{\iint_{\Pi_T}\int_{\R^d} \Sd(v^0(y)-\ue(t,x))\test(t,x,0,y) \,dydxdt}, 
\end{split} \\
&R =  -\E{\iiiint_{\Pi_L^2}\int_{[0,1]}\Sd(v-\ue)(\partial_s + \partial_t)\test\,d\beta dX}, \\ 
&F =  -\E{\iiiint_{\Pi_L^2}\int_{[0,1]} Q_\delta(\ue,v)\cdot \nabla_x \test 
+  Q_\delta(v,\ue)\cdot \nabla_y\test\,d\beta dX}, \\
\begin{split}  
&\mathcal{T}_1 = -\frac{1}{2}\E{ \iiiint_{\Pi_L^2}\int_{[0,1]}\int_Z \Sd''(v-\ue)
\left(\sigma(y,v,z)-\sigma(x,\ue,z)\right)^2\test\,d\mu(z)d\beta dX},
\end{split} \\
&\mathcal{T}_2 = \E{ \iiiint_{\Pi_L^2}\int_{[0,1]}\int_Z \Sd''(v-\ue)\left(D_{s,z}\ue
-\sigma(x,\ue,z)\right)\sigma(y,v,z)\test\,d\mu(z)d\beta dX}, \\
&\mathcal{T}_3 = -\varepsilon \E{\iiiint_{\Pi_L^2}\int_{[0,1]} \Sd(\ue-v)\Delta_x\test\,d\beta dX},
\end{align*}
where $dX = dxdtdyds$.
\end{lemma}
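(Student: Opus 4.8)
The plan is to add two entropy inequalities: the entropy inequality for the Young measure-valued solution $v$ with the Kru\v{z}kov parameter frozen at the Malliavin differentiable random variable $\ue(t,x)$, and the parabolic-level entropy inequality for $\ue$ (Lemma~\ref{lemma:EntIneqViscForAdapted}) with the Kru\v{z}kov parameter frozen at the adapted random variable $v(s,y,\beta)$, and then to reorganize the resulting noise terms by completing a square.

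\emph{Step 1.} By Proposition~\ref{proposition:MalliavinDiffOfViscApprox}, $\ue(t,x)\in\D^{1,2}$ for $dt\otimes dx$-a.a.\ $(t,x)$, with \eqref{eq:PointwiseMalliavinBoundOnVisc}, and for such $(t,x)$ the Malliavin derivative of $\ue(t,x)$ agrees a.e.\ with the jointly measurable field $D_{s,z}\ue(t,x)$ of that proposition. For each such $(t,x)$ the function $(s,y)\mapsto\test(t,x,s,y)$ is a non-negative element of $C^\infty_c([0,T)\times\R^d)$, so Definition~\ref{Def:YoungEntropySolution}, applied with $V=\ue(t,x)$ and the pair $(\Sd,Q_\delta)\in\mathscr{E}$, gives $\Y{\Entropy[(\Sd,Q_\delta),\test(t,x,\cdot,\cdot),\ue(t,x)]}(v)\ge0$. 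I would integrate this over $(t,x)\in\Pi_T$; this is legitimate because on the compact $\psi$-support every term is dominated, uniformly in $(t,x)$, via the estimates \eqref{eq:EstOnFluxTermEntIneq}, \eqref{eq:EstOnSquareTermEntIneq}, \eqref{eq:EstOnMallTermEntIneq} together with \eqref{eq:BoundednessOfViscApprox} and \eqref{eq:PointwiseMalliavinBoundOnVisc}, so that Fubini applies. The result contributes the initial term $L$, the term $\Sd(v-\ue)\partial_s\test+Q_\delta(v,\ue)\cdot\nabla_y\test$, a term $+\tfrac12\int_Z\Sd''(v-\ue)\sigma(y,v,z)^2\test\,d\mu(z)$, and a term $-\int_Z\Sd''(v-\ue)\sigma(y,v,z)D_{s,z}\ue\,\test\,d\mu(z)$.

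\emph{Step 2.} For each fixed $(s,y,\beta)$ the variable $v(s,y,\beta)$ is $\F_s$-measurable (as $v$ is predictable) and $(t,x)\mapsto\test(t,x,s,y)$ is a non-negative test function supported in $(s,T)\times\R^d$, since $J^+_{r_0}(t-s)$ forces $t\in(s,s+2r_0)$ while $\xi_{\gamma,t_0}$ cuts off strictly before $T$. Lemma~\ref{lemma:EntIneqViscForAdapted} therefore applies with $(\Sd,Q_\delta)$, and integrating the resulting inequality over $(s,y,\beta)\in\Pi_T\times[0,1]$ (legitimate by Lemma~\ref{lemma:UniformBoundsOnVisc} and $v\in L^2$) contributes the term $\Sd(\ue-v)\partial_t\test+Q_\delta(\ue,v)\cdot\nabla_x\test$, a term $+\tfrac12\int_Z\Sd''(\ue-v)\sigma(x,\ue,z)^2\test\,d\mu(z)$, and an $\varepsilon$-term $+\varepsilon\Sd(\ue-v)\Delta_x\test$; there is no Malliavin term, because $D_{t,z}v(s,y,\beta)=0$ for $t>s$.

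\emph{Step 3 and the main obstacle.} Adding the two inequalities and using that $\Sd$, hence $\Sd''$, is even (so $\Sd(v-\ue)=\Sd(\ue-v)$ and $\Sd''(v-\ue)=\Sd''(\ue-v)$), the surviving quantity is $L$ plus: the time-derivative terms, which assemble into $-R$; the flux terms, into $-F$; the $\varepsilon$-term, into $-\mathcal{T}_3$; and the three noise terms, whose combined integrand is $\Sd''(v-\ue)\bigl(\tfrac12\sigma(y,v,z)^2+\tfrac12\sigma(x,\ue,z)^2-\sigma(y,v,z)D_{s,z}\ue\bigr)\test$. A one-line expansion shows $\tfrac12\sigma(y,v,z)^2+\tfrac12\sigma(x,\ue,z)^2-\sigma(y,v,z)D_{s,z}\ue=\tfrac12\bigl(\sigma(y,v,z)-\sigma(x,\ue,z)\bigr)^2-\bigl(D_{s,z}\ue-\sigma(x,\ue,z)\bigr)\sigma(y,v,z)$, so the noise terms assemble into $-\mathcal{T}_1-\mathcal{T}_2$; rearranging gives \eqref{eq:DoublingLemmaIneq}. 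The conceptual heart is Step~1: one must freeze the Kru\v{z}kov parameter at the \emph{random} variable $\ue(t,x)$ and read off correctly the term $D_{s,z}\ue(t,x)$ — exactly the ingredient in $\mathcal{T}_2$ that, after later sending $r_0\downarrow0$ and invoking Lemma~\ref{lemma:MalliavinDerivativeWeakTimeCont}, yields the cancellation behind the $L^1$-contraction. Beyond that, the only work is the (routine but somewhat lengthy) measurability and integrability bookkeeping required to integrate the two pointwise-in-parameter inequalities, together with the elementary square completion above.
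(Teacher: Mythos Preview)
Your proposal is correct and follows essentially the same route as the paper: apply the Young-measure entropy inequality for $v$ with $V=\ue(t,x)$ and integrate in $(t,x)$, apply Lemma~\ref{lemma:EntIneqViscForAdapted} for $\ue$ with $V=v(s,y,\beta)$ and integrate in $(s,y,\beta)$, add, and complete the square in the noise terms. The paper's proof is the same two-inequality addition, only written more tersely; your explicit algebraic identity for $\tfrac12\sigma(y,v,z)^2+\tfrac12\sigma(x,\ue,z)^2-\sigma(y,v,z)D_{s,z}\ue$ and the remarks on the support of $\test$ in each variable are exactly what is needed to justify the two applications.
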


\begin{remark}
In \cite[Section~4.6]{FengNualart2008} the authors prove existence of a 
\textit{strong} entropy solution. The additional condition attached to the notion of 
strong solution stems from the difficulties in sending 
$\varepsilon \downarrow 0$ before $r_0 \downarrow 0$. 
In our setting, the existence of a strong entropy solution amounts to 
showing that we can send $r_0 \downarrow 0$ 
and $\varepsilon \downarrow 0$ simultaneously in such a way that 
$\lim_{(\varepsilon,t) \downarrow (0,s)}\mathcal{T}_2 = 0$. 
This requires a careful study of how the continuity 
properties of \eqref{eq:MallEqSat} depends on 
$\varepsilon$, cf.~Lemma~\ref{lemma:MalliavinDerivativeWeakTimeCont}. 
We do not proceed along this path in this paper, instead we let $r_0 \downarrow 0$ 
before $\varepsilon \downarrow 0$ as in \cite{Bauzet:2012kx}.
\end{remark}

\begin{proof}
Recall that $\mathrm{supp}(J_{r_0}^+) \subset (0,2r_0)$, so $J_{r_0}^+(t-s)$ is zero 
whenever $s \geq t$. Applying Lemma \ref{lemma:EntIneqViscForAdapted} 
with $V = v(s,y,\beta)$ and integrating in $y,s,\beta$, we obtain
\begin{equation}\label{eq:EntIneqForViscConstv}
\begin{split}    
	&\E{\iiiint_{\Pi_T^2}\int_{[0,1]} \Sd(\ue-v)\partial_t\test 
	+ Q_\delta(\ue,v)\cdot \nabla \test \,d\beta dX}\\ 
	&\geq-\frac{1}{2}\E{\iiiint_{\Pi_T^2}\int_{[0,1]}
	\int_Z \Sd''(\ue-v)\sigma(x,\ue,z)^2\test\,d\mu(z)\, d\beta dX}\\
	&\qquad -\varepsilon \E{\iiiint_{\Pi_T^2}\int_{[0,1]} \Sd(\ue-v)\Delta_x\test \,d\beta dX}.
\end{split}
\end{equation}
Similarly, in the entropy inequality for $v = v(s,y,\beta)$ we take $V = \ue(t,x)$ 
and integrate in $t,x$, resulting in    
\begin{equation}\label{eq:EntIneqForMeasureValuedvConstue}
\begin{split}    
	& \E{\iiiint_{\Pi_T^2}\int_{[0,1]} \Sd(v-\ue)\partial_s\test 
	+ Q_\delta(v,\ue)\cdot \nabla_y \test \,d\beta dX} \\ 
	& +\E{\iint_{\Pi_T}\int_{\R^d} \Sd(v^0(y)-\ue(t,x))\test(t,x,0,y) \,dydxdt}\\
	& \geq \E{\iiiint_{\Pi_T^2}\int_{[0,1]}\int_Z \Sd''(v-\ue) 
	D_{s,z}\ue \sigma(y,v,z)\test \,d\mu(z) \,d\beta dX} \\
	& \qquad -\frac{1}{2}\E{ \iiiint_{\Pi_T^2}\int_{[0,1]}
	\int_Z \Sd''(v-\ue)\sigma(y,v,z)^2\test\,d\mu(z)d\beta dX}.
\end{split}
\end{equation}
The result follows by adding \eqref{eq:EntIneqForViscConstv} 
and \eqref{eq:EntIneqForMeasureValuedvConstue}.
\end{proof}

\begin{proposition}[Kato inequality]\label{proposition:KatoInequalityEntSol}
Fix $\phi \in \mathfrak{N}$. Suppose \eqref{assumption:LipOnf}, 
\eqref{assumption:LipOnSigma}, and \eqref{assumption:SigmaRegularity} hold. 
Let $u$ be the Young measure-valued limit of the viscous 
approximations $\seq{\ue}_{\varepsilon > 0}$ with initial 
condition $u^0 \in L^2(\Omega,\F_0,P;L^2(\R^d,\phi))$, 
constructed in Theorem \ref{theorem:ExistenceOfSolution}. 
Let $v$ be a Young measure-valued entropy solution in the 
sense of Definition~\ref{Def:YoungEntropySolution} with 
initial condition $v^0 \in L^2(\Omega,\F_0,P;L^2(\R^d,\phi))$. 
Then, for almost all $t_0 \in (0,T)$ and 
any non-negative $\psi \in C^\infty_c(\R^d)$,
\begin{equation}\label{eq:KatoInequalityEntSol}
\begin{split}
	& E\bigg[\int_{\R^d}\iint_{[0,1]^2}\abs{u(t_0,x,\alpha)-v(t_0,x,\beta)}
	\psi(x)\,d\alpha d\beta dx \bigg] \\
	& \quad \leq \E{\int_{\R^d} \abs{u^0(x)-v^0(x)}\psi(x)\dx} \\
	& \quad \qquad +E\bigg[\int_0^{t_0}\int_{\R^d}\iint_{[0,1]^2} 
	\sign{u(t,x,\alpha)-v(t,x,\beta)} \\
	&\quad\qquad\qquad\qquad
	\times (f(u(t,x,\alpha))-f(v(t,x,\beta)))\cdot 
	\nabla \psi(x)\,d\beta d\alpha dxdt \bigg].
\end{split}
\end{equation}
\end{proposition}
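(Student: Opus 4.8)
The plan is to start from the doubling-of-variables inequality \eqref{eq:DoublingLemmaIneq} supplied by Lemma~\ref{lemma:DoubelingWithoutLimits} and pass to the limit in the parameters $\delta, r, r_0, \varepsilon$ (and eventually send $\gamma\downarrow 0$ to make $t_0$ a genuine time slice), in the order $r_0\downarrow 0$, then $\varepsilon\downarrow 0$, then $r\downarrow 0$ and $\delta\downarrow 0$, recovering the Young measure representation of $u$ via Step~1 of the proof of Theorem~\ref{theorem:ExistenceOfSolution}. Concretely, I would organise the argument as follows. First, examine the term $\mathcal{T}_2$, which carries the ``noise--noise interaction''. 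Writing $D_{s,z}\ue(t,x)-\sigma(x,\ue(t,x),z) = \bigl(D_{s,z}\ue(t,x)-\sigma(x,\ue(s,x),z)\bigr) + \bigl(\sigma(x,\ue(s,x),z)-\sigma(x,\ue(t,x),z)\bigr)$, the first bracket is controlled by Lemma~\ref{lemma:MalliavinDerivativeWeakTimeCont} (testing against $\Psi$ built from $\Sd''(v-\ue)\sigma(y,v,z)$ times the spatial part of $\test$), which shows that its contribution tends to $0$ as $r_0\downarrow 0$ for a.e.\ value of the inner time variable, while the uniform bound \eqref{eq:UniformBoundOnTr0} together with \eqref{assumption:LipOnSigma} and the $L^2$ bounds on $\ue$ gives dominated convergence in the remaining integrations; the second bracket vanishes in $L^1$ as $r_0 \downarrow 0$ because $J_{r_0}^+(t-s)$ forces $t\downarrow s$ and $\ue$ is $L^2$-continuous in time. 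Hence $\mathcal{T}_2\to 0$ as $r_0\downarrow 0$.

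**Sending $r_0\downarrow 0$ and $\varepsilon\downarrow 0$.** With $\mathcal{T}_2$ disposed of, the $r_0\downarrow 0$ limit in $L$, $R$, $F$, $\mathcal{T}_1$, $\mathcal{T}_3$ is routine: $J_{r_0}^+(t-s)$ collapses the $s$-integration onto $s=t$ (Lebesgue differentiation / the remark after Lemma~\ref{lemma:InitialCondition}), turning the double space-time integral into a single one, and all integrands have the uniform-integrability bounds exploited in \eqref{eq:UniformIntpsii}. Next send $\varepsilon\downarrow 0$: the term $\mathcal{T}_3$ carries an explicit factor $\varepsilon$ against $\Sd(\ue-v)\Delta_x\test$, which is $O(\varepsilon)$ by the Lipschitz bound on $\Sd$ and Lemma~\ref{lemma:UniformBoundsOnVisc}, so $\mathcal{T}_3\to 0$. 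For $L$, $R$, $F$, $\mathcal{T}_1$ I would invoke the Young-measure convergence of $\seq{\ue}$ established in Theorem~\ref{theorem:ExistenceOfSolution}, Step~1: each integrand is a Carath\'eodory function of $\ue$ (after the diagonal $s=t$ reduction), the families are uniformly integrable by Lemma~\ref{lemma:UniformBoundsOnVisc} and the Lipschitz bounds on $\Sd, Q_\delta$ (cf.\ \eqref{eq:pNormEstOnEnt}-type estimates), and Theorem~\ref{theorem:DunfordPettis} yields the limits expressed through the Young measure $\nu_{t,x,\omega}$, i.e.\ via $\int_0^1(\cdot)\,d\alpha$. This produces an inequality of the form $L_0 \geq R_0 + F_0 + \mathcal{T}_{1,0}$ with $u(t,x,\alpha)$ now a Young-measure-valued function.

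**Collapsing the doubled variables.** The remaining steps are the classical Kru\v{z}kov manipulations. Sending $r\downarrow 0$ in $J_r\bigl(\tfrac{x-y}{2}\bigr)$ identifies $x$ and $y$; the symmetric split of the flux terms in $F$ combines into $\mathrm{sign}_\delta(u-v)\bigl(f(u)-f(v)\bigr)\cdot\nabla\psi$ up to errors controlled by the spatial continuity of $\sigma$ and the fractional-regularity estimate (Proposition~\ref{proposition:FracBounds}), which is exactly where assumption~\eqref{assumption:SigmaRegularity} enters and yields an $O(r^\kappa)$ error that vanishes. The crucial algebraic cancellation is in $\mathcal{T}_{1,0}$: after $r\downarrow 0$ the coefficient $(\sigma(y,v,z)-\sigma(x,\ue,z))^2$ becomes $(\sigma(x,v,z)-\sigma(x,u,z))^2$, and since $\Sd''$ is supported near the origin and $\Sd''(w)w^2 \to 0$ as $\delta \downarrow 0$ uniformly (by the choice $\Sd'(\sigma)=2\int_0^\sigma J_\delta$), the term $\mathcal{T}_{1,0}\to 0$. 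Finally $\delta\downarrow 0$ turns $\Sd\to|\cdot|$ and $Q_\delta\to \mathrm{sign}(\cdot)(f(\cdot)-f(\cdot))$, and $\gamma\downarrow 0$ converts $\xi_{\gamma,t_0}$ into the indicator of $[0,t_0]$ for a.e.\ $t_0$, giving \eqref{eq:KatoInequalityEntSol}.

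**Main obstacle.** I expect the genuinely delicate point to be the handling of $\mathcal{T}_2$: one must interleave the limit $r_0\downarrow 0$ (which invokes the weak-time-continuity Lemma~\ref{lemma:MalliavinDerivativeWeakTimeCont}, itself stated only for fixed $\varepsilon$ and for a.e.\ value of the relevant time parameter) with the spatial mollification and the integration against $\Sd''(v-\ue)\sigma(y,v,z)\test$, all while keeping everything uniformly integrable in $\varepsilon$ so that the subsequent $\varepsilon\downarrow 0$ passage goes through. In particular one must be careful that the a.e.-in-$r$ statement of Lemma~\ref{lemma:MalliavinDerivativeWeakTimeCont} is compatible with the integration over the time variable that survives after the $J_{r_0}^+$-collapse; this is the technical heart of why $r_0\downarrow 0$ is taken \emph{before} $\varepsilon\downarrow 0$, as flagged in the remark following Lemma~\ref{lemma:DoubelingWithoutLimits}.
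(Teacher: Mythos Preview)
Your overall strategy matches the paper's exactly: start from Lemma~\ref{lemma:DoubelingWithoutLimits}, send $r_0 \downarrow 0$ (killing $\mathcal{T}_2$ via Lemma~\ref{lemma:MalliavinDerivativeWeakTimeCont}), then $\varepsilon \downarrow 0$ (killing $\mathcal{T}_3$ and passing to the Young-measure limit), then $(\delta,r) \downarrow (0,0)$ together, and finally $\gamma \downarrow 0$. Two details, however, need correction.

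First, in your treatment of $\mathcal{T}_2$, the function $\Psi$ you propose to feed into Lemma~\ref{lemma:MalliavinDerivativeWeakTimeCont} involves $\Sd''\bigl(v(s,y,\beta)-\ue(t,x)\bigr)$, which depends on the integration variable $t$; the lemma requires $\Psi \in L^2(\Omega \times Z;L^2(\R^d,\phi))$ independent of $t$. The paper fixes this by first replacing $\ue(t,x)$ by $\ue(s,x)$ inside $\Sd''$ (and likewise inside the $\sigma(x,\ue,z)$ factor), at the cost of additional error terms that vanish by the $L^2$-time-continuity of $\ue$ as $r_0 \downarrow 0$; this produces a four-term splitting of $\mathcal{T}_2$ rather than your two-term one.

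Second, your attribution of the error terms is off. The flux term $F$ produces errors of order $\delta + \delta/r$ (from the $\int \Sd''(z-\cdot)(f(z)-f(\cdot))\,dz$ corrections in $Q_\delta$ tested against $\nabla_x\test$, which carries a $\nabla J_r$ factor) and involves no $\sigma$ whatsoever; invoking Proposition~\ref{proposition:FracBounds} here would in any case be circular, since that proposition is derived downstream from this very machinery. Assumption~\eqref{assumption:SigmaRegularity} enters only in $\mathcal{T}_1$, where the spatial variation of $\sigma$ contributes $O(r^{2\kappa+1}/\delta)$ while the Lipschitz-in-$u$ part contributes $O(\delta)$. Consequently your suggested order ``$r\downarrow 0$ for fixed $\delta$, then $\delta\downarrow 0$'' for $\mathcal{T}_1$ is incompatible with the constraint $\delta/r \to 0$ forced by $F$: one must couple the two, taking e.g.\ $\delta(r) = r^{1+\eta}$ with $0 < \eta < 2\kappa$, so that $\delta/r$ and $r^{2\kappa+1}/\delta$ both vanish.
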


\begin{proof}
Starting off from \eqref{eq:DoublingLemmaIneq}, we send $r_0$ 
and $\varepsilon$ to zero (in that order). 
Next, we send $(\delta,r)$ to $(0,0)$ simultaneously. 
In view of Limits \ref{limit:F} and \ref{limit:T1}, we let $\delta(r) = r^{1 + \eta}$ 
with $0 < \eta <2\kappa -1$. Finally, we send $\gamma \downarrow 0$. 
We arrive at the Kato inequality \eqref{eq:KatoInequalityEntSol} 
thanks to the upcoming Limits~\ref{limit:L}--\ref{limit:T3}. 
\end{proof}
\begin{remark}\label{remark:testFuncProp}
Later we will make repeated use of two elementary 
identities.  Set
$$
\xi_r(x):= \frac{1}{2^d}\int_{\R^d} 
\psi \left(\frac{x+y}{2}\right)J_r\left(\frac{x-y}{2}\right)\,dy.
$$
Then note that $\xi_r = \psi \star J_r$. 
Indeed, making the change of variable $z = (x+y)/2$, 
it follows that $(x-y)/2 = x-z$ and $dy=2^d\,dz$. 
Next, consider the change of variables
\begin{displaymath}
 \Phi(x,y) = \left(\frac{x + y}{2},\frac{x-y}{2}\right) = (\tilde{x},z).
\end{displaymath}
By the change of variables formula 
\begin{displaymath}
 \iint_{\R^d \times \R^d} g(\tilde{x},z)\,d\tilde{x}dz = 
 \iint_{\R^d \times \R^d} g(\Phi(x,y))\abs{\mathrm{det}(\Jac\Phi(x,y))}\,dxdy,
\end{displaymath}
for any measurable function $g(\cdot,\cdot)$. A computation 
yields $\abs{\mathrm{det}(\Jac\Phi(x,y))} = 1/2^d$. It follows that 
\begin{align*}
	\frac{1}{2^d}\iint_{\R^d \times \R^d} &\underbrace{h(x,y)
	\psi \left(\frac{x+y}{2}\right)J_r\left(\frac{x-y}{2}\right)}_{g(\Phi(x,y))}\,dxdy \\
	&= \iint_{\R^d \times \R^d} 
	\underbrace{h(\tilde{x} + z,\tilde{x} - z)\psi(\tilde{x})J_r(z)}_{g(\tilde{x},z)}\,d\tilde{x}dz,
\end{align*}
for any measurable function $h(\cdot,\cdot)$. Most of the 
time we drop the tilde and write $x$ instead of $\tilde{x}$.
\end{remark}

\begin{limit}\label{limit:L}
With $L$ defined in Lemma~\ref{lemma:DoubelingWithoutLimits},
$$
\lim_{r_0 \downarrow 0}L 
= \E{\iint_{\R^d \times \R^d} \Sd(v^0(x - z)-u^0(x + z))\psi(x)J_r(z)\,dxdz}.
$$
If $\delta = \delta(r)$ is a nondecreasing function 
satisfying $\delta(r) \downarrow 0$ 
as $r \downarrow 0$, then
$$
\lim_{\gamma,(\delta,r),\varepsilon,r_0 \downarrow 0} L 
= \E{\norm{v^0-u^0}_{1,\psi}}.
$$
\end{limit}

\begin{proof}
Note that 
\begin{equation}\label{eq:SdAbsEst}
 \abs{\Sd(b)-\Sd(a)} = \abs{\int_a^b\Sd'(z)\dz} \leq \abs{b-a}.
\end{equation}
Furthermore, observe that $\xi_{\gamma,t_0}(t) = 1$ whenever $t \leq t_0$. 
Hence, due to Remark~\ref{remark:testFuncProp},
\begin{align*}
&\abs{L -\E{\int_{\R^d}\int_{\R^d} \Sd(v^0(y)-u^0(x))
\frac{1}{2^d}\psi\left(\frac{x+y}{2}\right)J_r\left(\frac{x-y}{2}\right)\,dxdy}} \\
& \qquad \leq \E{\iint_{\Pi_T} \abs{u^0(x)-\ue(t,x)}J_{r_0}^+(t)(\psi \star J_r)(x)\,dxdt},
\end{align*}
whenever $2r_0 < t_0$. Arguing as in Lemma~\ref{lemma:InitialCondition} for the 
viscous approximation, it follows that
\begin{align*}
	\lim_{r_0 \downarrow 0}L &= \E{\frac{1}{2^d}\int_{\R^d}\int_{\R^d} 
	\Sd(v^0(y)-u^0(x))\psi\left(\frac{x+y}{2}\right)J_r\left(\frac{x-y}{2}\right)\,dxdy} \\
	& = \E{\iint_{\R^d \times \R^d} \Sd(v^0(x - z)-u^0(x + z))\psi(x)J_r(z) \,dxdz}.
\end{align*}
This proves the first limit. The second limit follows by the 
dominated convergence theorem 
and Lemma~\ref{lemma:DeltaRconvergence}. 
\end{proof}
\begin{remark}\label{remark:YoungLimitInDoubling}
To establish Limits~\ref{limit:R} and \ref{limit:F} we 
need to send $\varepsilon \downarrow 0$ in terms of the form 
$$
\E{\,\,\, \iiiint\limits_{\,\,\Pi_T \times \R^d \times [0,1]} 
\Psi(\ue(t,x,\omega),t,x,y,\beta,\omega)\underbrace{\frac{1}{2^d}\phi\left(\frac{x+y}{2}\right)
J_r\left(\frac{x-y}{2}\right)\,d\beta dydxdt}_{d\eta_{\phi,r}}},
$$
where $\Psi$ is continuous in the first variable. Essentially we 
proceed as in the proof of Theorem~\ref{theorem:ExistenceOfSolution}, but 
now the underlying measure space is $\Pi_T \times \R^d \times [0,1] \times \Omega$ 
instead of $\Pi_T \times \Omega$. 
By Lemma~\ref{lemma:UniformBoundsOnVisc} 
and Remark~\ref{remark:testFuncProp},
\begin{multline*}
	\sup_{\varepsilon > 0}\left\{\E{\iiiint_{\Pi_T \times \R^d \times [0,1]}  
	\abs{\ue(t,x)}^2\,d\eta_{\phi,r}}\right\} \\
	= \sup_{\varepsilon > 0}
	\left\{\E{\int_0^T \norm{\ue(t)}_{2,\phi \star J_r}^2
	 \,dt}\right\} < \infty.
\end{multline*}
By Theorem~\ref{theorem:YoungMeasureLimitOfComposedFunc}, there 
exists $\nu \in \Young{\Pi_T \times \R^d \times [0,1] \times \Omega}$ 
such that whenever $\Psi(\ue,\cdot) \rightharpoonup \overline{\Psi}$ (weakly)
along some subsequence in 
$L^1(\Pi_T \times \R^d \times [0,1] \times \Omega,d\eta_{\phi,r} \otimes dP)$, 
\begin{equation*}
	\overline{\Psi} 
	= \int_\R \Psi(\xi,t,x,y,\beta,\omega)\,d\nu_{t,x,\omega}(\xi) 
	= \int_0^1 \Psi(u(t,x,\alpha,\omega),t,x,y,\beta,\omega)\,d\alpha,
\end{equation*}
where $u$ is defined through \eqref{eq:ReprOfProcessByYoung}. 
The fact that $\nu_{t,x,y,\beta,\omega} = \nu_{t,x,\omega}$ comes out since the 
limit is independent of $y,\beta$ when $\Psi$ is independent of $y,\beta$. 
For measurability considerations, see Step 1 in proof 
of Theorem~\ref{theorem:ExistenceOfSolution}.
\end{remark}
\begin{limit}\label{limit:R} With $R$ defined in 
Lemma \ref{lemma:DoubelingWithoutLimits},
$$
\lim_{\gamma,\varepsilon, r_0 \downarrow 0} R 
= E\Bigg[\, \, \iiiint\limits_{\,\,\, \R^d \times \R^d \times [0,1]^2}
\Sd(v(t_0,x-z,\beta)-u(t_0,x+z,\alpha)) \psi(x)J_r(z)\,d\alpha d\beta dxdz\Bigg],
$$
for $dt$-a.a.~$t_0 \in [0,T]$. 
If $\delta = \delta(r)$ is a nondecreasing 
function satisfying $\delta(r) \downarrow 0$ 
as $r \downarrow 0$, then
$$
\lim_{\gamma,(\delta,r),\varepsilon,r_0 \downarrow 0} 
R = \E{\int_{\R^d}\iint_{[0,1]^2}
\abs{v(t_0,x,\beta)-u(t_0,x,\alpha)}\psi(x)\,d\alpha d\beta dx},
$$
for $dt$-a.a.~$t_0 \in [0,T]$.
\end{limit}

\begin{proof}
Since $\partial_tJ_{r_0}^+(t-s) = -\partial_sJ_{r_0}^+(t-s)$ and 
$\partial_t\xi_{\gamma,t_0}(t) = -J_\gamma^+(t-t_0)$,
$$
(\partial_s + \partial_t)\test(t,x,s,y) = -\frac{1}{2^d}\psi\left(\frac{x+y}{2}\right)
J_r\left(\frac{x-y}{2}\right)J_\gamma^+(t-t_0)J_{r_0}^+(t-s).
$$
It follows that
\begin{align*}
	R  = E\Bigg[\frac{1}{2^d}\iiiint_{\Pi_T^2}\int_{[0,1]}&
	\Sd(v-\ue)\psi\left(\frac{x+y}{2}\right)J_r\left(\frac{x-y}{2}\right) \\
	&\qquad \qquad \qquad 
	\times J_\gamma^+(t-t_0)J_{r_0}^+(t-s)\,d\beta\dX\Bigg].
\end{align*}
Thanks to
$$
\abs{\Sd(v-\ue)} \leq \abs{v} + \abs{\ue},
$$
we can apply the dominated convergence theorem 
and Lemma~\ref{lemma:TimeTraceMollLimit}, resulting in 
\begin{multline*}
 \lim_{r_0 \downarrow 0} R = E\Bigg[\iiiint\limits_{\quad \Pi_T \times \R^d \times [0,1]}
 \underbrace{\Sd(v(t,y,\beta)-\ue(t,x))
 J_\gamma^+(t-t_0)(\psi\phi^{-1})\left(\frac{x+y}{2}\right)}_{\Psi(\ue,\cdot)} \\
 \hphantom{XXXXXXXXXXXXXXXX}\times 
 \frac{1}{2^d}\phi\left(\frac{x+y}{2}\right)J_r\left(\frac{x-y}{2}\right)\,d\beta dy dx dt\Bigg].
\end{multline*}
By Lemma~\ref{lemma:UniformIntCriteria}(ii), $\seq{\Psi_\varepsilon(\ue,\cdot)}$ 
is uniformly integrable, and so, cf.~Theorem~\ref{theorem:DunfordPettis}, 
we can extract a weakly convergent subsequence. 
By Remarks~\ref{remark:YoungLimitInDoubling} and \ref{remark:testFuncProp},
\begin{align*}
 \lim_{\varepsilon, r_0 \downarrow 0} R 
  &= E\Bigg[\iint_{\Pi_T}\int_{\R^d} \iint_{[0,1]^2}
  \Sd(v(t,y,\beta)-u(t,x,\alpha))J_\gamma^+(t-t_0) \\
  &\hphantom{XXXXXXXXXXx} \times \frac{1}{2^d}
  \psi\left(\frac{x+y}{2}\right)J_r\left(\frac{x-y}{2}\right)\,d\alpha d\beta  dydxdt \Bigg] \\
  &= E\Bigg[\iint_{\Pi_T}\int_{\R^d} \iint_{[0,1]^2}\Sd(v(t,x-z,\beta)-u(t,x+z,\alpha)) \\
  &\hphantom{XXXXXXXXXXXXXx} 
  \times J_\gamma^+(t-t_0)\psi(x)J_r(z)\,d\alpha d\beta  dzdxdt \Bigg].
\end{align*}
Note that 
$$
\abs{\Sd(a-b)-\Sd(c-d)} \leq \abs{b-d} + \abs{a-c}, 
\qquad a,b,c,d \in \R. 
$$
Applying this inequality and Lemma~\ref{lemma:TimeTraceMollLimit}, we 
can send $\gamma \downarrow 0$ to obtain the first inequality. 
To send $(\delta,r) \downarrow (0,0)$ we apply 
the dominated convergence theorem 
and Lemma~\ref{lemma:DeltaRconvergence}, yielding 
$$
\lim_{r,\varepsilon,r_0 \downarrow 0}R 
= \E{\iint_{\Pi_T}\iint_{[0,1]^2}
\abs{v(t,x,\beta)-u(t,x,\alpha)}
\psi(x)J_\gamma^+(t-t_0)\,d\alpha d\beta\dxdt}.
$$
To send $\gamma \downarrow 0$ we apply 
Lemma~\ref{lemma:TimeTraceMollLimit}. 
This provides the second limit.
\end{proof}

\begin{limit}\label{limit:F}
With $F$ defined in Lemma~\ref{lemma:DoubelingWithoutLimits},
\begin{multline}\label{eq:FluxFracBVLimit}
	\lim_{\gamma,\varepsilon,r_0 \downarrow 0}F
	= E\Bigg[\int_0^{t_0}\iiiint\limits_{\R^d \times \R^d \times [0,1]^2}
	\Sd'(u(t,x+z,\alpha)-v(t,x-z,\beta)) \\
	\times (f(u(t,x + z,\alpha))-f(v(t,x-z,\beta)))\cdot \nabla \psi(x)J_r(z)
	\,d\alpha d\beta dxdzdt\Bigg] \\
	+ \mathcal{O}\left(\delta + \frac{\delta}{r}\right).
\end{multline}
If $\delta:[0,\infty) \rightarrow [0,\infty)$ satisfy 
$\lim_{r \rightarrow 0}\frac{\delta(r)}{r} = 0$, then
\begin{multline}\label{eq:FluxLimit}
	\lim_{\gamma,r,\varepsilon,r_0 \downarrow 0}F 
	= -E\bigg[\int_0^{t_0}\int_{\R^d}\iint_{[0,1]^2} \sign{u(t,x,\alpha)-v(t,x,\beta)} \\
	\times (f(u(t,x,\alpha))-f(v(t,x,\beta)))\cdot \nabla \psi(x)\,d\alpha d\beta dxdt\bigg].
\end{multline}
\end{limit}

\begin{proof}
Using integration by parts,
\begin{displaymath}
 Q_\delta(\ue,v) = \Sd'(\ue-v)(f(\ue)-f(v)) - \int_v^{\ue} \Sd''(z-v)(f(z)-f(v))\dz
\end{displaymath}
and 
\begin{displaymath}
 Q_\delta(v,\ue) = \Sd'(v-\ue)(f(v)-f(\ue)) - \int_{\ue}^v \Sd''(z-\ue)(f(z)-f(\ue))\dz.
\end{displaymath}
Due to the symmetry of $\Sd$,
\begin{align*}
	F &= -\E{\iiiint_{\Pi_T^2}\int_{[0,1]} Q_\delta(\ue,v)\cdot \nabla_x \test 
	+  Q_\delta(v,\ue)\cdot \nabla_y \test\,d\beta dX} \\
	&=-\E{\iiiint_{\Pi_T^2}\int_{[0,1]} \Sd'(\ue-v)(f(\ue)-f(v))\cdot(\nabla_x 
	+ \nabla_y)\test \,d\beta dX} \\
	&\quad + \E{\iiiint_{\Pi_T^2}\int_{[0,1]}\left(\int_v^{\ue} \Sd''(z-v)(f(z)-f(v))\dz\right)
	\cdot \nabla_x \test\,d\beta dX} \\ 
	& \quad + \E{\iiiint_{\Pi_T^2}\int_{[0,1]}\left(\int_{\ue}^v 
	\Sd''(z-\ue)(f(z)-f(\ue))\dz\right)\cdot \nabla_y \test\,d\beta dX} \\
	&= -F_1 + F_2 + F_3,
\end{align*}
where $dX = dxdtdyds$ as in Lemma~\ref{lemma:DoubelingWithoutLimits}.
Note that 
\begin{equation}\label{est:SingTimesLipInt}
\abs{\int_v^u \Sd''(z-v)(f(z)-f(v))\dz} 
\leq \norm{f}_{\mathrm{Lip}}\delta, 
\qquad u,v\in \R.
\end{equation}
To see this, recall that 
$\Sd''(\sigma) = 2J_\delta(\sigma)$. 
By \eqref{assumption:LipOnf}, 
$$
\abs{\int_v^u \Sd''(z-v)(f(z)-f(v))\dz} 
\leq  2\norm{f}_{\mathrm{Lip}}\sign{u-v}
\int_v^u J_\delta(z-v)\abs{z-v}\dz,
$$
and letting $\xi = \abs{z-v}/\delta$,
$$
\sign{u-v}\int_v^u J_\delta(z-v)\abs{z-v}\dz 
= \delta \int_0^{\delta^{-1}\abs{u-v}}J(\xi)\xi\,d\xi 
\leq \frac{\delta}{2}.
$$

In view of \eqref{est:SingTimesLipInt}, it is clear that
$$
F_2 \leq \norm{f}_{\mathrm{Lip}}\delta 
\iiiint_{\Pi_T^2}\abs{\nabla_x \test} \dX.
$$
A computation shows $\norm{\nabla \test}_{L^1(\Pi_T^2)} \leq C(1 + r^{-1})$, 
for some constant $C$ depending only on $J,T,\psi$. 
Consequently,
$$
F_2 \leq C\norm{f}_{\mathrm{Lip}} 
\delta\left(1 + \frac{1}{r}\right).
$$
The same type of estimate applies to $F_3$. 

Let us consider $F_1$. Observe that   
$$
 (\nabla_x + \nabla_y)\test(t,x,s,y) 
 = \frac{1}{2^d}\nabla \psi\left(\frac{x+y}{2}\right)
 J_r\left(\frac{x-y}{2}\right)\xi_{\gamma,t_0}(t)J_{r_0}^+(t-s).
$$
For $\delta>0$, define
$$
\mathcal{F}_\delta(a,b) := \Sd'(a-b)(f(a)-f(b)), \qquad a,b \in \R,
$$
and note that $(t,b) \mapsto \mathcal{F}_\delta(\ue(t,x),b)$ 
obeys the hypotheses of Lemma \ref{lemma:TimeTraceMollLimit}. 
By the dominated convergence theorem 
and Lemma \ref{lemma:TimeTraceMollLimit},
\begin{multline*}
	\lim_{r_0 \downarrow 0}F_1 
	= E\Bigg[\, \, \iint_{\Pi_T}\int_{\R^d}\int_{[0,1]} 
	\underbrace{\mathcal{F}_\delta(\ue(t,x),v(t,y,\beta))\cdot \zeta
	\left(\frac{x+y}{2}\right)\xi_{\gamma,t_0}(t)}_{\Psi(\ue,\cdot)} \\
	\times \frac{1}{2^d}\phi\left(\frac{x+y}{2}\right)J_r\left(\frac{x-y}{2}\right)
	\,d\beta dydxdt\Bigg],
\end{multline*}
where $\zeta(x) = \phi^{-1}(x)\nabla \psi(x)$. 
The uniform integrability of $\seq{\Psi(\ue,\cdot)}_{\varepsilon > 0}$ 
follows thanks to Lemma~\ref{lemma:UniformIntCriteria}(ii). 
Indeed, $\abs{\zeta} \leq C_\phi$ 
and $\abs{\mathcal{F}_\delta(\ue,v)} \leq 
\norm{f}_{\mathrm{Lip}}\abs{\ue-v}$, so
$$
\abs{\Psi(\ue,\cdot)}^2 \leq 2C_\phi^2\norm{f}_{\mathrm{Lip}}^2
(\abs{\ue}^2+ \abs{v}^2).
$$
By Theorem~\ref{theorem:DunfordPettis} and Remark~\ref{remark:YoungLimitInDoubling}
\begin{multline*}
	\lim_{\varepsilon,r_0 \downarrow 0}F_1 
	= E\Bigg[\iint_{\Pi_T}\int_{\R^d}\iint_{[0,1]^2} 
	\mathcal{F}_\delta(u(t,x,\alpha),v(t,y,\beta)) \\
	\cdot \frac{1}{2^d}\nabla \psi\left(\frac{x+y}{2}\right)
	J_r\left(\frac{x-y}{2}\right)\xi_{\gamma,t_0}(t)
	\, d\alpha d\beta dydxdt \Bigg],
\end{multline*}
along a subsequence. Sending $\gamma \downarrow 0$, 
applying Remark \ref{remark:testFuncProp}, yields \eqref{eq:FluxFracBVLimit}. 

Next we want to prove \eqref{eq:FluxLimit}. To send $r \downarrow 0$ we apply 
Lemma \ref{lemma:DeltaRconvergence}. It is easily 
verified that condition (i) and (iii) are satisfied with $F_\delta = \mathcal{F}_\delta$. 
Consider condition (ii). Since $\Sd'(-\sigma) = -\Sd'(\sigma)$ for all $\sigma \in \R$, it follows that 
\begin{align*}
 \mathcal{F}_\delta(a,b) - \mathcal{F}_\delta(a,c) 
  &= \Sd'(b-a)(f(b)-f(a)) - \Sd'(c-a)(f(c)-f(a))\\
  &= \int_b^c \partial_z(\Sd'(z-a)(f(z)-f(a)))\dz \\
  &= \int_b^c \Sd''(z-a)(f(z)-f(a)))\dz + \int_b^c \Sd'(z-a)f'(z)\dz,
\end{align*}
for $a,b,c \in \R$. By \eqref{est:SingTimesLipInt},
\begin{align*}
 \abs{\mathcal{F}_\delta(a,b) - \mathcal{F}_\delta(a,c)} &\leq  
    \underbrace{\abs{\int_b^c \Sd''(z-a)(f(z)-f(a)))\dz}}_{\leq \norm{f}_{\mathrm{Lip}}2\delta} 
    + \underbrace{\abs{ \int_b^c \Sd'(z-a)f'(z)\dz}}_{\leq \norm{f}_{\mathrm{Lip}}\abs{b-c}}.
\end{align*}
This and the symmetry of $\mathcal{F}_\delta$, i.e., $\mathcal{F}_\delta(a,b) 
= \mathcal{F}_\delta(b,a)$ for $a,b \in \R$, yields condition (ii). 
Hence, by Lemma~\ref{lemma:DeltaRconvergence},
\begin{align*}
	\lim_{(\delta,r),\varepsilon,r_0 \downarrow 0} F_1 
	& = E\bigg[\,\, 
	\iint_{\Pi_T}\iint_{[0,1]^2} 
	\sign{u(t,x,\alpha)-v(t,x,\beta)} 
	\\ & \qquad \qquad
	\times (f(u(t,x,\alpha))-f(v(t,x,\beta)))
	\cdot \nabla\psi(x)\xi_{\gamma,t_0}(t)
	\,d\beta d\alpha dxdt \bigg].
\end{align*}
At long last, Limit \eqref{eq:FluxLimit} follows 
by sending $\gamma \downarrow 0$.
\end{proof}

\begin{limit}\label{limit:T1}
Suppose assumptions \eqref{assumption:SigmaRegularity} 
and \eqref{assumption:LipOnSigma} hold. 
With $\mathcal{T}_1$ defined in Lemma~\ref{lemma:DoubelingWithoutLimits}, 
$$
\mathcal{T}_1 = \mathcal{O}
\left(\frac{r^{2\kappa + 1}}{\delta} + \delta\right).
$$
If $\sigma$ is independent of $x$, i.e., $\sigma(x,u,z) = \sigma(u,z)$, then 
$\mathcal{T}_1 = \mathcal{O}(\delta)$.
\end{limit}

\begin{proof} 
By assumption \eqref{assumption:SigmaRegularity} 
and \eqref{assumption:LipOnSigma},
$$
\abs{\sigma(y,v,z)-\sigma(x,\ue,z)} 
\leq  M(z)\abs{y-x}^\kappa(1 + \abs{\ue}) + M(z)\abs{v-\ue}.
$$
and thus
\begin{align*}
	\abs{\mathcal{T}_1} 
	&= \frac{1}{2}\E{ \iiiint_{\Pi_T^2}\int_{[0,1]}\int_Z 
	\Sd''(v-\ue)\left(\sigma(y,v,z)-\sigma(x,\ue,z)\right)^2\test\,d\mu(z)d\beta\dX} \\
	& \leq \norm{M}_{L^2(Z)}^2\E{\iiiint_{\Pi_T^2}\int_{[0,1]}
	\Sd''(v-\ue)\abs{x-y}^{2\kappa + 1}(1 +\abs{\ue})^2\test\,d\beta\dX} \\ 
	&\qquad +\norm{M}_{L^2(Z)}^2\E{\iiiint_{\Pi_T^2}\int_{[0,1]}
	\Sd''(v-\ue)\abs{v-\ue}^2\test\,d\beta\dX} \\
	&=:\mathcal{T}_1^1 + \mathcal{T}_1^2.
\end{align*}
Since $J_r(\frac{x-y}{2}) = 0$ whenever $\abs{x-y} \geq 2r$, 
$$
\mathcal{T}_1^1 \leq 4\norm{M_K}_{L^2(Z)}^2
\norm{J}_\infty\frac{r^{2\kappa + 1}}{\delta}
\E{\iiiint_{\Pi_T^2}(1 + \abs{\ue})^2\test \dX}.
$$
Moreover, as
$$
\E{\iiiint_{\Pi_T^2}(1 + \abs{\ue})^2\test \dX} 
\leq \int_0^T  \E{\norm{1 + \ue(t)}_{2,\psi \star J_r}^2}\,dt,
$$
there is a constant $C > 0$, independent of $r_0, \varepsilon, \delta, \gamma, r$,
such that $\mathcal{T}_1^1 \leq Cr^{2\kappa + 1}\delta^{-1}$. 
Regarding the second term $\mathcal{T}_1^2$, observe that 
$$
\Sd''(v-\ue)\abs{v-\ue}^2 = J_\delta(v-\ue)\abs{v-\ue}^2 
\leq 2\norm{J}_\infty \delta.
$$
Hence, $\mathcal{T}_1^2 \leq \delta 2
\norm{J}_\infty\norm{M}_{L^2(Z)}^2\norm{\test}_{L^1(\Pi_T^2)}$. 
Regarding the case $\sigma(x,u,z) = \sigma(u,z)$, observe that $\mathcal{T}_1^1 = 0$. 
\end{proof}

Let us consider the term involving the Malliavin derivative.
\begin{limit}\label{limit:T2}
With $\mathcal{T}_2$ defined in 
Lemma~\ref{lemma:DoubelingWithoutLimits},
 \begin{displaymath}
  \lim_{r_0 \downarrow 0}\mathcal{T}_2 = 0.
 \end{displaymath}
\end{limit}

\begin{proof}
Let us split $\mathcal{T}_2$ as follows:
\begin{align*}
	\mathcal{T}_2 & = E\Bigg[\iiiint_{\Pi_T^2}\int_{[0,1]}
	\int_Z \Sd''(v-\ue(s,x))\bigg(D_{s,z}\ue(t,x)-\sigma(x,\ue(s,x),z)\bigg) \\
	& \qquad\qquad\qquad \qquad \qquad \qquad\qquad\qquad\qquad
	\times \sigma(y,v,z)\test\,d\mu(z)d\beta dX\Bigg] \\
	&\qquad 
	+ E\Bigg[\iiiint_{\Pi_T^2}\int_{[0,1]}\int_Z \bigg(\Sd''(v-\ue(t,x))-\Sd''(v-\ue(s,x))\bigg) \\
	& \qquad\qquad\qquad \qquad \qquad \qquad\qquad\qquad\qquad
	\times D_{s,z}\ue(t,x)\sigma(y,v,z)\test\,d\mu(z)d\beta dX\Bigg] \\
	&\qquad 
	+ E\Bigg[\iiiint_{\Pi_T^2}\int_{[0,1]}\int_Z  \Sd''(v-\ue(s,x))
	\bigg(\sigma(x,\ue(s,x),z)-\sigma(x,\ue(t,x),z)\bigg)\\
	& \qquad\qquad\qquad \qquad \qquad \qquad\qquad\qquad\qquad
	\times \sigma(y,v,z)\test\,d\mu(z)d\beta dX\Bigg] \\
	&\qquad + E\Bigg[\iiiint_{\Pi_T^2}\int_{[0,1]}\int_Z  \bigg(\Sd''(v-\ue(s,x))-\Sd''(v-\ue(t,x))\\
	& \qquad\qquad\qquad \qquad \qquad \qquad\qquad\qquad\qquad
	\times \sigma(x,\ue,z)\sigma(y,v,z)\test\,d\mu(z)d\beta dX\Bigg] \\
	&=: \mathcal{T}_2^1 + \mathcal{T}_2^2 + \mathcal{T}_2^3 + \mathcal{T}_2^4.
\end{align*}

Consider $\mathcal{T}_2^1$. We want to apply 
Lemma~\ref{lemma:MalliavinDerivativeWeakTimeCont} 
for fixed $(s,y,\beta)$ with 
\begin{align*}
	&\Psi_{s,y,\beta}(x,z)  = \Sd''(v-\ue(s,x))\sigma(y,v,z), \\
	&\phi_y(x)  = \frac{1}{2^d}\psi\left(\frac{x+y}{2}\right)
	J_r\left(\frac{x-y}{2}\right).
\end{align*}
Then 
\begin{displaymath}
	\mathcal{T}_2^1 = \iint_{\Pi_T}\int_0^1 
	\mathcal{T}_{r_0}(\Psi_{s,y,\beta}) \,d\beta dsdy.
\end{displaymath}
By means of Lemma \ref{lemma:MalliavinDerivativeWeakTimeCont}, 
$\lim_{r_0 \downarrow 0} \mathcal{T}_{r_0}(\Psi_{s,y,\beta}) = 0$ 
$dsdyd\beta$-a.e., and so 
$\lim_{r_0 \downarrow 0}\mathcal{T}_2^1=0$ 
by the dominated convergence theorem. To this end, in view of 
\eqref{eq:UniformBoundOnTr0}, there 
exists a constant $C$, not depending on $r_0$, such that
\begin{align*}
	\abs{\mathcal{T}_{r_0}(\Psi_{s,y,\beta})}^2 
	&\leq C^2\E{\iint_{Z \times \R^d}
	\abs{\Psi_{s,y,\beta}(x,z)}^2\phi_y(x)\,dx\,d\mu(z)} \\
	&\leq C^2\norm{\Sd''}_\infty^2
	\E{\int_{Z}\abs{\sigma(y,v,z)}^2(\psi \star J_r)(y)\,d\mu(z)} \\
	& \leq C^2\norm{\Sd''}_\infty^2
	\norm{M}_{L^2(Z)}^2\E{(1 + \abs{v})^2(\psi \star J_r)(y)}.
\end{align*}
Due to the compact support of $\psi \star J_r$, we see that 
$\abs{\mathcal{T}_{r_0}(\Psi_{s,y,\beta})}$ is dominated by an integrable function.

Let us consider $\mathcal{T}_2^2$. Note that
\begin{align*}
	& \abs{\Sd''(v-\ue(t,x))-\Sd''(v-\ue(s,x))} \\ 
	& \qquad 
	\leq \underbrace{\max \seq{2\norm{\Sd''}_\infty,\norm{\Sd''}_{\mathrm{Lip}}
	\abs{\ue(t,x)-\ue(s,x)}}}_{\Psi}.
\end{align*}
By H\"older's inequality,
\begin{align*} 
	\mathcal{T}_2^2 &\leq E\Bigg[\iiiint_{\Pi_T^2}\int_{[0,1]}
	\int_Z \Psi^2(s,t,x)\abs{\sigma(y,v,z)}^2
	\test\,d\mu(z)d\beta dX\Bigg]^{1/2} \\
	& \qquad \quad
	\times E\Bigg[\iiiint_{\Pi_T^2}\int_{[0,1]}\int_Z 
	\abs{D_{s,z}\ue(t,x)}^2\test\,d\mu(z)d\beta dX\Bigg]^{1/2} \\
	& =: F_1 \times F_2.
\end{align*}
By the uniform boundedness of $\Psi$ we can
apply the dominated convergence theorem and 
Lemma~\ref{lemma:TimeTraceMollLimit}, to conclude 
that $\lim_{r_0 \downarrow 0}F_1 = 0$. 
It remains to show that $\abs{F_2} \leq C$, with
$C$ independent of $r_0 > 0$. We deduce easily
\begin{align*}
	F_2^2 &= \int_0^T\int_0^T\E{\norm{D_{s}\ue(t)}_{L^2(Z;L^2(\R^d,\psi \star J_r))}^2}
	J_{r_0}^+(t-s)\xi_{\gamma,t_0}(t) dsdt \\
       & \leq \int_0^T \sup_{t \in [0,T]}
       \seq{\E{\norm{D_{s}\ue(t)}_{L^2(Z;L^2(\R^d,\psi \star J_r))}^2}}\,ds,
\end{align*}
and so $\abs{F_2}$ is uniformly bounded by \eqref{eq:supInrMallEst}. 

Consider $\mathcal{T}_2^3$. By H\"older's 
inequality and \eqref{assumption:LipOnSigma},
\begin{align*}
	\abs{\mathcal{T}_2^3} &\leq \norm{\Sd''}_\infty \norm{M}_{L^2(Z)}
	E\Bigg[\iiiint_{\Pi_T^2}\int_{[0,1]}\int_Z 
	\abs{\sigma(y,v,z)}^2\test\,d\mu(z)d\beta dX\Bigg]^{1/2} \\
	&\quad\qquad \times 
	E\Bigg[\iiint\limits_{\quad \R^d \times [0,T]^2}
	\abs{\ue(s,x)-\ue(t,x)}^2(\psi \star J_r)(x)J_{r_0}^+(t-s)\,dxdtds\Bigg]^{1/2}.
\end{align*}
By the dominated convergence theorem and Lemma~\ref{lemma:TimeTraceMollLimit}, 
$\lim_{r_0 \downarrow 0}\mathcal{T}_2^3 = 0$.

The term $\mathcal{T}_2^4$ is treated in the same manner as $\mathcal{T}_2^2$, resulting 
in $\lim_{r_0 \downarrow 0}\mathcal{T}_2^4 = 0$. 
\end{proof}

\begin{limit}\label{limit:T3}
With $\mathcal{T}_3$ defined in Lemma~\ref{lemma:DoubelingWithoutLimits},
$$
\mathcal{T}_3 = \mathcal{O}(\varepsilon).
$$
\end{limit}
\begin{proof}
Note that 
$$
\abs{\Sd(\ue-v)\Delta_x\test} \leq (\abs{\ue} + \abs{v})\abs{\Delta_x\test}.
$$
Using this inequality, it follows from 
Lemma~\ref{lemma:UniformBoundsOnVisc} that 
$$
\E{\iiiint_{\Pi_T^2}\int_{[0,1]}
\Sd(\ue-v)\Delta_x\test \,d\beta dX} \leq C,
$$
for some constant $C > 0$ independent of $\varepsilon$ and $r_0$.
\end{proof}

Having established Proposition \ref{proposition:KatoInequalityEntSol}, the proof 
of Theorem \ref{theorem:UniquenessOfEntSol} follows easily. 

\begin{proof}[Proof of Theorem~\ref{theorem:UniquenessOfEntSol}]
In the setting of Proposition~\ref{proposition:KatoInequalityEntSol},
suppose $u^0 = v^0$. Let $\seq{\phi_R}_{R > 1}$ be as in 
Lemma~\ref{lemma:NSmCompApprox} and take 
$\psi = \phi_R$ in \eqref{eq:KatoInequalityEntSol}. 
Exploiting  that $\phi$ belongs to $\mathfrak{N}$, 
sending $R \rightarrow \infty$ yields
$$
\eta(t_0) \leq 
C_\phi\norm{f}_{\mathrm{Lip}}\int_0^{t_0}\eta(t)\,dt,
$$
where
$$
\eta(t) =E\Bigg[\,\,\iiint\limits_{\;\; \R^d \times [0,1]^2}
\abs{u(t,x,\alpha)-v(t,x,\beta)}\phi(x)\,d\beta d\alpha dx\Bigg].
$$
An application of Gr\"onwall's inequality gives $\eta(t) = 0$ for a.a.~$t \in [0,T]$. 
Hence $u(t,x,\alpha) = v(t,x,\beta)$ $(t,x,\alpha,\beta,\omega)$-almost everywhere.
\end{proof}

\begin{proof}[Proof of Proposition~\ref{proposition:FracBounds}]
Let $\seq{\phi_R}_{R > 1}$ be as in Lemma~\ref{lemma:NSmCompApprox}, and start off from 
Lemma \ref{lemma:DoubelingWithoutLimits} with $\psi = \phi_R$ and $v^0 = u^0$. 
We then compute the limits $r_0 \downarrow 0$, $\varepsilon \downarrow 0$, 
and $\gamma \downarrow 0$ (in that order). Recall that by 
Theorem \ref{theorem:UniquenessOfEntSol}, $v = u$ with 
$u =\lim_{\varepsilon \downarrow 0}\ue$. 
Furthermore, $u$ is a solution according to Definition~\ref{Def:EntropySolution}. 
Due to Limits~\ref{limit:L}--\ref{limit:T3} we arrive at the inequality
\begin{equation}\label{eq:FracIneq}
\begin{split}
	&E\Bigg[\,\,\iint\limits_{\,\,\,\R^d \times \R^d} 
	\Sd(u^0(x-z)-u^0(x + z))\phi_R(x)J_r(z)\,dxdz\Bigg] \\
	&\quad
	 \geq E\Bigg[\,\, \iint\limits_{\,\,\, \R^d \times \R^d}
	\Sd(u(t_0,x-z)-u(t_0,x+z)) \phi_R(x)J_r(z)\,dxdz\Bigg] \\
	& \quad\qquad 
	+ E\Bigg[\,\, \int_0^{t_0}
	\iint\limits_{\R^d \times \R^d}\Sd'(u(t,x+z)-u(t,x-z)) \\
	&\quad\quad \qquad\qquad\qquad 
	\times (f(u(t,x + z))-f(u(t,x-z)))\cdot \nabla \phi_R(x)J_r(z)\,dxdzdt\Bigg] \\
	&\quad\qquad\qquad 
	+ \mathcal{O}\left(\delta + \frac{\delta}{r} 
	+ \frac{r^{2\kappa + 1}}{\delta}\right),
\end{split}
\end{equation}
where $\mathcal{O}(\cdot)$ is independent
 of $R$, cf.~Limits \ref{limit:F} and \ref{limit:T1} 
and Lemmas \ref{lemma:ContMollWeightedNorm} 
and \ref{lemma:NSmCompApprox}. 

Note that
$$
\abs{\Sd(\sigma)-\abs{\sigma}} 
\leq \delta, \qquad 
\forall \sigma \in \R,
$$
and $\abs{\nabla \phi} \leq C_\phi\phi$.  With the help of 
Lemma~\ref{lemma:NSmCompApprox}, we can 
now send $R \rightarrow \infty$ in \eqref{eq:FracIneq}, obtaining
$$
\eta(t_0) \leq \eta(0) + C_\phi \norm{f}_{\mathrm{Lip}}\int_0^{t_0}\eta(t)\,dt 
+ \mathcal{O}\left(\delta + \frac{\delta}{r} + \frac{r^{2\kappa + 1}}{\delta}\right),
$$
where
$$
\eta(t) = E\Bigg[\,\, \iint\limits_{\,\,\,\R^d \times \R^d} 
\abs{u(t,x-z)-u(t,x + z)}\phi(x)J_r(z)\,dxdz\Bigg].
$$
By Gr\"onwall's inequality,
$$
\eta(t) \leq  \left(1 + C_\phi 
\norm{f}_{\mathrm{Lip}}te^{C_\phi \norm{f}_{\mathrm{Lip}}t}\right)\left(\eta(0) + 
\mathcal{O}\left(\delta + \frac{\delta}{r} + \frac{r^{2\kappa+1}}{\delta}\right)\right).
$$
Prescribing $\delta = r^{\kappa + 1}$ concludes the proof. 
Regarding the case $\sigma(x,u,z) = \sigma(u,z)$, observe that by Limit~\ref{limit:T1} we 
may replace $\mathcal{O}\left(\delta + \frac{\delta}{r} + \frac{r^{2\kappa+1}}{\delta}\right)$ 
by $\mathcal{O}\left(\delta + \frac{\delta}{r}\right)$ in the 
above argument. The result follows by letting $\delta \downarrow 0$.
\end{proof}

\section{Appendix}\label{sec:Appendix}

\subsection{Some ``doubling of variables" tools} 

\begin{lemma}\label{lemma:DeltaRconvergence}
Suppose $u,v \in L^1_{\mathrm{loc}}(\R^d)$ and $\seq{F_\delta}_{\delta > 0}$ satisfy:
\begin{itemize}
	\item[(i)] There is $F: \R^2 \rightarrow \R$ such 
	that $F_\delta \rightarrow F$ pointwise as $\delta \downarrow 0$.
	\item[(ii)] There exists a constant $C > 0$ such that
	$$
	\abs{F_\delta(a,b)-F_\delta(c,d)} \leq C(\abs{a-c} + \abs{b-d} + \delta),
	$$
	for all $a,b,c,d \in \R$ and all $\delta > 0$.
	\item[(iii)] There is a constant $C > 0$ such that
	$$
	\abs{F_\delta(a,a)} \leq C(1 + \abs{a}) \mbox{ for all $\delta > 0$.}
	$$
 \end{itemize}
Fix $\psi \in C_c(\R^d)$. Suppose 
$\delta:[0,\infty) \rightarrow [0,\infty)$ 
satisfies $\delta(r) \downarrow 0$ as 
$r \downarrow 0$. Set
\begin{align*}
	\mathcal{T}_r &:= \int_{\R^d}\int_{\R^d} F_{\delta(r)}(u(x),v(y))
	\frac{1}{2^d}\psi\left(\frac{x+y}{2}\right)J_r\left(\frac{x-y}{2}\right)\,dydx \\
	& \qquad\qquad \qquad 
	-\int_{\R^d} F(u(x),v(x))\psi(x)\dx.
\end{align*}
Then $\mathcal{T}_r \rightarrow 0$ as $r \downarrow 0$.
\end{lemma}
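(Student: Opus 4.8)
The plan is to reduce $\mathcal{T}_r$ to a single integral over $\R^d \times \R^d$ by the change of variables of Remark~\ref{remark:testFuncProp}, and then split the integrand into one piece controlled by the continuity of translations in $L^1_{\mathrm{loc}}$ and one piece controlled by the pointwise convergence $F_\delta \to F$ via dominated convergence. First I would apply the substitution $\Phi(x,y) = \big(\tfrac{x+y}{2}, \tfrac{x-y}{2}\big)$ from Remark~\ref{remark:testFuncProp} with $h(x,y) = F_{\delta(r)}(u(x),v(y))$; after dropping the tilde and using $\int_{\R^d} J_r(z)\dz = 1$ this recasts $\mathcal{T}_r$ as
\[
  \mathcal{T}_r = \int_{\R^d}\!\int_{\R^d}\Big( F_{\delta(r)}(u(x+z),v(x-z)) - F(u(x),v(x)) \Big)\psi(x)J_r(z)\dz\dx ,
\]
the manipulations being justified on the compact support of $\psi$ by $u,v \in L^1_{\mathrm{loc}}$ together with the growth bound on $F_\delta$ derived below. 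I would then write the integrand as $A_r(x,z) + B_r(x)$, where $A_r(x,z) = F_{\delta(r)}(u(x+z),v(x-z)) - F_{\delta(r)}(u(x),v(x))$ and $B_r(x) = F_{\delta(r)}(u(x),v(x)) - F(u(x),v(x))$, and estimate the two pieces separately.

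For the $A_r$-part, hypothesis~(ii) gives $\abs{A_r(x,z)} \le C\big(\abs{u(x+z)-u(x)} + \abs{v(x-z)-v(x)} + \delta(r)\big)$, so using $\mathrm{supp}(J_r) \subset B(0,r)$, $\int J_r = 1$, and that $J_r$ is even,
\begin{multline*}
  \Big| \int_{\R^d}\!\int_{\R^d} A_r(x,z)\,\psi(x)J_r(z)\dz\dx \Big|
  \le C\norm{\psi}_\infty \sup_{\abs{z}\le r}\int_{\mathrm{supp}\,\psi}\!\big(\abs{u(x+z)-u(x)} + \abs{v(x+z)-v(x)}\big)\dx \\
  + C\,\delta(r)\norm{\psi}_{L^1(\R^d)},
\end{multline*}
which tends to $0$ as $r \downarrow 0$: the first term by the continuity of translations in $L^1$, applied to $u$ and $v$ truncated to a compact neighbourhood of $\mathrm{supp}\,\psi$, and the second since $\delta(r)\downarrow 0$.

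For the $B_r$-part the $z$-integration is trivial, $\int_{\R^d}\!\int_{\R^d} B_r(x)\psi(x)J_r(z)\dz\dx = \int_{\R^d} B_r(x)\psi(x)\dx$, and I would pass to the limit by dominated convergence. Combining (ii) and (iii) yields $\abs{F_\delta(a,b)} \le \abs{F_\delta(a,b)-F_\delta(a,a)} + \abs{F_\delta(a,a)} \le C\big(\abs{a-b} + \delta + 1 + \abs{a}\big)$, so with $\delta(r)$ bounded for $r$ near $0$ one gets $\abs{F_{\delta(r)}(u(x),v(x))} \le C(1 + \abs{u(x)} + \abs{v(x)})$ uniformly in such $r$, and letting $\delta\downarrow 0$ the same bound holds for $\abs{F(u(x),v(x))}$; since $u,v \in L^1_{\mathrm{loc}}(\R^d)$ this is integrable against $\psi\dx$, while~(i) supplies the pointwise convergence $F_{\delta(r)}(u(x),v(x)) \to F(u(x),v(x))$. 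Hence $\int_{\R^d} B_r(x)\psi(x)\dx \to 0$, and adding the two estimates gives $\mathcal{T}_r \to 0$. I expect the only (mild) obstacle to be this last step, where one must combine the Lipschitz-type bound~(ii) with the linear-growth bound~(iii) to produce a $\delta$-independent dominating function that is integrable against $\psi\dx$.
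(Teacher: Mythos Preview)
Your proof is correct and follows essentially the same approach as the paper: the same change of variables, the same splitting into a translation-difference part controlled by hypothesis~(ii) and continuity of translations in $L^1_{\mathrm{loc}}$, and a remainder part handled by dominated convergence using~(i) together with the growth bound obtained from~(ii) and~(iii). The only difference is cosmetic: the paper packages the argument as an equicontinuity statement for $g_\delta(z) := \int_{\R^d} F_{\delta}(u(x+z),v(x-z))\psi(x)\,dx$ at $z=0$, whereas you split the integrand directly; the underlying estimates are identical.
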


\begin{proof}
Due to Remark~\ref{remark:testFuncProp},
\begin{align*}
	\mathcal{T}_r & = \int_{\R^d}\underbrace{\int_{\R^d} 
	F_{\delta(r)}(u(x+z),v(x-z))\psi(x)\,dx}_{g_\delta(z)}J_r(z)\,dz
	\\ & \qquad\qquad
	-\underbrace{\int_{\R^d} F(u(x),v(x))\psi(x)\,dx}_{g(0)}.
\end{align*}

Suppose for the moment that given a number $\varepsilon > 0$, 
there exists two numbers $\eta=\eta(\varepsilon) > 0$ and 
$\delta=\delta_0(\varepsilon) > 0$ such that
\begin{equation}\label{eq:gEquicontLimAt0}
	\abs{g_\delta(z)-g(0)} \leq \varepsilon, 
	\mbox{ whenever } \abs{z} \leq \eta \mbox{ and } \delta < \delta_0.
\end{equation}

The change of variables $z = r\zeta$ yields
$$
\abs{\mathcal{T}_r}  \leq \int_{\R^d}\abs{g_\delta(z)-g(0)}J_r(z)\,dz 
= \int_{\R^d}\abs{g_\delta(r\zeta)-g(0)}J(\zeta)\,d\zeta.
$$
Fix $\varepsilon > 0$, and pick $\eta, \delta_0$ as 
dictated by \eqref{eq:gEquicontLimAt0}. Let $r_0 > 0$ satisfy 
$r_0 \leq \eta$ and $\delta(r_0) \leq \delta_0$. 
It follows by \eqref{eq:gEquicontLimAt0} that 
$\abs{\mathcal{T}_{r_0}} \leq \varepsilon$. 
Hence, $\mathcal{T}_r \downarrow 0$ as $r \downarrow 0$. 

Let us now prove \eqref{eq:gEquicontLimAt0}. 
By assumption (ii),
\begin{align*}
	\abs{g_\delta(z)-g(0)} 
	& \leq C\int_{\R^d} \abs{u(x+z)-u(x)}\psi(x)\,dx 
	+C\int_{\R^d} \abs{v(x-z)-v(x)}\psi(x)\,dx \\
	& \quad\quad 
	+\int_{\R^d} \abs{F_\delta(u(x),v(x))-F(u(x),v(x))}\psi(x)\,dx 
	+ C\delta \norm{\psi}_{L^1(\R^d)}. 
\end{align*}
Because of assumptions (i) and (iii), we can apply the 
dominated convergence theorem to conclude that
$$
\lim_{\delta \downarrow 0}\int_{\R^d} 
\abs{F_\delta(u(x),v(x))-F(u(x),v(x))}\psi(x)\,dx = 0.
$$
It remains to show that 
\begin{equation}\label{eq:ContAt0IntTrans}
	\lim_{z \rightarrow 0}\int_{\R^d} 
	\abs{u(x+z)-u(x)}\psi(x)\,dx = 0.
\end{equation}
The term involving $v$ follows by the same argument. 
Pick a compact $K \subset \R^d$ such that $\bigcup_{\abs{z} \leq 1}
\mathrm{supp}\,(\psi(\cdot + z)) \subset K$. 
Fix $\varepsilon > 0$. By the density of continuous functions in $L^1(K)$, 
we can find $w \in C(K)$ such that 
$\norm{w-u}_{L^1(K)} \leq \varepsilon$. 
Then
$$
\int_{\R^d} \abs{u(x+z)-u(x)}\psi(x)\,dx \leq 
2\norm{\psi}_\infty\varepsilon 
+ \int_{\R^d}\abs{w(x+z)-w(x)}\psi(x)\,dx,
$$
for any $\abs{z} \leq 1$. 
Next  we send $z \rightarrow 0$. 
The claim \eqref{eq:ContAt0IntTrans} follows 
by the dominated convergence theorem and the 
arbitrariness of $\varepsilon > 0$.
\end{proof}

\begin{lemma}\label{lemma:TimeTraceMollLimit}
Let  $v \in L^p([0,T])$, $1 \leq p < \infty$. Moreover,
Let $F:[0,T] \times \R \rightarrow \R$ be measurable 
in the first variable and Lipschitz in the second variable,
$$
\abs{F(s,a)-F(s,b)} 
\leq C\abs{a-b}, \qquad \forall a,b \in \R, \forall s \in [0,T],
$$
for some constant $C > 0$. Set
$$
\mathcal{T}_{r_0}(s) = \left(\int_0^T 
\abs{F(s,v(t))-F(s,v(s))}^pJ_{r_0}^+(t-s)\,dt\right)^{1/p}.
$$
Then $\mathcal{T}_{r_0}(s) \rightarrow 0$ 
$ds$-a.e.~as $r_0 \downarrow 0$.
\end{lemma}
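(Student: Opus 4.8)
The plan is to reduce the assertion to the one-sided, $L^p$-version of the Lebesgue differentiation theorem. First, the Lipschitz bound on $F$ gives
\begin{displaymath}
	\mathcal{T}_{r_0}(s)^p \leq C^p\int_0^T \abs{v(t)-v(s)}^p J_{r_0}^+(t-s)\dt,
\end{displaymath}
so it suffices to show the right-hand side tends to $0$ for $ds$-a.a.\ $s$. Extend $v$ by zero to all of $\R$, so that $v \in L^p(\R)$. Recall that $\mathrm{supp}(J_{r_0}^+)\subset(0,2r_0)$ and $\norm{J_{r_0}^+}_\infty = r_0^{-1}\norm{J}_\infty$; hence, for every fixed $s\in[0,T)$ and all $r_0$ small enough that $2r_0 < T-s$, the substitution $\tau = t-s$ yields
\begin{displaymath}
	\int_0^T \abs{v(t)-v(s)}^p J_{r_0}^+(t-s)\dt
	= \int_0^{2r_0}\abs{v(s+\tau)-v(s)}^p J_{r_0}^+(\tau)\,d\tau
	\leq 2\norm{J}_\infty\cdot\frac{1}{2r_0}\int_s^{s+2r_0}\abs{v(t)-v(s)}^p\dt.
\end{displaymath}

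Second, I would invoke the claim that for $ds$-a.a.\ $s\in\R$,
\begin{displaymath}
	\lim_{h\downarrow 0}\frac{1}{h}\int_s^{s+h}\abs{v(t)-v(s)}^p\dt = 0.
\end{displaymath}
This is the standard $L^p$-refinement of Lebesgue's differentiation theorem: for each rational $q$ the function $t\mapsto\abs{v(t)-q}^p$ lies in $L^1_{\mathrm{loc}}(\R)$, so by the ordinary Lebesgue differentiation theorem $h^{-1}\int_s^{s+h}\abs{v(t)-q}^p\dt\to\abs{v(s)-q}^p$ for a.a.\ $s$; intersecting these full-measure sets over $q\in\mathbb{Q}$ and using the triangle inequality for the $L^p$-norm together with the density of $\mathbb{Q}$ in $\R$ gives the claim at a.a.\ $s$. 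Combining this (with $h=2r_0$) with the previous display shows $\mathcal{T}_{r_0}(s)\to 0$ as $r_0\downarrow 0$ for $ds$-a.a.\ $s\in[0,T]$, which is the assertion.

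The computation is essentially routine; the only point requiring a little care is that one needs the $L^p$ (not merely the $L^1$) version of the Lebesgue point property, since the integrand is $\abs{v(t)-v(s)}^p$ rather than an expression to which the $L^1$ theorem could be applied directly. The one-sidedness of $J_{r_0}^+$ causes no difficulty, as the estimate above only involves the interval $[s,s+2r_0]$ lying to the right of $s$.
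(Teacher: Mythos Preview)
Your proof is correct and follows essentially the same route as the paper: first apply the Lipschitz bound to reduce to showing $\int_0^T |v(t)-v(s)|^p J_{r_0}^+(t-s)\,dt \to 0$ for a.e.\ $s$, then establish this via an $L^p$-Lebesgue point argument. The only cosmetic difference is in how that last step is carried out: the paper decomposes $v = v_1^n + v_2^n$ with $v_1^n$ continuous and $\|v_2^n\|_{L^p}\le 1/n$, handles the continuous piece by uniform continuity and the remainder by the convolution limit $|v_2^n|^p\star J_{r_0}^+ \to |v_2^n|^p$ a.e., and then sends $n\to\infty$ along an a.e.-convergent subsequence; you instead bound by the one-sided average and invoke the textbook $L^p$-Lebesgue point theorem (with the standard rational-exhaustion sketch). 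Both arguments are short and rest on the same density idea, so neither buys anything substantial over the other.
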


\begin{proof}
We can write $v = v_1^n + v_2^n$ with $v_1^n$ 
continuous and $\norm{v_2^n}_{L^p([0,T])} \leq 1/n$. 
This is possible since the continuous functions are dense in $L^p([0,T])$. 
Assuming $s\in [0, T-2r_0]$, an application of 
the triangle inequality gives
\begin{align*}
	\abs{\mathcal{T}_{r_0}(s)} & \leq 
	C\left(\int_0^T \abs{v(t)- v(s)}^pJ_{r_0}^+(t-s)\,dt\right)^{1/p} \\
	& \leq C\left(\int_0^T \abs{v_1^n(t)-v_1^n(s)}^pJ_{r_0}^+(t-s)\,dt\right)^{1/p} \\
	& \qquad\qquad 
	+ (\abs{v_2^n}^p \star J_{r_0}(s))^{1/p} + \abs{v_2^n(s)}.
\end{align*}
Sending $r_0 \downarrow 0$, it follows that 
$\lim_{r_0 \downarrow 0}\abs{\mathcal{T}_{r_0}(s)} 
\leq 2\abs{v_2^n(s)}$ for $ds$-a.a.~$s \in [0,T)$. Since $v_2^n \rightarrow 0$ 
in $L^p([0,T])$, it has a subsequence that 
converges $ds$-a.e., and this concludes the proof.
\end{proof}

\subsection{Weighted $L^p$ spaces.}
First we make some elementary observations regarding 
functions in $\mathfrak{N}$ (see Section~\ref{sec:Entropy_Formulation} 
for the definition of $\mathfrak{N}$).

\begin{lemma}\label{lemma:PhiProp}
Suppose $\phi \in \mathfrak{N}$ and $0 < p < \infty$. 
Then, for $x,z\in \R^d$,
$$
\abs{\phi^{1/p}(x+z)-\phi^{1/p}(x)} 
\leq w_{p,\phi}(\abs{z})\phi^{1/p}(x), 
$$
where 
$$
w_{p,\phi}(r) = \frac{C_\phi}{p}r\left(1 + \frac{C_\phi}{p}r e^{C_\phi r/p}\right),
$$
which is defined for all $r \geq 0$. As a consequence it follows that if 
$\phi(x_0) = 0$ for some $x_0 \in \R^d$, then $\phi \equiv 0$ (and by 
definition $\phi \notin \mathfrak{N}$).
\end{lemma}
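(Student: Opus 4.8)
The plan is to reduce the whole statement to a one-dimensional Grönwall argument along line segments, with one important preliminary: $\phi$ must be shown to be strictly positive everywhere before $\phi^{1/p}$ can even be treated as a $C^1$ function, and this positivity is exactly (the contrapositive of) the ``consequence'' asserted at the end of the lemma. So I would prove the two assertions in the reverse of the order stated. First I would fix $x,z\in\R^d$ and set $F(t):=\phi(x+tz)$ for $t\in\R$. Since $\phi\in C^1$ with $\abs{\nabla\phi}\le C_\phi\phi$ and $\phi\ge 0$, the function $F$ is $C^1$, nonnegative, and satisfies $\abs{F'(t)}=\abs{\nabla\phi(x+tz)\cdot z}\le C_\phi\abs{z}\,F(t)$. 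Multiplying the two-sided bound $-C_\phi\abs{z}F\le F'\le C_\phi\abs{z}F$ by the integrating factors $e^{\pm C_\phi\abs{z}t}$ shows that $t\mapsto e^{-C_\phi\abs{z}t}F(t)$ is nonincreasing and $t\mapsto e^{C_\phi\abs{z}t}F(t)$ is nondecreasing, hence
\begin{displaymath}
 e^{-C_\phi\abs{z}}\phi(x)\le\phi(x+z)\le e^{C_\phi\abs{z}}\phi(x),\qquad x,z\in\R^d.
\end{displaymath}
Taking $x=x_0$ with $\phi(x_0)=0$ in the right inequality forces $\phi\equiv 0$; since $\phi\in\mathfrak{N}$ is by definition nonzero, this also gives $\phi>0$ on all of $\R^d$.

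Granted $\phi>0$, the function $h:=\phi^{1/p}$ is $C^1$ (a smooth function of the positive $C^1$ function $\phi$), strictly positive, and obeys $\abs{\nabla h}=\tfrac1p\phi^{1/p-1}\abs{\nabla\phi}\le\tfrac{C_\phi}{p}h$. Then I would fix $x,z$, put $g(t):=h(x+tz)$ for $t\in[0,1]$, and note $g>0$, $g\in C^1$, and $\abs{g'(t)}\le\tfrac{C_\phi}{p}\abs{z}\,g(t)$. Integrating gives $g(t)\le g(0)+\tfrac{C_\phi}{p}\abs{z}\int_0^t g(s)\,ds$, so Grönwall's inequality yields $g(s)\le g(0)\,e^{\frac{C_\phi}{p}\abs{z}s}$ for $s\in[0,1]$.

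Finally I would assemble the estimate: from $\abs{h(x+z)-h(x)}=\bigl|\int_0^1 g'(s)\,ds\bigr|\le\tfrac{C_\phi}{p}\abs{z}\int_0^1 g(s)\,ds$, and using the elementary inequality $e^a-1\le a e^a$ for $a\ge 0$ to bound $g(s)-g(0)\le g(0)(e^{\frac{C_\phi}{p}\abs{z}s}-1)\le g(0)\tfrac{C_\phi}{p}\abs{z}\,e^{\frac{C_\phi}{p}\abs{z}}$ on $[0,1]$, one gets $\int_0^1 g(s)\,ds\le g(0)\bigl(1+\tfrac{C_\phi}{p}\abs{z}\,e^{\frac{C_\phi}{p}\abs{z}}\bigr)$, and therefore
\begin{displaymath}
 \abs{\phi^{1/p}(x+z)-\phi^{1/p}(x)}\le\frac{C_\phi}{p}\abs{z}\Bigl(1+\frac{C_\phi}{p}\abs{z}\,e^{C_\phi\abs{z}/p}\Bigr)\phi^{1/p}(x)=w_{p,\phi}(\abs{z})\,\phi^{1/p}(x),
\end{displaymath}
which is the claim. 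The computation is essentially routine; the only genuine subtlety — what I would call the ``main obstacle,'' though it is a mild one — is the logical ordering just described, namely that one cannot differentiate $\phi^{1/p}$ until the non-vanishing of $\phi$ (the ``consequence'') has been established, so the proof of the consequence must come first and be carried out directly from $\abs{\nabla\phi}\le C_\phi\phi$ rather than deduced from the displayed inequality for $\phi^{1/p}$.
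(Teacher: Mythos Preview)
Your proof is correct and follows essentially the same Gr\"onwall-along-segments argument as the paper: set $g(\lambda)=\phi^{1/p}(x+\lambda z)$, use $\abs{g'}\le \tfrac{C_\phi}{p}\abs{z}\,g$, apply Gr\"onwall, and bound $\abs{g(1)-g(0)}$. Your added preliminary step---establishing $\phi>0$ directly from $\abs{\nabla\phi}\le C_\phi\phi$ before differentiating $\phi^{1/p}$---is a genuine improvement in rigor over the paper, which differentiates $\phi^{1/p}$ first and states the nonvanishing only as a ``consequence,'' leaving a slight circularity you have correctly identified and resolved.
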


\begin{proof}
Set $g(\lambda) = \phi^{1/p}(x + \lambda z)$. Then 
$$
g'(\lambda) = \frac{1}{p}\phi^{1/p-1}(x + \lambda z)
(\nabla \phi(x + \lambda z) \cdot z).
$$
Since $\phi \in \mathfrak{N}$, it follows that 
$\abs{g'(\lambda)} \leq \frac{C_\phi}{p}g(\lambda)\abs{z}$. Hence
$$
g(\lambda) \leq g(0) + \frac{C_\phi}{p}
\abs{z}\int_0^\lambda g(\xi)\,d\xi.
$$
By Gr\"onwall's inequality,
$$
g(\lambda) \leq g(0)\left(1 + \frac{C_\phi}{p}\abs{z}
\lambda e^{C_\phi\abs{z}\lambda/p}\right).
$$
Hence,
$$
\abs{g(1)-g(0)} \leq \frac{C_\phi}{p}\abs{z}g(0)
\left(1 + \frac{C_\phi}{p}\abs{z} e^{C_\phi\abs{z}/p}\right).
$$
This concludes the proof.
\end{proof}

Next, we consider an adaption of Young's inequality for convolutions. 
\begin{proposition}\label{prop:YoungsForLocalized}
Fix $\phi \in \mathfrak{N}$. 
Suppose $f \in C_c(\R^d)$, and 
$g \in L^p(\R^d,\phi)$ for some finite $p\ge 1$. Then
$$
\norm{f \star g}_{L^p(\R^d,\phi)} \leq 
\left(\int_{\R^d}\abs{f(x)}(1 + w_{p,\phi}(\abs{x}))\,dx \right)
\norm{g}_{L^p(\R^d,\phi)}.
$$
where $w_{p,\phi}$ is defined in Lemma \ref{lemma:PhiProp}.
\end{proposition}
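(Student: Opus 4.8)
The plan is to mimic the classical proof of Young's inequality for convolutions, the only new ingredient being Lemma~\ref{lemma:PhiProp}, which is used to transfer the weight $\phi$ from the evaluation point $x$ to the point $x-y$ inside the convolution integral. First I would record that, by Lemma~\ref{lemma:PhiProp}, a function in $\mathfrak{N}$ vanishing at a single point vanishes identically; since $\phi$ is by definition non-zero, we get $\phi>0$ everywhere, and being continuous it is bounded below by a positive constant on every compact set. Consequently $g\in L^p(\R^d,\phi)\subset L^1_{\mathrm{loc}}(\R^d)$, so for $f\in C_c(\R^d)$ the convolution $f\star g$ is a well-defined continuous function with $(f\star g)(x)=\int_{\R^d}f(y)g(x-y)\,dy$, and all the integrals below are legitimate.

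Next comes the key pointwise estimate. For fixed $x$ we have $\abs{(f\star g)(x)}\le\int_{\R^d}\abs{f(y)}\abs{g(x-y)}\,dy$; multiplying by $\phi^{1/p}(x)$ and applying Lemma~\ref{lemma:PhiProp} to the decomposition $x=(x-y)+y$, i.e. $\phi^{1/p}(x)\le\phi^{1/p}(x-y)\bigl(1+w_{p,\phi}(\abs{y})\bigr)$, yields
$$
\phi^{1/p}(x)\,\abs{(f\star g)(x)}
\le\int_{\R^d}\Bigl(\abs{f(y)}\bigl(1+w_{p,\phi}(\abs{y})\bigr)\Bigr)\Bigl(\abs{g(x-y)}\phi^{1/p}(x-y)\Bigr)\,dy
=(F\star H)(x),
$$
where $F(y):=\abs{f(y)}\bigl(1+w_{p,\phi}(\abs{y})\bigr)$ and $H(y):=\abs{g(y)}\phi^{1/p}(y)$ are non-negative. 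Here $F\in L^1(\R^d)$ since $f$ is compactly supported and $w_{p,\phi}$ is locally bounded, while $\norm{H}_{L^p(\R^d)}=\norm{g}_{p,\phi}<\infty$.

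Finally I would take the $L^p(\R^d)$-norm in $x$ of both sides and invoke Minkowski's integral inequality (equivalently, the unweighted Young inequality $\norm{F\star H}_{L^p}\le\norm{F}_{L^1}\norm{H}_{L^p}$) to obtain
$$
\norm{f\star g}_{L^p(\R^d,\phi)}=\norm{\phi^{1/p}(f\star g)}_{L^p(\R^d)}\le\norm{F}_{L^1(\R^d)}\norm{H}_{L^p(\R^d)}=\left(\int_{\R^d}\abs{f(x)}\bigl(1+w_{p,\phi}(\abs{x})\bigr)\,dx\right)\norm{g}_{L^p(\R^d,\phi)},
$$
which is the asserted bound. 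I do not expect a genuine obstacle: the one point requiring care is the weight-transfer inequality supplied by Lemma~\ref{lemma:PhiProp} (together with the observation $\phi>0$ that makes the $L^1_{\mathrm{loc}}$ reductions and measurability statements valid); everything else is the textbook argument.
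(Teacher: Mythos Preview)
Your proof is correct and essentially identical to the paper's: both multiply the convolution by $\phi^{1/p}(x)$, use Lemma~\ref{lemma:PhiProp} to bound $\phi^{1/p}(x)\le(1+w_{p,\phi}(\abs{x-y}))\phi^{1/p}(y)$, define the same auxiliary functions $F=\abs{f}(1+w_{p,\phi}(\abs{\cdot}))\in L^1$ and $H=\abs{g}\phi^{1/p}\in L^p$, and conclude via the unweighted Young inequality. Your extra remark that $\phi>0$ (hence $g\in L^1_{\mathrm{loc}}$) is a nice justification the paper leaves implicit.
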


\begin{proof}
First observe that 
\begin{align*}
	\norm{f \star g}_{L^p(\R^d,\phi)}^p 
	&= \int_{\R^d}\abs{\int_{\R^d} f(x-y)g(y)\,dy}^p \phi(x)\,dx \\
	&\leq \int_{\R^d}\left(\int_{\R^d} \abs{f(x-y)}
	\left(\frac{\phi(x)}{\phi(y)}\right)^{1/p}\abs{g(y)}\phi^{1/p}(y)\,dy\right)^p \,dx.
\end{align*}
By Lemma~\ref{lemma:PhiProp}(iii),
$$
\left(\frac{\phi(x)}{\phi(y)}\right)^{1/p} \leq 
\frac{1}{\phi^{1/p}(y)}\left(\phi^{1/p}(y) + \abs{\phi^{1/p}(x)-\phi^{1/p}(y)}\right) 
\leq \left(1 + w_{p,\phi}(\abs{x-y})\right).
$$
Set
$$
\zeta(x) := \abs{f(x)}(1 + w_{p,\phi}(\abs{x})),
\qquad 
\xi(x) := \abs{g(x)}\phi^{1/p}(x).
$$
Then, by Young's inequality for convolutions,
$$
\norm{f \star g}_{L^p(\R^d,\phi)} \leq \norm{\zeta \star \xi}_{L^p(\R^d)} 
\leq \norm{\zeta}_{L^1(\R^d)}\norm{\xi}_{L^p(\R^d)}.
$$
\end{proof}

\begin{lemma}\label{lemma:ContMollWeightedNorm}
Fix $\phi \in \mathfrak{N}$, and let $w_{p,\phi}$ be defined in 
Lemma \ref{lemma:PhiProp}. Let $J$ be a mollifier as defined 
in Section~\ref{sec:Entropy_Formulation} and 
take $\phi_\delta = \phi \star J_\delta$ for $\delta > 0$. Then
\begin{itemize}
	\item[(i)] $\phi_\delta \in \mathfrak{N}$ with $C_{\phi_\delta} = C_\phi$.
	\item[(ii)] For any $u \in L^p(\R^d,\phi)$,
	$$
	\abs{\norm{u}_{p,\phi}^p-\norm{u}_{p,\phi_\delta}^p} 
	\leq w_{1,\phi}(\delta)\min\seq{\norm{u}_{p,\phi}^p,\norm{u}_{p,\phi_\delta}^p}.
	$$
	\item[(iii)] 
	\begin{displaymath}
	 \abs{\Delta \phi_\delta(x)} \leq \frac{1}{\delta} 
	 C_\phi\norm{\nabla J}_{L^1(\R^d)}(1 + w_{1,\phi}(\delta))^2\phi_\delta(x).
	\end{displaymath}
 \end{itemize}
\end{lemma}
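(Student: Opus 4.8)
The plan is to reduce all three parts to a single elementary pointwise comparison between $\phi$ and its translates, namely the estimate of Lemma~\ref{lemma:PhiProp} applied with $p=1$, i.e. $\abs{\phi(x+z)-\phi(x)} \leq w_{1,\phi}(\abs{z})\phi(x)$ for all $x,z \in \R^d$. First I would record two preliminary observations. One: $w_{1,\phi}$ is nondecreasing on $[0,\infty)$, so since $J_\delta$ is supported in $B(0,\delta)$ the factor $w_{1,\phi}(\abs{y})$ may be replaced by $w_{1,\phi}(\delta)$ inside any convolution against $J_\delta$ (or $\nabla J_\delta$). Two: relabelling $x \mapsto x+z$ and $z \mapsto -z$ in Lemma~\ref{lemma:PhiProp} gives the ``reversed'' bound $\abs{\phi(x+z)-\phi(x)} \leq w_{1,\phi}(\abs{z})\phi(x+z)$; both the original and the reversed inequality will be needed. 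I also recall that, by the consequence noted in Lemma~\ref{lemma:PhiProp}, $\phi$ is strictly positive, hence so is $\phi_\delta = \phi \star J_\delta$.

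For part (i): differentiating under the convolution gives $\nabla \phi_\delta = (\nabla\phi)\star J_\delta$, so $\abs{\nabla\phi_\delta(x)} \leq \int \abs{\nabla\phi(x-y)}J_\delta(y)\,dy \leq C_\phi \int \phi(x-y)J_\delta(y)\,dy = C_\phi\phi_\delta(x)$. Together with $\phi_\delta \in C^\infty(\R^d)\cap L^1(\R^d)$ (the $L^1$ bound is Young's inequality $\norm{\phi_\delta}_{L^1(\R^d)} \leq \norm{\phi}_{L^1(\R^d)}$) and $\phi_\delta > 0$, this shows $\phi_\delta \in \mathfrak{N}$ with $C_{\phi_\delta} = C_\phi$. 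For part (ii): write $\norm{u}_{p,\phi}^p - \norm{u}_{p,\phi_\delta}^p = \int \abs{u(x)}^p(\phi(x)-\phi_\delta(x))\,dx$ and $\phi_\delta(x)-\phi(x) = \int (\phi(x-y)-\phi(x))J_\delta(y)\,dy$. Applying, inside this integral, first the original and then the reversed version of the Lemma~\ref{lemma:PhiProp} estimate yields $\abs{\phi_\delta(x)-\phi(x)} \leq w_{1,\phi}(\delta)\phi(x)$ and $\abs{\phi_\delta(x)-\phi(x)} \leq w_{1,\phi}(\delta)\phi_\delta(x)$ respectively; inserting each back into the $x$-integral gives the two bounds $w_{1,\phi}(\delta)\norm{u}_{p,\phi}^p$ and $w_{1,\phi}(\delta)\norm{u}_{p,\phi_\delta}^p$, whose minimum is the claim.

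For part (iii) — the only place needing a little care — the point is to distribute the two derivatives so that exactly one lands on $J_\delta$, which produces the factor $\delta^{-1}$ rather than $\delta^{-2}$. Using $\phi \in C^1$ and $J_\delta \in C^\infty_c$, one writes $\Delta\phi_\delta(x) = \int \nabla\phi(x-y)\cdot\nabla J_\delta(y)\,dy$, so that $\abs{\Delta\phi_\delta(x)} \leq C_\phi\int \phi(x-y)\abs{\nabla J_\delta(y)}\,dy$. On $\mathrm{supp}(\nabla J_\delta)\subset B(0,\delta)$ the reversed–then–rearranged Lemma~\ref{lemma:PhiProp} bound gives $\phi(x-y) \leq (1+w_{1,\phi}(\delta))\phi(x)$, and a scaling computation gives $\norm{\nabla J_\delta}_{L^1(\R^d)} = \delta^{-1}\norm{\nabla J}_{L^1(\R^d)}$; finally $\phi(x) \leq (1+w_{1,\phi}(\delta))\phi_\delta(x)$ comes from (the reversed bound established in) part (ii). Chaining these three inequalities yields exactly $\abs{\Delta\phi_\delta(x)} \leq \frac{1}{\delta}C_\phi\norm{\nabla J}_{L^1(\R^d)}(1+w_{1,\phi}(\delta))^2\phi_\delta(x)$.

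No step presents a genuine difficulty; the main obstacle is purely bookkeeping — tracking which of the two directions of the Lemma~\ref{lemma:PhiProp} estimate is required at each occurrence, and, in (iii), splitting the Laplacian as a gradient–against–gradient convolution so the $\delta$-scaling comes out as $\delta^{-1}$.
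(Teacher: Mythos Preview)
Your proposal is correct and follows essentially the same approach as the paper's own proof: all three parts are reduced to the pointwise translate estimate of Lemma~\ref{lemma:PhiProp}, with (iii) handled by writing $\Delta\phi_\delta$ as the gradient--gradient convolution, bounding $\abs{\nabla\phi}\le C_\phi\phi$, then comparing $\phi(x-y)$ to $\phi(x)$ and $\phi(x)$ to $\phi_\delta(x)$ via the same lemma. One tiny cosmetic remark: in (iii) the bound $\phi(x-y)\le(1+w_{1,\phi}(\delta))\phi(x)$ comes directly from the \emph{original} form of Lemma~\ref{lemma:PhiProp} (with $z=-y$), not the reversed one; only the final step $\phi(x)\le(1+w_{1,\phi}(\delta))\phi_\delta(x)$ genuinely needs the reversed direction.
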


\begin{proof}
Consider (i). Young's inequality for convolutions 
yields $\phi_\delta \in L^1(\R^d)$. Furthermore,
$$
\abs{\nabla(\phi \star J_\delta)(x)} 
= \abs{\int_{\R^d} J_\delta(y)\nabla\phi(x-y)\,dy} 
\leq C_\phi (\phi \star J_\delta)(x).
$$
Consider (ii). By Lemma \ref{lemma:PhiProp},
\begin{align*}
	\abs{\norm{u}_{p,\phi}^p -\norm{u}_{p,\phi_\delta}^p} 
	&= \abs{\int_{\R^d}\int_{\R^d} \abs{u(x)}^p(\phi(x-z)-\phi(x))
	J_\delta(z)\,dzdx} \\
	&\leq \min\seq{\norm{u}_{p,\phi}^p,\norm{u}_{p,\phi_\delta}^p}
	\int_{\R^d}w_{1,\phi}(\abs{z})J_\delta(z)\,dz.
\end{align*}
This proves (ii). Consider(iii). Integration by parts yields 
\begin{displaymath}
 \abs{\Delta (\phi_\delta)(x)}  = 
	\abs{\int_{\R^d}\nabla J_\delta(x-y) \cdot \nabla \phi (y)\,dy}
	\leq C_\phi\int_{\R^d}\abs{\nabla J_\delta(x-y)}\phi (y)\,dy.
\end{displaymath}
By Lemma~\ref{lemma:PhiProp},
\begin{align*}
\int_{\R^d}\abs{\nabla J_\delta(x-y)}\phi (y)\,dy 
  &\leq \left(\int_{\R^d}\abs{\nabla J_\delta(x-y)}(1 + w_{1,\phi}(\abs{x-y}))\,dy\right)\phi(x) \\
  &\leq \frac{1}{\delta}\norm{\nabla J}_{L^1(\R^d)}(1 + w_{1,\phi}(\delta))\phi(x).
\end{align*}
Again, by Lemma~\ref{lemma:PhiProp}
\begin{displaymath}
 \phi(x) \leq \abs{\phi(x)-\phi_\delta(x)} + \phi_\delta(x) 
 \leq (1 + w_{1,\phi}(\delta))\phi_\delta(x).
\end{displaymath}
The result follows.
\end{proof}

\begin{lemma}\label{lemma:NSmCompApprox}
Let $\phi \in \mathfrak{N}$. Then there exists 
$\seq{\phi_R}_{R > 1} \subset C^\infty_c(\R^d)$ such that 
\begin{itemize}
	\item[(i)] $\phi_R \rightarrow \phi$ and $\nabla \phi_R \rightarrow \nabla \phi$ 
	pointwise in $\R^d$ as $R \rightarrow \infty$,
	\item[(ii)] $\exists$ a constant $C$ independent of $R > 1$ such that 
	$$
	\max \seq{\norm{\phi_R}_{\infty,\phi^{-1}},
	\norm{\nabla \phi_R}_{\infty,\phi^{-1}}} \leq C.
	$$
\end{itemize}
\end{lemma}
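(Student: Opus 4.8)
The plan is to upgrade $\phi$ to a smooth function by mollification and then give it compact support by a smooth cut-off, letting the mollification scale shrink together with the cut-off scale. Concretely, fix once and for all a function $\chi \in C^\infty_c(\R^d)$ with $0 \leq \chi \leq 1$, $\chi \equiv 1$ on $B(0,1)$ and $\mathrm{supp}(\chi) \subset B(0,2)$, and set $\chi_R(x) := \chi(x/R)$. With $J$ the mollifier fixed in Section~\ref{sec:Entropy_Formulation}, put $\phi^{1/R} := \phi \star J_{1/R}$ and define
\begin{displaymath}
 \phi_R := \chi_R\,\phi^{1/R}.
\end{displaymath}
Since $\phi \in C^1(\R^d) \cap L^1(\R^d)$, the convolution $\phi^{1/R}$ is $C^\infty$ and $\nabla \phi^{1/R} = (\nabla \phi)\star J_{1/R}$; hence $\phi_R \in C^\infty_c(\R^d)$ with support in $\overline{B(0,2R)}$.

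For the uniform bound (ii), the key input is Lemma~\ref{lemma:PhiProp} with $p=1$: because $J_{1/R}$ is supported in $B(0,1/R)$,
\begin{displaymath}
 \phi^{1/R}(x) = \int_{\R^d} \phi(x-z)J_{1/R}(z)\,dz
 \leq \bigl(1 + w_{1,\phi}(1/R)\bigr)\phi(x) \leq \bigl(1 + w_{1,\phi}(1)\bigr)\phi(x)
\end{displaymath}
for all $R \geq 1$, so $\norm{\phi_R}_{\infty,\phi^{-1}} \leq 1 + w_{1,\phi}(1)$. For the gradient I would write $\nabla\phi_R = (\nabla\chi_R)\phi^{1/R} + \chi_R\,(\nabla\phi)\star J_{1/R}$ and estimate the two terms separately: $\abs{\nabla\chi_R} \leq R^{-1}\norm{\nabla\chi}_\infty$, while $\abs{(\nabla\phi)\star J_{1/R}} \leq C_\phi\,\phi^{1/R}$ by the defining inequality $\abs{\nabla\phi}\leq C_\phi\phi$ of $\mathfrak{N}$. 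Dividing by $\phi$ and invoking the previous display gives $\norm{\nabla\phi_R}_{\infty,\phi^{-1}} \leq (\norm{\nabla\chi}_\infty + C_\phi)\bigl(1 + w_{1,\phi}(1)\bigr)$, uniformly in $R \geq 1$.

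For (i), fix $x \in \R^d$. As soon as $R > \abs{x}$ we have $x/R \in B(0,1)$, hence $\chi_R(x) = 1$ and $\nabla\chi_R(x) = 0$; therefore $\phi_R(x) = \phi^{1/R}(x)$ and $\nabla\phi_R(x) = (\nabla\phi)\star J_{1/R}(x)$ for all such $R$. Since $\phi$ and $\nabla\phi$ are continuous, the standard mollifier convergence gives $\phi^{1/R}(x)\to\phi(x)$ and $(\nabla\phi)\star J_{1/R}(x)\to\nabla\phi(x)$ as $R\to\infty$, which is precisely the pointwise convergence claimed in (i).

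There is no genuine obstacle here; the only places where the full strength of $\phi\in\mathfrak{N}$ (rather than mere smoothness and integrability) enters are the bound $\abs{\nabla\phi}\leq C_\phi\phi$ together with Lemma~\ref{lemma:PhiProp}, both already available, and the strict positivity of $\phi$ (Lemma~\ref{lemma:PhiProp}), which is what makes the norm $\norm{\cdot}_{\infty,\phi^{-1}}$ well-defined in the first place.
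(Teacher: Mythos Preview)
Your proof is correct and follows essentially the same approach as the paper: mollify $\phi$ to gain smoothness, then multiply by a rescaled cut-off to obtain compact support, with the estimates coming from $\abs{\nabla\phi}\le C_\phi\phi$ and Lemma~\ref{lemma:PhiProp}. The paper dismisses the mollification in one line (``modulo a mollification step, we may assume $\phi\in C^\infty$'') and then works with $\phi_R(x)=\phi(x)\zeta(x/R)$ directly, whereas you make the mollification explicit and elegantly tie its scale $1/R$ to the cut-off scale $R$; this is a minor but pleasant refinement rather than a different idea.
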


\begin{proof}
Modulo a mollification step, we may assume $\phi \in C^\infty$. 
Let $\zeta \in C_c^\infty(\R^d)$ satisfy $0 \leq \zeta \leq 1$, $\zeta(0) =1$. 
Let $\phi_R(x) := \phi(x)\zeta(R^{-1}x)$. Then
$$
\nabla \phi_R(x) = \nabla \phi(x)
\zeta(R^{-1}x) + R^{-1}\phi(x)\nabla \zeta(R^{-1}x).
$$
Hence (i) follows. Clearly, $\norm{\phi_R}_{\infty,\phi^{-1}} 
= \sup_x \seq{\abs{\phi_R(x)}\phi^{-1}(x)} = \norm{\zeta}_\infty$. 
Furthermore,
$$
\abs{\nabla \phi_R(x)} \leq 
\left(C_\phi\zeta(R^{-1}x) + R^{-1}\abs{\nabla \zeta(R^{-1}x)}\right)\phi(x).
$$
Hence, $\norm{\nabla \phi_R}_{\infty,\phi^{-1}} 
\leq C_\phi + R^{-1}\norm{\nabla \zeta}_\infty$.
\end{proof}

\subsection{A version of It\^{o}'s formula} 
Here we establish the particular 
anticipating It\^{o} formula applied in 
the proof of Theorem \ref{theorem:ExistenceOfSolution}.
\begin{theorem}\label{theorem:AntIto}
Let 
$$
X(t) = X_0 + \int_0^t\int_Z u(s,z)\,W(dz,ds) 
+ \int_0^t v(s)\,ds,
$$
where $u:[0,T] \times Z \times \Omega \rightarrow \R$ and 
$v:[0,T] \times \Omega \rightarrow \R$ are jointly measurable 
and $\seq{\F_t}$-adapted processes, satisfying 
\begin{equation}\label{eq:AssumptionItoProcess}
	\E{\bigg(\int_0^T\int_Z u^2(s,z)\,d\mu(z)ds\bigg)^2} < \infty, 
	\qquad \E{\int_0^T v^2(s)\,ds} < \infty.
\end{equation}
Let $F:\R^2 \times [0,T] \rightarrow \R$ be twice continuously 
differentiable. Suppose there exists a 
constant $C > 0$ such that for all 
$(\zeta,\lambda,t) \in \R^2 \times [0,T]$,
\begin{align*}
	& \abs{F(\zeta,\lambda,t)},\abs{\partial_3F(\zeta,\lambda,t)} 
	\leq C(1 + \abs{\zeta} + \abs{\lambda}), \\
	& \abs{\partial_1F(\zeta,\lambda,t)}, 
	\abs{\partial_{1,2}^2F(\zeta,\lambda,t)}, 
	\abs{\partial_1^2F(\zeta,\lambda,t)} \leq C.
\end{align*}
Let $V \in \Sm$. Then $s \mapsto \partial_1F(X(s),V,s)u(s)$ is 
Skorohod integrable, and
\begin{align*}
	F(X(t),V,t) &= F(X_0,V,0) \\
	&\qquad +\int_0^t\partial_3F(X(s),V,s)\,ds \\
	&\qquad +\int_0^t\int_Z\partial_1F(X(s),V,s)u(s,z)\,W(dz,ds) \\	
	&\qquad +\int_0^t\partial_1F(X(s),V,s)v(s)\,ds \\
	&\qquad +\int_0^t\int_Z \partial_{1,2}^2F(X(s),V,s)D_{s,z}Vu(s,z)\,d\mu(z)ds \\
	&\qquad +\frac{1}{2}\int_0^t\int_Z \partial_1^2F(X(s),V,s)u^2(s,z)\,d\mu(z)ds,
	\quad \text{$dP$-almost surely}.
\end{align*}
\end{theorem}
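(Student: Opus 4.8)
The plan is to derive the formula from the ordinary It\^o formula by the \emph{substitution} (or composition) technique for anticipating integrals, cf.~\cite[\S 3.2.4]{NualartMalliavinCalc2006}. Since $V \in \Sm$, write $V = g(W(h_1),\dots,W(h_n))$ with $g \in C^\infty_c(\R^n)$ and $h_1,\dots,h_n \in H$; in particular $V$ is bounded and $D_{s,z}V = \sum_i \partial_i g(W(h_1),\dots,W(h_n))h_i(s,z)$ is an explicit smooth functional with $DV$ bounded. The idea is to treat the second argument of $F$ as a \emph{deterministic} parameter $\lambda \in \R$, apply the classical It\^o formula to $t \mapsto F(X(t),\lambda,t)$ for each fixed $\lambda$, select versions of all terms that are continuous in $\lambda$, and then substitute $\lambda = V$. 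The moment bounds needed to control the terms with only linear growth in $(\zeta,\lambda)$ (namely $F$ and $\partial_3F$) follow from \eqref{eq:AssumptionItoProcess}: by Burkholder--Davis--Gundy and Cauchy--Schwarz one gets $\E{\sup_{0\le t\le T}\abs{X(t)}^2}<\infty$, while $V$ is bounded.

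First I would fix $\lambda$. By \eqref{eq:AssumptionItoProcess} the process $X$ is a genuine (adapted) It\^o process, so the classical It\^o formula applied to the $C^2$ function $(\zeta,t)\mapsto F(\zeta,\lambda,t)$ yields, $dP$-almost surely,
\begin{align*}
 F(X(t),\lambda,t) &= F(X_0,\lambda,0) + \int_0^t \partial_3F(X(s),\lambda,s)\ds
 + \int_0^t\int_Z \partial_1F(X(s),\lambda,s)u(s,z)\,W(dz,ds) \\
 &\quad + \int_0^t \partial_1F(X(s),\lambda,s)v(s)\ds
 + \frac12\int_0^t\int_Z \partial_1^2F(X(s),\lambda,s)u^2(s,z)\,d\mu(z)\ds.
\end{align*}
The boundedness of $\partial_1F$, $\partial_{1,2}^2F$, $\partial_1^2F$ shows that each term depends on $\lambda$ in a Lipschitz (indeed $C^1$) fashion in an appropriate $L^2$ sense; for the stochastic integral this gives, via Kolmogorov's criterion, a version jointly continuous in $(t,\lambda)$. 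Hence the identity may be read off simultaneously for all $\lambda$ in the (bounded) range of $V$, which is what licenses substituting the random parameter.

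Next I would substitute $\lambda = V$. The terms $F(X_0,V,0)$, $\int_0^t\partial_3F(X(s),V,s)\ds$, $\int_0^t\partial_1F(X(s),V,s)v(s)\ds$ and $\tfrac12\int_0^t\int_Z\partial_1^2F(X(s),V,s)u^2(s,z)\,d\mu(z)\ds$ require nothing further, being pathwise Lebesgue expressions in $\lambda$. The only delicate term is the stochastic integral $\int_0^t\int_Z \psi(s,z,\lambda)\,W(dz,ds)$ with $\psi(s,z,\lambda)=\partial_1F(X(s),\lambda,s)u(s,z)$, an adapted integrand for each fixed $\lambda$. For this I invoke the substitution formula for Skorohod integrals: for a parametrized adapted integrand that is $C^1$ in $\lambda$ with suitable integrability, and $V \in \Sm$,
\begin{multline*}
 \left[\int_0^t\int_Z \psi(s,z,\lambda)\,W(dz,ds)\right]_{\lambda = V}
 = \int_0^t\int_Z \psi(s,z,V)\,W(dz,ds) \\
 + \int_0^t\int_Z \partial_\lambda\psi(s,z,V)\,D_{s,z}V\,d\mu(z)\ds,
\end{multline*}
where the first integral on the right is now a Skorohod integral. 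Since $\partial_\lambda\psi(s,z,\lambda)=\partial_{1,2}^2F(X(s),\lambda,s)u(s,z)$, the correction term equals $\int_0^t\int_Z \partial_{1,2}^2F(X(s),V,s)\,u(s,z)\,D_{s,z}V\,d\mu(z)\ds$, matching the claimed formula, and the Skorohod integrability of $s\mapsto \partial_1F(X(s),V,s)u(s)$ is a byproduct: $\partial_1F$ is bounded, $u \in L^2([0,T]\times Z\times\Omega)$, and $V$ is smooth, so the shifted integrand lies in $\Dom\,\delta$.

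The main obstacle is making this substitution step fully rigorous in the present white-noise setting with the auxiliary measure space $(Z,\Zcal,\mu)$: establishing the parametrized It\^o identity in a form valid simultaneously for (almost) every $\lambda$ with enough joint regularity to plug in $V$, and proving (or precisely citing) the Skorohod substitution formula with its correction term. I would handle the first point by first treating polynomial, and then compactly truncated, $F$, passing to the stated growth class by the moment bounds above and the boundedness of $V$ and $DV$; the second point I would reduce, using $V = g(W(h_1),\dots,W(h_n))$ together with a Gram--Schmidt normalization of the $h_i$, to a finite-dimensional Cameron--Martin shift in the directions spanned by the $h_i$, where the correction term can be computed directly and the remaining passages to the limit (mollifying $F$, truncating, using dominated convergence and the $L^2$-closability of $\delta$) are routine and may be left to the reader.
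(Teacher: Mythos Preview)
Your approach is correct but differs from the paper's. The paper follows the direct Riemann--sum/Taylor expansion route of \cite[Theorem~3.2.2]{NualartMalliavinCalc2006}: one partitions $[0,t]$, Taylor expands $F$ in the first variable, and on each subinterval uses the product rule for the Skorohod integral \cite[Proposition~1.3.5]{NualartMalliavinCalc2006} to write $\partial_1F(X(t_i^n),V)\int_{t_i^n}^{t_{i+1}^n}u\,dW$ as a Skorohod integral plus the Malliavin correction $\int \partial_{1,2}^2F(X(t_i^n),V)D_{s,z}V\,u(s,z)\,d\mu\,ds$; convergence of the Skorohod sums is then handled via the closability criterion \cite[Proposition~1.3.6]{NualartMalliavinCalc2006}. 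Your substitution strategy---freeze $\lambda$, apply the adapted It\^o formula, then invoke the Skorohod substitution formula of \cite[\S3.2.4]{NualartMalliavinCalc2006} at $\lambda=V$---is a legitimate alternative and arguably more transparent about the origin of the cross term $\int\partial_{1,2}^2F\cdot D_{s,z}V\,u\,d\mu\,ds$. The price you pay is the extra machinery you correctly flag: a Kolmogorov-type joint-continuity argument in $(\lambda,t)$ for the parametrized It\^o integrals, and a verification of the substitution formula in the space--time white-noise framework (the reduction to a finite Cameron--Martin shift via the $h_i$'s is the right move here). The paper's route avoids both of these, needing only the single-point product rule and closability, and so is slightly more self-contained under the stated hypotheses; your route, once the substitution lemma is in hand, generalizes more readily to other anticipating functionals of $V$.
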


\begin{proof}
The proof follows \cite[Theorem~3.2.2 and Proposition~1.2.5]{NualartMalliavinCalc2006}. 
We give an outline and some details where there are 
considerable differences. Furthermore, we assume that $F$ is 
independent of $t$ as this is a standard modification. 

Set $t_i^n = \frac{it}{2^n}$, $0 \leq i \leq 2^n$. 
By  Taylor's formula,
\begin{multline*}
	F(X(t),V) = F(X_0,V) 
	+ \underbrace{\sum_{i=0}^{2^n-1} 
	\partial_1F(X(t_i^n),V)(X(t_{i+1}^n)-X(t_i^n))}_{\mathcal{T}^1_n} \\
	+ \underbrace{\frac{1}{2}\sum_{i=0}^{2^n-1}
	\partial_1^2F(\overline{X}_i,V)(X(t_{i+1}^n)-X(t_i^n))^2}_{\mathcal{T}^2_n},
\end{multline*}
where $\overline{X}_i$ denotes a random intermediate point 
between $X(t_i^n)$ and $X(t_{i+1}^n)$.  
As in the proof of \cite[Proposition~1.2.5]{NualartMalliavinCalc2006},
$$
\mathcal{T}^2_n \rightarrow \frac{1}{2}\int_0^t\int_Z 
\partial_1^2F(X(s),V)u^2(s,z)\,d\mu(z)ds, 
\quad
\text{in $L^1(\Omega)$ as $n \rightarrow \infty$.}
$$
Note that 
\begin{multline*}
	\mathcal{T}^1_n = \underbrace{\sum_{i=0}^{2^n-1} 
	\partial_1F(X(t_i^n),V)\int_{t_i^n}^{t_{i+1}^n}
	\int_Z u(s,z)\,W(dz,ds)}_{\mathcal{T}^{1,1}_n} \\
	+  \underbrace{\sum_{i=0}^{2^n-1} \partial_1F(X(t_i^n),V)
	\int_{t_i^n}^{t_{i+1}^n}v(s)\,ds}_{\mathcal{T}^{1,2}_n}. 
\end{multline*}
Clearly,
$$
\mathcal{T}^{1,2}_n \rightarrow \int_0^t\partial_1F(X(s),V)v(s)\,ds, 
\quad
\text{in $L^1(\Omega)$ as $n \rightarrow \infty$.} 
$$

Consider $\mathcal{T}^{1,1}_n$. 
By \cite[Proposition~1.3.5]{NualartMalliavinCalc2006}, 
$s \mapsto \partial_1F(X(t_i^n),V)u(s)$ 
is Skorohod integrable on $[t_i^n,t_{i+1}^n]$ and
\begin{multline*}
	\mathcal{T}^{1,1}_n = 
	\underbrace{\sum_{i=0}^{2^n-1}\int_{t_i^n}^{t_{i+1}^n}
	\int_Z \partial_1F(X(t_i^n),V)u(s,z)\,W(dz,ds)}_{\mathcal{T}^{1,1,1}_n} \\
	+ \underbrace{\sum_{i=0}^{2^n-1} \int_{t_i^n}^{t_{i+1}^n}
	\int_Z\partial_{1,2}^2F(X(t_i^n),V)
	D_{s,z}Vu(s,z)\,d\mu(z)ds}_{\mathcal{T}^{1,1,2}_n}.
\end{multline*}
As before
$$
\mathcal{T}^{1,1,2}_n 
\rightarrow \int_0^t\int_Z \partial_{1,2}^2F(X(s),V)D_{s,z}Vu(s,z)\,d\mu(z)ds,
\quad
\text{in $L^1(\Omega)$ as $n \rightarrow \infty$.} 
$$
Consider $\mathcal{T}^{1,1,1}_n$. Let
$$
\zeta_n(s,z) = \sum_{i=0}^{2^n-1}\partial_{1,2}^2F(X(t_i^n),V)
\car{[t_i^n,t_{i+1}^n)}(s)D_{s,z}Vu(s,z),
$$
and note that $\zeta_n$ is Skorohod integrable on $[0,t]$. 
We need to show the following:
\begin{itemize}
	\item[(i)] There exists $\zeta \in L^2(\Omega;H)$ 
	such that $\zeta_n \rightarrow \zeta$ in $L^2(\Omega;H)$.
	\item[(ii)] There exists a $G \in L^2(\Omega)$ 
	such that for each $U \in \Sm$
	$$
	\E{\int_0^t\int_Z \zeta_n(s,z)W(dz,ds)U} \rightarrow \E{GU}.
	$$
 \end{itemize}
Then we may conclude by \cite[Proposition~1.3.6]{NualartMalliavinCalc2006} 
that $\zeta$ is Skorohod integrable and $\int_0^t \zeta(s)\,dW(s) = G$. 
The result then follows. Consider (i). Let 
$$
\zeta(s,z) = \partial_{1,2}^2F(X(s),V)D_{s,z}Vu(s,z).
$$
Then
$$
\E{\int_0^t\int_Z \abs{\zeta_n(s)-\zeta(s)}^2\,d\mu(z)ds}
\leq \E{H_n\int_0^t\int_Z \abs{D_{s,z}Vu(s,z)}^2\,d\mu(z)ds},
$$
where
$$
H_n = \sup_{\abs{t_i^n-s} \leq t2^{-n}}
\seq{\abs{\partial_{1,2}^2F(X(t_i^n),V)-\partial_{1,2}^2F(X(s),V)}^2}.
$$
Hence, (i) follows by the dominated convergence theorem. Consider (ii). 
The existence of a random variable $G$ follows by the convergence of the other terms. 
This also yields the weak convergence. It remains to check that 
$G \in L^2(\Omega)$. This is a consequence 
of assumptions \eqref{eq:AssumptionItoProcess}. 
\end{proof}

\subsection{The Lebesgue-Bochner space}\label{sec:LebBoch}
Let $(X,\mathscr{A},\mu)$ be a $\sigma$-finite measure space 
and $E$ a Banach space. In the previous sections 
$X = [0,T] \times \Omega$, $\mu = dt \otimes dP$, $E$ is 
typically $L^p(\R^d,\phi)$ for some $1 \leq p < \infty$, and $\mathscr{A}$ is the 
predictable $\sigma$-algebra $\Pred$. A function $u:X \rightarrow E$ 
is \emph{strongly $\mu$-measurable} if there exists a sequence 
of $\mu$-simple functions $\seq{u_n}_{n \geq 1}$ such that 
$u_n \rightarrow u$ $\mu$-almost everywhere. 
By a $\mu$-\emph{simple function} $s:X \rightarrow E$ 
we mean a function of the form
\begin{displaymath}
 s(\zeta) =  \sum_{k = 1}^N \car{A_k}(\zeta)x_k, \qquad \zeta \in X,
\end{displaymath}
where $x_k \in E$ and $A_k \in \mathscr{A}$ satisfy 
$\mu(A_k)< \infty$ for all $1 \leq k \leq N$. 
The Lebesgue-Bochner space $L^p(X,\mathscr{A},\mu;E)$ is the 
linear space of $\mu$-equivalence classes of strongly 
measurable functions $u:X \rightarrow E$ satisfying
\begin{displaymath}
 \int_X \norm{u(\xi)}_E^p \,d\mu(\xi) < \infty.
\end{displaymath}
A map $u:X \rightarrow E$ is \emph{weakly $\mu$-measurable} if 
the map $\xi \mapsto \inner{u(\xi)}{\test^*}$ has a $\mu$-version which 
is $\mathscr{A}$-measurable for each $\test^*$ in the dual space $E^*$.
By the Pettis measurability theorem \cite[Theorem~1.11]{Neerven2007}, strong 
$\mu$-measurability is equivalent to weak 
$\mu$-measurability, whenever $E$ is separable.

For $u \in L^1(X,\mathscr{A},\mu;L^1(\R^d,\phi))$, it is convenient 
to know that $\zeta \mapsto u(\zeta)(x)$ has a $\mu$-version which 
is $\mathscr{A}$-measurable for almost all $x$. In fact this is crucial to 
the manipulations performed in the previous sections. 
The following result verifies that this is indeed the case.

\begin{lemma}\label{lemma:LebBochRepr}
 Let $(X,\mathscr{A},\mu)$ be a $\sigma$-finite measure space 
 and $\phi \in \mathfrak{N}$. Let
 \begin{displaymath}
  \Psi:L^1(X \times \R^d,\mathscr{A} \otimes \Borel{\R^d},d\mu \otimes d\phi) 
  \rightarrow L^1(X,\mathscr{A},\mu;L^1(\R^d,\phi))
 \end{displaymath}
 be defined by $\Psi(u)(\xi)=u(\xi,\cdot)$. Then $\Psi$ is an isometric isomorphism. 
\end{lemma}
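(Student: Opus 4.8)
The plan is to prove that $\Psi$ is a well-defined, linear, isometric bijection between the two Lebesgue-Bochner/mixed-norm spaces. The linearity is clear from the definition $\Psi(u)(\xi) = u(\xi,\cdot)$, so the real content is: (a) for $u \in L^1(X \times \R^d, \mathscr{A} \otimes \Borel{\R^d}, d\mu \otimes d\phi)$, the slice $u(\xi,\cdot)$ lies in $L^1(\R^d,\phi)$ for $\mu$-almost every $\xi$, and the resulting map $\xi \mapsto u(\xi,\cdot)$ is strongly $\mu$-measurable; (b) the norm identity $\norm{\Psi(u)}_{L^1(X;L^1(\R^d,\phi))} = \norm{u}_{L^1(X \times \R^d, d\mu \otimes d\phi)}$ holds, which makes $\Psi$ an isometry (hence injective); and (c) $\Psi$ is surjective.

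First I would handle measurability and the norm identity simultaneously via a density/approximation argument. For a $\mu \otimes \phi$-simple function $u = \sum_k \car{A_k \times B_k} x_k$ with $A_k \in \mathscr{A}$, $B_k \in \Borel{\R^d}$ of finite measure, the slice $u(\xi,\cdot) = \sum_k \car{A_k}(\xi) \car{B_k} x_k$ is visibly a $\phi$-integrable function of $x$ and $\xi \mapsto u(\xi,\cdot)$ is a $\mu$-simple $L^1(\R^d,\phi)$-valued function; Tonelli's theorem gives $\int_X \norm{u(\xi,\cdot)}_{L^1(\R^d,\phi)}\,d\mu(\xi) = \iint \abs{u}\,d\mu\,d\phi$. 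Since $\phi \in \mathfrak{N}$ is a finite measure (being in $L^1(\R^d)$) and $\mu$ is $\sigma$-finite, $\mu \otimes \phi$ is $\sigma$-finite, so simple functions are dense in $L^1(X \times \R^d, d\mu \otimes d\phi)$. Given a general $u$, pick simple $u_n \to u$ in $L^1$; passing to a subsequence we may assume $\sum_n \norm{u_{n+1}-u_n}_{L^1} < \infty$, so by the simple-function norm identity $\sum_n \norm{\Psi(u_{n+1})-\Psi(u_n)}_{L^1(X;L^1(\R^d,\phi))} < \infty$, whence $\Psi(u_n)$ converges in the Bochner space to some limit, and this limit is $\Psi(u)$ up to $\mu$-null sets; along a further subsequence the convergence is $\mu$-a.e.~in $L^1(\R^d,\phi)$, which exhibits $\Psi(u)$ as a $\mu$-a.e.~limit of $\mu$-simple functions, i.e.~strongly $\mu$-measurable. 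Taking limits in the norm identity for simple functions gives it for $u$, so $\Psi$ is an isometry.

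For surjectivity, I would start from an arbitrary $g \in L^1(X,\mathscr{A},\mu;L^1(\R^d,\phi))$, approximate it in the Bochner norm by $\mu$-simple $L^1(\R^d,\phi)$-valued functions $g_n = \sum_k \car{A_k^n}(\xi) h_k^n$ with $h_k^n \in L^1(\R^d,\phi)$, and further approximate each $h_k^n$ in $L^1(\R^d,\phi)$ by, say, continuous compactly supported functions (or $\Borel{\R^d}$-simple functions), so that each $g_n$ corresponds under $\Psi$ to an element $\tilde g_n \in L^1(X \times \R^d, d\mu \otimes d\phi)$. By the isometry, $\{\tilde g_n\}$ is Cauchy in $L^1(X \times \R^d, d\mu \otimes d\phi)$; let $\tilde g$ be its limit. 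Then $\Psi(\tilde g) = \lim \Psi(\tilde g_n) = \lim g_n = g$ in the Bochner space, giving surjectivity.

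The main obstacle I anticipate is the measurability bookkeeping — making sure that the slice map $\xi \mapsto u(\xi,\cdot)$ is genuinely \emph{strongly} $\mu$-measurable (not merely that slices are measurable in $x$ for fixed $\xi$ and vice versa). This is where the $\sigma$-finiteness of $\mu \otimes \phi$, the density of simple functions, and the a.e.-convergence along subsequences all get used; since $L^1(\R^d,\phi)$ is separable one could alternatively invoke the Pettis measurability theorem quoted just above, checking weak measurability by pairing with a countable norming family in $L^\infty(\R^d,\phi^{-1})$, but the approximation route keeps everything elementary and self-contained. The norm identity and surjectivity are then essentially formal consequences of Tonelli plus completeness.
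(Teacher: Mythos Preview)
Your proposal is correct and follows essentially the same architecture as the paper's proof: the isometry is Tonelli, and surjectivity is obtained by approximating an element of the Bochner space by simple functions, pulling back through $\Psi$, and using completeness. The one genuine difference is in how strong $\mu$-measurability of $\xi \mapsto u(\xi,\cdot)$ is established: the paper invokes the Pettis measurability theorem directly (separability of $L^1(\R^d,\phi)$ plus weak measurability via Fubini), whereas you build it by hand through approximation by product-type simple functions and a.e.~convergence along a subsequence. Your route is more self-contained but longer; the paper's is a one-line appeal to Pettis, which you yourself flag as the alternative. Also, in your surjectivity step the extra layer of approximating each $h_k^n$ by $\Borel{\R^d}$-simple or $C_c$ functions is unnecessary: once you fix a Borel representative of $h_k^n \in L^1(\R^d,\phi)$, the function $(\xi,x)\mapsto \car{A_k^n}(\xi)h_k^n(x)$ is already $\mathscr{A}\otimes\Borel{\R^d}$-measurable, which is exactly how the paper proceeds.
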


\begin{remark}
 The measure space $(X \times \R^d,\mathscr{A} \otimes \Borel{\R^d},d\mu \otimes d\phi)$ 
 is not necessarily complete. Strictly speaking we should rather consider its completion. 
 What this ensures is that every representative is measurable 
 with respect to the complete $\sigma$-algebra. A remedy is to define 
 $L^1(X \times \R^d,\mathscr{A} \otimes \Borel{\R^d},d\mu \otimes d\phi)$ by 
 asking that any element $u$ has a $d\mu \otimes d\phi$-version $\tilde{u}$ 
 which is $\mathscr{A} \otimes \Borel{\R^d}$-measurable. Now, $\tilde{u}(\cdot,x)$ is 
 $\mathscr{A}$ measurable, and so for $d\phi$-almost all $x$, $u(\cdot,x)$ 
 has a $\mu$-version which is $\mathscr{A}$-measurable. 
\end{remark}

\begin{proof}
 Let us first check that $\Psi(u) \in L^1(X;L^1(\R^d,\phi))$. 
 By the Pettis measurability theorem \cite[Theorem~1.11]{Neerven2007}, strong 
 $\mu$-measurability follows due to the separability of $L^1(\R^d,\phi)$ if $\Psi(u)$ is 
 weakly $\mu$-measurable. That is, for any $\test \in L^\infty(\R^d,\phi)$, the map
 \begin{displaymath}
  \xi \mapsto  \int_{\R^d} \test(x)u(\xi,x)\phi(x)\,dx,
 \end{displaymath}
 has a $\mu$-version which is $\mathscr{A}$ measurable. This is a consequence 
 of Fubini's theorem \cite[Proposition~5.2.2]{Cohn2013}. The fact that $\Psi$ is an 
 isometry is obvious. It remains to prove that $\Psi$ is surjective. 
 Let $v \in L^1(X;L^1(\R^d,\phi))$. By definition there exists 
 a sequence $\seq{v_n}_{n \geq 1}$ of simple 
 functions such that $v_n \rightarrow v$ $\mu$-almost everywhere. Set
 \begin{displaymath}
  v_n(\xi) = \sum_{k=1}^{N_n} \car{A_{k,n}}(\xi)f_{k,n}, \quad u_n(\xi,x) = v_n(\xi)(x),
 \end{displaymath}
 where $A_{k,n} \in \mathscr{A}$, $f_{k,n} \in L^1(\R^d,\phi)$. Note that 
 $u_n$ is $\mathscr{A} \otimes \Borel{\R^d}$ measurable, and $\Psi(u_n) = v_n$. 
 By the Lebesgue dominated convergence theorem, $v_n \rightarrow v$ 
 in $L^1(X,L^1(\R^d,\phi))$ \cite[Proposition~1.16]{Neerven2007}. 
 By the isometry property, $\seq{u_n}_{n \geq 1}$ is Cauchy, and so by 
 completeness there exists $u$ such that
 \begin{displaymath}
  u_n \rightarrow u \mbox{ in } L^1(X \times \R^d,\mathscr{A} 
  \otimes \Borel{\R^d},d\mu \otimes d\phi).
 \end{displaymath}
 Since
 \begin{align*}
  \int_X \norm{v-\Psi(u)}_{1,\phi}\,d\mu 
      &= \lim_{n \rightarrow \infty}\int_X \norm{v_n-\Psi(u)}_{1,\phi}\,d\mu \\
      &= \lim_{n \rightarrow \infty}\iint_{X \times \R^d}
      \abs{u_n(\xi,x)-u(\xi,x)}\, d\mu \otimes d\phi(\xi,x) = 0,
 \end{align*}
 it follows that $\Psi(u) = v$.
\end{proof}

\subsection{Young measures}\label{sec:YoungMeasures}
The purpose of this subsection is to provide a 
reference for some results concerning Young measures 
and their application as generalized limits. 
Let $(X,\mathscr{A},\mu)$ be a $\sigma$-finite measure space, 
and $\mathscr{P}(\R)$ denote the set of probability measures on $\R$. In the 
previous sections $X$ is typically $\Pi_T \times \Omega$. 
A \emph{Young measure} from $X$ into $\R$ is a function 
$\nu:X \rightarrow \mathscr{P}(\R)$ such that $x \mapsto \nu_x(B)$ is 
$\mathscr{A}$-measurable for every Borel measurable set $B \subset \R$. 
We denote by $\Young{X,\mathscr{A},\mu;\R}$, or simply $\Young{X;\R}$ if the 
measure space is understood, the set of all Young measures from $X$ into $\R$. 
The following theorem is proved in \cite[Theorem~6.2]{Pedregal1997} in the 
case that $X \subset \R^n$ and $\mu$ is the Lebesgue measure: 

\begin{theorem}\label{theorem:YoungMeasureLimitOfComposedFunc}
Let $(X,\mathscr{A},\mu)$ be a $\sigma$-finite measure space. 
Let $\zeta:[0,\infty) \rightarrow [0,\infty]$ be a continuous, nondecreasing 
function satisfying $\lim_{\xi \rightarrow \infty}\zeta(\xi) = \infty$ and 
$\seq{u^n}_{n \geq 1}$ a sequence of measurable functions such that 
$$
\sup_{n} \int_X \zeta(\abs{u^n})d\mu(x)  < \infty.
$$
Then there exist a subsequence $\seq{u^{n_j}}_{j \geq 1}$ 
and $\nu \in \Young{X,\mathscr{A},\mu;\R}$ such that for 
any Carath\'eodory function $\psi:\R \times X \rightarrow \R$ 
with $\psi(u^{n_j}(\cdot),\cdot) \rightharpoonup \overline{\psi}$ (weakly) 
in $L^1(X)$, we have 
$$
\overline{\psi}(x) = \int_{\R} \psi(\xi,x)\,d\nu_x(\xi).
$$
\end{theorem}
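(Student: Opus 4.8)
The plan is to run the classical ``fundamental theorem of Young measures'' (cf.~\cite[Theorem~6.2]{Pedregal1997}), keeping track of the places where the Lebesgue/$\R^n$ structure is used and replacing those spots by soft measure-theoretic arguments. \emph{Step~1 (reduction to a finite measure).} Using $\sigma$-finiteness I would write $X=\bigsqcup_j A_j$ with $0<\mu(A_j)<\infty$ and pick a bounded, strictly positive $\rho$ with $\int_X\rho\,d\mu=1$ (e.g.~$\rho=c\sum_j\min\{1,2^{-j}\mu(A_j)^{-1}\}\car{A_j}$). Set $d\mathbb{P}=\rho\,d\mu$; then $\mathbb{P}$ is finite and equivalent to $\mu$, $\int_X\zeta(|u^n|)\,d\mathbb{P}\le\|\rho\|_\infty\int_X\zeta(|u^n|)\,d\mu$, $L^\infty(\mu)=L^\infty(\mathbb{P})$, and $\Young{X,\mathscr{A},\mu;\R}=\Young{X,\mathscr{A},\mathbb{P};\R}$ (the set of Young measures does not see the base measure). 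If $\psi(u^{n_j})\rightharpoonup\overline{\psi}$ weakly in $L^1(X,\mu)$, then testing against $h\in L^\infty$ and using $\rho^2 h\in L^\infty(\mu)$ shows that the Carath\'eodory function $(\xi,x)\mapsto\rho(x)\psi(\xi,x)$ satisfies $\rho\,\psi(u^{n_j})\rightharpoonup\rho\,\overline{\psi}$ weakly in $L^1(X,\mathbb{P})$. Hence the statement on $(X,\mathscr{A},\mu)$ follows from the one on the finite space $(X,\mathscr{A},\mathbb{P})$ (apply the latter to $\rho\psi$ and divide by $\rho>0$), so from now on $\mu$ may be assumed finite.

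\emph{Step~2 (extraction of $\nu$).} Fix a countable $\mathbb{Q}$-linear $\mathcal{D}\subset C_0(\R)$ dense in $C_0(\R)$. For each $g\in\mathcal{D}$ the sequence $\{g(u^n)\}_n$ is bounded in $L^2(X,\mu)$, so a diagonal argument together with weak sequential compactness of bounded sets in a Hilbert space yields a subsequence $\{u^{n_j}\}$ with $g(u^{n_j})\rightharpoonup h^{(g)}$ weakly in $L^2(X,\mu)$ for every $g\in\mathcal{D}$, where $g\mapsto h^{(g)}$ is $\mathbb{Q}$-linear, order preserving, and $\|h^{(g)}\|_{L^\infty}\le\|g\|_\infty$. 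For a.e.~$x$, $g\mapsto h^{(g)}(x)$ extends by density to a positive linear functional of norm $\le1$ on $C_0(\R)$, so by the Riesz representation theorem $h^{(g)}(x)=\int_\R g\,d\nu_x$ for a sub-probability Borel measure $\nu_x$, and $x\mapsto\nu_x(B)$ is $\mathscr{A}$-measurable (a monotone-class argument starting from measurability of $x\mapsto\int_\R g\,d\nu_x=h^{(g)}(x)$). To upgrade $\nu_x$ to a probability measure, take cutoffs $\theta_R\in C_0(\R)$ with $\car{[-R,R]}\le\theta_R\le\car{[-R-1,R+1]}$; Chebyshev's inequality applied to $\zeta$ gives $\mu(\{|u^{n_j}|>R\})\le C/\zeta(R)$, whence $\int_X\phi\,(1-\int_\R\theta_R\,d\nu_x)\,d\mu\le C\|\phi\|_\infty/\zeta(R)$ for $0\le\phi\in L^\infty$, and letting $R\to\infty$ (monotone convergence, $\int_\R\theta_R\,d\nu_x\uparrow\nu_x(\R)$) forces $\nu_x(\R)=1$ a.e.

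\emph{Step~3 (representation) and the main obstacle.} For $h\in L^\infty$ and $g\in C_0(\R)$ the identity $\int_X h\,g(u^{n_j})\,d\mu\to\int_X h\int_\R g\,d\nu_x\,d\mu$ holds by construction; splitting $g=g\theta_R+g(1-\theta_R)$ and controlling the tail by the tightness bound extends it to $g\in C_b(\R)$. For a \emph{bounded} Carath\'eodory $\psi$ the decisive point is that $x\mapsto\theta_R(\cdot)\psi(\cdot,x)\in C_0(\R)$ is strongly measurable (Pettis' theorem, $C_0(\R)$ separable), hence an a.e.-in-$x$, uniform-in-$\xi$ limit of $C_0(\R)$-valued simple functions $\sum_i\car{A_i^k}(x)\,g_i^k$; feeding these into the finite-sum product case and absorbing the $|\xi|\gtrsim R$ contribution by the same tightness bound gives $\int_X h\,\psi(u^{n_j})\,d\mu\to\int_X h\int_\R\psi\,d\nu_x\,d\mu$, i.e.~$\overline{\psi}=\int_\R\psi\,d\nu_\cdot$. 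For a general Carath\'eodory $\psi$ with $\psi(u^{n_j})\rightharpoonup\overline{\psi}$ in $L^1$, the weak $L^1$-convergence forces $\{\psi(u^{n_j})\}$ to be uniformly integrable; applying the bounded case to $\psi^{\pm}$ and to their truncations at height $M$, then sending $M\to\infty$ by monotone convergence on both sides, yields $\overline{\psi}=\int_\R\psi^+\,d\nu_\cdot-\int_\R\psi^-\,d\nu_\cdot=\int_\R\psi\,d\nu_\cdot$. The one genuinely delicate step is this reduction of Carath\'eodory integrands to ``separated'' ones \emph{without} assuming $(X,\mathscr{A},\mu)$ separable, routed through strong measurability into the separable space $C_0(\R)$ with the large-$|\xi|$ behaviour absorbed by the coercivity hypothesis. (When $(X,\mathscr{A},\mu)$ is separable, as in the paper's applications, one may instead regard $\{\delta_{u^n}\}$ as a bounded sequence in the dual of the separable Banach space $L^1(X;C_0(\R))$ and invoke the sequential Banach--Alaoglu theorem directly, which shortens Steps~2--3.)
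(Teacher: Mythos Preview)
Your proposal is correct, and the parenthetical at the end of Step~3 in fact names the route the paper takes. The paper does not supply a detailed proof; it simply points to the isometric isomorphism $(L^1(X;C_0(\R)))^*\cong L^\infty_{w*}(X;\Rad{\R})$, embeds $\seq{\delta_{u^n}}$ into this dual, and invokes the sequential Banach--Alaoglu theorem (separability of $L^1(X;C_0(\R))$ being available in all the applications), referring to \cite{Pedregal1997,Malek1996} for the remaining steps. It also remarks that the weighted setting makes the $\sigma$-finite case unnecessary in practice. Your argument is genuinely different in its mechanics: you reduce to a finite measure by the density $\rho$, extract $\nu$ by a diagonal argument over a countable dense set in $C_0(\R)$ using weak compactness in the Hilbert space $L^2(X)$, and then build up to general Carath\'eodory integrands via strong measurability into $C_0(\R)$ and truncation. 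The trade-off is clear: the paper's route is shorter and more conceptual but leans on the duality theorem and on separability of the base space, whereas yours is self-contained and works for an arbitrary $\sigma$-finite $(X,\mathscr{A},\mu)$ at the cost of a longer argument in Step~3. One small comment on Step~3: the passage from truncations $\min(\psi^\pm,M)$ to $\psi^\pm$ really uses that $\seq{\psi^\pm(u^{n_j})}_j$ are themselves uniformly integrable (which follows from uniform integrability of $\seq{\psi(u^{n_j})}_j$) and a subsequence argument to identify the limit; this is implicit in your sketch but worth making explicit.
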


Recall that $\psi:\R \times X \rightarrow \R$ is a Carath\'eodory function 
if $\psi(\cdot,x):\R \rightarrow \R$ is continuous for all $x \in X$ 
and $\psi(u,\cdot):X \rightarrow \R$ 
is measurable for all $u \in \R$. The proof is based on the embedding 
of $\Young{X;\R}$ into $L^\infty_{w*}(X,\Rad{\R})$. 
Here $\Rad{\R}$ denotes the space of Radon 
measures on $\R$ and $L^\infty_{w*}(X,\Rad{\R})$ denotes the 
space of weak$*$-measurable bounded maps $\nu:X \rightarrow \Rad{\R}$. 
The crucial observation is that $(L^1(X,C_0(\R)))^*$ is isometrically isomorphic 
to $L^\infty_{w*}(X,\Rad{\R})$ also in the case that 
$(X,\mathscr{A},\mu)$ is an abstract $\sigma$-finite measure space. 
It is relatively straightforward to go through the proof and 
extend to this more general case \cite[Theorem~2.11]{Malek1996}. 
Note however that the use of weighted $L^p$ spaces 
allows us to stick with the version 
for finite measure spaces.

\subsection{Weak compactness in $L^1$.}
To apply Theorem~\ref{theorem:YoungMeasureLimitOfComposedFunc} one
must first be able to extract from 
$\seq{\psi(u^n(\cdot),\cdot)}_{n \geq 1}$ a weakly 
convergent subsequence in $L^1(X)$. 
The key result is the Dunford-Pettis Theorem. 

\begin{definition}\label{def:equiintegrability}
Let $\mathcal{K} \subset L^1(X,\mathscr{A},\mu)$.
\begin{itemize}
	\item[(i)] $\mathcal{K}$ is \emph{uniformly integrable} 
	if for any $\varepsilon > 0$ there 
	exists $c_0(\varepsilon)$ such that 
	$$
	\sup_{f \in \mathcal{K}} \int_{\abs{f} \geq c} \abs{f} \,d\mu 
	< \varepsilon, 
	\quad \mbox{whenever $c > c_0(\varepsilon)$.}
	$$
	\item[(ii)] $\mathcal{K}$ has \emph{uniform tail} if for 
	any $\varepsilon > 0$ there exists 
	$E \in \mathscr{A}$ with $\mu(E) < \infty$ such that 
	$$
	\sup_{f \in \mathcal{K}}\int_{X \setminus E} \abs{f} \,d\mu 
	<\varepsilon.
	$$
\end{itemize}
If $\mathcal{K}$ satisfies both (i) and (ii) it is 
said to be \emph{equiintegrable}.
\end{definition}

\begin{remark}
Note that (ii) is void when $\mu$ is finite. As a consequence 
uniform integrability and equiintegrability are equivalent for finite measure spaces.
\end{remark}

\begin{theorem}[Dunford-Pettis]\label{theorem:DunfordPettis}
Let $(X,\mathscr{A},\mu)$ be a $\sigma$-finite measure space. 
A subset $\mathcal{K}$ of $L^1(X)$ is 
relatively weakly sequentially compact 
if and only if it is equiintegrable.
\end{theorem}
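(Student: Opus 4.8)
The plan is to prove each implication after reducing to a finite measure space, using a $\sigma$-finite exhaustion $\seq{E_m}$ with $E_m\uparrow X$ and $\mu(E_m)<\infty$ (and, where needed, chosen so that $\sup_{f\in\mathcal K}\int_{X\setminus E_m}\abs{f}\to0$, which combines $\sigma$-finiteness with the uniform tail). First I would record the easy bookkeeping: equiintegrability forces $\mathcal K$ to be bounded in $L^1(X)$, since picking $E$ with $\sup_{f\in\mathcal K}\int_{X\setminus E}\abs{f}<1$ and $c$ with $\sup_{f\in\mathcal K}\int_{\abs{f}\ge c}\abs{f}<1$ gives $\norm{f}_{L^1(X)}\le 2+c\,\mu(E)$ for all $f\in\mathcal K$; and conversely a relatively weakly sequentially compact set is bounded, because an unbounded sequence admits no weakly convergent subsequence.

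For the implication ``equiintegrable $\Rightarrow$ relatively weakly sequentially compact'', fix $\seq{f_n}\subset\mathcal K$ and truncate, $g_n^{k}:=f_n\car{\abs{f_n}\le k}$. For each $m,k$ the functions $g_n^{k}\car{E_m}$ are bounded in $L^2(E_m,\mu)$ (finite measure), so a diagonal extraction over the countably many pairs $(m,k)$ yields a subsequence, still denoted $\seq{f_n}$, with $g_n^{k}\car{E_m}\rightharpoonup h_m^{k}$ weakly in $L^2(E_m)$, hence weakly in $L^1(E_m)$. Uniform integrability gives $\norm{g_n^{k}-g_n^{j}}_{L^1(X)}\le\sup_n\int_{\abs{f_n}>\min(k,j)}\abs{f_n}\to0$ as $k,j\to\infty$, so by weak lower semicontinuity of the $L^1$-norm the sequence $\seq{h_m^{k}}_k$ is Cauchy; put $h_m:=\lim_k h_m^{k}$. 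The $h_m$ are consistent and satisfy $\int_{E_m}\abs{h_m}\le\liminf_n\norm{f_n}_{L^1(X)}$, so patching defines $h\in L^1(X)$. To check $f_n\rightharpoonup h$, fix $\test\in L^\infty(X)$ and decompose $\int_X(f_n-h)\test$ into the tail term over $X\setminus E_m$ (uniformly small in $n$ for large $m$, by the uniform tail and $h\in L^1$), the truncation error $\int_{E_m}(f_n-g_n^{k})\test$ (uniformly small in $n,m$ for large $k$, by uniform integrability), the term $\int_{E_m}(h_m^{k}-h_m)\test$ (small for large $k$), and $\int_{E_m}(g_n^{k}-h_m^{k})\test$ (vanishing as $n\to\infty$, since $\test$ restricted to $E_m$ lies in $L^\infty(E_m)\subset L^2(E_m)$); sending $m$, then $k$, then $n$ to infinity in this order finishes the argument.

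For the converse, boundedness is already in hand. Were uniform integrability to fail, there would be $\varepsilon_0>0$, $f_n\in\mathcal K$ and $c_n\uparrow\infty$ with $\int_{\abs{f_n}\ge c_n}\abs{f_n}\ge\varepsilon_0$; passing to $f_n\rightharpoonup f$ and testing against indicators, the measures $\nu_n(A):=\int_A f_n\,d\mu$ converge setwise, so the Vitali--Hahn--Saks theorem makes them uniformly $\mu$-continuous, and splitting into positive and negative parts together with the boundedness bound converts this into uniform integrability of $\seq{f_n}$ --- contradicting $c_n\to\infty$. The uniform tail is treated identically: if it failed, an exhaustion $E_m\uparrow X$ and functions $f_m\in\mathcal K$ with $\int_{X\setminus E_m}\abs{f_m}\ge\varepsilon_0$ would, after extracting a setwise convergent subsequence of the $\nu_n$, contradict the uniform countable additivity guaranteed by the Nikodym convergence theorem applied along $B_k:=X\setminus E_k\downarrow\emptyset$.

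The real obstacle, as the above makes clear, is confined to the converse direction and is exactly the Vitali--Hahn--Saks/Nikodym theorem: the step passing from setwise convergence of the $\nu_n$ to their uniform $\mu$-continuity and uniform countable additivity. Its proof is a Baire category argument in a complete metric space of measures and is not short; in a self-contained account I would include it, and otherwise simply cite it (e.g.\ \cite{Cohn2013}). The forward implication, in contrast, needs nothing beyond reflexivity of $L^2$ on finite measure spaces and the two clauses of equiintegrability.
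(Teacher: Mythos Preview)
The paper does not actually prove this theorem; it is listed in the appendix as a classical result, followed only by the remark that the Eberlein--\v{S}mulian theorem allows one to interchange ``relatively weakly compact'' and ``relatively weakly sequentially compact''. So there is nothing to compare against, and what matters is simply whether your argument is sound.

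It is. The forward direction via diagonal $L^2$-extraction on the finite-measure pieces $E_m$, followed by patching, is a standard and correct route; the only wording I would tighten is the order of limits at the end---you want to fix $m$, then $k$, and finally send $n\to\infty$, which is presumably what you intend. For the converse, the appeal to Vitali--Hahn--Saks is the right move, but one step deserves to be made explicit: VHS gives $\sup_n\abs{\nu_n(A)}<\varepsilon$ whenever $\mu(A)<\delta$, whereas you need $\sup_n\int_A\abs{f_n}<\varepsilon$. This follows from the Hahn decomposition $X=P_n\sqcup N_n$ for $\nu_n$: since $A\cap P_n$ and $A\cap N_n$ both have $\mu$-measure below $\delta$, one gets $\int_A\abs{f_n}=\nu_n(A\cap P_n)-\nu_n(A\cap N_n)<2\varepsilon$. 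The uniform tail is handled by the same trick after passing to an equivalent finite measure $\tilde\mu=\sum_m 2^{-m}\mu(\cdot\cap E_m)/\mu(E_m)$, so that $\tilde\mu(X\setminus E_m)\to 0$ and VHS applies directly; invoking Nikodym separately is not necessary. You correctly flag VHS as the only substantive black box.
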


By the Eberlain-\v{S}mulian theorem \cite{Whitley1967}, in the 
weak topology of a Banach space, relative weak compactness 
is equivalent with relative sequentially weak compactness. 
There are a couple of well known reformulations of uniform integrability.

\begin{lemma}\label{lemma:UniformIntCriteria}
Suppose $\mathcal{K} \subset L^1(X)$ is bounded. 
Then $\mathcal{K}$ is uniformly integrable if and only if:
\begin{itemize}
	\item[(i)] For any $\varepsilon > 0$ there exists 
	$\delta(\varepsilon) > 0$ such that
 	$$
	\sup_{f \in \mathcal{K}}\int_E \abs{f} \,d\mu < 
	\varepsilon,  \quad 
	\mbox{whenever $\mu(E) <\delta(\varepsilon)$.}
	$$
	\item[(ii)] There is an increasing function 
	$\Psi:[0,\infty) \rightarrow [0,\infty)$ 
	such that $\Psi(\zeta)/\zeta \rightarrow \infty$ 
	as $\zeta \rightarrow \infty$ and
	$$
	\sup_{f \in \mathcal{K}}\int_X \Psi(\abs{f(x)}) \,d\mu(x) < \infty.
	$$
 \end{itemize}
\end{lemma}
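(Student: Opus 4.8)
The plan is to establish the two equivalences ``\textbf{uniform integrability} $\Leftrightarrow$ (i)'' and ``\textbf{uniform integrability} $\Leftrightarrow$ (ii)'' separately. The first is elementary and is where the $L^1$-boundedness hypothesis is used; the second is the classical de~la Vall\'ee--Poussin argument, whose forward direction is the only genuinely non-routine part of the lemma.

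For \emph{uniform integrability $\Rightarrow$ (i)}: given $\varepsilon>0$ I pick $c$ with $\sup_{f\in\mathcal{K}}\int_{\{\abs{f}\ge c\}}\abs{f}\,d\mu<\varepsilon/2$, set $\delta:=\varepsilon/(2c)$, and for $\mu(E)<\delta$ split $\int_E\abs{f}\,d\mu = \int_{E\cap\{\abs{f}<c\}}\abs{f}\,d\mu+\int_{E\cap\{\abs{f}\ge c\}}\abs{f}\,d\mu\le c\,\mu(E)+\int_{\{\abs{f}\ge c\}}\abs{f}\,d\mu<\varepsilon$. For \emph{(i) $\Rightarrow$ uniform integrability}: using $M:=\sup_{f\in\mathcal{K}}\norm{f}_{L^1(X)}<\infty$, Chebyshev's inequality gives $\mu(\{\abs{f}\ge c\})\le M/c$, so once $c$ is large enough that $M/c<\delta(\varepsilon)$, property (i) yields $\int_{\{\abs{f}\ge c\}}\abs{f}\,d\mu<\varepsilon$ uniformly in $f\in\mathcal{K}$.

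The core is \emph{uniform integrability $\Rightarrow$ (ii)}. I put $a_n:=\sup_{f\in\mathcal{K}}\int_{\{\abs{f}\ge n\}}\abs{f}\,d\mu$; this sequence is nonincreasing and, by uniform integrability, $a_n\downarrow 0$, so one may choose integers $0=c_0<c_1<c_2<\cdots$ with $a_{c_n}\le 2^{-n}$ for $n\ge 1$. Define $\Psi(t):=\sum_{n\ge 1}(t-c_n)^+$ for $t\ge 0$; it is nondecreasing, convex, $\Psi(0)=0$, and since $\Psi(t)\ge\sum_{n=1}^N(t-c_n)$ for $t\ge c_N$ one gets $\Psi(t)/t\to\infty$ as $t\to\infty$. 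Using the pointwise bound $(t-c_n)^+\le t$ for $t\ge c_n$ (and $=0$ otherwise) and integrating the nonnegative series term by term, $\int_X\Psi(\abs{f})\,d\mu\le\sum_{n\ge 1}\int_{\{\abs{f}\ge c_n\}}\abs{f}\,d\mu\le\sum_{n\ge 1}a_{c_n}\le 1$ for every $f\in\mathcal{K}$. Conversely, for \emph{(ii) $\Rightarrow$ uniform integrability}, set $K_0:=\sup_{f\in\mathcal{K}}\int_X\Psi(\abs{f})\,d\mu<\infty$; given $\varepsilon>0$, superlinearity of $\Psi$ furnishes $c_0$ with $t\le(\varepsilon/K_0)\,\Psi(t)$ whenever $t\ge c_0$, whence $\int_{\{\abs{f}\ge c\}}\abs{f}\,d\mu\le(\varepsilon/K_0)\int_X\Psi(\abs{f})\,d\mu\le\varepsilon$ for all $c\ge c_0$ and all $f\in\mathcal{K}$.

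I expect the only real work to lie in the de~la Vall\'ee--Poussin step: justifying the term-by-term integration (monotone convergence, all summands nonnegative) and the divergence $\Psi(t)/t\to\infty$; all other implications are one-line estimates. If one wants $\Psi$ strictly increasing, replacing $\Psi$ by $\Psi(t)+t$ costs nothing — the uniform bound becomes $1+M$ thanks to the $L^1$-boundedness already assumed, and superlinearity is unaffected.
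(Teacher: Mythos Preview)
Your proof is correct and follows the standard de~la~Vall\'ee--Poussin argument; the paper itself states this lemma without proof, treating it as a well-known reformulation of uniform integrability, so there is no paper proof to compare against. One cosmetic point: in the implication (ii) $\Rightarrow$ uniform integrability you divide by $K_0$, so you should remark that the case $K_0=0$ is trivial (it forces $\abs{f}$ to be essentially bounded uniformly in $f$), but this is a routine edge case and does not affect the argument.
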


\begin{remark}\label{remark:DominatedFamilyUniformInt}
Suppose there exists $g \in L^1(X)$ such 
that $\abs{f} \leq g$ for all $f \in \mathcal{K}$. Then
$$
\sup_{f \in \mathcal{K}}\int_E \abs{f} \,d\mu \leq \int_E g \,d\mu, 
\qquad \forall E \in \mathscr{A}.
$$ 
Since $\seq{g} \subset L^1(X)$ is uniformly 
integrable, it follows by Lemma~\ref{lemma:UniformIntCriteria}(i) 
that $\mathcal{K}$ is uniformly integrable.
\end{remark}

\end{document}